\def\F{\mathbb{F}}
\def\eps{\varepsilon}
\DeclareMathOperator{\PG}{PG}
\DeclareMathOperator{\PGO}{PGO}
\newtheorem{theorem}{Theorem}[section]
\newtheorem{lemma}[theorem]{Lemma}
\newtheorem{corollary}[theorem]{Corollary}
\newtheorem{example}[theorem]{Example}
\theoremstyle{definition}
\newtheorem{definition}[theorem]{Definition}
\newtheorem{remark}[theorem]{Remark}
\newtheorem{notation}[theorem]{Notation}
\newtheoremstyle{dotless}{}{}{\itshape}{}{\bfseries}{}{ }{}
  \theoremstyle{dotless}
\newcommand{\comments}[1]{}
\newcommand{\gs}[3]{\genfrac{[}{]}{0pt}{}{#1}{#2}_{#3}}
\author{Maarten De Boeck\thanks{Department of Mathematical Sciences, University of Memphis, Dunn Hall, 3725 Norriswood Ave, Memphis, TN 38152, USA. ORCID: 0000-0001-8399-9064. \href{mailto:mdeboeck@memphis.edu}{\url{mdeboeck@memphis.edu}}\newline Department of Mathematics: Algebra and Geometry, Ghent University, Gent, Flanders, Belgium}~ and Geertrui Van de Voorde\thanks{School of Mathematics and Statistics, University of Canterbury, Private Bag 4800, 8140 Christchurch, New Zealand. ORCID: 0000-0002-4957-6911 \href{mailto:geertrui.vandevoorde@canterbury.ac.nz}{geertrui.vandevoorde@canterbury.ac.nz}}}
\title{Anzahl theorems for disjoint subspaces generating a non-degenerate subspace II: quadratic forms}
\date{}
\begin{document}
\maketitle

\begin{abstract}
        In this paper, we solve a classical counting problem for non-degenerate quadratic forms defined on a vector space in odd characteristic; given a subspace $\pi$, we determine the number of non-singular subspaces that are trivially intersecting with $\pi$ and span a non-singular subspace with $\pi$. Lower bounds for the quantity of such pairs where $\pi$ is non-singular were first studied in ``Glasby, Niemeyer, Praeger (Finite Fields Appl., 2022)'', which was later improved for even-dimensional subspaces in ``Glasby, Ihringer, Mattheus (Des. Codes Cryptogr., 2023)'' and  generalised in ``Glasby, Niemeyer, Praeger (Linear Algebra Appl., 2022)''. The explicit formulae, which allow us to give the exact proportion and improve the known lower bounds were derived in the symplectic and Hermitian case in ``De Boeck and Van de Voorde (Linear Algebra Appl. 2024)''. This paper deals with the more complicated quadratic case. 
\end{abstract}
\textbf{Keywords:} quadratic forms, counting, non-singular subspace\\
\textbf{MSC:} 51A50, 51E20

\section{Introduction}
Counting theorems are at the heart of finite geometry, and so-called \emph{Anzahl theorems} have been studied for geometries defined over finite fields for many decades already. For a more comprehensive introduction to the topic, we refer to the introduction of Part I of this work, \cite{dbvdv24}.
\par The research in this paper has been instigated by the problem posed and discussed in \cite{gim,gnp,gnp2}: given a form on $\F^{n}_{q}$ it asks for the proportion of pairs of non-singular subspaces that are trivially intersecting and span a non-singular subspace. The motivation for this problem comes from computational group theory, more precisely the algorithms for recognising classical groups. For more details we refer to \cite[Section 1.1]{gnp2}. In our previous work, \cite{dbvdv24}, we determined these proportions exactly for the case of Hermitian and symplectic forms. The current paper deals with the (much more involved) case of quadratic forms over fields of odd characteristic. In order to derive those proportions, we first count, given a fixed subspace $\pi$ of $\F^{n}_{q}$, the exact number of non-singular subspaces trivially intersecting $\pi$ and spanning a non-singular subspace with it.  We will not restrict ourselves to the case where $\pi$ is non-singular. The number we obtain will depend on multiple parameters, which include but are not limited to the dimension and type of the subspace (see later). The main result is given in Theorem \ref{th:gammageneral}: there we derive the explicit formula for the quantity $\gamma_{(i,j,\delta(,\lambda)),(n,\eps),(k,\zeta),\eta}$. This number counts the following: consider a quadratic form of type $\eps$ on $\mathbb{F}_q^n$, $q$ odd, and let $\pi$ be an $i$-singular $j$-space of type $\delta$, with perp type $\lambda$ if $n(j-i)$ odd, then this is the number of non-singular $k$-spaces $\sigma$ of type $\zeta$ such that $\sigma \cap \pi$ is trivial and $\langle \sigma,\pi\rangle$ is non-singular of type $\eta$. 
\par The paper is organised as follows. In Section \ref{sec:prelim} we will introduce the necessary background, and in Section \ref{sec:computations} introduce the functions that we will use to describe the results. Section \ref{sec:proofofmain} is devoted to the proof of four main theorems: Theorem \ref{th:gammaquadratic}, Corollary \ref{cor:gammaquadratic0odd} and Theorem \ref{th:gammaquadratic0even} deal with the case of complementary subspaces, and Theorem \ref{th:gammageneral} settles the general case. Finally, in Section \ref{sec:proportion} we will determine the proportion that was investigated in \cite{gim,gnp,gnp2}. The technical calculations used in the proof of Theorem \ref{th:gammaquadratic} are collected in Appendix \ref{ap:calculations} and the calculations used in the proof Theorem \ref{th:gammaquadratic0even} are collected in Appendix \ref{ap:morecalculations}.

\section{Preliminaries}\label{sec:prelim}
We start by recalling the background on quadratic and orthogonal forms, and the associated polarities.

\begin{definition}
    A \emph{bilinear form} on the vector space $\F^{n}_{q}$ is a map $f:\F^{n}_{q}\times \F^{n}_{q}\to\F_{q}$ such that for all $u,v,w\in\F^{n}_{q}$ and all $\lambda,\mu\in\F_{q}$
    \[
        f(u+v,w)=f(u,w)+f(v,w),\qquad f(u,v+w)=f(u,v)+f(u,w)\quad\text{and}\quad f(\lambda v,\mu w)=\lambda\mu f(v,w).
    \]
    
\end{definition}

\begin{definition}
    A \emph{quadratic form} on the vector space $\F^{n}_{q}$ is a map $Q:\F^{n}_{q}\to\F_{q}$ such that for all $v\in\F^{n}_{q}$ and all $\lambda\in\F_{q}$
    \[
        Q(\lambda v)=\lambda^{2} f(v).
    \]
\end{definition}

A bilinear form $f$ on $\F^{n}_{q}$ is called \emph{symmetric} if $f(v,w)=f(w,v)$ for all $v,w\in\F^{n}_q$. We can associate a {\em quadratic} form $Q(v)=f(v,v)$ to the symmetric bilinear form $f$ and when the characteristic of $q$ is odd, $f$ is uniquely determined by $Q$. 

\begin{definition}
    The \emph{radical} of a symmetric bilinear form $f$ on $\F^{n}_{q}$ is the set $\{v\in\F^{n}_{q}\mid\forall w\in\F^{n}_{q}:f(v,w)=0\}$. If the radical of $f$ only contains the zero vector, then $f$ is \emph{non-degenerate}. 

With the radical of a quadratic form we mean the radical of the associated symmetric bilinear form.\end{definition}

It is a classical result in linear algebra that every reflexive sesquilinear form is either symmetric, symplectic or a scalar multiple of a Hermitian form. In this article we will discuss the symmetric case. We now introduce the associated polarity.

\begin{definition}
    Given a non-degenerate symmetric bilinear form $f$ on $\F^{n}_{q}$, $q$ odd, we define the corresponding  \emph{orthogonal polarity} $\perp$ as follows:
    \begin{align*}
        \perp:S\to S:\pi\mapsto \{x\in\F^{n}_{q}\mid \forall y\in \pi:f(x,y)=0\},
    \end{align*}
    where $S$ is the set of all subspaces of $\F^{n}_{q}$.
\end{definition}

A polarity on $\F^{n}_{q}$ is an involutory inclusion-reversing permutation of the subspaces. We have that $\dim(\pi)+\dim(\pi^\perp)=n$.

\begin{definition}
    Given a quadratic form $Q$ on a vector space $V$, the subspaces $\pi$ in $V$ such that the restriction on $\pi$ has an $i$-dimensional radical are called \emph{$i$-singular}. A 0-singular space is also called \emph{non-singular}. If $Q$ vanishes on $\pi$ (equivalently, the radical of $\pi$ restricted to $f$ is $\pi$ itself), then $\pi$ is \emph{totally isotropic}.
\end{definition}

The totally isotropic subspaces with respect to an orthogonal form are precisely the subspaces $\pi$ such that $\pi\subseteq\pi^{\perp}$ where $\perp$ is the corresponding polarity.

\begin{definition}
    The incidence structure consisting of all totally isotropic subspaces with respect to an orthogonal form is an \emph{orthogonal polar space}. 
\end{definition}

The non-degenerate quadratic and orthogonal forms defined on $\F_q^n$ fall into three equivalence classes called {\em types} determined by a parameter $\eps$: one for odd $n$ (the {\em parabolic} type, $\eps=0$) and two for even $n$. For even $n$, the orthogonal form is called {\em hyperbolic} ($\eps=1$) if the maximum dimension of a totally isotropic subspace is $\frac{n}{2}$, and {\em elliptic} ($\eps=-1$) if this dimension is $\frac{n}{2}-1$. The {\em type} of a subspace is the type of the restriction of the form to that subspace.

\begin{remark} The type of the $0$-dimensional subspace (projectively, the empty space) is hyperbolic. 
\end{remark}

The orthogonal polar spaces are a class of \emph{classical polar spaces}. A classical polar space has a natural embedding in a vector space, and can therefore also be naturally embedded in the corresponding projective space. The type of the polar space defined by a non-degenerate quadratic form is the type of the form.

Looking at the orthogonal polar space $\mathcal{P}$ embedded in the projective space $\PG(n-1,q)$, an $i$-singular $j$-space in $\F^{n}_{q}$ corresponds to a $(j-1)$-space of $\PG(n-1,q)$ meeting $\mathcal{P}$ in a cone with an $(i-1)$-dimensional vertex and as base a polar space of the same type as $\mathcal{P}$ in a $(j-i-1)$-space disjoint to this vertex (where we use vectorial dimension for the subspaces of $\F^{n}_{q}$, and projective dimension for their interpretation in $\PG(n-1,q)$). The type of an $i$-singular $j$-space in $\F_q^n$ is the type of non-singular polar space forming the base of this cone.

In this paper, we will use the following shorthand conventions.
\begin{definition}\label{def:functionsphipsichi}
    For integers $b\geq a\geq0$ we define
    \begin{align*}
        \psi_{a,b}^{+}(q)=\prod_{k=a}^{b}\left(q^{k}+1\right),\qquad
        \psi_{a,b}^{-}(q)=\prod_{k=a}^{b}\left(q^{k}-1\right),\qquad\chi_{a,b}(q)=\prod_{k=a}^{b}\left(q^{2k-1}-1\right)
    \end{align*}
  
    Furthermore, we set $\psi_{a,a-1}^{+}(q)=\psi_{a,a-1}^{-}(q)=\chi_{a,a-1}(q)=1$ for $a\geq0$; this corresponds to the empty product.
\end{definition}

The classical \emph{Gaussian binomial coefficient} can be described using the function $\psi^{-}$.
\begin{definition}\label{def:gaussianbinomialcoeffient}
    For integers $b\geq a\geq 0$ and prime powers $q$ we define
    \begin{align*}
	   \gs{b}{a}{q}=\frac{\psi^{-}_{b-a+1,b}(q)}{\psi^-_{1,a}(q)}\:.
    \end{align*}
    Furthermore, we set $\gs{b}{a}{q}=0$ if $a<0$ or $0\leq b<a$, and we set $\gs{-1}{0}{q}=1$.
\end{definition}

It is well-known that the number of $a$-dimensional subspaces in $\F^{b}_{q}$ is given by $\gs{b}{a}{q}$.

We also have the following analogues of Pascal's identity which will be used throughout the computations in the appendix of this paper.

\begin{lemma}\label{lem:pascalgeneral}
For integers $b\geq a\geq 0$ we have

\begin{align*}
    \gs{b}{a}{q}=q^{a}\gs{b-1}{a}{q}+\gs{b-1}{a-1}{q}\qquad \text{and}\qquad
    \gs{b}{a}{q}=\gs{b-1}{a}{q}+q^{b-a}\gs{b-1}{a-1}{q}\:.
\end{align*}
Note that also the equalities $\gs{0}{0}{q}=q^{0}\gs{-1}{0}{q}+\gs{-1}{-1}{q}$ and $\gs{0}{0}{q}=\gs{-1}{0}{q}+q^{0}\gs{-1}{-1}{q}$ hold, since we set $\gs{-1}{0}{q}=1$, while $\gs{-1}{-1}{q}=0$.

\end{lemma}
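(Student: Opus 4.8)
The plan is to verify both identities by a direct algebraic manipulation of the closed form $\gs{b}{a}{q}=\psi^{-}_{b-a+1,b}(q)/\psi^{-}_{1,a}(q)$ from Definition \ref{def:gaussianbinomialcoeffient}: write each right-hand side over the common denominator $\psi^{-}_{1,a}(q)$ and simplify the numerator. The only ingredients needed beyond the definitions are the two elementary ``telescoping'' relations $\psi^{-}_{1,a}(q)=(q^{a}-1)\,\psi^{-}_{1,a-1}(q)$ and $\psi^{-}_{b-a,b-1}(q)=(q^{b-a}-1)\,\psi^{-}_{b-a+1,b-1}(q)$, together with $(q^{b}-1)\,\psi^{-}_{b-a+1,b-1}(q)=\psi^{-}_{b-a+1,b}(q)$, all of which are immediate from Definition \ref{def:functionsphipsichi}.

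First I would dispose of the boundary values. For $a=0$ both equalities read $1=q^{0}\cdot 1+0$ and $1=1+q^{b}\cdot 0$, using $\gs{b}{0}{q}=\gs{b-1}{0}{q}=1$ and $\gs{b-1}{-1}{q}=0$; for $a=b$ they read $1=q^{b}\cdot 0+1$ and $1=0+q^{0}\cdot 1$, using $\gs{b}{b}{q}=\gs{b-1}{b-1}{q}=1$ and $\gs{b-1}{b}{q}=0$; and the two displayed equalities for $b=a=0$ hold by the conventions $\gs{-1}{0}{q}=1$, $\gs{-1}{-1}{q}=0$. This leaves the generic range $1\le a\le b-1$, where all the $\psi^{-}$-products occurring are genuine non-empty products. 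For the first identity, over $\psi^{-}_{1,a}(q)=(q^{a}-1)\psi^{-}_{1,a-1}(q)$ the numerator of $q^{a}\gs{b-1}{a}{q}+\gs{b-1}{a-1}{q}$ becomes $q^{a}\psi^{-}_{b-a,b-1}(q)+(q^{a}-1)\psi^{-}_{b-a+1,b-1}(q)$; factoring out $\psi^{-}_{b-a+1,b-1}(q)$ leaves the scalar $q^{a}(q^{b-a}-1)+(q^{a}-1)=q^{b}-1$, so the numerator equals $\psi^{-}_{b-a+1,b}(q)$ and the whole sum equals $\gs{b}{a}{q}$. The second identity is handled in the same way: over the same denominator the numerator of $\gs{b-1}{a}{q}+q^{b-a}\gs{b-1}{a-1}{q}$ is $\psi^{-}_{b-a,b-1}(q)+q^{b-a}(q^{a}-1)\psi^{-}_{b-a+1,b-1}(q)$, and $(q^{b-a}-1)+q^{b-a}(q^{a}-1)=q^{b}-1$ again produces $\psi^{-}_{b-a+1,b}(q)$.

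There is no genuine obstacle here: the computation is a two-line simplification in each case, and the only thing requiring care is bookkeeping the index ranges of the $\psi^{-}$-products and the degenerate conventions from Definitions \ref{def:functionsphipsichi} and \ref{def:gaussianbinomialcoeffient}. If a more conceptual argument is preferred, one can instead count the $a$-subspaces of $\F_{q}^{b}$ by splitting according to whether they contain a fixed line (each of the $\gs{b-1}{a}{q}$ subspaces of dimension $a+1$ through that line contains exactly $q^{a}$ complements to it, which gives the $q^{a}$-identity) or according to whether they lie in a fixed hyperplane (each of the $\gs{b-1}{a-1}{q}$ subspaces of dimension $a-1$ inside that hyperplane extends to exactly $q^{b-a}$ of the $a$-subspaces meeting the hyperplane in it, which gives the $q^{b-a}$-identity); but the algebraic verification is shorter and entirely self-contained.
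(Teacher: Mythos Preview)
Your proof is correct. The paper states this lemma without proof, treating it as a well-known classical identity, so there is no approach to compare against; your direct algebraic verification from Definition~\ref{def:gaussianbinomialcoeffient} (and the optional combinatorial gloss) is entirely appropriate.
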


\section{Preliminary counting results}\label{sec:computations}%General computations

We now introduce some further notations that will be used to count the quantities in our main theorem.

\subsection{\texorpdfstring{$\alpha_{(i,j,\delta),(n,\eps)}$ and $\alpha_{(i,j,0,\lambda),(n,\eps)}$}{The alphas}}

\begin{definition}\label{def:alphaquadratic}
    Given a quadratic form $f$ of type $\eps$ on $\F^{n}_{q}$, we define $\alpha_{(i,j,\delta),(n,\eps)}$ as the number of $i$-singular $j$-spaces of type $\delta$ with respect to $f$.
\end{definition}

We first look at the action of the orthogonal groups $\PGO(n,q)$, $\PGO^{+}(n,q)$ and $\PGO^{-}(n,q)$ on the $i$-singular subspaces, where we abuse notation to let $\PGO^\eps$ correspond to $\PGO^{+}$, $\PGO$ and $\PGO^{-}$ when $\eps=1,0,-1$, respectively.

\begin{theorem}[{\cite[Theorem 1.49 and Section 1.8]{ht}}]\label{th:orbitsPGO}
    Let $f$ be a quadratic form of type $\eps$ on $\F^{n}_{q}$, $q$ odd, and consider the action of the group $\PGO^{\eps}(n,q)$ on the $i$-singular $j$-spaces of $\F^{n}_{q}$. This action is transitive if $n$ is even, or $j-i$ is even or $n-j-i=0$.
    \par If $n$ and $j-i$ are both odd and $n-i-j>0$, the action has two orbits. In this case both $f$ and the $j$-spaces have type $0$ (parabolic). Let $\perp$ be the orthogonal polarity corresponding to $f$. The orbits correspond to the $j$-spaces that under $\perp$ are mapped to $(n-j)$-spaces that have type $1$ or $-1$.
\end{theorem}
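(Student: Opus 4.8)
The plan is to derive the statement from two standard ingredients: Witt's extension theorem — every isometry between two (possibly degenerate) subspaces of a non-degenerate quadratic $\F_{q}$-space, $q$ odd, extends to an isometry of the whole space — and the classification of non-degenerate quadratic forms over $\F_{q}$, according to which such a form is determined up to isometry by its dimension together with its discriminant, read in $\F_{q}^{\ast}/(\F_{q}^{\ast})^{2}$ (all discriminants below are taken in this group). We read the statement as concerning the $i$-singular $j$-spaces of one prescribed type $\delta$; when $j-i$ is odd this type is necessarily parabolic, so there is nothing to prescribe and the claim is about all $i$-singular $j$-spaces. Throughout, $\PGO^{\eps}(n,q)$ is the group of projectivities of $\PG(n-1,q)$ stabilising the quadric defined by $f$, that is, $\mathrm{Sim}(f)/Z$, where $\mathrm{Sim}(f)$ is the group of similarities of $f$ (linear $g$ with $Q\circ g=\mu(g)\,Q$, $\mu(g)\in\F_{q}^{\ast}$) and $Z$ the scalar subgroup; note $\mathrm{O}(f):=\{g\mid\mu(g)=1\}$ satisfies $\mathrm{O}(f)/\{\pm I\}\hookrightarrow\PGO^{\eps}(n,q)$.

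First I would record the relevant linear-algebra structure. An $i$-singular $j$-space $\pi$ decomposes as $\pi=R\perp\pi_{0}$ with $R=\mathrm{rad}(Q|_{\pi})$ totally isotropic of dimension $i$ and $\pi_{0}$ non-degenerate of dimension $j-i$; hence the isometry type of $Q|_{\pi}$ is determined by $i$, by $j-i$, and by $d(\pi):=\mathrm{disc}(Q|_{\pi_{0}})$. Choosing a totally isotropic complement $R'$ of $R$ inside a non-degenerate $2i$-space $H$ — forced to be a perpendicular sum of $i$ hyperbolic planes, so $\mathrm{disc}(H)=(-1)^{i}$ — yields $\F_{q}^{n}=H\perp\pi_{0}\perp\pi_{0}'$ with $\pi_{0}'$ non-degenerate of dimension $n-i-j$, and multiplicativity of the discriminant gives $\mathrm{disc}(f)=(-1)^{i}\,d(\pi)\,\mathrm{disc}(Q|_{\pi_{0}'})$. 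Moreover $\pi^{\perp}=R\perp\pi_{0}'$, so $\pi^{\perp}$ is an $i$-singular $(n-j)$-space with non-degenerate part $\pi_{0}'$, and $n-i-j$ has parity opposite to that of $n$ precisely when $j-i$ is odd.

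By Witt's theorem two $i$-singular $j$-spaces lie in one $\mathrm{O}(f)$-orbit iff their restrictions are isometric, i.e. iff they share the value of $d$. I would then split into cases. If $j-i$ is even, $d(\pi)$ encodes exactly the type $\delta$ of $\pi_{0}$, so $Q|_{\pi}$ is determined by $(i,j,\delta)$ and already $\mathrm{O}(f)/\{\pm I\}\le\PGO^{\eps}(n,q)$ is transitive on the $i$-singular $j$-spaces of type $\delta$. If $j-i$ is odd and $n-i-j=0$ (which forces $n$ odd), then $d(\pi)=(-1)^{i}\mathrm{disc}(f)$ is pinned down by $f$, so there is a single isometry class and transitivity follows. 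If $j-i$ is odd and $n$ is even, one checks $\mathrm{Sim}(f)$ surjects onto $\F_{q}^{\ast}$ via $\mu$ (rescale the second vector of each hyperbolic pair, and on an anisotropic binary summand use surjectivity of the norm $\F_{q^{2}}^{\ast}\to\F_{q}^{\ast}$); a similarity with factor $\mu$ sends $\pi$ to an $i$-singular $j$-space with $d$-value $\mu^{\,j-i}d(\pi)=\mu\,d(\pi)$, fusing the at most two isometry classes, so $\PGO^{\eps}(n,q)$ is transitive. Finally, if $j-i$ and $n$ are both odd and $n-i-j>0$, every similarity factor of $f$ is a square (since $\mathrm{disc}(\mu f)=\mu^{n}\mathrm{disc}(f)$ with $n$ odd), hence $\mathrm{Sim}(f)=\mathrm{O}(f)\cdot Z$ and $\PGO^{\eps}(n,q)\cong\mathrm{O}(f)/\{\pm I\}$; as $n-i-j>0$ is even, both classes of $d(\pi)$ occur (take $\pi_{0}'$ of either type), each is one orbit by Witt, and the identity $\mathrm{disc}(f)=(-1)^{i}d(\pi)\mathrm{disc}(Q|_{\pi_{0}'})$ shows $d(\pi)$ is equivalent to the type $\lambda$ of $\pi^{\perp}$. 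Since $n$ odd makes $f$ parabolic and $j-i$ odd makes $\pi_{0}$ parabolic, this is exactly the two-orbit situation of the statement, the orbits matched to $\lambda\in\{1,-1\}$.

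The main obstacle is the bookkeeping that produces the even/odd-$n$ dichotomy: one must identify $\PGO^{\eps}(n,q)$ as $\mathrm{Sim}(f)/Z$ and pin down exactly when the similarity-factor map is onto $\F_{q}^{\ast}$ rather than onto $(\F_{q}^{\ast})^{2}$ — this, together with the parity of $n-i-j$, is what separates the three transitive cases from the fourth — while keeping the discriminant arithmetic (discriminant of a sum of hyperbolic planes, multiplicativity under $\perp$, and the translation between $d(\pi)$, $\delta$ and $\lambda$) under control. Invoking Witt's extension theorem in the form valid for arbitrary, possibly degenerate, subspaces (legitimate in odd characteristic) is what reduces everything to this arithmetic.
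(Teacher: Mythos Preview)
The paper does not prove this theorem; it is quoted from Hirschfeld--Thas \cite{ht} (Theorem~1.49 and Section~1.8) and used as a black box. So there is no proof in the paper to compare against.

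Your argument is correct and is essentially the standard one: reduce the orbit question to Witt's extension theorem, so that two $i$-singular $j$-spaces lie in the same $\mathrm{O}(f)$-orbit iff the restricted forms are isometric, i.e.\ iff they share the discriminant $d(\pi)$; then analyse when passing from $\mathrm{O}(f)$ to $\mathrm{Sim}(f)$ (equivalently, to $\PGO^{\eps}(n,q)$) fuses the two possible discriminant classes. Your identification of $\PGO^{\eps}(n,q)$ with $\mathrm{Sim}(f)/Z$ matches the paper's usage (cf.\ the proof of Lemma~\ref{lem:halvedorbits}, where $\mathrm{PO}$ is explicitly called the projective isometry group and $\PGO$ the projective similarity group), and the key dichotomy --- $\mu(\mathrm{Sim}(f))=\F_q^{\ast}$ for $n$ even versus $\mu(\mathrm{Sim}(f))=(\F_q^{\ast})^{2}$ for $n$ odd --- is handled cleanly via the discriminant constraint $\mathrm{disc}(\mu f)=\mu^{n}\mathrm{disc}(f)$.

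Two small remarks. First, your opening caveat about reading the statement as ``$i$-singular $j$-spaces of a fixed type $\delta$'' is the right reading: when $j-i$ is even a similarity cannot change $d(\pi)$ (the exponent $j-i$ is even), so $\PGO^{\eps}(n,q)$ certainly does not mix hyperbolic and elliptic $j$-spaces; this is consistent with how the paper parametrises all of its $\alpha$, $\beta$, $\gamma$ by a type. Second, the claim that both discriminant classes actually occur when $n$ and $j-i$ are odd and $n>i+j$ deserves one more sentence: it suffices to note that a non-degenerate parabolic $n$-space contains non-degenerate subspaces of every even dimension up to $n-1$ of both types (or simply to invoke the explicit count $\alpha_{(i,j,0,\lambda),(n,0)}>0$ from Theorem~\ref{th:alphaorthogonalodd}). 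With that, the argument is complete.
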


For these $i$, $j$ and $n$ where the action of $\PGO^{\eps}(n,q)$ on the $i$-singular $j$-spaces has two orbits, we define the perp type and introduce the corresponding $\alpha$-notation. Since we assume that $q$ is odd, there is always a polarity associated to the quadratic form.

\begin{definition}
    Let $n$ and $j-i$ be both odd, and let $f$ be a quadratic form (of type $0$) on $\F^{n}_{q}$, $q$ odd. The \emph{perp type} of a $j$-space $\pi$ (of type 0) is the type of $\pi^\perp$, where $\perp$ is the polarity associated to $f$.
\end{definition}

Note that the perp type is always $1$ or $-1$. Given Theorem \ref{th:orbitsPGO}, the following definition is well defined.

\begin{definition}\label{def:alphaperp}
    Given a quadratic form $f$ of type $0$ on $\F^{n}_{q}$, $q$ odd, we define $\alpha_{(i,j,0,\lambda),(n,0)}$ as the number of $i$-singular $j$-spaces with perp type $\lambda$.
\end{definition}

\begin{remark}\label{rem:n=i+jalpha}
    In the case that $n$ and $(j-i)$ are both odd, and $n=i+j$, there is only a single orbit of $i$-singular $j$-spaces with respect to a quadratic form in $\F_{q}^{n}$, as mentioned in Theorem \ref{th:orbitsPGO}. All these $i$-singular $j$-spaces have perp type 1, since their image under the polarity is their radical, which by definition has hyperbolic type. So, in case $n$ and $(j-i)$ are both odd, we have $\alpha_{(i,j,0),(i+j,0)}=\alpha_{(i,j,0,1),(i+j,0)}$ and $\alpha_{(i,j,0,-1),(i+j,0)}=0$.
\end{remark}

The formulae for $\alpha_{(i,j,\delta),(n,\eps)}$ and $\alpha_{(i,j,0,\lambda),(n,0)}$ (see Definitions \ref{def:alphaquadratic} and \ref{def:alphaperp}) are known from the literature. The formula for $\alpha_{(i,j,\delta),(n,\eps)}$ was first determined by Dai and Feng (\cite{df1,df2}).
\begin{theorem}[{\cite[Theorem 1.57]{ht}}]\label{th:alphaorthogonal}
    Let $n,j,i$ be integers with $0\leq i\leq j$ and $i+j\leq n$, and let $\delta,\eps\in\{0,\pm1\}$. If $j-i\equiv\delta\pmod{2}$ or $n\equiv\eps\pmod{2}$, then $\alpha_{(i,j,\delta),(n,\eps)}=0$. For the case $j-i-\delta\equiv1\equiv n-\eps\pmod{2}$ , with $\delta=1$ if $i=j$, we have:
    \begin{align*}
        \alpha_{(i,j,\delta),(n,\eps)}&=q^{\frac{1}{2}(n-j-i)\left((j-i)+(1-\delta^2)(1-\eps^2)\right)-\frac{1}{2}(1-\delta^2)\eps^2}\cdot\\
        &\quad\;\ \frac{\psi^{+}_{\frac{1}{2}(n-i-j+(\delta^2-1)(\eps^2-1)+1-\eps\delta),\frac{1}{2}(n-1-\eps)}(q)\ \psi^{-}_{\frac{1}{2}(n-i-j+(\delta^2-1)(\eps^2-1)+1+\eps\delta),\frac{1}{2}(n-1+\eps)}(q)}{\psi^{+}_{1-\delta^2,\frac{1}{2}(j-i-\delta-1)}(q)\ \psi^{-}_{1,\frac{1}{2}(j-i+\delta-1)}(q)\ \psi^{-}_{1,i}(q)}\:.
    \end{align*}
\end{theorem}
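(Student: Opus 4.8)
The statement to prove is Theorem \ref{th:alphaorthogonal}, giving the closed formula for $\alpha_{(i,j,\delta),(n,\eps)}$, the number of $i$-singular $j$-spaces of type $\delta$ in $\F_q^n$ equipped with a quadratic form of type $\eps$. Since this is quoted from Hirschfeld--Thas, the intended "proof" is really a derivation, and I should sketch how such a count is established from scratch.

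The natural approach: use the orbit--stabiliser theorem. By Theorem \ref{th:orbitsPGO}, $\PGO^\eps(n,q)$ acts transitively on $i$-singular $j$-spaces of type $\delta$ (in the single-orbit cases covered here, where we do not split by perp type), so $\alpha_{(i,j,\delta),(n,\eps)} = |\PGO^\eps(n,q)| / |\Stab(\pi)|$ for a fixed such $\pi$. One then needs (a) the order of $\PGO^\eps(n,q)$, which is classical, and (b) the order of the stabiliser of a fixed $i$-singular $j$-space $\pi$ of type $\delta$.

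For step (b), decompose $\pi$ according to its geometric structure: $\pi$ is a cone with an $i$-dimensional totally isotropic vertex $R$ (its radical) over a non-singular $(j-i)$-space of type $\delta$. Choose a hyperbolic complement so that $\F_q^n = R \oplus R' \oplus W$, where $R\oplus R'$ is a hyperbolic $2i$-space pairing $R$ with $R'$, and $W = (R\oplus R')^\perp$ is non-singular of dimension $n-2i$ and type $\eps$ (since adding a hyperbolic plane does not change the type). Inside $W$, the base of the cone is a non-singular $(j-i)$-space $\bar\pi$ of type $\delta$, with $W = \bar\pi \oplus \bar\pi^{\perp_W}$ where $\bar\pi^{\perp_W}$ is non-singular of dimension $n-2i-(j-i) = n-i-j$ and type $\eps\delta$ (the product rule for types of complementary non-singular subspaces). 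An element of $\Stab(\pi)$ must fix $R$ (the radical of $\pi$ is intrinsic) and $\pi$ itself, hence acts on $\pi/R \cong \bar\pi$ as an isometry, so it induces an element of $\GO^\delta(j-i,q)$; it acts on $R$ as an arbitrary element of $\GL(i,q)$; and on the remaining directions ($R'$, $\bar\pi^{\perp_W}$, and the "shearing" maps that move $\bar\pi$ and $R'$ by vectors in $R$) one gets a solvable group whose order is a power of $q$ times $|\GO^{\eps\delta}(n-i-j,q)|$. Counting these contributions carefully gives $|\Stab(\pi)|$ as a product of $\GL$, $\GO^\delta$, $\GO^{\eps\delta}$ orders and an explicit power of $q$, and the ratio with $|\GO^\eps(n,q)|$ must then be algebraically simplified to the claimed product of $\psi^\pm$ factors and the leading $q$-power.

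The main obstacle — and the bulk of the work — is the bookkeeping in two places: first, computing $|\Stab(\pi)|$ correctly, in particular getting the exponent of $q$ coming from the unipotent radical (the translations/shears fixing $\pi$, which act on $\Hom$-spaces between $R$, $R'$, $\bar\pi$ and $\bar\pi^{\perp_W}$) exactly right; and second, the purely algebraic simplification of $|\GO^\eps(n,q)|/|\Stab(\pi)|$ into the stated normalised form, where the appearance of the parabolic orders ($\eps=0$ or $\delta=0$) forces a case analysis packaged via the indicator-style exponents $(1-\delta^2)$, $(1-\eps^2)$, $\eps\delta$. I would handle the parity constraint first (if $j-i\equiv\delta$ or $n\equiv\eps\pmod 2$ then no such subspace exists, giving $\alpha=0$), then treat the generic case $\delta,\eps\in\{\pm1\}$ to fix the structure of the argument, and finally absorb the parabolic cases by checking that the same formula, read with the convention that type-$0$ orthogonal groups are the odd-dimensional ones, still yields the orbit size — this last verification being tedious but routine given the standard order formulae for $\GO^\pm$ and $\GO$.
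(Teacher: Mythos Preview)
The paper does not give a proof of this theorem at all: it is quoted verbatim from Hirschfeld--Thas \cite[Theorem 1.57]{ht}, with the attribution that the formula was first determined by Dai and Feng \cite{df1,df2}. So there is no ``paper's own proof'' to compare your proposal against.

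Your orbit--stabiliser sketch is the standard route by which such Anzahl formulae are established (and is essentially how the cited sources proceed), and the structural decomposition you describe --- radical $R$, hyperbolic complement, non-singular base $\bar\pi$ of type $\delta$, and perp of type $\eps\delta$ --- is correct. The honest caveats you flag (the $q$-power from the unipotent radical and the case-merging into the $\delta^2,\eps^2,\eps\delta$ exponents) are exactly where the work lies, but nothing in your outline is wrong. Since the paper treats this as a black-box citation, a full write-up along your lines would in fact go beyond what the paper itself supplies.
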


We determine simplified formulae for the case where $\alpha$ describes a number of hyperplanes, which will appear frequently in the proof of the main theorem.

\begin{corollary}\label{cor:alphaorthogonal}
    Let $n\geq1$ be an integer, and let $\delta,\eps\in\{0,\pm1\}$, with $n-\eps\equiv1\pmod{2}$. If $n\equiv\delta\pmod{2}$, we have
    \begin{align*}
        \alpha_{(0,n-1,\delta),(n,\eps)}&=
        \begin{cases}
            \frac{1}{2}q^{\frac{1}{2}(n-1)}\left(q^{\frac{1}{2}(n-1)}+\delta\right)&\delta\in\{\pm1\},\eps=0\\
            q^{\frac{1}{2}n-1}\left(q^{\frac{1}{2}n}-\eps\right)&\delta=0,\eps\in\{\pm1\}
        \end{cases}\:.
    \end{align*}
    If $n\equiv\delta+1\pmod{2}$, we have:
    \begin{align*}
        \alpha_{(1,n-1,\delta),(n,\eps)}&=
        \begin{cases}
            \frac{q^{n-1}-1}{q-1}+\eps q^{\frac{1}{2}n-1}&\delta=\eps\\
            0&\text{else}
        \end{cases}\:.
    \end{align*}
\end{corollary}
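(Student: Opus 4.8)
The plan is to derive both families of identities by specialising the Dai--Feng formula of Theorem~\ref{th:alphaorthogonal} to $(i,j)=(0,n-1)$ and $(i,j)=(1,n-1)$ respectively, and then collapsing the resulting quotients of $\psi^{\pm}$-products by cancellation. It is worth noting first that all the $\alpha$'s appearing are unambiguously defined without a perp type: for $(i,j)=(0,n-1)$ one has $j-i=n-1$, which is even exactly when $n$ is odd, so $\PGO^{\eps}(n,q)$ is transitive on these hyperplanes by Theorem~\ref{th:orbitsPGO}; for $(i,j)=(1,n-1)$ one has $n=i+j$, so again there is a single orbit (Theorem~\ref{th:orbitsPGO}, cf.\ Remark~\ref{rem:n=i+jalpha}).

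For the first statement, substitute $i=0$, $j=n-1$ in Theorem~\ref{th:alphaorthogonal}, so that $n-i-j=1$ and $j-i=n-1$. The hypotheses $n-\eps\equiv1$ and $n\equiv\delta\pmod2$ are precisely the condition $j-i-\delta\equiv1\equiv n-\eps\pmod2$ under which that formula applies, and the proviso ``$\delta=1$ if $i=j$'' is vacuous unless $n=1$, a case one checks directly ($\alpha_{(0,0,1),(1,0)}=1$). Parity then splits this into $\eps=0$ with $\delta=\pm1$, or $\delta=0$ with $\eps=\pm1$; in the former the exponent of $q$ collapses to $\frac12(n-1)$, in the latter to $\frac12 n-1$. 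Every $\psi$-factor now has an explicit endpoint, and using the one-term identities $\psi^{+}_{a-1,b}(q)=(q^{a-1}+1)\psi^{+}_{a,b}(q)$ and $\psi^{-}_{a,b}(q)=(q^{b}-1)\psi^{-}_{a,b-1}(q)$ that are immediate from Definition~\ref{def:functionsphipsichi} (in particular $\psi^{+}_{0,m}(q)=2\,\psi^{+}_{1,m}(q)$, which supplies the factor $\frac12$ in the $\eps=0$ case) together with cancellation of the equal products, everything reduces to the stated $\frac12 q^{\frac12(n-1)}\bigl(q^{\frac12(n-1)}+\delta\bigr)$ and $q^{\frac12 n-1}\bigl(q^{\frac12 n}-\eps\bigr)$.

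For the second statement, substitute $i=1$, $j=n-1$, so $n-i-j=0$; the exponent of $q$ in Theorem~\ref{th:alphaorthogonal} is then $-\frac12(1-\delta^2)\eps^2$, which is $0$ in every admissible case. The parities $n\equiv\delta+1$ and $n-\eps\equiv1\pmod2$ force $\delta\equiv\eps\pmod2$: for $n$ odd this means $\delta=\eps=0$, and for $n$ even it means $\delta,\eps\in\{\pm1\}$, where the alternative $\delta=-\eps$ can occur. The lower endpoints of the two numerator products come out as $\frac12\bigl(1-\eps\delta+(\delta^2-1)(\eps^2-1)\bigr)$ for $\psi^{+}$ and $\frac12\bigl(1+\eps\delta+(\delta^2-1)(\eps^2-1)\bigr)$ for $\psi^{-}$. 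When $\delta=-\eps$ (hence $n$ even) the $\psi^{-}$-endpoint is $0$, so that product carries the factor $\psi^{-}_{0,m}(q)$, which vanishes because of its $q^{0}-1$ term; this is the ``else'' value $0$. When $\delta=\eps$ the factor-of-two from $\psi^{+}_{0,\cdot}$ in the numerator cancels the one coming from $\psi^{+}_{1-\delta^2,\cdot}=\psi^{+}_{0,\cdot}$ in the denominator, and, using also $\psi^{-}_{1,i}(q)=\psi^{-}_{1,1}(q)=q-1$, the telescoping leaves $\frac{(q^{\frac12 n-1}+\eps)(q^{\frac12 n}-\eps)}{q-1}$ (with $\frac12(n-1)$ in place of $\frac12 n$ and the second term absent when $n$ is odd, where $\eps=0$). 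Expanding the numerator as $q^{n-1}-1+\eps q^{\frac12 n-1}(q-1)$ gives the stated $\frac{q^{n-1}-1}{q-1}+\eps q^{\frac12 n-1}$.

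The one genuine difficulty is the bookkeeping: one must track the exact integer endpoints of each $\psi^{\pm}$ (which shift by $1$ as $\eps$ or $\delta$ changes sign), deal with the small values $n=1,2$ where some products are empty and the $i=j$ proviso is active, and confirm that the vanishing in the ``else'' case truly comes from a $q^{0}-1$ factor inside a $\psi^{-}$ rather than from an edge-case convention. None of this is conceptually hard; the substantive point is merely that every quantity counts hyperplanes, for which Theorem~\ref{th:orbitsPGO} makes the notation well defined.
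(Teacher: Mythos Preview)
Your approach is correct and is precisely what the paper intends: the corollary is stated without proof, as a direct specialisation of Theorem~\ref{th:alphaorthogonal} to the hyperplane cases $(i,j)=(0,n-1)$ and $(i,j)=(1,n-1)$. You have simply written out the algebraic simplification that the paper leaves implicit, including the correct handling of the vanishing in the $\delta\neq\eps$ case via the $q^{0}-1$ factor in $\psi^{-}_{0,\cdot}$; the only remark is that your parenthetical about the $n$ odd sub-case (``with $\frac12(n-1)$ in place of $\frac12 n$ and the second term absent'') is slightly imprecise in its wording, though the intended computation is clear and correct.
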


\par The formula for $\alpha_{(i,j,0,\lambda),(n,0)}$ follows immediately from the formula for $\alpha_{(i,j,\delta),(n,0)}$ by considering the image of the $i$-singular $j$-spaces under the polarity.

\begin{theorem}[{\cite[Theorem 1.65]{ht}}]\label{th:alphaorthogonalodd}
    Let $n,j,i$ be integers with $0\leq i\leq j$ and $i+j\leq n$, and assume that $n(j-i)$ is odd.% Let $\lambda\in\{0,\pm1\}$. If $\lambda=0$, then $\alpha_{(i,j,0,\lambda),(n,0)}=0$.
    For $\lambda\in\{\pm1\}$, then we have
    \begin{align*}
        \alpha_{(i,j,0,\lambda),(n,0)}&=\alpha_{(i,n-j,\lambda),(n,0)}%\\
        =q^{\frac{1}{2}(j-i)\left(n-j-i\right)}\cdot\frac{\psi^{+}_{\frac{1}{2}(j-i+1),\frac{1}{2}(n-1)}(q)\ \psi^{-}_{\frac{1}{2}(j-i+1),\frac{1}{2}(n-1)}(q)}{\psi^{+}_{0,\frac{1}{2}(n-j-i-\lambda-1)}(q)\ \psi^{-}_{1,\frac{1}{2}(n-j-i+\lambda-1)}(q)\ \psi^{-}_{1,i}(q)}\:.
    \end{align*}
\end{theorem}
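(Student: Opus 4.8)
The plan is to reduce the claim to Theorem~\ref{th:alphaorthogonal} through the orthogonal polarity $\perp$ associated to $f$. First I would recall that $\perp$ is an involutory permutation of the set of all subspaces of $\F_q^n$ with $\dim\pi^\perp=n-\dim\pi$, and that for any subspace $\tau$ the radical of the restriction of $f$ (equivalently, of $Q$, since $q$ is odd) to $\tau$ is exactly $\tau\cap\tau^\perp$. Applying this to $\pi$ and to $\pi^\perp$, and using $(\pi^\perp)^\perp=\pi$, we get $\pi\cap\pi^\perp=\pi^\perp\cap(\pi^\perp)^\perp$; hence $\pi$ is $i$-singular if and only if $\pi^\perp$ is $i$-singular, and by the very definition of perp type the type of $\pi^\perp$ equals the perp type $\lambda$ of $\pi$. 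The same argument run on an $i$-singular $(n-j)$-space $\sigma$ of type $\lambda$ shows $\sigma^\perp$ is an $i$-singular $j$-space whose perp type is the type of $(\sigma^\perp)^\perp=\sigma$, namely $\lambda$. Thus $\pi\mapsto\pi^\perp$ is a bijection between the $i$-singular $j$-spaces of perp type $\lambda$ and the $i$-singular $(n-j)$-spaces of type $\lambda$, which gives $\alpha_{(i,j,0,\lambda),(n,0)}=\alpha_{(i,n-j,\lambda),(n,0)}$.

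Next I would check that $(i,\,n-j,\,\lambda,\,n,\,0)$ is an admissible parameter tuple for Theorem~\ref{th:alphaorthogonal}. The dimension conditions $0\le i\le n-j$ and $i+(n-j)\le n$ are just $i+j\le n$ and $i\le j$; since $n$ is odd we have $n-\eps=n$ odd; and since $n$ and $j-i$ are both odd, $n-j-i$ is even, so $(n-j)-i-\lambda$ is odd, which is the remaining parity condition. In the boundary case $i=n-j$, i.e.\ $n=i+j$, the space $\pi^\perp$ is totally isotropic and hence of hyperbolic type, which forces $\lambda=1$; this matches both Remark~\ref{rem:n=i+jalpha} and the side requirement ``$\delta=1$ if $i=j$'' in Theorem~\ref{th:alphaorthogonal}, so the vanishing $\alpha_{(i,i,-1),(i+j,0)}=0$ causes no inconsistency.

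It then remains to substitute $j\mapsto n-j$, $\delta\mapsto\lambda$ and $\eps\mapsto0$ in the formula of Theorem~\ref{th:alphaorthogonal}. Using $\lambda^2=1$ (hence $1-\lambda^2=0$), $\eps^2=0$ and $\eps\delta=0$, the exponent of $q$ collapses to $\tfrac12(j-i)(n-j-i)$, the two factors in the numerator both take the index range $\bigl[\tfrac12(j-i+1),\tfrac12(n-1)\bigr]$, and the denominator reduces to $\psi^{+}_{0,\frac12(n-j-i-\lambda-1)}(q)\,\psi^{-}_{1,\frac12(n-j-i+\lambda-1)}(q)\,\psi^{-}_{1,i}(q)$, which is precisely the asserted expression. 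The only genuinely non-trivial step is the first paragraph — arranging for the polarity to interchange ``$i$-singular of type $\lambda$'' with ``$i$-singular of perp type $\lambda$'' and handling the degenerate case $n=i+j$ consistently; everything after that is routine algebraic simplification.
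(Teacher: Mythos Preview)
Your argument is correct and matches the paper's approach exactly: the paper does not give a detailed proof but simply remarks that the formula ``follows immediately from the formula for $\alpha_{(i,j,\delta),(n,0)}$ by considering the image of the $i$-singular $j$-spaces under the polarity,'' citing \cite[Theorem~1.65]{ht}. You have carefully filled in the details of that polarity bijection, verified the admissibility of the parameters (including the boundary case $n=i+j$), and carried out the substitution into Theorem~\ref{th:alphaorthogonal}; nothing further is needed.
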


\begin{example}
    When $n=3$, the totally isotropic subspaces of dimension 1 with respect to $f$ correspond to points of a conic in $\PG(2,q)$. Their number is given by $\alpha_{(1,1,1),(3,0)}$, which, by Theorem \ref{th:alphaorthogonal}, is indeed $\psi^+_{1,1}(q)=q+1$. The number $\alpha_{(0,2,1),(3,0)}$ is the number of $0$-singular lines of hyperbolic type, that is, secant lines. 
    Indeed, Corollary \ref{cor:alphaorthogonal} with $n=3, \delta=1,\eps=0$ shows that $\alpha_{(0,2,1),(3,0)}=\frac{1}{2}q(q+1)$. 
    Similarly, $\alpha_{(0,1,0,-1),(3,0)}$ is the number of internal points, not on the conic: these are the points whose image under the polarity is of elliptic type (i.e. a passant to the conic). Indeed, Theorem \ref{th:alphaorthogonalodd}, with $n=3, i=0,j=1,\lambda=-1$ shows that $\alpha_{(0,1,0,-1),(3,0)}=q^{\frac{1}{2}2}\frac{\psi^+_{1,1}(q)\psi^-_{1,1}(q)}{\psi^+_{0,1}(q)\psi^-_{1,0}(q)\psi^-_{1,0}(q)}=q\frac{(q+1)(q-1)}{2(q+1)}=\frac{1}{2}q(q-1).$
\end{example}

\begin{example}\label{ex:alfa}
    Consider $i=0,j=2,\delta=1,n=4,\eps=1$, then $\alpha_{(0,2,1),(4,1)}$ counts the number of secant lines to a hyperbolic quadric $\mathcal{Q}^+(3,q).$ This number is $q^2\frac{(q+1)^2}{2}$, which is indeed equal to $q^2\frac{\psi^+_{1,1}\psi^-_{2,2}}{\psi^+_{0,0}\psi^-_{1,1}\psi^-_{1,0}}$ given in Theorem \ref{th:alphaorthogonal}.
\end{example}

\subsection{\texorpdfstring{$\beta_{(i,j,\delta),(n,\eps),(k,\zeta)}$ and its variations}{The betas}}

Given Theorem \ref{th:orbitsPGO} the following definition is well defined, i.e~independent of the chosen subspace $\pi$.

\begin{definition}\label{def:betaquadratic}
    Assume that $n(j-i)$ even or $n-j-i=0$. Consider a quadratic form $f$ of type $\eps$ on $\F^{n}_{q}$, $q$ odd, and let $\pi$ be an $i$-singular $j$-space of type $\delta$. Then, $\beta_{(i,j,\delta),(n,\eps),(k,\zeta)}$ is the number of non-singular $k$-spaces $\sigma\supseteq\pi$ of type $\zeta$ in $\F_q^n$.
\end{definition}

\begin{remark}\label{rem:projectiveinterpretationbeta}
    Using projective dimension and notation, we can say that $\beta_{(i,j,\delta),(n,\eps),(k,\zeta)}$ is the number of $(k-1)$-spaces in $\PG(n-1,q)$ meeting $\mathcal{Q}^\eps(n-1,q)$ in a $\mathcal{Q}^\zeta(k-1,q)$ and containing a fixed $(j-1)$-space $\pi$ that meets $\mathcal{Q}^\eps(n-1,q)$ in a cone $\Pi_{i-1}\mathcal{Q}^\delta(j-i-1,q)$.
\end{remark}

We now introduce two analogues of $\beta_{(i,j,\delta),(n,\eps),(k,\zeta)}$ for the case that there is more than one orbit on the $i$-singular $j$-spaces, i.e.~for the case that the perp types are to be taken into account. Given Theorem \ref{th:orbitsPGO}, the statements in Definition \ref{def:betaquadraticodd} are well defined, i.e~independent of the chosen subspace $\pi$.

\begin{definition}\label{def:betaquadraticodd}
    Consider a quadratic form $f$ of type $\eps$ on $\F^{n}_{q}$, $q$ odd. Let $\pi$ be an $i$-singular $j$-space $\pi$ of type $\delta$. Let $\lambda$ be the perp type of $\pi$ in case $n(j-i)$ is odd.
    \begin{itemize}
        \item For $n$ even, $k(j-i)$ odd (and thus $\delta=0$), $\beta^{\nu}_{(i,j,0),(n,\eps),(k,0)}$ is the number of non-singular $k$-spaces $\sigma\supseteq\pi$ (necessarily of type $0$) in $\F_{q^{n}}$ such that $\pi$ has perp type $\nu$ in $\sigma$.
        \item For $n(j-i)$ odd (and thus $\eps=\delta=0$), $\beta_{(i,j,0,\lambda),(n,0),(k,\zeta)}$ is the number of non-singular $k$-spaces $\sigma\supseteq\pi$ of type $\zeta$ in $\F_{q^{n}}$.
        \item  For $nk(j-i)$ odd (and thus $\eps=\delta=\zeta=0$), $\beta^{\nu}_{(i,j,0,\lambda),(n,0),(k,0)}$ is the number of non-singular $k$-spaces $\sigma\supseteq\pi$ (necessarily of type $0$) in $\F_{q^{n}}$ such that $\pi$ has perp type $\nu$ in $\sigma$.
    \end{itemize}
\end{definition}

The betas in Definition \ref{def:betaquadraticodd} and gammas in Definition \ref{def:gammaquadraticodd} can be interpreted in projective space, similar to the interpretation in Remarks \ref{rem:projectiveinterpretationbeta} and \ref{rem:projectiveinterpretationgamma}.

\begin{remark}\label{rem:n=i+jbeta}
    The case with $n$ and $j-i$ both odd and $n=i+j$ was discussed earlier in Remark \ref{rem:n=i+jalpha}. Similarly, we now find $\beta_{(i,j,0),(i+j,0),(k,\zeta)}=\beta_{(i,j,0,1),(i+j,0),(k,\zeta)}$. Furthermore, $\beta_{(i,j,0,-1),(i+j,0),(k,\zeta)}$ is not defined.
\end{remark}

We now establish a formula for $\beta_{(i,j,\delta),(n,\eps),(k,\zeta)}$.

\begin{theorem}\label{th:betaorthogonal}
    Let $n,j,i,k$ be integers with $0\leq i\leq j$ and $i+j\leq k\leq n$, and let $\delta,\eps,\zeta\in\{0,\pm1\}$ with $j-i-\delta\equiv1\equiv n-\eps\pmod{2}$, such that $n(j-i)$ is even or $n-j-i=0$. If $k\equiv\zeta\pmod{2}$, then $\beta_{(i,j,\delta),(n,\eps),(k,\zeta)}=0$. For the case $k-\zeta\equiv1\pmod{2}$ we have:
    \begin{align*}
        &\beta_{(i,j,\delta),(n,\eps),(k,\zeta)}\\
        &=q^{\frac{1}{2}\left((n-k)(k-j+i)+n(\delta^2-\zeta^2)(1-\eps^2)+(i+j)(1-\delta^2)(\zeta^2-\eps^2)+k(1-\zeta^2)(\eps^2-\delta^2)-\eps^2\delta^2+\zeta^2(\delta^2+\eps^2-1)\right)}\cdot\\
        &\quad\;\ \frac{\psi^{+}_{\frac{1}{2}(n-k+(\zeta^2-1)(\eps^2-1)+1-\eps\zeta),\frac{1}{2}(n-i-j+(\delta^2-1)(\eps^2-1)-1-\eps\delta)}(q)}{\psi^{+}_{1-\zeta^2,\frac{1}{2}(k-i-j+(\delta^2-1)(\zeta^2-1)-1-\zeta\delta)}(q)}\cdot\\
        &\quad\;\ \frac{\psi^{-}_{\frac{1}{2}(n-k+(\zeta^2-1)(\eps^2-1)+1+\eps\zeta),\frac{1}{2}(n-i-j+(\delta^2-1)(\eps^2-1)-1+\eps\delta)}(q)}{\psi^{-}_{1,\frac{1}{2}(k-i-j+(\delta^2-1)(\zeta^2-1)-1+\zeta\delta)}(q)}
    \end{align*}
\end{theorem}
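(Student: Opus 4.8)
The plan is to count the non-singular $k$-spaces $\sigma \supseteq \pi$ of type $\zeta$ by a quotient/reduction argument, passing to the radical quotient of $\pi$ and then handling the non-singular part separately. Write $\pi = R \oplus \pi_0$ where $R$ is the $i$-dimensional radical of $\pi|_f$ and $\pi_0$ is a non-singular $(j-i)$-space of type $\delta$ complementing $R$ inside $\pi$. Any $\sigma \supseteq \pi$ that is itself non-singular must in particular have trivial radical; since $R \subseteq \pi \subseteq \sigma$, the restriction of $f$ to $\sigma$ pairs $R$ non-degenerately with a complementary $i$-space. So I would first peel off the $i$ ``hyperbolic pairs'' contributed by $R$: a standard Witt-type argument shows $\sigma$ non-singular containing $R$ forces $\sigma = R \oplus R' \oplus \sigma'$ with $R'$ totally isotropic of dimension $i$, $\langle R,R'\rangle$ a non-singular $2i$-space of hyperbolic type, and $\sigma' = \langle R, R'\rangle^{\perp_\sigma}$ a non-singular $(k-2i)$-space whose type equals the type $\zeta$ of $\sigma$ (since a hyperbolic $2i$-space has type $+1$ which is the identity for the type-product). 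Moreover $\pi_0 \subseteq \langle R,R'\rangle^{\perp}$ need not hold, but after replacing $\pi_0$ by its projection modulo $R$ into $\sigma'$ one reduces cleanly.

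The cleaner route, which I would actually carry out, is to work in the quotient space $W = R^{\perp}/R$ of dimension $n-2i$, which carries a non-degenerate quadratic form of the same type $\eps$ (because $R$ is totally isotropic, $R^\perp/R$ inherits a non-degenerate form, and the type is preserved). Under this quotient, $\pi/R = \bar\pi_0$ is a non-singular $(j-i)$-space of type $\delta$, and the non-singular $k$-spaces $\sigma \supseteq \pi$ correspond — via $\sigma \mapsto (\sigma \cap R^\perp)/R$, together with a count of the choices of how $\sigma$ sits over $R^\perp$ — to non-singular $(k-2i)$-spaces of type $\zeta$ containing $\bar\pi_0$, with a multiplicative factor recording the $\GL$-orbit of complements to $R$ inside the various $\sigma$'s. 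This factor is exactly the number of ways to extend: $q^{2i(k-2i)}\cdot(\text{number of totally isotropic } i\text{-spaces } R' \text{ with } R \oplus R' \text{ a fixed hyperbolic } 2i\text{-space inside a non-singular space of suitable dimension})$, i.e. $q^{i(i-1)/2}\cdot q^{\text{something}}\cdot\psi^-$-type products, which I would read off from the known count of totally isotropic subspaces (a special case of Theorem~\ref{th:alphaorthogonal}). So the problem is reduced to the case $i=0$: counting non-singular $(k-2i)$-spaces of type $\zeta$ containing a non-singular $(j-i)$-space of type $\delta$ in an $(n-2i)$-space of type $\eps$.

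For the reduced case $i=0$, I would set up the recursion on $k$: to build a non-singular $k$-space $\sigma \supseteq \pi_0$ of type $\zeta$, choose a non-singular $(k-1)$-space $\tau$ with $\pi_0 \subseteq \tau \subseteq \sigma$; equivalently, choose a vector $v \in \sigma \setminus \tau$, or dually choose the non-singular $k$-space $\sigma$ directly by adding one dimension at a time to $\pi_0$ while controlling the type. More efficiently, I would count pairs $(\sigma, H)$ where $H$ is a non-singular hyperplane of $\sigma$ containing $\pi_0$: on one hand this is $\sum_\xi \beta_{(i,j,\delta),(n,\eps),(k-1,\xi)} \cdot (\text{number of non-singular } k\text{-spaces } \sigma \supseteq H \text{ of type } \zeta)$, and the latter inner count is an $\alpha$-type quantity in the quotient $H^\perp$ (a $2$- or $1$-dimensional non-degenerate space), handled by Corollary~\ref{cor:alphaorthogonal}. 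This gives a recursion in $k$ whose solution telescopes into the stated product of $\psi^+$, $\psi^-$ terms; the base case $k = i+j$ is $\beta_{(i,j,\delta),(n,\eps),(i+j,\delta)} = 1$ (and $0$ for the wrong type), matching the empty products. The power of $q$ in the prefactor is then accumulated from the $q$-powers appearing in each $\alpha$ step, and one checks it collapses to the exponent written in the statement.

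\textbf{Main obstacle.} The genuinely delicate point is the bookkeeping of \emph{types} through the quotient and the recursion: at each step of adding a dimension, the type of the enlarged space depends on whether the new vector is isotropic, anisotropic-``square'', or anisotropic-``non-square'' relative to the form, and on the parity of the current dimension; tracking the $\pm1$ type together with the parity constraints $k - \zeta \equiv 1 \pmod 2$ is what makes the case analysis branch. The exponent of $q$ — the long quadratic expression in $n,k,i,j$ and the $\delta^2,\eps^2,\zeta^2$ indicator terms — must be verified to match after telescoping, and I expect this to be the most error-prone part; I would relegate that verification, along with the routine $\psi^\pm$-product manipulations (using the Pascal-type identities of Lemma~\ref{lem:pascalgeneral} and the identities among $\psi^+,\psi^-,\chi$), to the appendix, organising it so that the induction hypothesis is applied to $(k-1)$ and the residual factor is shown to be exactly the ratio of the claimed formula at $k$ to that at $k-1$.
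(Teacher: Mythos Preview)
Your route is workable but far more elaborate than what the paper does. The paper's proof is a single double-count: counting pairs $(\pi,\sigma)$ with $\pi$ an $i$-singular $j$-space of type $\delta$ and $\sigma\supseteq\pi$ a non-singular $k$-space of type $\zeta$, first over $\pi$ then over $\sigma$, gives
\[
\alpha_{(i,j,\delta),(n,\eps)}\,\beta_{(i,j,\delta),(n,\eps),(k,\zeta)}=\alpha_{(0,k,\zeta),(n,\eps)}\,\alpha_{(i,j,\delta),(k,\zeta)},
\]
and $\beta$ is then read off directly from the known closed form for $\alpha$ (Theorem~\ref{th:alphaorthogonal}); all that remains is the displayed algebraic simplification. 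No quotient by the radical, no induction on $k$, no hyperplane recursion.

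Your quotient reduction to $i=0$ is in fact sound --- one can check that for an $i$-singular $(k-i)$-space $\tilde\sigma$ with radical $R$, the non-singular $k$-spaces containing $\tilde\sigma$ are exactly the $k$-spaces $\sigma\supseteq\tilde\sigma$ with $\sigma+R^\perp=\F_q^n$, a condition depending only on $R$, so the lift count is $q^{i(n-k)}$ regardless of perp type, and every such $\sigma$ automatically has type $\zeta$. But your subsequent recursion on $k$ does run into the perp-type issue you elsewhere worry about: when $n$ is odd and $k$ is even, the inner factor ``number of non-singular $k$-spaces $\sigma\supseteq H$ of type $\zeta$'' depends on the perp type of the hyperplane $H$, not just on its type $\xi$, so the sum $\sum_\xi\beta_{(0,j,\delta),(n,\eps),(k-1,\xi)}\cdot(\ldots)$ as you wrote it needs an extra stratification. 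This is fixable bookkeeping, not a fatal error, but it illustrates why the paper's one-line double-count is preferable: since $\alpha_{(i,j,\delta),(k,\zeta)}$ is an intrinsic count inside $\sigma$, the right-hand side never sees how $\sigma$ sits in the ambient space, and all perp-type subtleties evaporate.
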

\begin{proof}
    Given a quadratic form of type $\eps$ on $\F_{q}^{n}$, we count the tuples $(\pi,\sigma)$ such that $\pi$ is an $i$-singular $j$-space of type $\delta$ and $\sigma\supseteq\pi$ is a non-singular $k$-space of type $\zeta$, in two ways. We find that
	\[
		\alpha_{(i,j,\delta),(n,\eps)}\:\beta_{(i,j,\delta),(n,\eps),(k,\zeta)}=\alpha_{(0,k,\zeta),(n,\eps)}\:\alpha_{(i,j,\delta),(k,\zeta)}\:.
	\]
	Hence, using Theorem \ref{th:alphaorthogonal} we find
    \begin{align*}
        &\beta_{(i,j,\delta),(n,\eps),(k,\zeta)}\\&=\frac{\alpha_{(0,k,\zeta),(n,\eps)}\:\alpha_{(i,j,\delta),(k,\zeta)}}{\alpha_{(i,j,\delta),(n,\eps)}}\\
        &=q^{\frac{1}{2}(n-k)\left(k+(1-\zeta^2)(1-\eps^2)\right)-\frac{1}{2}(1-\zeta^2)\eps^2}\cdot\\
        &\quad\;\ \frac{\psi^{+}_{\frac{1}{2}(n-k+(\zeta^2-1)(\eps^2-1)+1-\eps\zeta),\frac{1}{2}(n-1-\eps)}(q)\ \psi^{-}_{\frac{1}{2}(n-k+(\zeta^2-1)(\eps^2-1)+1+\eps\zeta),\frac{1}{2}(n-1+\eps)}(q)}{\psi^{+}_{1-\zeta^2,\frac{1}{2}(k-\zeta-1)}(q)\ \psi^{-}_{1,\frac{1}{2}(k+\zeta-1)}(q)}\cdot\\
        &\quad\ q^{\frac{1}{2}(k-j-i)\left((j-i)+(1-\delta^2)(1-\zeta^2)\right)-\frac{1}{2}(1-\delta^2)\zeta^2}\cdot\\
        &\quad\;\ \frac{\psi^{+}_{\frac{1}{2}(k-i-j+(\delta^2-1)(\zeta^2-1)+1-\zeta\delta),\frac{1}{2}(k-1-\zeta)}(q)\ \psi^{-}_{\frac{1}{2}(k-i-j+(\delta^2-1)(\zeta^2-1)+1+\zeta\delta),\frac{1}{2}(k-1+\zeta)}(q)}{\psi^{+}_{1-\delta^2,\frac{1}{2}(j-i-\delta-1)}(q)\ \psi^{-}_{1,\frac{1}{2}(j-i+\delta-1)}(q)\ \psi^{-}_{1,i}(q)}\cdot\\
        &\quad\ q^{-\frac{1}{2}(n-j-i)\left((j-i)+(1-\delta^2)(1-\eps^2)\right)+\frac{1}{2}(1-\delta^2)\eps^2}\cdot\\
        &\quad\;\ \frac{\psi^{+}_{1-\delta^2,\frac{1}{2}(j-i-\delta-1)}(q)\ \psi^{-}_{1,\frac{1}{2}(j-i+\delta-1)}(q)\ \psi^{-}_{1,i}(q)}{\psi^{+}_{\frac{1}{2}(n-i-j+(\delta^2-1)(\eps^2-1)+1-\eps\delta),\frac{1}{2}(n-1-\eps)}(q)\ \psi^{-}_{\frac{1}{2}(n-i-j+(\delta^2-1)(\eps^2-1)+1+\eps\delta),\frac{1}{2}(n-1+\eps)}(q)}\\
        &=q^{\frac{1}{2}\left((n-k)(k-j+i)+n(\delta^2-\zeta^2)(1-\eps^2)+(i+j)(1-\delta^2)(\zeta^2-\eps^2)+k(1-\zeta^2)(\eps^2-\delta^2)-\eps^2\delta^2+\zeta^2(\delta^2+\eps^2-1)\right)}\cdot\\
        &\quad\;\ \frac{\psi^{+}_{\frac{1}{2}(n-k+(\zeta^2-1)(\eps^2-1)+1-\eps\zeta),\frac{1}{2}(n-1-\eps)}(q)\ \psi^{+}_{\frac{1}{2}(k-i-j+(\delta^2-1)(\zeta^2-1)+1-\zeta\delta),\frac{1}{2}(k-1-\zeta)}(q)}{\psi^{+}_{1-\zeta^2,\frac{1}{2}(k-\zeta-1)}(q)\ \psi^{+}_{\frac{1}{2}(n-i-j+(\delta^2-1)(\eps^2-1)+1-\eps\delta),\frac{1}{2}(n-1-\eps)}(q)}\cdot\\
        &\quad\;\ \frac{\psi^{-}_{\frac{1}{2}(n-k+(\zeta^2-1)(\eps^2-1)+1+\eps\zeta),\frac{1}{2}(n-1+\eps)}(q)\ \psi^{-}_{\frac{1}{2}(k-i-j+(\delta^2-1)(\zeta^2-1)+1+\zeta\delta),\frac{1}{2}(k-1+\zeta)}(q)}{\psi^{-}_{1,\frac{1}{2}(k+\zeta-1)}(q)\ \psi^{-}_{\frac{1}{2}(n-i-j+(\delta^2-1)(\eps^2-1)+1+\eps\delta),\frac{1}{2}(n-1+\eps)}(q)}\\
        &=q^{\frac{1}{2}\left((n-k)(k-j+i)+n(\delta^2-\zeta^2)(1-\eps^2)+(i+j)(1-\delta^2)(\zeta^2-\eps^2)+k(1-\zeta^2)(\eps^2-\delta^2)-\eps^2\delta^2+\zeta^2(\delta^2+\eps^2-1)\right)}\cdot\\
        &\quad\;\ \frac{\psi^{+}_{\frac{1}{2}(n-k+(\zeta^2-1)(\eps^2-1)+1-\eps\zeta),\frac{1}{2}(n-i-j+(\delta^2-1)(\eps^2-1)-1-\eps\delta)}(q)}{\psi^{+}_{1-\zeta^2,\frac{1}{2}(k-i-j+(\delta^2-1)(\zeta^2-1)-1-\zeta\delta)}(q)}\cdot\\
        &\quad\;\ \frac{\psi^{-}_{\frac{1}{2}(n-k+(\zeta^2-1)(\eps^2-1)+1+\eps\zeta),\frac{1}{2}(n-i-j+(\delta^2-1)(\eps^2-1)-1+\eps\delta)}(q)}{\psi^{-}_{1,\frac{1}{2}(k-i-j+(\delta^2-1)(\zeta^2-1)-1+\zeta\delta)}(q)}\qedhere
    \end{align*}
\end{proof}

We determine a few simplified formulae for the case where $\beta$ gives a number of hyperplanes, which will appear frequently in the proof of the main theorem.

\begin{corollary}\label{cor:betas}
    Let $n,j,i$ be integers with $0\leq i\leq j$ and $i+j\leq n-1$, and let $\delta,\eps,\zeta\in\{0,\pm1\}$ with $j-i-\delta\equiv1\equiv n-\eps\pmod{2}$ and $n\equiv\zeta\pmod{2}$, such that $n(j-i)$ is even or $n-j-i=0$.
    For $\eps\in\{\pm1\}$ we have
    \begin{align*}
        \beta_{(i,j,\delta),(n,\eps),(n-1,0)}&=q^{\frac{1}{2}(n-j+i)-1}\left(q^{\frac{1}{2}(n-j-i)}-\delta\eps\right)\:.
    \end{align*}
    For $\eps=0$ and $\delta\in\{\pm1\}$ we have
    \begin{align*}
        \beta_{(i,j,\delta),(n,0),(n-1,\zeta)}&=\frac{1}{2}q^{\frac{1}{2}(n-j+i-1)}\left(q^{\frac{1}{2}(n-j-i-1)}+\delta\zeta\right).
    \end{align*}
    
\end{corollary}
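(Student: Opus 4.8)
The plan is to specialise Theorem \ref{th:betaorthogonal} to the hyperplane case $k=n-1$ in the two relevant parity regimes, and to simplify the resulting products of $\psi^{\pm}$-functions down to a single factor. In both cases the hypotheses of Theorem \ref{th:betaorthogonal} are met: we have $i+j\leq n-1 = k$, and the parity conditions $j-i-\delta\equiv 1\equiv n-\eps\pmod 2$ together with $n\equiv\zeta\pmod 2$ are exactly what is assumed here. Since $k=n-1$ and $n\equiv\zeta\pmod 2$, we have $k-\zeta\equiv n-1-\zeta\equiv 1\pmod 2$ (using $n\equiv\zeta$), so we are in the non-vanishing branch of the theorem and may substitute $k=n-1$ directly into the big formula. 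The whole proof is then a bookkeeping exercise: track what each of the four $\psi$-blocks and the $q$-exponent becomes.

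For the first formula take $\eps\in\{\pm1\}$ and $\zeta=0$ (forced by $n\equiv\zeta\pmod 2$, since $n-\eps$ is odd means $n$ is even when $\eps=\pm1$, wait — here $n\equiv\zeta$ and $n-\eps\equiv 1$, so $\zeta\equiv n\equiv \eps+1\pmod 2$, hence $\zeta=0$). Plugging $\zeta=0$, $\eps^2=1$ into the exponent of $q$ in Theorem \ref{th:betaorthogonal}: the terms $(n-k)(k-j+i)$, $n(\delta^2-\zeta^2)(1-\eps^2)=0$, $(i+j)(1-\delta^2)(\zeta^2-\eps^2)$, $k(1-\zeta^2)(\eps^2-\delta^2)$, $-\eps^2\delta^2=-\delta^2$, and $\zeta^2(\delta^2+\eps^2-1)=0$ should collapse, after setting $k=n-1$ and using $\delta\in\{0,\pm1\}$ with $j-i-\delta$ odd, to $\tfrac12(n-j+i)-1$ plus the correction hidden in the $\psi$-blocks. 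The two $\psi^{+}$ blocks: numerator $\psi^{+}_{\frac12(n-k+(\zeta^2-1)(\eps^2-1)+1-\eps\zeta),\cdot}=\psi^{+}_{\frac12(1+0+1),\cdot}=\psi^{+}_{1,\cdot}$ over the denominator $\psi^{+}_{1-\zeta^2,\cdot}=\psi^{+}_{1,\cdot}$, so the lower limits agree and the whole $\psi^{+}$ contribution telescopes to a short product (possibly empty, possibly a single factor $q^{\frac12(n-j-i)}+\delta\eps$ once one matches the upper limits $\frac12(n-i-j-1-\eps\delta)$ versus $\frac12(k-i-j-1-\zeta\delta)=\frac12(n-1-i-j-1)$); the $\psi^{-}$ contribution is handled the same way. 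Matching these upper indices, using that $n-i-j$ has the same parity as $1+\eps\delta$ etc., is the step that produces the $\pm\delta\eps$ term. For the second formula take $\eps=0$, $\delta\in\{\pm1\}$, and $\zeta$ now equal to $n\bmod 2 \in\{\pm1\}$; substitute $\eps^2=0$, $\delta^2=1$, $k=n-1$ into the same master formula and carry out the analogous simplification, where the factor $\tfrac12$ in front arises because $\psi^{-}_{1,\cdot}$ in the denominator overlaps the $\psi^{-}$-numerator in all but one factor, leaving $q^{2\cdot 1-1}-1$-type leftovers that recombine with the $q$-power to give $\tfrac12 q^{\frac12(n-j+i-1)}(q^{\frac12(n-j-i-1)}+\delta\zeta)$.

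The genuinely fiddly part — and the only place an error is likely — is verifying that the half-integer index bounds appearing inside the $\psi^{\pm}$ symbols are actually integers and lie in the valid range $b\geq a-1$ for the empty-product convention of Definition \ref{def:functionsphipsichi}, and then correctly counting how many factors survive the telescoping so as to land on exactly \emph{one} factor $q^{m}\pm 1$ (rescaled to $q^{m}+\delta\eps$ resp.\ $q^{m}+\delta\zeta$ after absorbing sign conventions). A clean way to do this is: rewrite $\beta_{(i,j,\delta),(n,\eps),(n-1,\zeta)}$ via the ratio $\alpha_{(0,n-1,\zeta),(n,\eps)}\,\alpha_{(i,j,\delta),(n-1,\zeta)}\big/\alpha_{(i,j,\delta),(n,\eps)}$ exactly as in the proof of Theorem \ref{th:betaorthogonal}, but now invoke \emph{Corollary \ref{cor:alphaorthogonal}} for the factor $\alpha_{(0,n-1,\zeta),(n,\eps)}$ (which is already in simplified one-or-two-factor form) and Theorem \ref{th:alphaorthogonal} for the other two. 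Then the bulk of the $\psi$-product in $\alpha_{(i,j,\delta),(n-1,\zeta)}$ cancels against that in $\alpha_{(i,j,\delta),(n,\eps)}$ — the indices differ by shifting $n\to n-1$ and $\eps\to\zeta$ — leaving at most one or two stray factors, which combine with the simplified $\alpha_{(0,n-1,\zeta),(n,\eps)}$ from Corollary \ref{cor:alphaorthogonal}. Splitting into the two cases $\eps=\pm1$ (so $\zeta=0$) and $\eps=0$ (so $\zeta=\pm1$, $\delta=\pm1$) and reading off Corollary \ref{cor:alphaorthogonal} in each gives the two stated formulae after a line or two of algebra; I would present exactly this cancellation rather than expanding the monster exponent in Theorem \ref{th:betaorthogonal} by hand.
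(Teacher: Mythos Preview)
Your approach is correct and is exactly what the paper does: the corollary is stated there without proof, as an immediate specialisation of Theorem~\ref{th:betaorthogonal} to $k=n-1$. Your alternative route via the ratio $\alpha_{(0,n-1,\zeta),(n,\eps)}\,\alpha_{(i,j,\delta),(n-1,\zeta)}/\alpha_{(i,j,\delta),(n,\eps)}$ combined with Corollary~\ref{cor:alphaorthogonal} is equally valid and is in fact how the paper handles the analogous simplification for Corollary~\ref{cor:betavariations}; either way the content is a routine index-matching exercise, and your identification of the two parity cases ($\eps=\pm1,\zeta=0$ versus $\eps=0,\zeta=\pm1$) is correct.
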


\begin{example}
    The number $\beta_{(0,2,1),(4,1),(3,0)}$ is the number of conic planes in $\PG(3,q)$, through a given secant line to a hyperbolic quadric. Corollary \ref{cor:betas}, with $i=0,j=2,\delta=1, n=4,\eps=1$ indeed says that this number is $q^0(q-1)=q-1.$
\end{example}

We will derive the formulae to compute $\beta^{\nu}_{(i,j,0),(n,\eps),(k,0)}$, $\beta_{(i,j,0,\lambda),(n,0),(k,\zeta)}$ and $\beta^{\nu}_{(i,j,0,\lambda),(n,0),(k,\zeta)}$ in Theorem \ref{th:betavariationsorthogonal}. But first we prove the following lemma and its corollary which will be useful later.

\begin{lemma}\label{lem:halvedorbits}
    Let $i,j,k,n$ be integers with $0\leq i\leq j$, $i+j\leq k\leq n$ and $j-i$ odd, and let $\lambda\in\{\pm1\}$. Let $Q$ be a quadratic form on $\F^{n}_{q}$, $q$ odd and $n$ odd, and let $\pi$ be a non-singular $k$-space, with $k$ even. Precisely half of the $i$-singular $j$-spaces in $\pi$ (all of type 0) have perp type $\lambda$ with respect to $Q$. 
\end{lemma}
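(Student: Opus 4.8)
The plan is to reduce the statement to a \emph{balance} question about discriminants of subspaces lying inside $\pi$, and then to settle that by exhibiting an explicit bijection of the relevant set of subspaces, coming from a similarity of $\pi$ with a non-square multiplier. Set $V=\F^{n}_{q}$. Since $Q$ is non-degenerate and $\pi$ is non-singular, $V$ is the orthogonal direct sum $\pi\perp\pi^{\perp}$ with both summands non-degenerate; here $\dim\pi^{\perp}=n-k$ is odd, so $\pi^{\perp}$ is parabolic, and it is fixed once $\pi$ is fixed. For any $\rho\subseteq\pi$ one has $\rho^{\perp}=\rho^{\perp_{\pi}}\perp\pi^{\perp}$, where $\rho^{\perp_{\pi}}$ denotes the perp taken inside $\pi$; in particular an $i$-singular $j$-space contained in $\pi$ is $i$-singular for the same value $i$ with respect to $Q|_{\pi}$ and with respect to $Q$, and since $j-i$ is odd Theorem \ref{th:alphaorthogonal} forces the type parameter of such a space to be even, hence $0$. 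Write $\S$ for the set of these spaces. Note also that $i+j$ is odd (same parity as $j-i$) while $k$ is even, so $i+j\leq k$ improves to $k-i-j\geq1$, with $k-i-j$ odd; I will use this repeatedly.

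Next I would pin down, for $\rho\in\S$, what its perp type means concretely. Write $\rho^{\perp_{\pi}}=R\oplus\tau$, where $R=\mathrm{rad}(\rho^{\perp_{\pi}})=\mathrm{rad}(\rho)$ is the totally isotropic $i$-space and $\tau$ is a non-degenerate complement, of dimension $k-i-j\geq1$ (odd). Then $\rho^{\perp}=R\oplus(\tau\perp\pi^{\perp})$, and one checks that $R$ is the radical of $\rho^{\perp}$ and $\tau\perp\pi^{\perp}$ is its non-singular part, of even dimension $n-i-j$. Hence the perp type of $\rho$ is the type of $\tau\perp\pi^{\perp}$. Since $\pi^{\perp}$ is fixed, and the type of an even-dimensional non-degenerate quadratic form over $\F_{q}$ is determined by its discriminant, which is multiplicative over orthogonal sums, the perp type of $\rho$ is determined by the square class of $\operatorname{disc}(\tau)$; in particular it does not depend on the choice of complement $\tau$. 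Thus it suffices to show that, as $\rho$ ranges over $\S$, $\operatorname{disc}(\tau)$ lies in the square class for exactly half of the $\rho$ and in the non-square class for the other half.

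For that I would use a similarity of $\pi$ with non-square multiplier. Since $\dim\pi=k$ is even, for every $c\in\F_{q}^{\ast}$ the forms $Q|_{\pi}$ and $c\,Q|_{\pi}$ have the same isotropic vectors, hence the same Witt index and the same type, hence $c\,Q|_{\pi}\cong Q|_{\pi}$ by the classification of quadratic forms over a finite field; so there is $g\in\GL(\pi)$ with $Q(gv)=c\,Q(v)$ for all $v\in\pi$. Fix such a $g$ with $c$ a non-square. Then $g$ scales the bilinear form of $\pi$ by $c$, so it preserves radicals, hence permutes $\S$ bijectively; and $(g\rho)^{\perp_{\pi}}=g\!\left(\rho^{\perp_{\pi}}\right)=gR\oplus g\tau$, so $g\tau$ serves as a non-degenerate complement for $g\rho$, with $\operatorname{disc}(g\tau)=c^{\,k-i-j}\operatorname{disc}(\tau)$. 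As $k-i-j$ is odd and $c$ is a non-square, this is a non-square multiple of $\operatorname{disc}(\tau)$. Hence $g$ is a bijection of $\S$ that interchanges the two square classes of $\operatorname{disc}(\tau)$, i.e. interchanges the two perp types; therefore each perp type occurs for exactly $\tfrac12|\S|$ of the spaces in $\S$, which is the claim. (One could equally run the similarity argument on the non-singular part of $\rho$ itself, whose dimension $j-i$ is also odd.)

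The only delicate point is the orthogonal-decomposition bookkeeping in the second step, identifying the perp type of $\rho$ in $V$ with a discriminant class attached to $\rho$ inside $\pi$; once that is in place the result falls out, the numerical coincidence that does the work being precisely that $\dim\pi=k$ is even (so the swapping similarity $g$ exists) while $k-i-j$ (equivalently $j-i$) is odd (so $g$ genuinely reverses the relevant discriminant class). No genuinely hard estimate or delicate counting is involved.
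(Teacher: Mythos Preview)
Your argument is correct and rests on the same mechanism as the paper's proof: a similarity of $\pi$ with non-square multiplier (equivalently, an element of $\mathrm{PGO}^{\eps}_{k}\setminus\mathrm{PO}^{\eps}_{k}$) interchanges the two perp-type classes. The paper packages this group-theoretically---$\mathrm{PO}^{\eps}_{k}$ embeds in $\mathrm{PO}_{n}$ and hence preserves perp types, $\mathrm{PGO}^{\eps}_{k}$ acts transitively on $\mathcal{S}$ by Theorem~\ref{th:orbitsPGO}, and $[\mathrm{PGO}^{\eps}_{k}:\mathrm{PO}^{\eps}_{k}]=2$, so the two $\mathrm{PO}^{\eps}_{k}$-orbits have equal size---whereas you unpack the same idea explicitly: you identify the perp type of $\rho\subseteq\pi$ with the discriminant class of a non-degenerate complement $\tau$ to the radical inside $\rho^{\perp_{\pi}}$, build the similarity $g$ by hand using that $k$ is even, and then check via $\operatorname{disc}(g\tau)=c^{\,k-i-j}\operatorname{disc}(\tau)$ with $k-i-j$ odd that $g$ swaps the two classes. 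Your version is longer but entirely self-contained (no appeal to \cite{kleidman}), and it makes explicit why both perp-type classes are non-empty---a point the paper's formulation leaves implicit.
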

\begin{proof}
    Let $\mathcal{S}$ be the set of $i$-singular $j$-spaces in $\pi$. Since $k$ is even, $Q\vert_\pi$ has type $\eps\in\{1,-1\}$. Let $\mathcal{S}_{\mu}$ be the subset of $\mathcal{S}$ consisting of the subspaces having perp type $\mu$ with respect to $Q$. 
    Let $\mathrm{PO}_n$ denote the projective isometry group of $Q$. Then it is easy to see (e.g. \cite[p17]{kleidman}) that the stabiliser of $\pi$ contains the projective isometry group of $Q\vert_\pi$, $\mathrm{PO}^{\eps}_k$, as a subgroup . 
    
    We know that $\mathcal{S}_1$ and $\mathcal{S}_2$ are orbits under the action of $\mathrm{PO}^{\eps}_k$. 
    Since $q$ is odd, the projective isometry group $\mathrm{PO}^{\eps}_k$ is a subgroup of index $2$ of the projective similarity group $\mathrm{PGO}^{\eps}_k$ (see e.g. \cite[Table 2.1D]{kleidman}), and we know that the action of $\mathrm{PGO}^{\eps}_k$ is transtive on $\mathcal{S}$. It follows that the $\mathrm{PO}^{\eps}_k$-orbits $S_1$ and $S_2$ have the same size. 
\end{proof}

\begin{corollary}
    \label{cor:halvedorbits} Let $i,j,n$ be integers with $0\leq i\leq j$, $i+j\leq n$, $j,i$ odd, and let $\lambda\in\{\pm1\}$. Let $Q$ be a quadratic form on $\F^{n}_{q}$, $q$ odd and $n$ odd, and let $\pi$ be an $i$-singular $j$-space with radical $\overline{\pi}$. Precisely half of the hyperplanes through $\bar{\pi}$ in $\pi$ (all of type 0) have perp type $\lambda$ with respect to $Q$. 
\end{corollary}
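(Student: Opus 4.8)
The plan is to deduce the statement directly from Lemma~\ref{lem:halvedorbits} by passing to the quotient space $W:=\bar\pi^{\perp}/\bar\pi$, where $\perp$ is the polarity associated to $Q$. We may assume $j>i$, since otherwise $\pi=\bar\pi$ has no hyperplane containing $\bar\pi$ and the statement is vacuous; note also that $j-i$ is even and hence $j-1-i$ is odd, as $i,j$ are odd.

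First I would record the structure of $W$. Since $\bar\pi=\pi\cap\pi^{\perp}$ is the radical of $Q|_\pi$, it is totally isotropic for $Q$, so $\bar\pi\subseteq\bar\pi^{\perp}$ and $Q$ induces a non-degenerate quadratic form $\widehat Q$ on $W$, with $\dim W=n-2i$ odd (so $\widehat Q$ is parabolic). As $\pi\subseteq\bar\pi^{\perp}$, the image $\widehat\pi:=\pi/\bar\pi$ is a subspace of $W$, non-singular for $\widehat Q$ (because the radical of $Q|_\pi$ is exactly $\bar\pi$), of even dimension $j-i$. By the correspondence theorem, the hyperplanes $H$ of $\pi$ with $\bar\pi\subseteq H$ correspond bijectively to the hyperplanes $\widehat H:=H/\bar\pi$ of $\widehat\pi$, and such an $H$ is $i$-singular (equivalently, since then $H/\bar\pi$ is non-degenerate of odd dimension $j-1-i$, of type $0$) if and only if $\widehat H$ is a non-singular $(j-1-i)$-space of $\widehat\pi$. (The other hyperplanes of $\pi$ through $\bar\pi$ are $(i+1)$-singular and carry no perp type; the statement concerns the type-$0$ ones.) Crucially, the perp type is preserved: for $H$ of type $0$ one has $\bar\pi\subseteq H^{\perp}\subseteq\bar\pi^{\perp}$, and $H^{\perp}$ is again $i$-singular with radical $\bar\pi$, so the standard identity $(H/\bar\pi)^{\perp_W}=H^{\perp}/\bar\pi$ together with passing to non-degenerate quotients shows that the type of $\widehat H^{\perp_W}$ in $W$ equals the type of $H^{\perp}$ in $\F^n_q$; that is, the perp type of $\widehat H$ with respect to $\widehat Q$ equals the perp type of $H$ with respect to $Q$.

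It then remains to apply Lemma~\ref{lem:halvedorbits} to $\widehat Q$ on $W\cong\F^{n-2i}_q$ (with $n-2i$ odd), taking the non-singular $(j-i)$-space $\widehat\pi$ in the role of its $\pi$ and $(0,\,j-1-i,\,j-i,\,n-2i)$ in the roles of $(i,j,k,n)$; these satisfy the hypotheses of the lemma (in particular $0\le j-1-i$, and $j-i\le n-2i$ by $i+j\le n$, while $j-1-i$ is odd and $j-i$ is even). The lemma then gives that precisely half of the non-singular $(j-1-i)$-spaces of $\widehat\pi$ have perp type $\lambda$ with respect to $\widehat Q$, which by the correspondence above is exactly the claim. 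The only real work is the middle paragraph — checking that ``$i$-singular'', ``type $0$'' and, above all, ``perp type'' transfer correctly between $\pi\subseteq\F^n_q$ and $\widehat\pi\subseteq W$; once this is in place, the result is an immediate consequence of Lemma~\ref{lem:halvedorbits}.
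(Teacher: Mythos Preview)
Your proposal is correct and follows essentially the same approach as the paper: both reduce to Lemma~\ref{lem:halvedorbits} applied to a non-singular $(j-i)$-space, after checking that the perp type transfers along the identification. The only cosmetic difference is that the paper picks a complement $\mu$ of $\overline{\pi}$ inside $\pi$ and works in the ambient $\F_q^n$, whereas you pass to the quotient $W=\overline{\pi}^{\perp}/\overline{\pi}$; these two viewpoints are equivalent, and your extra remark that only the $i$-singular hyperplanes through $\overline{\pi}$ carry a perp type is a useful clarification.
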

\begin{proof} 
Consider a $(j-i)$-space $\mu$ of $\pi$ which is complementary to $\overline{\pi}$, and let $H$ be a hyperplane of $\pi$ through $\overline{\pi}$. 
The perp type of $H$ is the same as the perp type of $H\cap \mu$ since $H^\perp$ is an $i$-singular $(n-j-i+1)$-space in the non-singular $(n-j+i+1)$-space $(H\cap\mu)^\perp$. Note that $j-i$ is even and the restriction of $Q$ to $\mu$ is non-singular. By Lemma \ref{lem:halvedorbits} we know that half of the hyperplanes of $\mu$ have perp type $\lambda$, therefore, half of the hyperplanes of $\pi$ through $\overline{\pi}$ have perp type $\lambda$.
\end{proof}

\begin{theorem}\label{th:betavariationsorthogonal}
    Let $n,j,i,k$ be integers with $0\leq i\leq j$ and $i+j\leq k\leq n$, let $\eps\in\{0,\pm1\}$ and let $\zeta,\lambda,\nu\in\{\pm1\}$, with $j-i$ odd and $n-\eps\equiv1\pmod{2}$.
    \par If $n$ is even and $k$ is odd, then
    \begin{align*}
        \alpha_{(i,j,0),(n,\eps)}\beta^{\nu}_{(i,j,0),(n,\eps),(k,0)}&=\alpha_{(0,k,0),(n,\eps)}\alpha_{(i,j,0,\nu),(k,0)}\:.
    \end{align*}
    \par If $n$ is odd (and thus $\eps=0$) and $k$ is even, then
    \begin{align*}
        \alpha_{(i,j,0,\lambda),(n,0)}\beta_{(i,j,0,\lambda),(n,0),(k,\zeta)}&=\frac{1}{2}\alpha_{(0,k,\zeta),(n,0)}\alpha_{(i,j,0),(k,\zeta)}
    \end{align*}
    \par  If $n$ and $k$ are both odd, then
    \begin{align*}
        \beta^{\nu}_{(i,j,0,\lambda),(n,0),(k,0)}&=\alpha_{(0,k-j-i,\nu),(n-j-i,\lambda)}\beta_{(i,k-i,0,\lambda\nu),(n,0),(k,0)}\:.
    \end{align*}
\end{theorem}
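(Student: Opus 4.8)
The plan is to prove all three identities by counting incident pairs $(\pi,\sigma)$ in two ways, treating them separately. For the first identity I would use that, since $n$ is even, Theorem~\ref{th:orbitsPGO} makes $\PGO^{\eps}(n,q)$ transitive on the $i$-singular $j$-spaces, so every quantity occurring is well defined; then I count the pairs $(\pi,\sigma)$ with $\pi$ an $i$-singular $j$-space of type $0$ and $\sigma\supseteq\pi$ a non-singular $k$-space (of type $0$, as $k$ is odd) in which $\pi$ has perp type $\nu$. Choosing $\pi$ first yields $\alpha_{(i,j,0),(n,\eps)}\,\beta^{\nu}_{(i,j,0),(n,\eps),(k,0)}$; choosing $\sigma$ first yields $\alpha_{(0,k,0),(n,\eps)}\,\alpha_{(i,j,0,\nu),(k,0)}$, the perp type of $\pi$ with respect to $Q|_{\sigma}$ being exactly its perp type in $\sigma$ (the boundary case $k=i+j$, where all perp types are forced to be $1$ by Remark~\ref{rem:n=i+jalpha}, just recovers Theorem~\ref{th:betaorthogonal}). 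For the second identity ($n$ odd so $\eps=0$, $k$ even so $k<n$) I would instead count the pairs $(\pi,\sigma)$ with $\pi$ of perp type $\lambda$ with respect to $Q$ and $\sigma\supseteq\pi$ non-singular of type $\zeta$: counting by $\pi$ gives the left-hand side, and counting by $\sigma$ gives $\alpha_{(0,k,\zeta),(n,0)}$ times the number of $i$-singular $j$-spaces of $\sigma$ of perp type $\lambda$ \emph{with respect to $Q$}, which by Lemma~\ref{lem:halvedorbits} (applicable as $\sigma$ is non-singular of even dimension and $j-i$ is odd) equals $\frac{1}{2}\alpha_{(i,j,0),(k,\zeta)}$. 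This yields the first two identities.

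For the third identity, where $n$ and $k$ are odd (so $\eps=\delta=\zeta=0$) and $\overline{\pi}$ denotes the $i$-dimensional radical of $\pi$ (so $\overline{\pi}\subseteq\pi\subseteq\overline{\pi}^{\perp}$), the plan is to interpose between $\pi$ and any admissible $\sigma$ the subspace $\tau:=\langle\pi,\pi^{\perp_{\sigma}}\rangle$ with $\pi^{\perp_{\sigma}}:=\pi^{\perp}\cap\sigma$. I would first check, using $(\pi^{\perp_{\sigma}})^{\perp_{\sigma}}=\pi$ for non-singular $\sigma$, that $\pi\cap\pi^{\perp_{\sigma}}=\overline{\pi}$, that $\dim\tau=k-i$, and that $\tau$ is $i$-singular with radical $\overline{\pi}$. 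The key step is to show $\tau$ is ``saturated'': for \emph{any} non-singular $k$-space $\sigma'\supseteq\tau$ one has $\pi^{\perp_{\tau}}\subseteq\pi^{\perp_{\sigma'}}$ with both of dimension $k-j$ ($\dim\pi^{\perp_{\tau}}=k-j$ since $\tau/\overline{\pi}$ is non-singular of dimension $k-2i$ and $\dim\pi/\overline{\pi}=j-i$; $\dim\pi^{\perp_{\sigma'}}=k-j$ since $\sigma'$ is non-singular), whence $\pi^{\perp_{\sigma'}}=\pi^{\perp_{\tau}}$, $\langle\pi,\pi^{\perp_{\sigma'}}\rangle=\tau$, and the perp type of $\pi$ in $\sigma'$ equals the type of the $i$-singular space $\pi^{\perp_{\tau}}$, depending only on $\tau$. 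This partitions the non-singular $k$-spaces $\sigma\supseteq\pi$ in which $\pi$ has perp type $\nu$ into disjoint blocks indexed by the $i$-singular $(k-i)$-spaces $\tau\supseteq\pi$ with radical $\overline{\pi}$ in which $\pi$ has perp type $\nu$, the block of $\tau$ consisting of all non-singular $k$-spaces containing $\tau$.

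To finish, I would pass to the non-singular quotient $\overline{\pi}^{\perp}/\overline{\pi}$ (of dimension $n-2i$): the admissible $\tau$ correspond bijectively to the non-singular $(k-2i)$-spaces containing $\pi/\overline{\pi}$ in which $\pi/\overline{\pi}$ has perp type $\nu$, equivalently to the orthogonal sums of $\pi/\overline{\pi}$ with a non-singular $(k-j-i)$-space of type $\nu$ inside $(\pi/\overline{\pi})^{\perp}=\pi^{\perp}/\overline{\pi}$. Since $\pi^{\perp}$ is $i$-singular with radical $\overline{\pi}$, its non-singular part $\pi^{\perp}/\overline{\pi}$ has dimension $n-j-i$ and type $\lambda$ (this being exactly the meaning of ``$\pi$ has perp type $\lambda$''), so the number of admissible $\tau$ is $\alpha_{(0,k-j-i,\nu),(n-j-i,\lambda)}$; and applying the rule that the types of an orthogonal-complement pair of even-dimensional subspaces multiply to $(\pi/\overline{\pi})^{\perp}=(\pi/\overline{\pi})^{\perp_{\tau/\overline{\pi}}}\perp(\tau/\overline{\pi})^{\perp}$ inside $\overline{\pi}^{\perp}/\overline{\pi}$ shows each admissible $\tau$ has perp type $\lambda\nu$ with respect to $Q$, so every block has size $\beta_{(i,k-i,0,\lambda\nu),(n,0),(k,0)}$ (Definition~\ref{def:betaquadraticodd}); multiplying gives the third identity. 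I expect the main obstacle to be exactly this third argument — spotting the interpolating $\tau$, verifying the dimension equality $\dim\pi^{\perp_{\tau}}=\dim\pi^{\perp_{\sigma}}=k-j$ that makes it saturated (so the blocks are genuine ``pencils'' of non-singular $k$-spaces through a fixed subspace), and transporting radicals and perp types correctly through the quotient by $\overline{\pi}$. The parity hypotheses are precisely what keep all the non-singular parts even-dimensional so that types behave multiplicatively; the degenerate case $k=n$ is immediate and can be handled separately.
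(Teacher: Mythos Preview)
Your proposal is correct and follows essentially the same approach as the paper: identical double-counting for the first two identities (invoking Lemma~\ref{lem:halvedorbits} for the second, just as the paper does), and for the third identity both arguments interpose the subspace $\langle\pi,\sigma\cap\pi^{\perp}\rangle$, with the paper indexing by $\sigma'=\sigma\cap\pi^{\perp}\subseteq\pi^{\perp}$ where you index by the equivalent $\tau=\langle\pi,\sigma'\rangle$. Your derivation of the perp type $\lambda\nu$ of $\tau$ via orthogonal complements in $\pi^{\perp}/\overline{\pi}$ matches the paper's computation of the perp type of $\sigma'$ in $\pi^{\perp}$.
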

\begin{proof}
     The first result can be derived from a double counting argument analogous to the one in the proof of Theorem \ref{th:betaorthogonal}. The same is true for the second one, but here we need to be more cautious, so we present the argument in detail.
     \par Given a quadratic form $Q$ of type $0$ on $\F_{q}^{n}$, $n$ odd, we count the tuples $(\pi,\sigma)$ such that $\pi$ is an $i$-singular $j$-space of type $0$ and perp type $\lambda$, and $\sigma\supseteq\pi$ is a non-singular $k$-space of type $\zeta$, in two ways. We know that the number of $i$-singular $j$-spaces of type $0$ and perp type $\lambda$ equals $\alpha_{(i,j,0,\lambda),(n,0)}$ and that through each such subspace there are $\beta_{(i,j,0,\lambda),(n,0),(k,\zeta)}$ non-singular $k$-spaces of type $\zeta$. On the other hand there are $\alpha_{(0,k,\zeta),(n,0)}$ non-singular $k$-spaces of type $\zeta$. Such a space contains $\alpha_{(i,j,0),(k,\zeta)}$ $i$-singular $j$-spaces of type 0. Precisely half of those have perp type $\lambda$ with respect to $Q$ by Lemma \ref{lem:halvedorbits}.
     \par For the third result we follow another approach. Denote the polarity corresponding to the quadratic form by $\perp$. Recall that we assume that $n$ and $k$ are both odd. Let $\pi$ be an $i$-singular $j$-space of type 0 and perp type $\lambda$. Let $\overline{\pi}$ be the $i$-space that is the radical of $\pi$ (the vertex of the cone geometrically). We know that $\pi^\perp$ is an $i$-singular $(n-j)$-space of type $\lambda$ with radical $\overline{\pi}$. Note that $\langle\pi,\pi^\perp\rangle=\overline{\pi}^{\perp}$ is an $i$-singular $(n-i)$-space.
     \par Let $\sigma$ be a non-singular $k$-space through $\pi$ such that $\pi$ has perp type $\nu$ in $\sigma$. We know that $k'=\dim(\sigma\cap\pi^{\perp})\geq k-j$ by the Grassmann identity, and that $\dim(\sigma\cap\langle\pi,\pi^{\perp}\rangle)=k'+j-i\geq k-i$. We also know that $\overline{\pi}$ is contained in the radical of $\sigma\cap\pi^{\perp}$ and of $\sigma\cap\langle\pi,\pi^{\perp}\rangle$, so the radical of $\sigma\cap\langle\pi,\pi^{\perp}\rangle$ has dimension at least $i$. As in general an $a$-singular subspace of a non-singular $k$-space can have dimension at most $k-a$, we find that $k'+j-i\leq k-i$, and hence that $k'\leq k-j$, and we conclude that $k'=k-j$.
     \par Let $\perp'$ be the orthogonal polarity corresponding to the restriction of the quadratic form to $\sigma$. It is immediate that $\pi^{\perp'}=\pi^{\perp}\cap\sigma$, and thus this subspace has type $\nu$.
     \par So, on the one hand, for any non-singular $k$-space $\sigma$ through $\pi$ such that $\pi$ has perp type $\nu$ with respect to it, we find a unique $(k-j)$-space of type $\nu$ through $\overline{\pi}$ in $\pi^\perp$ (namely $\sigma\cap \pi^\perp$). On the other hand, if $\sigma'$ is a $(k-j)$-space of type $\nu$ through $\overline{\pi}$ in $\pi^\perp$, then $\langle\sigma',\pi\rangle$ is an $i$-singular $(k-i)$-space and in any non-singular $k$-space through $\langle\sigma',\pi\rangle$ the subspace $\pi$ has perp type $\nu$. Note that $\langle\sigma',\pi\rangle$ necessarily has type 0; its perp type is the perp type of $\sigma'$ in $\pi^\perp$. Recall that $Q_{|\pi^\perp}$ has type $\lambda$, and that $\sigma'$ has type $\nu$ in $\pi^\perp$, therefore the perp type of $\sigma'$ in $\pi^\perp$ is given by $\lambda\nu$.
     \par We conclude that we can count the non-singular $k$-spaces $\sigma$ through $\pi$ such that $\pi$ has perp type $\nu$ in $\sigma$ by counting the number of pairs $(\sigma',\sigma)$ such that $\sigma'$ is a $(k-j)$-space of type $\nu$ through $\overline{\pi}$ in $\pi^\perp$ and $\sigma\supseteq\langle\sigma',\pi\rangle$ is a non-singular $k$-space. The number of choices for $\sigma'$ equals the number of non-singular $(k-j-i)$-spaces of type $\nu$ in an $(n-j-i)$-space of type $\lambda$, namely $\alpha_{(0,k-j-i,\nu),(n-j-i,\lambda)}$. The number of choices for $\sigma$ is then the number of non-singular $k$-spaces through an $i$-singular $(k-i)$-space of type 0 and perp type $\lambda\nu$, namely $\beta_{(i,k-i,0,\lambda\nu),(n,0),(k,0)}$.
\end{proof}

\begin{corollary}\label{cor:betavariations}
    Let $n,j,i$ be integers with $0\leq i\leq j$ and $i+j\leq n-1$, let $\eps\in\{0,\pm1\}$ and let $\zeta,\lambda,\nu\in\{\pm1\}$, with $j-i$ odd and $n-\eps\equiv1\pmod{2}$.%
    \par\noindent If $n$ is even (and thus $\eps\in\{\pm1\}$), we have
    \begin{align*}
        \beta^{\nu}_{(i,j,0),(n,\eps),(n-1,0)}&=\frac{1}{2}q^{\frac{1}{2}(n-j+i-1)}\left(q^{\frac{1}{2}(n-j-i-1)}+\nu\right).
    \end{align*}
    If $n$ is odd, we have
    \begin{align*}
        \beta_{(i,j,0,\lambda),(n,0),(n-1,\zeta)}=\frac{1}{2}q^{\frac{1}{2}(n-j+i)-1}\left(q^{\frac{1}{2}(n-j-i)}-\lambda\right).
    \end{align*}
\end{corollary}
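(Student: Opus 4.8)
The plan is to specialise $k=n-1$ in Theorem~\ref{th:betavariationsorthogonal} and substitute the closed formulae for the $\alpha$-quantities that appear. When $n$ is even, $k=n-1$ is odd, and the first identity of Theorem~\ref{th:betavariationsorthogonal} gives
\[
    \beta^{\nu}_{(i,j,0),(n,\eps),(n-1,0)}=\frac{\alpha_{(0,n-1,0),(n,\eps)}\;\alpha_{(i,j,0,\nu),(n-1,0)}}{\alpha_{(i,j,0),(n,\eps)}}\:,
\]
where $\alpha_{(0,n-1,0),(n,\eps)}=q^{\frac{1}{2}n-1}(q^{\frac{1}{2}n}-\eps)$ by Corollary~\ref{cor:alphaorthogonal} (case $\delta=0$, $\eps\in\{\pm1\}$), $\alpha_{(i,j,0,\nu),(n-1,0)}$ is given by Theorem~\ref{th:alphaorthogonalodd} in dimension $n-1$ (legitimate since $(n-1)(j-i)$ is odd), and $\alpha_{(i,j,0),(n,\eps)}$ is given by Theorem~\ref{th:alphaorthogonal} with $\delta=0$. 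When $n$ is odd, $k=n-1$ is even, so $\zeta\in\{\pm1\}$ is forced, and the second identity of Theorem~\ref{th:betavariationsorthogonal} gives
\[
    \beta_{(i,j,0,\lambda),(n,0),(n-1,\zeta)}=\frac{\alpha_{(0,n-1,\zeta),(n,0)}\;\alpha_{(i,j,0),(n-1,\zeta)}}{2\,\alpha_{(i,j,0,\lambda),(n,0)}}\:,
\]
where now $\alpha_{(0,n-1,\zeta),(n,0)}=\frac{1}{2}q^{\frac{1}{2}(n-1)}(q^{\frac{1}{2}(n-1)}+\zeta)$ by Corollary~\ref{cor:alphaorthogonal} (case $\delta=\zeta\in\{\pm1\}$, $\eps=0$), $\alpha_{(i,j,0),(n-1,\zeta)}$ comes from Theorem~\ref{th:alphaorthogonal} in dimension $n-1$ with form type $\zeta$ and $\delta=0$, and $\alpha_{(i,j,0,\lambda),(n,0)}$ from Theorem~\ref{th:alphaorthogonalodd}.

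It then remains to simplify these ratios. The factor $\psi^{-}_{1,i}(q)$ cancels at once; collecting the powers of $q$, the exponent telescopes (via $\frac{1}{2}(j-i)\left[(n-1-j-i)-(n-j-i)\right]$) to $\frac{1}{2}(n-j+i-1)$ in the first case and to $\frac{1}{2}(n-j+i)-1$ in the second, matching the prefactors in the statement. For the $\psi$-products one uses repeatedly the concatenation $\psi^{\pm}_{a,b}(q)\,\psi^{\pm}_{b+1,c}(q)=\psi^{\pm}_{a,c}(q)$, the identities $q^{m}+1=\psi^{+}_{m,m}(q)$ and $q^{m}-1=\psi^{-}_{m,m}(q)$ to absorb or split off the isolated binomial factors coming from Corollary~\ref{cor:alphaorthogonal}, and $\psi^{+}_{0,m}(q)=2\,\psi^{+}_{1,m}(q)$. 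Carrying this out (treating the sign combinations of $\nu$ and $\eps$, resp.\ of $\lambda$ and $\zeta$, separately) collapses the first ratio to $q^{\frac{1}{2}(n-j+i-1)}\cdot\frac{1}{2}\!\left(q^{\frac{1}{2}(n-j-i-1)}+\nu\right)$ and the second to $q^{\frac{1}{2}(n-j+i)-1}\cdot\frac{1}{2}\!\left(q^{\frac{1}{2}(n-j-i)}-\lambda\right)$, as claimed.

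The step I expect to require the most care is the $\zeta$-independence of the second formula: both $\alpha_{(0,n-1,\zeta),(n,0)}$ and $\alpha_{(i,j,0),(n-1,\zeta)}$ depend on $\zeta$, yet the answer does not. The point is that $q^{\frac{1}{2}(n-1)}+\zeta$ equals $\psi^{+}_{\frac{1}{2}(n-1),\frac{1}{2}(n-1)}(q)$ for $\zeta=1$ and $\psi^{-}_{\frac{1}{2}(n-1),\frac{1}{2}(n-1)}(q)$ for $\zeta=-1$, and this factor is exactly what is needed to raise the top index of the $\psi^{+}$- (resp.\ $\psi^{-}$-) product inside $\alpha_{(i,j,0),(n-1,\zeta)}$ from $\frac{1}{2}(n-3)$ to $\frac{1}{2}(n-1)$, so that after simplification the combined numerator equals $\psi^{+}_{\frac{1}{2}(n-i-j),\frac{1}{2}(n-1)}(q)\,\psi^{-}_{\frac{1}{2}(n-i-j),\frac{1}{2}(n-1)}(q)$ irrespective of $\zeta$; an analogous but simpler bookkeeping yields the $\eps$-independence in the first case. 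As a sanity check, summing the first formula over $\nu\in\{\pm1\}$ gives $q^{n-j-1}$, which agrees with the value of $\beta_{(i,j,0),(n,\eps),(n-1,0)}$ obtained from Corollary~\ref{cor:betas}.
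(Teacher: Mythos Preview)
Your proposal is correct and follows essentially the same approach as the paper's proof: specialise $k=n-1$ in Theorem~\ref{th:betavariationsorthogonal}, substitute the explicit $\alpha$-values from Corollary~\ref{cor:alphaorthogonal}, Theorem~\ref{th:alphaorthogonal}, and Theorem~\ref{th:alphaorthogonalodd}, and simplify the resulting ratio. The paper carries out the first case (even $n$) in full and declares the second case analogous, whereas you give more detail on the mechanics of the $\psi$-cancellations and the $\zeta$-independence; but the route is the same.
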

\begin{proof}
    First assume that $n$ is even. From Theorem \ref{th:alphaorthogonal}, Corollary \ref{cor:alphaorthogonal}, Theorem \ref{th:alphaorthogonalodd} and Theorem \ref{th:betavariationsorthogonal} it follows that
    \begin{align*}
        &\beta^{\nu}_{(i,j,0),(n,\eps),(n-1,0)}\\
        &=\frac{\alpha_{(0,n-1,0),(n,\eps)}\alpha_{(i,j,0,\nu),(n-1,0)}}{\alpha_{(i,j,0),(n,\eps)}}\\
        &=q^{\frac{1}{2}n-1}\left(q^{\frac{1}{2}n}-\eps\right)q^{\frac{1}{2}(j-i)\left(n-j-i-1\right)}\frac{\psi^{+}_{\frac{1}{2}(j-i+1),\frac{1}{2}n-1}(q)\ \psi^{-}_{\frac{1}{2}(j-i+1),\frac{1}{2}n-1}(q)}{\psi^{+}_{0,\frac{1}{2}(n-j-i-\nu)-1}(q)\ \psi^{-}_{1,\frac{1}{2}(n-j-i+\nu)-1}(q)\ \psi^{-}_{1,i}(q)}\\&\qquad q^{-\frac{1}{2}(n-j-i)\left(j-i\right)+\frac{1}{2}}\frac{\psi^{+}_{1,\frac{1}{2}(j-i-1)}(q)\ \psi^{-}_{1,\frac{1}{2}(j-i-1)}(q)\ \psi^{-}_{1,i}(q)}{\psi^{+}_{\frac{1}{2}(n-i-j+1),\frac{1}{2}(n-1-\eps)}(q)\ \psi^{-}_{\frac{1}{2}(n-i-j+1),\frac{1}{2}(n-1+\eps)}(q)}\\
        &=q^{\frac{1}{2}(n-j+i-1)}\left(q^{\frac{1}{2}n}-\eps\right)\\&\qquad\frac{\psi^{+}_{1,\frac{1}{2}n-1}(q)\ \psi^{-}_{1,\frac{1}{2}n-1}(q)}{\psi^{+}_{0,\frac{1}{2}(n-j-i-\nu)-1}(q)\ \psi^{+}_{\frac{1}{2}(n-i-j+1),\frac{1}{2}(n-1-\eps)}(q)\ \psi^{-}_{1,\frac{1}{2}(n-j-i+\nu)-1}(q)\ \psi^{-}_{\frac{1}{2}(n-i-j+1),\frac{1}{2}(n-1+\eps)}(q)}\\
        &=\frac{1}{2}q^{\frac{1}{2}(n-j+i-1)}\left(q^{\frac{1}{2}n}-\eps\right)\left(\begin{cases}
            \frac{1}{q^{\frac{1}{2}n}-1}&\eps=1\\
            \frac{1}{q^{\frac{1}{2}n}+1}&\eps=-1\\
        \end{cases}\right)\left(\begin{cases}
            q^{\frac{1}{2}(n-j-i-1)}+1&\nu=1\\
            q^{\frac{1}{2}(n-j-i-1)}-1&\nu=-1
        \end{cases}\right)\\
        &=\frac{1}{2}q^{\frac{1}{2}(n-j+i-1)}\left(q^{\frac{1}{2}(n-j-i-1)}+\nu\right).
    \end{align*}
    The proof for the second formula, with $n$ odd, is similar.
\end{proof}

\section{Proof of the main result}\label{sec:proofofmain}

In this section we will prove the main result of this article. We start by introducing the following notation. Given Theorem \ref{th:orbitsPGO}, Definition \ref{def:gammaquadratic} is well defined, i.e~independent of the chosen subspace $\pi$.

\begin{definition}\label{def:gammaquadratic}
    Assume that $n(j-i)$ even or $n-j-i=0$. Consider a quadratic form $f$ of type $\eps$ on $\F^{n}_{q}$, $q$ odd, an let $\pi$ be an $i$-singular $j$-space of type $\delta$. Then, $\gamma_{(i,j,\delta),(n,\eps),(k,\zeta),\eta}$ is the number of non-singular $k$-spaces $\sigma$ of type $\zeta$ in $\F_q^n$ such that $\sigma\cap\pi$ is trivial and $\langle\pi,\sigma\rangle$ is a non-singular $(k+j)$-space of type $\eta$. When additionally $nk$ is odd (and thus $\eps=\zeta=0$), then $\gamma_{(i,j,\delta),(n,0),(k,0,\mu),\eta}$ is the number of non-singular $k$-spaces $\sigma$ of type $0$ and perp type $\mu$ in $\F_{q^{n}}$ such that $\sigma\cap\pi$ is trivial and $\langle\pi,\sigma\rangle$ is a non-singular $(k+j)$-space of type $\eta$.
\end{definition}

\begin{remark}\label{rem:projectiveinterpretationgamma}
    Using projective dimension and notation, we can say that $\gamma_{(i,j,\delta),(n,\eps),(k,\zeta),\eta}$ is the number of projective $(k-1)$-spaces in $\PG(n-1,q)$ meeting $\mathcal{Q}^\eps(n-1,q)$ in a $\mathcal{Q}^\zeta(k-1,q)$, disjoint from a fixed $(j-1)$-space $\pi$ that meets $\mathcal{Q}^\eps(n-1,q)$ in a cone $\Pi_{i-1}\mathcal{Q}^\delta(j-i-1,q)$ and such that the $(k+j-1)$-space spanned by both meets $\mathcal{Q}^\eps(n-1,q)$ in a $\mathcal{Q}^\eta(k+j-1,q)$.
\end{remark}

\begin{remark}
    Note that $\gamma_{(i,j,\delta),(n,\eps),(k,\zeta),\eta}$ is not defined if no $i$-singular $j$-space of type $\delta$ exists with respect to a quadratic form of type $\eps$ on $\F_{q}^{n}$, $q$ odd. So, we must have that $j-i-\delta\equiv 1\equiv n-\eps\pmod{2}$ for $\gamma_{(i,j,\delta),(n,\eps),(k,\zeta),\eta}$ to exist.
\end{remark}

We now introduce an analogue of $\gamma_{(i,j,\delta),(n,\eps),(k,\zeta),\eta}$ for the case where a perp type needs to be taken into account. Given Theorem \ref{th:orbitsPGO},  Definition \ref{def:gammaquadraticodd} is well defined, i.e~independent of the chosen subspace $\pi$.

\begin{definition}\label{def:gammaquadraticodd}
    Consider a quadratic form $f$ of type $0$ on $\F^{n}_{q}$, $q$ odd. Let $\pi$ be an $i$-singular $j$-space $\pi$ of type $0$ and perp type $\lambda$ (necessarily $n(j-i)$ is odd). Then $\gamma_{(i,j,0,\lambda),(n,0),(k,\zeta),\eta}$ is the number of non-singular $k$-spaces $\sigma$ of type $\zeta$ in $\F_{q^{n}}$ such that $\sigma\cap\pi$ is trivial and $\langle\pi,\sigma\rangle$ is a non-singular $(k+j)$-space of type $\eta$. When additionally $k$ is odd (and thus $\zeta=0$), then $\gamma_{(i,j,0,\lambda),(n,0),(k,0,\mu),\eta}$ is the number of non-singular $k$-spaces $\sigma$ of type $0$ and perp type $\mu$ in $\F_{q^{n}}$ such that $\sigma\cap\pi$ is trivial and $\langle\pi,\sigma\rangle$ is a non-singular $(k+j)$-space of type $\eta$.
\end{definition}

The gammas in Definition \ref{def:gammaquadraticodd} can be interpreted in projective space, similar to the interpretation in Remark \ref{rem:projectiveinterpretationgamma}.

\begin{remark}\label{rem:n=i+jgamma}
    The case with $n$ and $j-i$ both odd and $n=i+j$ was discussed earlier in Remarks \ref{rem:n=i+jalpha} and \ref{rem:n=i+jbeta}. Similarly, we now find that $\gamma_{(i,j,0),(i+j,0),(k,\zeta),\eta}=\gamma_{(i,j,0,1),(i+j,0),(k,\zeta),\eta}$ and that  $\gamma_{(i,j,0,-1),(i+j,0),(k,\zeta),\eta}$ is not defined.
\end{remark}

\begin{notation}\label{not:ambiguous}
    In the main theorems we will discuss $\gamma_{(i,j,\delta),(n,\eps),(k,\zeta),\eta}$ and $\gamma_{(i,j,0,\lambda),(n,0),(k,\zeta),\eta}$ simultaneously and use the notation $\gamma_{(i,j,\delta(,\lambda)),(n,\eps),(k,\zeta),\eta}$. Based on the parity of $n(j-i)$, it should always be clear which one is intended. Similarly, we will also use $\beta_{(i,j,\delta(,\lambda)),(n,\eps),(k,\zeta)}$.
\end{notation}

{\bf Structure of the proof of the Main Theorems}

We first deal with the case that $k=n-j$, that is, that the subspaces we are counting are complementary to the fixed $j$-space in $\mathbb{F}_q^n$; in this case we may assume $\eta=\eps$. The general case where $k<n-j$ (Theorem \ref{th:gammageneral}) then follows relatively easily from the expressions in the complementary case $k=n-j$, together with a double counting argument. To tackle the case $k=n-j$, we derive four lemmas with recursion relations on $\gamma_{(i,j,\delta),(n,\eps),(n-j,\zeta),\eps}$ and $\gamma_{(i,j,0,\lambda),(n,0),(n-j,\zeta),0}$. The proof of each of the lemmas is essentially done in the same way; we double count the set of tuples $(\sigma,\tau)$ with $\sigma$ a non-singular hyperplane (possibly of a certain type), and $\tau\subseteq \sigma$ a non-singular $(n-j)$-space of a fixed type disjoint from the given space $\pi$. However, the arguments used to count this quantity strongly depend on the parity of $n,i,j$, so these cases need to be treated separately. The different cases we treat are: $n,j$ odd, $i$ odd (Lemma \ref{lem:recursion3}), $n,j$ odd, $i$ even (Lemma \ref{lem:recursion4}), $n$ even, $j-i$ even (Lemma \ref{lem:recursion1}) and $n$ even, $j-i$ odd (Lemma \ref{lem:recursion2}).   
Overall, the four lemmas cover all cases with $n-j$ even which allows us, for $n-j$ even, to derive the expression for $\gamma_{(i,j,\delta,(\lambda)), (n,\eps),(n-j,\zeta),\eps}$ in Theorem \ref{th:gammaquadratic}, using an induction argument. Using the associated polarity, we are able to use the expressions for $n-j$ even to deal with the case that $n$ is odd and $j$ is even as well. This is done in Theorem \ref{th:gammaquadratic0odd}. This leaves only the case that $n$ is even and $j$ is odd. We deal with this case in Theorem \ref{th:gammaquadratic0even} by using Lemma \ref{lem:recursion1} and \ref{lem:recursion2} for the second time, because at this point we have already derived the expressions for $n$ even and $j$ odd that are occurring in the recursion formulae.

\begin{remark}
    One may wonder why do not simply derive a recursion formula for all possible parities of $n,j,i$, thereby avoiding having to go through the steps explained above. However, there is a conceptual obstacle to this approach: the non-singular $(n-j)$-spaces with respect to a non-degenerate quadratic form on $\F_{q}^{n}$, $n$ and $q$ odd, split in two different orbits (see Theorem \ref{th:orbitsPGO}). The number of non-singular hyperplanes containing a given non-singular $(n-j)$-space (necessary to set up the induction argument) then depends on the perp type of the given non-singular $(n-j)$-space, so it would be necessary to take care of both classes separately. But when trying to take care of both classes separately, one notices that the perp type gets lost when going one dimension down.
\end{remark}

\begin{lemma}\label{lem:recursion3}
    Let $n,j,i$ be integers with $0\leq i\leq j$ and $i+j\leq n$, and with $n$, $j$ and $i$ odd. Let $\delta,\zeta\in\{\pm1\}$. If $j\geq1$, we have
    \begin{align*}
        &\gamma_{(i,j,\delta),(n,0),(n-j,\zeta),0}\beta_{(0,n-j,\zeta),(n,0),(n-1,\zeta)}\\
        &=\frac{1}{2}q^{n-i-1}\left(q^{i}-1\right)\gamma_{(i-1,j-1,\delta),(n-1,\zeta),(n-j,\zeta),\zeta}\\
        &\qquad+\frac{1}{2}q^{\frac{1}{2}n-\frac{3}{2}}\left((q-1)q^{\frac{1}{2}(n-1)-i}-\delta\zeta q^{\frac{1}{2}(j-i)}-\delta(q-1)q^{\frac{1}{2}(n-j-i-1)}+\zeta\right)\\&\qquad\qquad\gamma_{(i,j-1,0),(n-1,\zeta),(n-j,\zeta),\zeta}\\
        &\qquad+\frac{1}{2}q^{\frac{1}{2}(n-j+i-1)}\gamma_{(i+1,j-1,\delta),(n-1,\zeta),(n-j,\zeta),\zeta}\\&\qquad\qquad\left(\left(q^{j-i-1}-1\right)q^{\frac{1}{2}(n-j-i-1)}+\delta\left(q-1\right)q^{\frac{1}{2}n-i-\frac{3}{2}}+\delta\zeta\left(q^{j-i-1}-1\right)+\zeta\left(q-1\right)q^{\frac{1}{2}(j-i)-1}\right).
    \end{align*}
\end{lemma}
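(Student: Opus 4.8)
The strategy is the one announced in the ``Structure of the proof'' paragraph: double count, in two ways, the set $\mathcal{T}$ of pairs $(\sigma,\tau)$ where $\sigma$ is a non-singular hyperplane of type $\zeta$ containing a fixed $i$-singular $j$-space $\pi$ of type $\delta$ (with $n,j,i$ all odd, so $\pi$ has type $0$ only when $j=i$; here $\delta\in\{\pm1\}$ forces $j>i$), and $\tau\subseteq\sigma$ is a non-singular $(n-j)$-space of type $\zeta$ with $\tau\cap\pi$ trivial and $\langle\pi,\tau\rangle=\sigma$ --- equivalently $\tau$ is a complement to $\pi$ inside $\sigma$ on which the form restricts to type $\zeta$. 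Since $\sigma$ is non-singular of even dimension $n-1$, it has a genuine type $\zeta\in\{\pm1\}$, and counting the $\tau$'s inside a fixed such $\sigma$ is exactly the complementary-case quantity $\gamma_{(i,j,\delta),(n-1,\zeta),(n-j,\zeta),\zeta}$. So one way of counting $\mathcal{T}$ gives $\alpha$-times-$\gamma$ on the ``hyperplane'' side, which after dividing by $\alpha_{(i,j,\delta),(n,0)}$ produces the left-hand side $\gamma_{(i,j,\delta),(n,0),(n-j,\zeta),0}\cdot\beta_{(0,n-j,\zeta),(n,0),(n-1,\zeta)}$: indeed $\beta_{(0,n-j,\zeta),(n,0),(n-1,\zeta)}$ is the number of non-singular hyperplanes of type $\zeta$ through a given non-singular $(n-j)$-space of type $\zeta$, and a non-singular $(n-j)$-space disjoint from $\pi$ and spanning a non-singular $(n-j)$-space... more precisely, one counts pairs $(\tau,\sigma)$ where $\tau$ is counted by $\gamma_{(i,j,\delta),(n,0),(n-j,\zeta),0}$ (note $\eta=\eps=0$ here since $n$ is odd) and then $\sigma\supseteq\langle\pi,\tau\rangle$ is one of $\beta_{(0,n-j,\zeta),(n,0),(n-1,\zeta)}$ hyperplanes --- wait, $\langle\pi,\tau\rangle$ has dimension $n$, so this needs care; what is actually fixed is $\tau$, and $\sigma$ ranges over non-singular type-$\zeta$ hyperplanes through $\tau$, but $\sigma$ must also contain $\pi$, which is automatic only if... in fact since $\langle\pi,\tau\rangle$ is already all of $\F_q^n$ this reading is wrong, so the correct bookkeeping is: fix $\tau$ (a non-singular $(n-j)$-space of type $\zeta$ whose span with $\pi$ is non-singular, i.e. is $\F_q^n$ of type $0$ --- but $n$ odd has type $0$, fine), then $\sigma$ is a hyperplane of type $\zeta$ with $\pi\not\subseteq\sigma$... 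This is the delicate point and I will set it up by instead fixing $\sigma$ first.

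\textbf{The two counts, done carefully.} Count $\#\mathcal{T}$ by first choosing $\sigma$: there are $\alpha_{(0,n-1,\zeta),(n,0)}$ non-singular hyperplanes of type $\zeta$; among them, those containing $\pi$ number (using $\alpha_{(0,n-1,\zeta),(n,0)}\beta_{(i,j,\delta),(n,0),(n-1,\zeta)}=\alpha_{(i,j,\delta),(n,0)}\beta_{\dots}$, i.e. Corollary~\ref{cor:betas}) a computable quantity, and inside each such $\sigma$ there are $\gamma_{(i,j,\delta),(n-1,\zeta),(n-j,\zeta),\zeta}$ choices of $\tau$. For the other count I fix $\pi$ and classify a non-singular $(n-j)$-space $\tau'$ with $\tau'\cap\pi$ trivial by the dimension $d$ of $\langle\pi,\tau'\rangle\cap$ --- rather, the cleaner route, matching the three terms on the right-hand side, is to go one dimension down in $\pi$: intersect everything with the hyperplane $\sigma$, or dually, analyse how $\tau$ sits relative to $\pi$ by looking at $\pi\cap\sigma'$ for a hyperplane $\sigma'$ of $\pi$... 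The structure of the RHS --- three terms indexed by $i-1,i,i+1$ with second coordinate $j-1$ --- signals the actual mechanism: we are choosing $\sigma$, then choosing a hyperplane $\pi'$ of $\pi$ contained in $\sigma$ playing an auxiliary role, and the type/singularity data of $\pi'$ (an $i'$-singular $(j-1)$-space with $i'\in\{i-1,i,i+1\}$) splits the count into the three cases. Concretely: given $\pi$ ($i$-singular, $j$-dim, type $\delta$, with radical $\bar\pi$ of dimension $i$), a hyperplane $\pi'$ of $\pi$ is either (a) through $\bar\pi$ and then $(i-1)$-singular of type $\delta$ --- wait no: a hyperplane of $\pi$ through $\bar\pi$ has radical containing $\bar\pi$, dimension $i$ or $i-1$... the three possibilities for the radical dimension of a hyperplane $\pi'$ of $\pi$ are $i+1$ (when $\pi'\supseteq\bar\pi$ and $\pi'$ meets the base part in a degenerate hyperplane), $i$ (two sub-cases), $i-1$ (when $\pi'\not\supseteq\bar\pi$), and matching types $\delta$ gives exactly the indices $(i-1,j-1,\delta),(i,j-1,0),(i+1,j-1,\delta)$ appearing; the coefficients are then products of $\alpha$'s and $\beta$'s counting, for each type of $\pi'$, how many hyperplanes $\sigma$ of $\F_q^n$ contain $\pi'$ but not $\pi$ and are non-singular of type $\zeta$, which is where the $q$-polynomial coefficients with their $\delta,\zeta$-dependent correction terms come from, via Corollary~\ref{cor:alphaorthogonal} and Corollary~\ref{cor:betas}.

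\textbf{Main obstacle.} The genuinely hard part is not the double-counting identity itself but the precise classification of the hyperplanes $\pi'$ of $\pi$ by their singularity index and type, and the accurate evaluation of the three coefficient functions --- each is of the form (number of suitable auxiliary objects through $\pi'$) $\times$ (number of non-singular type-$\zeta$ hyperplanes of $\F_q^n$ through $\pi'$ avoiding $\pi$), and the latter must be obtained by an inclusion--exclusion between ``hyperplanes through $\pi'$'' and ``hyperplanes through $\pi$'', which is why each coefficient carries both a leading $q$-power term and several $\delta\zeta$-, $\delta$-, $\zeta$-correction terms. Getting the signs and the exact exponents right in all the sub-cases (in particular the delicate middle term, where $\pi'$ is $i$-singular of type $0$ and there are two geometrically distinct ways this occurs) is the calculational heart of the argument; I would relegate these evaluations to the appendix as the paper does, and here only record the double-counting skeleton: $\#\mathcal{T} = \sum_{\pi'} (\text{\# valid }\sigma\text{ through }\pi') \cdot \gamma_{(i',j-1,\cdot),(n-1,\zeta),(n-j,\zeta),\zeta}$ grouped by the type of $\pi'$, set equal to $\alpha_{(0,n-1,\zeta),(n,0)}\cdot\beta_{(i,j,\delta),(n,0),(n-1,\zeta)}/\,\alpha_{(i,j,\delta),(n,0)}\cdot\gamma_{(i,j,\delta),(n,0),(n-j,\zeta),0}$ --- on the left is $\gamma_{(i,j,\delta),(n,0),(n-j,\zeta),0}\,\beta_{(0,n-j,\zeta),(n,0),(n-1,\zeta)}$ after recognising the fraction as the count of type-$\zeta$ hyperplanes through the fixed $\tau$ --- and then simplifying both sides using the closed forms in Corollaries~\ref{cor:alphaorthogonal},~\ref{cor:betas} and the Pascal identities of Lemma~\ref{lem:pascalgeneral}.
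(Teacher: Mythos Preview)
Your overall strategy --- double count pairs $(\sigma,\tau)$ with $\sigma$ a non-singular hyperplane of type $\zeta$ and $\tau\subseteq\sigma$ a non-singular $(n-j)$-space of type $\zeta$ disjoint from $\pi$, and classify by the hyperplane $\pi'=\sigma\cap\pi$ of $\pi$ --- is exactly the paper's approach. However, your write-up contains one real gap and some confusions that would prevent the argument from going through as written.

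\textbf{The first count.} Your opening sentence defines $\mathcal{T}$ with ``$\sigma$ \dots\ containing a fixed $i$-singular $j$-space $\pi$'', which is wrong: since $\langle\pi,\tau\rangle=\F_q^n$, no hyperplane through $\tau$ can contain $\pi$. You partially catch this later, but never state the correct first count cleanly. It is simply: fix $\tau$ (there are $\gamma_{(i,j,\delta),(n,0),(n-j,\zeta),0}$ choices), then count non-singular hyperplanes of type $\zeta$ through $\tau$ (there are $\beta_{(0,n-j,\zeta),(n,0),(n-1,\zeta)}$ of them). No $\alpha$'s or quotients are needed here; the fraction you write at the end is not the right object.

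\textbf{The genuine gap: perp types in the middle term.} In the $i$-singular case, $\pi'$ is an $i$-singular $(j-1)$-space of type $0$ in $\F_q^n$ with $n$ odd and $(j-1)-i$ odd, so $\pi'$ carries a \emph{perp type} $\lambda\in\{\pm1\}$, and the number of non-singular type-$\zeta$ hyperplanes through $\pi'$ is $\beta_{(i,j-1,0,\lambda),(n,0),(n-1,\zeta)}$, which genuinely depends on $\lambda$ (Corollary~\ref{cor:betavariations}). To get a closed coefficient you must know how the $i$-singular hyperplanes of $\pi$ through $\overline\pi$ distribute over the two perp types. The paper invokes Corollary~\ref{cor:halvedorbits}: exactly half have perp type $+1$ and half have perp type $-1$. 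You never mention perp types, and your remark that ``there are two geometrically distinct ways this occurs'' misdiagnoses the issue --- it is not two kinds of $\pi'$ inside $\pi$, but two orbits of such $\pi'$ under the full isometry group of $\F_q^n$, distinguished only by perp type. Without this step the middle coefficient cannot be evaluated.

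\textbf{Minor points.} The three cases for $\pi'$ are: (a) $\pi'\not\supseteq\overline\pi$, forcing $\pi'$ to be $(i-1)$-singular of type $\delta$; (b) $\pi'\supseteq\overline\pi$ and $i$-singular (type $0$); (c) $\pi'\supseteq\overline\pi$ and $(i+1)$-singular (type $\delta$). Your description scrambles which case contains $\overline\pi$. Also, $\delta\in\{\pm1\}$ does not force $j>i$: the case $j=i$ (with $\delta=1$) is allowed and the paper explicitly notes the formula remains valid there. Finally, the coefficient evaluations (via Corollaries~\ref{cor:alphaorthogonal}, \ref{cor:betas}, \ref{cor:betavariations}) are carried out \emph{in} the proof of the lemma, not deferred to the appendix; what the appendix contains is the separate verification that the recursion combined with the induction hypothesis yields the closed form in Theorem~\ref{th:gammaquadratic}.
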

\begin{proof}
    Consider a quadratic form $f$ (necessarily of type $0$) on $\F^{n}_{q}$ and a fixed $i$-singular $j$-space $\pi$ of type $\delta$ with respect to it, with $j\geq1$. We denote the $i$-space that is the radical of $\pi$ (the vertex of the cone geometrically) by $\overline{\pi}$. We count the tuples $(\sigma,\tau)$ with $\sigma$ a non-singular hyperplane of type $\zeta$, and $\tau\subseteq\sigma$ a non-singular $(n-j)$-space of type $\zeta$ disjoint from $\pi$. Note that $\langle\pi,\tau\rangle=\F^{n}_{q}$ and thus that the type of the restriction of the quadratic form to $\langle\pi,\tau\rangle$ is $0$. On the one hand there are $\gamma_{(i,j,\delta),(n,0),(n-j,\zeta),0}$ choices for $\tau$, and for each of them $\beta_{(0,n-j,\zeta),(n,0),(n-1,\zeta)}$ corresponding tuples. Now, we look at the non-singular hyperplanes of type $\zeta$. Since we consider hyperplanes containing an $(n-j)$-space disjoint from $\pi$, none of these hyperplanes goes through $\pi$. Therefore, each non-singular hyperplane that contains an $(n-j)$-space disjoint from $\pi$ meets $\pi$ in a $(j-1)$-space. There are three possibilities for a $(j-1)$-space $\pi'$ of $\pi$, depending on whether $\pi'$ is $(i-1)$-, $i$- or $(i+1)$-singular. For each of those possibilities, we will first count the number of such $(j-1)$-spaces $\pi'$, then multiply this number by the number of non-singular hyperplanes $\sigma$ meeting $\pi$ exactly in $\pi'$, and finally, multiply by the number of choices for a subspace $\tau$ disjoint from $\pi'$ (and thus from $\pi$) in $\sigma$.
	\begin{itemize}
		\item We first look at a $(j-1)$-space $\pi'$ which meets $\overline{\pi}$ in precisely an $(i-1)$-space. Necessarily, $\pi'$ is $(i-1)$-singular. There are $\gs{j}{j-1}{q}-\gs{j-i}{j-i-1}{q}=\frac{q^{j}-q^{j-i}}{q-1}$ choices for $\pi'$. Through $\pi'$ there are $\beta_{(i-1,j-1,\delta),(n,0),(n-1,\zeta)}$ non-singular hyperplanes of type $\zeta$, of which $\beta_{(i,j,\delta),(n,0),(n-1,\zeta)}$ contain $\pi$. Therefore, for every $\pi'$ we find $\beta_{(i-1,j-1,\delta),(n,0),(n-1,\zeta)}-\beta_{(i,j,\delta),(n,0),(n-1,\zeta)}$ admissible hyperplanes $\sigma$. In each such hyperplane $\sigma$, we have $\gamma_{(i-1,j-1,\delta),(n-1,\zeta),(n-j,\zeta),\zeta}$ choices for $\tau$.
        \item Secondly, we look at a $(j-1)$-space $\pi'$ through $\overline{\pi}$ that is $i$-singular. The number of choices for $\pi'$ is precisely the number of non-singular $(j-i-1)$-spaces (necessarily of type 0) with respect to a non-singular quadratic form of type $\delta$ on a $(j-i)$-space, which is $\alpha_{(0,j-i-1,0),(j-i,\delta)}$. Such a space $\pi'$ has a perp type $\lambda$ with respect to $f$. It follows from Corollary \ref{cor:halvedorbits} that precisely half of the $i$-singular $(j-1)$-spaces in $\pi$ have perp type $1$ and half of them have perp type $-1$.
        \par Through $\pi'$ there are $\beta_{(i,j-1,0,\lambda),(n,0),(n-1,\zeta)}$ non-singular hyperplanes of type $\zeta$, of which $\beta_{(i,j,\delta),(n,0),(n-1,\zeta)}$ contain $\pi$. Therefore, for every $i$-singular $(j-1)$-space $\pi'$ with perp type $\lambda$ we find $\beta_{(i,j-1,0,\lambda),(n,0),(n-1,\zeta)}-\beta_{(i,j,\delta),(n,0),(n-1,\zeta)}$ admissible hyperplanes $\sigma$. In each such hyperplane $\sigma$, we have $\gamma_{(i,j-1,0),(n-1,\zeta),(n-j,\zeta),\zeta}$ choices for $\tau$.
        \item Finally, we look at a $(j-1)$-space $\pi'$ through $\overline{\pi}$ that is $(i+1)$-singular. The number of choices for $\pi'$ is the number of $1$-singular $(j-i-1)$-spaces (necessarily of type $\delta$) with respect to a non-singular quadratic form of type $\delta$ on a $(j-i)$-space, which is given by $\alpha_{(1,j-i-1,\delta),(j-i,\delta)}$. Through $\pi'$ there are $\beta_{(i+1,j-1,\delta),(n,0),(n-1,\zeta)}-\beta_{(i,j,\delta),(n,0),(n-1,\zeta)}$ non-singular hyperplanes $\sigma$ of type $\zeta$ not containing $\pi$. The number of choices for $\tau$ in $\sigma$ is $\gamma_{(i+1,j-1,\delta),(n-1,\zeta),(n-j,\zeta),\zeta}$.
	\end{itemize}
We find the following result, which we simplify using Corollaries \ref{cor:alphaorthogonal}, \ref{cor:betas} and \ref{cor:betavariations}:
    \begin{align*}
        &\gamma_{(i,j,\delta),(n,0),(n-j,\zeta),0}\beta_{(0,n-j,\zeta),(n,0),(n-1,\zeta)}\\
        &=\frac{q^{j}-q^{j-i}}{q-1}\left(\beta_{(i-1,j-1,\delta),(n,0),(n-1,\zeta)}-\beta_{(i,j,\delta),(n,0),(n-1,\zeta)}\right)\gamma_{(i-1,j-1,\delta),(n-1,\zeta),(n-j,\zeta),\zeta}\\
        &\qquad+\frac{1}{2}\alpha_{(0,j-i-1,0),(j-i,\delta)}\\&\qquad\quad\sum_{\lambda\in\{\pm1\}}\left(\beta_{(i,j-1,0,\lambda),(n,0),(n-1,\zeta)}-\beta_{(i,j,\delta),(n,0),(n-1,\zeta)}\right)\gamma_{(i,j-1,0),(n-1,\zeta),(n-j,\zeta),\zeta}\\
        &\qquad+\alpha_{(1,j-i-1,\delta),(j-i,\delta)}\left(\beta_{(i+1,j-1,\delta),(n,0),(n-1,\zeta)}-\beta_{(i,j,\delta),(n,0),(n-1,\zeta)}\right)\gamma_{(i+1,j-1,\delta),(n-1,\zeta),(n-j,\zeta),\zeta}\\
        &=q^{j-i}\frac{q^{i}-1}{q-1}\gamma_{(i-1,j-1,\delta),(n-1,\zeta),(n-j,\zeta),\zeta}\\&\qquad\qquad\left(\frac{1}{2}q^{\frac{1}{2}(n-j+i-1)}\left(q^{\frac{1}{2}(n-j-i+1)}+\delta\zeta\right)-\frac{1}{2}q^{\frac{1}{2}(n-j+i-1)}\left(q^{\frac{1}{2}(n-j-i-1)}+\delta\zeta\right)\right)\\
        &\qquad+\frac{1}{2}q^{\frac{1}{2}(j-i)-1}\left(q^{\frac{1}{2}(j-i)}-\delta\right)\gamma_{(i,j-1,0),(n-1,\zeta),(n-j,\zeta),\zeta}\\&\qquad\qquad\sum_{\lambda\in\{\pm1\}}\left(\frac{1}{2}q^{\frac{1}{2}(n-j+i-1)}\left(q^{\frac{1}{2}(n-j-i+1)}-\lambda\right)-\frac{1}{2}q^{\frac{1}{2}(n-j+i-1)}\left(q^{\frac{1}{2}(n-j-i-1)}+\delta\zeta\right)\right)\\
        &\qquad+\left(\frac{q^{j-i-1}-1}{q-1}+\delta q^{\frac{1}{2}(j-i)-1}\right)\gamma_{(i+1,j-1,\delta),(n-1,\zeta),(n-j,\zeta),\zeta}\\&\qquad\qquad\left(\frac{1}{2}q^{\frac{1}{2}(n-j+i+1)}\left(q^{\frac{1}{2}(n-j-i-1)}+\delta\zeta\right)-\frac{1}{2}q^{\frac{1}{2}(n-j+i-1)}\left(q^{\frac{1}{2}(n-j-i-1)}+\delta\zeta\right)\right)\\
        &=\frac{1}{2}q^{n-i-1}\left(q^{i}-1\right)\gamma_{(i-1,j-1,\delta),(n-1,\zeta),(n-j,\zeta),\zeta}\\
        &\qquad+\frac{1}{2}q^{\frac{1}{2}n-\frac{3}{2}}\left(q^{\frac{1}{2}(j-i)}-\delta\right)\left((q-1)q^{\frac{1}{2}(n-j-i-1)}-\delta\zeta\right)\gamma_{(i,j-1,0),(n-1,\zeta),(n-j,\zeta),\zeta}\\
        &\qquad+\frac{1}{2}q^{\frac{1}{2}(n-j+i-1)}\left(q^{j-i-1}-1+\delta\left(q-1\right)q^{\frac{1}{2}(j-i)-1}\right)\gamma_{(i+1,j-1,\delta),(n-1,\zeta),(n-j,\zeta),\zeta}\\&\qquad\qquad\left(q^{\frac{1}{2}(n-j-i-1)}+\delta\zeta\right)\\
        &=\frac{1}{2}q^{n-i-1}\left(q^{i}-1\right)\gamma_{(i-1,j-1,\delta),(n-1,\zeta),(n-j,\zeta),\zeta}\\
        &\qquad+\frac{1}{2}q^{\frac{1}{2}n-\frac{3}{2}}\left((q-1)q^{\frac{1}{2}(n-1)-i}-\delta\zeta q^{\frac{1}{2}(j-i)}-\delta(q-1)q^{\frac{1}{2}(n-j-i-1)}+\zeta\right)\\&\qquad\qquad\gamma_{(i,j-1,0),(n-1,\zeta),(n-j,\zeta),\zeta}\\
        &\qquad+\frac{1}{2}q^{\frac{1}{2}(n-j+i-1)}\gamma_{(i+1,j-1,\delta),(n-1,\zeta),(n-j,\zeta),\zeta}\\&\qquad\qquad\left(\left(q^{j-i-1}-1\right)q^{\frac{1}{2}(n-j-i-1)}+\delta\left(q-1\right)q^{\frac{1}{2}n-i-\frac{3}{2}}+\delta\zeta\left(q^{j-i-1}-1\right)+\zeta\left(q-1\right)q^{\frac{1}{2}(j-i)-1}\right).
    \end{align*}
    Note that this equality is also valid if $i=j$ (and thus $\delta=1)$. Then, only one of the three possible cases for a $(j-1)$-space in $\pi$ occurs. But the cases that do not appear, indeed correspond to terms with a factor $0$ in the given equation.
\end{proof}

\begin{lemma}\label{lem:recursion4}
    Let $n,j,i$ be integers with $0\leq i\leq j$ and $i+j\leq n$, and with $n$ and $j$ odd, and $i$ even. Let $\lambda,\zeta\in\{\pm1\}$. If $j\geq1$, we have
    \begin{align*}
        &\gamma_{(i,j,0,\lambda),(n,0),(n-j,\zeta),0}\beta_{(0,n-j,\zeta),(n,0),(n-1,\zeta)}\\
        &=\frac{1}{2}q^{n-i-1}\left(q^{i}-1\right)\gamma_{(i-1,j-1,0),(n-1,\zeta),(n-j,\zeta),\zeta}\\
        &\qquad+\frac{1}{4}q^{\frac{1}{2}n-\frac{3}{2}}\sum_{\kappa\in\{\pm1\}}\gamma_{(i,j-1,\kappa),(n-1,\zeta),(n-j,\zeta),\zeta}\\&\qquad\qquad\left((q-1)q^{\frac{1}{2}(n-1)-i}+\lambda q^{\frac{1}{2}(j-i-1)}+\zeta\kappa q^{\frac{1}{2}(j-i+1)}+\kappa(q-1)q^{\frac{1}{2}(n-j-i)}+\kappa\lambda+\zeta q\right)\\
        &\qquad+\frac{1}{2}q^{\frac{1}{2}(n-j+i)-1}\left(q^{j-i-1}-1\right)\left(q^{\frac{1}{2}(n-j-i)}-\lambda\right)\gamma_{(i+1,j-1,0),(n-1,\zeta),(n-j,\zeta),\zeta}\:.
    \end{align*}
\end{lemma}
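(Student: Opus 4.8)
This is the counterpart of Lemma~\ref{lem:recursion3} for the case $n,j$ odd and $i$ even, so I would mirror that proof almost line for line, double counting the set of tuples $(\sigma,\tau)$ where $\sigma$ is a non-singular hyperplane of type $\zeta$ and $\tau\subseteq\sigma$ is a non-singular $(n-j)$-space of type $\zeta$ disjoint from the fixed $i$-singular $j$-space $\pi$ of perp type $\lambda$. On the one hand, such a $\tau$ spans all of $\F_q^n$ with $\pi$ (since $\dim\tau+\dim\pi=n$ and they are disjoint), hence $\langle\pi,\tau\rangle$ has type $0$; the number of choices for $\tau$ is $\gamma_{(i,j,0,\lambda),(n,0),(n-j,\zeta),0}$ and for each of them $\beta_{(0,n-j,\zeta),(n,0),(n-1,\zeta)}$ hyperplanes $\sigma$ complete the tuple, giving the left-hand side. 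On the other hand, since $\sigma$ contains a space disjoint from $\pi$, no admissible $\sigma$ contains $\pi$, so $\sigma\cap\pi$ is a $(j-1)$-space $\pi'$, and I split by whether $\pi'$ meets the radical $\overline{\pi}$ in an $(i-1)$-space (so $\pi'$ is $(i-1)$-singular), contains $\overline{\pi}$ and is $i$-singular, or contains $\overline{\pi}$ and is $(i+1)$-singular.

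\textbf{Counting in each case.} For the first case, there are $\gs{j}{j-1}{q}-\gs{j-i}{j-i-1}{q}=q^{j-i}\frac{q^i-1}{q-1}$ choices for $\pi'$, each carrying $\beta_{(i-1,j-1,0,\lambda),(n,0),(n-1,\zeta)}-\beta_{(i,j,0,\lambda),(n,0),(n-1,\zeta)}$ admissible hyperplanes (subtracting off those that contain $\pi$), and inside each such $\sigma$ there are $\gamma_{(i-1,j-1,0),(n-1,\zeta),(n-j,\zeta),\zeta}$ choices for $\tau$; note that here the perp type drops out, matching the first summand. For the third case, the number of $(i+1)$-singular $(j-1)$-spaces through $\overline{\pi}$ equals $\alpha_{(1,j-i-1,\delta),(j-i,\delta)}$ where $\delta$ is determined by the perp type; since $j-i$ is odd with the restriction of $Q$ to a complement of $\overline{\pi}$ non-singular of some type, and I need to translate ``perp type $\lambda$ of $\pi$'' into the type of that $(j-i)$-space --- that is the bookkeeping that produces the $(q^{j-i-1}-1)(q^{\frac12(n-j-i)}-\lambda)$ factor after using Corollaries~\ref{cor:alphaorthogonal}, \ref{cor:betas}, \ref{cor:betavariations}. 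The delicate case is the middle one: an $i$-singular $(j-1)$-space $\pi'$ through $\overline{\pi}$ carries a perp type $\kappa$ with respect to $f$, and by Corollary~\ref{cor:halvedorbits} exactly half of the $i$-singular $(j-1)$-spaces in $\pi$ have each perp type, so I sum over $\kappa\in\{\pm1\}$ with a factor $\tfrac12\alpha_{(0,j-i-1,0),(j-i,\delta)}$; then $\beta_{(i,j-1,0,\kappa),(n,0),(n-1,\zeta)}-\beta_{(i,j,0,\lambda),(n,0),(n-1,\zeta)}$ counts the admissible hyperplanes and $\gamma_{(i,j-1,\kappa),(n-1,\zeta),(n-j,\zeta),\zeta}$ the choices of $\tau$. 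Assembling these three contributions and simplifying the $\psi^{\pm}$-products with the corollaries yields the claimed identity.

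\textbf{Expected obstacle.} The conceptual content is light; the real work is the algebraic simplification of the three bracketed coefficients into the compact closed forms in the statement, and --- more subtly --- getting every type/perp-type correctly: deciding for the $(i+1)$-singular and the $i$-singular $(j-1)$-subspaces which $\delta$ or $\kappa$ the form on a $\overline{\pi}$-complement carries as a function of $\lambda$, since $n-j$ is even here and the ``perp type'' of $\pi$ lives on $\pi^\perp$ inside $\F_q^n$. I would track this exactly as in the analogous step of Lemma~\ref{lem:recursion3}, using that $H^\perp$ for a hyperplane $H$ of $\pi$ through $\overline{\pi}$ is an $i$-singular $(n-j-i+1)$-space whose type is governed by $(H\cap\mu)^\perp$ for $\mu$ a complement of $\overline{\pi}$ in $\pi$ (the mechanism of Corollary~\ref{cor:halvedorbits}). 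Finally, as in Lemma~\ref{lem:recursion3}, I would remark that the identity persists in the degenerate boundary case $i=j$ (forcing the middle and last cases to vanish), because the terms that geometrically do not occur carry an explicit factor $0$ (here $q^{j-i-1}-1$) in the stated formula.
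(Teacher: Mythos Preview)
Your overall double-counting framework is right and matches the paper's, but you have transplanted the type/perp-type bookkeeping from Lemma~\ref{lem:recursion3} without adjusting for the changed parity of $j-i$, and this corrupts the second and third cases.

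Here $j-i$ is odd, so the quotient $\pi/\overline{\pi}$ is a non-singular space of type~$0$ (there is no parameter ``$\delta$'' to speak of). Consequently an $i$-singular $(j-1)$-space $\pi'\supset\overline{\pi}$ has $(j-1-i)$-dimensional non-singular part, which is \emph{even}-dimensional, so $\pi'$ carries a \emph{type} $\kappa\in\{\pm1\}$, not a perp type. The number of such $\pi'$ of type $\kappa$ is $\alpha_{(0,j-i-1,\kappa),(j-i,0)}$, which genuinely depends on $\kappa$; there is no factor $\tfrac12$, and Corollary~\ref{cor:halvedorbits} (whose hypothesis is that both $j$ and $i$ are odd) does not apply. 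The hyperplane count is then $\beta_{(i,j-1,\kappa),(n,0),(n-1,\zeta)}-\beta_{(i,j,0,\lambda),(n,0),(n-1,\zeta)}$, with $\kappa$ in the type slot, not a perp-type slot. Your $\beta_{(i,j-1,0,\kappa),\dots}$ is ill-formed. Similarly, in the third case the $(i+1)$-singular $(j-1)$-space $\pi'$ has type $0$ (since $j-1-(i+1)$ is odd) and inherits perp type $\lambda$; the count is $\alpha_{(1,j-i-1,0),(j-i,0)}$ and the $\beta$-difference is $\beta_{(i+1,j-1,0,\lambda),(n,0),(n-1,\zeta)}-\beta_{(i,j,0,\lambda),(n,0),(n-1,\zeta)}$. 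Your attempt to ``translate perp type $\lambda$ into the type of that $(j-i)$-space'' is misguided: the perp type $\lambda$ lives on $\pi^\perp$ in the ambient space and enters only through the $\beta$-factors, not through the internal structure of $\pi$.

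Finally, the boundary cases here are $i=0$ (first bullet vanishes) and $i=j-1$ (third bullet vanishes); $i=j$ is impossible since $i$ is even and $j$ is odd.
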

\begin{proof}
    Consider a quadratic form $f$ (necessarily of type $0$) on $\F^{n}_{q}$ and a fixed $i$-singular $j$-space $\pi$ of type $0$ and perp type $\lambda$ with respect to it, with $j\geq1$. We denote the $i$-space that is the radical of $\pi$ (the vertex of the cone geometrically) by $\overline{\pi}$. We count the tuples $(\sigma,\tau)$ with $\sigma$ a non-singular hyperplane of type $\zeta$, and $\tau\subseteq\sigma$ a non-singular $(n-j)$-space of type $\zeta$ disjoint from $\pi$. Note that $\langle\pi,\tau\rangle=\F^{n}_{q}$ and thus that the type of the restriction of the quadratic form to $\langle\pi,\tau\rangle$ is $0$. On the one hand there are $\gamma_{(i,j,0,\lambda),(n,0),(n-j,\zeta),0}$ choices for $\tau$, and for each of them $\beta_{(0,n-j,\zeta),(n,0),(n-1,\zeta)}$ corresponding tuples. Now, we look at the non-singular hyperplanes of type $\zeta$. Since we consider hyperplanes containing an $(n-j)$-space disjoint from $\pi$, none of these hyperplanes goes through $\pi$. Therefore, each non-singular hyperplane that contains an $(n-j)$-space disjoint from $\pi$ meets $\pi$ in a $(j-1)$-space. There are three possibilities for a $(j-1)$-space $\pi'$ of $\pi$, depending on whether $\pi'$ is $(i-1)$-, $i$- or $(i+1)$-singular. For each of those possibilities, we will first count the number of such $(j-1)$-spaces $\pi'$, then multiply this number by the number of non-singular hyperplanes $\sigma$ meeting $\pi$ exactly in $\pi'$, and finally, multiply by the number of choices for a subspace $\tau$ disjoint from $\pi'$ (and thus from $\pi$) in $\sigma$.
	\begin{itemize}
		\item We first look at a $(j-1)$-space $\pi'$ which meets $\overline{\pi}$ in precisely an $(i-1)$-space. Necessarily, $\pi'$ is $(i-1)$-singular. There are $\gs{j}{j-1}{q}-\gs{j-i}{j-i-1}{q}=\frac{q^{j}-q^{j-i}}{q-1}$ choices for $\pi'$. Note that $\pi'$ has type 0 and perp type $\lambda$. Through $\pi'$ there are $\beta_{(i-1,j-1,0,\lambda),(n,0),(n-1,\zeta)}$ non-singular hyperplanes of type $\zeta$, of which $\beta_{(i,j,0,\lambda),(n,0),(n-1,\zeta)}$ contain $\pi$. Therefore, for every $\pi'$ we find $\beta_{(i-1,j-1,0,\lambda),(n,0),(n-1,\zeta)}-\beta_{(i,j,0,\lambda),(n,0),(n-1,\zeta)}$ admissible hyperplanes $\sigma$. In each such hyperplane $\sigma$, we have $\gamma_{(i-1,j-1,0),(n-1,\zeta),(n-j,\zeta),\zeta}$ choices for $\tau$.
        \item Secondly, we look at a $(j-1)$-space $\pi'$ through $\overline{\pi}$ that is $i$-singular. Such a space $\pi'$ has a type $\kappa\in\{\pm1\}$. The number of choices for $\pi'$ depends on $\kappa$, and is precisely the number of non-singular $(j-i-1)$-spaces of type $\kappa$ with respect to a non-singular quadratic form of type 0 on a $(j-i)$-space, which is $\alpha_{(0,j-i-1,\kappa),(j-i,0)}$. Through $\pi'$ there are $\beta_{(i,j-1,\kappa),(n,0),(n-1,\zeta)}$ non-singular hyperplanes of type $\zeta$, of which $\beta_{(i,j,0,\lambda),(n,0),(n-1,\zeta)}$ contain $\pi$. Therefore, for every $\pi'$ with perp type $\lambda$ we find $\beta_{(i,j-1,\kappa),(n,0),(n-1,\zeta)}-\beta_{(i,j,0,\lambda),(n,0),(n-1,\zeta)}$ admissible hyperplanes $\sigma$. In each such hyperplane $\sigma$, we have $\gamma_{(i,j-1,\kappa),(n-1,\zeta),(n-j,\zeta),\zeta}$ choices for $\tau$.
        \item Finally, we look at a $(j-1)$-space $\pi'$ through $\overline{\pi}$ that is $(i+1)$-singular. Note that $\pi'$ has type 0 and perp type $\lambda$. The number of choices for $\pi'$ is the number of $1$-singular $(j-i-1)$-spaces (necessarily of type 0 and perp type $\lambda$) with respect to a non-singular quadratic form of type $0$ on a $(j-i)$-space, which is given by $\alpha_{(1,j-i-1,0),(j-i,0)}$. Through $\pi'$ there are $\beta_{(i+1,j-1,0,\lambda),(n,0),(n-1,\zeta)}-\beta_{(i,j,0,\lambda),(n,0),(n-1,\zeta)}$ non-singular hyperplanes $\sigma$ of type $\zeta$ not containing $\pi$. The number of choices for $\tau$ in $\sigma$ is $\gamma_{(i+1,j-1,0),(n-1,\zeta),(n-j,\zeta),\zeta}$.
	\end{itemize}
   	We find the following result, which we simplify using Corollaries \ref{cor:alphaorthogonal}, \ref{cor:betas} and \ref{cor:betavariations}:
    \begin{align*}
        &\gamma_{(i,j,0,\lambda),(n,0),(n-j,\zeta),0}\beta_{(0,n-j,\zeta),(n,0),(n-1,\zeta)}\\
        &=\frac{q^{j}-q^{j-i}}{q-1}\gamma_{(i-1,j-1,0),(n-1,\zeta),(n-j,\zeta),\zeta}\left(\beta_{(i-1,j-1,0,\lambda),(n,0),(n-1,\zeta)}-\beta_{(i,j,0,\lambda),(n,0),(n-1,\zeta)}\right)\\
        &\qquad+\sum_{\kappa\in\{\pm1\}}\alpha_{(0,j-i-1,\kappa),(j-i,0)}\gamma_{(i,j-1,\kappa),(n-1,\zeta),(n-j,\zeta),\zeta}\\&\qquad\qquad\qquad\left(\beta_{(i,j-1,\kappa),(n,0),(n-1,\zeta)}-\beta_{(i,j,0,\lambda),(n,0),(n-1,\zeta)}\right)\\
        &\qquad+\alpha_{(1,j-i-1,0),(j-i,0)}\gamma_{(i+1,j-1,0),(n-1,\zeta),(n-j,\zeta),\zeta}\\&\qquad\qquad\qquad\left(\beta_{(i+1,j-1,0,\lambda),(n,0),(n-1,\zeta)}-\beta_{(i,j,0,\lambda),(n,0),(n-1,\zeta)}\right)\\
        &=q^{j-i}\frac{q^{i}-1}{q-1}\gamma_{(i-1,j-1,0),(n-1,\zeta),(n-j,\zeta),\zeta}\\&\qquad\qquad\left(\frac{1}{2}q^{\frac{1}{2}(n-j+i)-1}\left(q^{\frac{1}{2}(n-j-i)+1}-\lambda\right)-\frac{1}{2}q^{\frac{1}{2}(n-j+i)-1}\left(q^{\frac{1}{2}(n-j-i)}-\lambda\right)\right)\\
        &\qquad+\sum_{\kappa\in\{\pm1\}}\frac{1}{2}q^{\frac{1}{2}(j-i-1)}\left(q^{\frac{1}{2}(j-i-1)}+\kappa\right)\gamma_{(i,j-1,\kappa),(n-1,\zeta),(n-j,\zeta),\zeta}\\&\qquad\qquad\left(\frac{1}{2}q^{\frac{1}{2}(n-j+i)}\left(q^{\frac{1}{2}(n-j-i)}+\kappa\zeta\right)-\frac{1}{2}q^{\frac{1}{2}(n-j+i)-1}\left(q^{\frac{1}{2}(n-j-i)}-\lambda\right)\right)\\
        &\qquad+\frac{q^{j-i-1}-1}{q-1}\gamma_{(i+1,j-1,0),(n-1,\zeta),(n-j,\zeta),\zeta}\\&\qquad\qquad\left(\frac{1}{2}q^{\frac{1}{2}(n-j+i)}\left(q^{\frac{1}{2}(n-j-i)}-\lambda\right)-\frac{1}{2}q^{\frac{1}{2}(n-j+i)-1}\left(q^{\frac{1}{2}(n-j-i)}-\lambda\right)\right)\\
        &=\frac{1}{2}q^{n-i-1}\left(q^{i}-1\right)\gamma_{(i-1,j-1,0),(n-1,\zeta),(n-j,\zeta),\zeta}\\
        &\qquad+\frac{1}{4}q^{\frac{1}{2}n-\frac{3}{2}}\sum_{\kappa\in\{\pm1\}}\gamma_{(i,j-1,\kappa),(n-1,\zeta),(n-j,\zeta),\zeta}\left(q^{\frac{1}{2}(j-i-1)}+\kappa\right)\left((q-1)q^{\frac{1}{2}(n-j-i)}+\zeta\kappa q+\lambda\right)\\
        &\qquad+\frac{1}{2}q^{\frac{1}{2}(n-j+i)-1}\left(q^{j-i-1}-1\right)\left(q^{\frac{1}{2}(n-j-i)}-\lambda\right)\gamma_{(i+1,j-1,0),(n-1,\zeta),(n-j,\zeta),\zeta}\\
        &=\frac{1}{2}q^{n-i-1}\left(q^{i}-1\right)\gamma_{(i-1,j-1,0),(n-1,\zeta),(n-j,\zeta),\zeta}\\
        &\qquad+\frac{1}{4}q^{\frac{1}{2}n-\frac{3}{2}}\sum_{\kappa\in\{\pm1\}}\gamma_{(i,j-1,\kappa),(n-1,\zeta),(n-j,\zeta),\zeta}\\&\qquad\qquad\left((q-1)q^{\frac{1}{2}(n-1)-i}+\lambda q^{\frac{1}{2}(j-i-1)}+\zeta\kappa q^{\frac{1}{2}(j-i+1)}+\kappa(q-1)q^{\frac{1}{2}(n-j-i)}+\kappa\lambda+\zeta q\right)\\
        &\qquad+\frac{1}{2}q^{\frac{1}{2}(n-j+i)-1}\left(q^{j-i-1}-1\right)\left(q^{\frac{1}{2}(n-j-i)}-\lambda\right)\gamma_{(i+1,j-1,0),(n-1,\zeta),(n-j,\zeta),\zeta}\:.
    \end{align*}
   Note that this equality is also valid if $i=j-1$ and if $i=0$. Then (in both instances), only two of the three possible cases for a $(j-1)$-space in $\pi$ occur. But the cases that do not appear, indeed correspond to terms with a factor $0$ in the given equation.
\end{proof}

\begin{lemma}\label{lem:recursion1}
    Let $n,j,i$ be integers with $0\leq i\leq j$ and $i+j\leq n$, and with $n$ and $j-i$ both even. Let $\delta,\eps\in\{\pm1\}$ and $\zeta\in\{0,\pm1\}$. 
    If $j\geq1$, we have
    \begin{align*}
        &\gamma_{(i,j,\delta),(n,\eps),(n-j,\zeta),\eps}\beta_{(0,n-j,\zeta),(n,\eps),(n-1,0)}\\
        &=\left(q^{i}-1\right)q^{n-i-1}\gamma_{(i-1,j-1,\delta),(n-1,0),(n-j,\zeta),0}\\
        &\qquad+\frac{1}{2}q^{\frac{1}{2}n-2}\left(q^{\frac{1}{2}(j-i)}-\delta\right)\sum_{\nu\in\{\pm1\}}\left((q-1)q^{\frac{1}{2}(n-j-i)}+\nu q+\delta\eps\right)\gamma_{(i,j-1,0,\nu),(n-1,0),(n-j,\zeta),0}\\
        &\qquad+q^{\frac{1}{2}(n-j+i)-1}\left(q^{\frac{1}{2}(j-i-1-\delta)}+1\right)\left(q^{\frac{1}{2}(j-i-1+\delta)}-1\right)\left(q^{\frac{1}{2}(n-j-i)}-\delta\eps\right)\\&\qquad\qquad\qquad\gamma_{(i+1,j-1,\delta),(n-1,0),(n-j,\zeta),0}\:.
    \end{align*}
\end{lemma}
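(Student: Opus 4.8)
The plan is to carry out the double-counting scheme of Lemmas \ref{lem:recursion3} and \ref{lem:recursion4}, adapted to the parities $n$ even and $j-i$ even. Fix a quadratic form $f$ of type $\eps$ on $\F^{n}_{q}$ and an $i$-singular $j$-space $\pi$ of type $\delta$ with radical $\overline{\pi}$. Since $n-1$ is odd, every non-singular hyperplane has type $0$, and whenever $\tau$ is a non-singular $(n-j)$-space disjoint from $\pi$ we have $\langle\pi,\tau\rangle=\F^{n}_{q}$, so its type is $\eps$; this is why $\eta=\eps$ is forced. I would count the set of pairs $(\sigma,\tau)$, where $\sigma$ is a non-singular hyperplane and $\tau\subseteq\sigma$ is a non-singular $(n-j)$-space of type $\zeta$ disjoint from $\pi$. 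Counting by $\tau$ first yields the left-hand side $\gamma_{(i,j,\delta),(n,\eps),(n-j,\zeta),\eps}\,\beta_{(0,n-j,\zeta),(n,\eps),(n-1,0)}$.

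For the other count, since $\sigma$ contains an $(n-j)$-space disjoint from $\pi$ it cannot contain $\pi$, hence $\pi'=\sigma\cap\pi$ is a $(j-1)$-space, and I would distinguish three cases according to whether $\pi'\cap\overline{\pi}$ has dimension $i-1$, $i$ or $i+1$, i.e.\ whether $\pi'$ is $(i-1)$-, $i$- or $(i+1)$-singular. If $\pi'$ is $(i-1)$-singular (necessarily of type $\delta$) there are $\gs{j}{j-1}{q}-\gs{j-i}{j-i-1}{q}=q^{j-i}\tfrac{q^{i}-1}{q-1}$ choices for $\pi'$, through each of which pass $\beta_{(i-1,j-1,\delta),(n,\eps),(n-1,0)}-\beta_{(i,j,\delta),(n,\eps),(n-1,0)}$ admissible hyperplanes $\sigma$, and in each such $\sigma$ there are $\gamma_{(i-1,j-1,\delta),(n-1,0),(n-j,\zeta),0}$ choices for $\tau$ (no perp type enters since $(n-1)(j-i)$ is even). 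The $(i+1)$-singular case is completely analogous, with $\alpha_{(1,j-i-1,\delta),(j-i,\delta)}$ choices for $\pi'$ (the $1$-singular $(j-i-1)$-spaces in the base of $\pi$), multiplicity $\beta_{(i+1,j-1,\delta),(n,\eps),(n-1,0)}-\beta_{(i,j,\delta),(n,\eps),(n-1,0)}$, and the term $\gamma_{(i+1,j-1,\delta),(n-1,0),(n-j,\zeta),0}$.

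The subtle case is when $\pi'$ is $i$-singular: there are $\alpha_{(0,j-i-1,0),(j-i,\delta)}$ such $\pi'$, all of type $0$, but now $(j-1)-i$ and $n-1$ are both odd, so $\pi'$ picks up a \emph{perp type} $\nu\in\{\pm1\}$ inside $\sigma$ even though it carries none with respect to $f$ on $\F^{n}_{q}$, and the number of $\tau$'s disjoint from $\pi'$ in $\sigma$ is $\gamma_{(i,j-1,0,\nu),(n-1,0),(n-j,\zeta),0}$, which genuinely depends on $\nu$. For a fixed such $\pi'$ one must therefore count, for each $\nu$, the non-singular hyperplanes $\sigma$ with $\sigma\cap\pi=\pi'$ in which $\pi'$ has perp type $\nu$: these are $\beta^{\nu}_{(i,j-1,0),(n,\eps),(n-1,0)}$ minus the non-singular hyperplanes through $\pi$ in which $\pi'$ has perp type $\nu$. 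The essential input — obtainable by a halving argument in the spirit of Lemma \ref{lem:halvedorbits} (non-singular hyperplanes through $\pi$ correspond to anisotropic points of $\pi^{\perp}$, whose two perp-type classes are equinumerous because the norm square-classes split evenly) — is that exactly half of the $\beta_{(i,j,\delta),(n,\eps),(n-1,0)}$ non-singular hyperplanes through $\pi$ give $\pi'$ each perp type, so that the $i$-singular case contributes $\alpha_{(0,j-i-1,0),(j-i,\delta)}\sum_{\nu\in\{\pm1\}}\bigl(\beta^{\nu}_{(i,j-1,0),(n,\eps),(n-1,0)}-\tfrac12\beta_{(i,j,\delta),(n,\eps),(n-1,0)}\bigr)\gamma_{(i,j-1,0,\nu),(n-1,0),(n-j,\zeta),0}$.

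Adding the three contributions and substituting the closed forms of Corollaries \ref{cor:alphaorthogonal}, \ref{cor:betas} and \ref{cor:betavariations} yields the stated identity after simplification: the $(i-1)$-singular block collapses using $\beta_{(i-1,j-1,\delta),(n,\eps),(n-1,0)}-\beta_{(i,j,\delta),(n,\eps),(n-1,0)}=(q-1)q^{n-j-1}$, and the $(i+1)$-singular coefficient $q^{j-i-1}-1+\delta(q-1)q^{\frac12(j-i)-1}$ factorises as $\bigl(q^{\frac12(j-i-1-\delta)}+1\bigr)\bigl(q^{\frac12(j-i-1+\delta)}-1\bigr)$. As in the other recursion lemmas one finally verifies the boundary cases $i=j$ (so $\delta=1$; only the $(i-1)$-singular block survives) and $i=0$ (only the $i$- and $(i+1)$-singular blocks survive), which remain valid since the missing terms carry a vanishing factor. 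I expect the genuine difficulty to lie precisely in the $i$-singular case — recognising the perp type that exists only inside $\sigma$, tracking its effect on $\gamma_{(i,j-1,0,\nu),(n-1,0),(n-j,\zeta),0}$, and establishing the even split — together with the mechanical but lengthy consolidation of the three blocks, of the kind this paper relegates to its appendices.
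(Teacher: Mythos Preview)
Your overall double-counting scheme matches the paper's proof exactly: count pairs $(\sigma,\tau)$ two ways, split the hyperplane count according to whether $\pi'=\sigma\cap\pi$ is $(i-1)$-, $i$-, or $(i+1)$-singular, and note that in the middle case the perp type $\nu$ of $\pi'$ inside $\sigma$ controls the number of $\tau$'s. The $(i-1)$- and $(i+1)$-singular blocks, the boundary remarks, and the algebraic simplification are all as in the paper.

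The one point where your proposal diverges from the paper is the justification of the ``even split'' in the $i$-singular case, and here your parenthetical argument does not work as stated. You identify non-singular hyperplanes $\sigma\supseteq\pi$ with anisotropic points $\sigma^{\perp}\subseteq\pi^{\perp}$ and appeal to an equal split of \emph{their} perp-type classes; but what must be split evenly is the perp type of $\pi'$ inside $\sigma$, i.e.\ the type of $\pi'^{\perp}\cap\sigma$, which is not the same invariant of $\sigma^{\perp}$ that Lemma~\ref{lem:halvedorbits} governs. The paper does \emph{not} argue by halving. Instead it shows, via a geometric correspondence, that a non-singular hyperplane $\sigma\supseteq\pi$ with $\pi'$ of perp type $\nu$ determines (and is determined up to a factor $\beta_{(i,n-i-1,0),(n,\eps),(n-1,0)}$) an $i$-singular $(n-j)$-space of type $\nu$ through $\pi'^{\perp}\cap\pi$ inside $\pi'^{\perp}$; passing to the quotient by $\overline{\pi}$ this count becomes $\beta_{(0,1,0,\delta\eps),(n-j-i+1,0),(n-j-i,\nu)}$. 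The subtracted term is therefore
\[
\beta_{(0,1,0,\delta\eps),(n-j-i+1,0),(n-j-i,\nu)}\,\beta_{(i,n-i-1,0),(n,\eps),(n-1,0)}
=\tfrac{1}{2}q^{\frac{1}{2}(n-j-i)-1}\bigl(q^{\frac{1}{2}(n-j-i)}-\delta\eps\bigr)\cdot q^{i},
\]
which by Corollary~\ref{cor:betavariations} is indeed independent of $\nu$ and equals $\tfrac{1}{2}\beta_{(i,j,\delta),(n,\eps),(n-1,0)}$. So your halving \emph{claim} is correct, but it emerges in the paper as a consequence of an explicit computation rather than from an a priori symmetry; if you want to keep a direct halving argument you would need to supply the missing link between the type of $\pi'^{\perp}\cap\sigma$ and a genuine two-orbit situation for $\sigma^{\perp}$.
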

\begin{proof}
    Consider a quadratic form $f$ of type $\eps$ on $\F^{n}_{q}$ and a fixed $i$-singular $j$-space $\pi$ of type $\delta$ with respect to it, with $j\geq1$. We denote the $i$-space that is the radical of $\pi$ (the vertex of the cone geometrically) by $\overline{\pi}$. We count the tuples $(\sigma,\tau)$ with $\sigma$ a non-singular hyperplane, and $\tau\subseteq\sigma$ a non-singular $(n-j)$-space of type $\zeta$ disjoint from $\pi$. Note that $\sigma$ is necessarily of parabolic type since $n$ is even. Also, note that $\langle\pi,\tau\rangle=\F^{n}_{q}$ and thus that the type of the restriction of the quadratic form to $\langle\pi,\tau\rangle$ is $\eps$. On the one hand there are $\gamma_{(i,j,\delta),(n,\eps),(n-j,\zeta),\eps}$ choices for $\tau$, and for each of them $\beta_{(0,n-j,\zeta),(n,\eps),(n-1,0)}$ corresponding tuples.
    Now, we look at the non-singular hyperplanes. Since we consider hyperplanes $\sigma$ containing an $(n-j)$-space disjoint from $\pi$, none of these hyperplanes goes through $\pi$. Therefore, each non-singular hyperplane that contains an $(n-j)$-space disjoint from $\pi$ meets $\pi$ in a $(j-1)$-space. There are three possibilities for such a $(j-1)$-space $\pi'$ of $\pi$, depending on whether $\pi'$ is $(i-1)$-, $i$- or $(i+1)$-singular.
	\begin{itemize}
		\item We first look at a $(j-1)$-space $\pi'$ which meets $\overline{\pi}$ in precisely an $(i-1)$-space. Necessarily, $\pi'$ is $(i-1)$-singular. There are $\gs{j}{j-1}{q}-\gs{j-i}{j-i-1}{q}=\frac{q^{j}-q^{j-i}}{q-1}$ choices for $\pi'$. Through $\pi'$ there are $\beta_{(i-1,j-1,\delta),(n,\eps),(n-1,0)}$ non-singular hyperplanes, of which $\beta_{(i,j,\delta),(n,\eps),(n-1,0)}$ contain $\pi$. Therefore, for every $\pi'$ we find $\beta_{(i-1,j-1,\delta),(n,\eps),(n-1,0)}-\beta_{(i,j,\delta),(n,\eps),(n-1,0)}$ admissible hyperplanes $\sigma$. In each such hyperplane $\sigma$, we have  $\gamma_{(i-1,j-1,\delta),(n-1,0),(n-j,\zeta),0}$ choices for $\tau$.
        \item Secondly, we look at a $(j-1)$-space $\pi'$ through $\overline{\pi}$ that is $i$-singular. The number of choices for $\pi'$ is precisely the number of non-singular $(j-i-1)$-spaces (necessarily of type 0) with respect to a non-singular quadratic form of type $\delta$ on a $(j-i)$-space, which is $\alpha_{(0,j-i-1,0),(j-i,\delta)}$. Given a hyperplane $\sigma$ through $\pi'$ not containing $\pi$, the number of choices for $\tau$ depends on the perp type $\nu$ of $\pi'$ with respect to the quadratic form $f\vert_{\sigma}$: it equals $\gamma_{(i,j-1,0,\nu),(n-1,0),(n-j,\zeta),0}$.
        \par Therefore, we need to count the number of non-singular hyperplanes $\sigma$ through $\pi'$, not containing $\pi$, such that $\pi'$ has perp type $\nu$ with respect to the restricted quadratic form. 
        The number of non-singular hyperplanes through $\pi'$ such that $\pi'$ has perp type $\nu$ with respect to the restricted quadratic form equals $\beta^{\nu}_{(i,j-1,0),(n,\eps),(n-1,0)}$ by Definition \ref{def:betaquadraticodd}. To find the correct number we now need to subtract the number of such hyperplanes containing $\pi$.
        \par We know that $\pi^{\perp}$ is an $i$-singular $(n-j)$-space of type $\delta\eps$ with radical $\overline{\pi}$ and that $\pi'^{\perp}$ is an $i$-singular $(n-j+1)$-space of type 0 with radical $\overline{\pi}$, containing $\pi^\perp$. Note that $\langle\pi,\pi^\perp\rangle=\overline{\pi}^{\perp}=\langle\pi',\pi'^\perp\rangle$ is an $i$-singular $(n-i)$-space.
        \par Let $\sigma$ be a non-singular (necessarily parabolic) hyperplane through $\pi$ in which $\pi'$ has perp type $\nu$, and let $\perp'$ be the orthogonal polarity on $\sigma$ corresponding to $f\vert_{\sigma}$. Note that $\sigma$ does not contain $\langle\pi,\pi^\perp\rangle=\langle\pi',\pi'^\perp\rangle$, since there cannot be an $i$-singular $(n-i)$-space with respect to a non-singular form in $\F^{n-1}_{q}$. Since $\sigma$ contains $\pi'$, the subspace $\sigma\cap\pi'^\perp$ must have dimension $n-j$, and since $\sigma\supseteq\pi$, the subspace $\sigma\cap\pi'^\perp$ contains the $i$-singular $(i+1)$-space $\pi'^{\perp}\cap\pi$ through $\overline{\pi}$. Moreover, $\sigma\cap\pi'^\perp$ must be $i$-singular as it contains $\overline{\pi}$ but its radical cannot have dimension larger than $i$, for if it were then also $\sigma\cap\langle\pi',\pi'^{\perp}\rangle$ would have a radical of dimension at least $i+1$, which is impossible, since $\dim(\sigma)-\dim(\sigma\cap\langle\pi',\pi'^{\perp}\rangle)=i$. Furthermore, we note that $\pi'^{\perp'}=\sigma\cap\pi'^{\perp}$, and so $\nu$ is the type of $\sigma\cap\pi'^{\perp}$.
        \par So, on the one hand, for any non-singular hyperplane through $\pi$ such that $\pi'$ has perp type $\nu$ with respect to it, we find a unique $(n-j)$-space of type $\nu$ through $\pi'^{\perp}\cap\pi\supset\overline{\pi}$ in $\pi'^\perp$. On the other hand, if $\sigma'$ is an $(n-j)$-space of type $\nu$ through $\pi'^{\perp}\cap\pi$ in $\pi'^\perp$, then $\langle\sigma',\pi'\rangle$ is an $i$-singular $(n-i-1)$-space and in any non-singular hyperplane through $\langle\sigma',\pi'\rangle$ the subspace $\pi'$ has perp type $\nu$. Note that $\langle\sigma',\pi'\rangle$ necessarily has type 0.
        \par  So, we first count the number of $i$-singular $(n-j)$-spaces of type $\nu$ in an $i$-singular $(n-j+1)$-space through a fixed $i$-singular $(i+1)$-space. Taking the quotient by the radical, we find that this number is $\beta_{(0,1,0,\delta\eps),(n-j-i+1,0),(n-j-i,\nu)}$. And given such a space $\sigma'$ we now count the number of non-singular hyperplanes through $\sigma'$ also containing $\pi'$ (and then automatically $\pi$). Note that $\langle\sigma',\pi'\rangle$ is an $(n-i-1)$-space in $\langle\pi',\pi'^\perp\rangle=\langle\pi,\pi^\perp\rangle$ containing $\overline{\pi}$ that is $i$-singular. So, we are counting the number of hyperplanes through an $i$-singular $(n-i-1)$-space (necessarily of type 0): this number is $\beta_{(i,n-i-1,0),(n,\eps),(n-1,0)}$.
        \par We conclude that, given an $i$-singular $(j-1)$-space in $\pi$ through $\overline{\pi}$, the number of choices for $\tau$ equals
        \begin{align*}
            &\sum_{\nu\in\{\pm1\}}\left(\beta^{\nu}_{(i,j-1,0),(n,\eps),(n-1,0)}-\beta_{(0,1,0,\delta\eps),(n-j-i+1,0),(n-j-i,\nu)}\beta_{(i,n-i-1,0),(n,\eps),(n-1,0)}\right)\\&\qquad\qquad\qquad\qquad\gamma_{(i,j-1,0,\nu),(n-1,0),(n-j,\zeta),0}\:.
        \end{align*}
		\item Finally, we look at a $(j-1)$-space $\pi'$ through $\overline{\pi}$ that is $(i+1)$-singular. The number of choices for $\pi'$ is the number of $1$-singular $(j-i-1)$-spaces (necessarily of type $\delta$) with respect to a non-singular quadratic form of type $\delta$ on a $(j-i)$-space, which is given by $\alpha_{(1,j-i-1,\delta),(j-i,\delta)}$. Through $\pi'$ there are $\beta_{(i+1,j-1,\delta),(n,\eps),(n-1,0)}-\beta_{(i,j,\delta),(n,\eps),(n-1,0)}$ non-singular hyperplanes $\sigma$ not containing $\pi$. The number of choices for $\tau$ in $\sigma$ is $\gamma_{(i+1,j-1,\delta),(n-1,0),(n-j,\zeta),0}$.
	\end{itemize}
	We find the following result, which we simplify using Corollaries \ref{cor:alphaorthogonal}, \ref{cor:betas} and \ref{cor:betavariations}:
    \begin{align*}
        &\gamma_{(i,j,\delta),(n,\eps),(n-j,\zeta),1}\beta_{(0,n-j,\zeta),(n,\eps),(n-1,0)}\\
        &=\frac{q^{j}-q^{j-i}}{q-1}\left(\beta_{(i-1,j-1,\delta),(n,\eps),(n-1,0)}-\beta_{(i,j,\delta),(n,\eps),(n-1,0)}\right)\gamma_{(i-1,j-1,\delta),(n-1,0),(n-j,\zeta),0}\\
        &\qquad+\alpha_{(0,j-i-1,0),(j-i,\delta)}\sum_{\nu\in\{\pm1\}}\gamma_{(i,j-1,0,\nu),(n-1,0),(n-j,\zeta),0}\\&\qquad\qquad\qquad\qquad\left(\beta^{\nu}_{(i,j-1,0),(n,\eps),(n-1,0)}-\beta_{(0,1,0,\delta\eps),(n-j-i+1,0),(n-j-i,\nu)}\beta_{(i,n-i-1,0),(n,\eps),(n-1,0)}\right)\\
        &\qquad+\alpha_{(1,j-i-1,\delta),(j-i,\delta)}\left(\beta_{(i+1,j-1,\delta),(n,\eps),(n-1,0)}-\beta_{(i,j,\delta),(n,\eps),(n-1,0)}\right)\gamma_{(i+1,j-1,\delta),(n-1,0),(n-j,\zeta),0}\\
        &=\frac{q^{j}-q^{j-i}}{q-1}\left(q^{\frac{1}{2}(n-j+i)-1}\left(q^{\frac{1}{2}(n-j-i)+1}-\delta\eps\right)-q^{\frac{1}{2}(n-j+i)-1}\left(q^{\frac{1}{2}(n-j-i)}-\delta\eps\right)\right)\\&\qquad\qquad\qquad\gamma_{(i-1,j-1,\delta),(n-1,0),(n-j,\zeta),0}\\
        &\qquad+q^{\frac{1}{2}(j-i)-1}\left(q^{\frac{1}{2}(j-i)}-\delta\right)\sum_{\nu\in\{\pm1\}}\gamma_{(i,j-1,0,\nu),(n-1,0),(n-j,\zeta),0}\\&\qquad\qquad\qquad\left(\frac{1}{2}q^{\frac{1}{2}(n-j+i)}\left(q^{\frac{1}{2}(n-j-i)}+\nu\right)-q^{i}\frac{1}{2}q^{\frac{1}{2}(n-j-i)-1}\left(q^{\frac{1}{2}(n-j-i)}-\delta\eps\right)\right)\\
        &\qquad+\frac{1}{q-1}\left(q^{\frac{1}{2}(j-i-1-\delta)}+1\right)\left(q^{\frac{1}{2}(j-i-1+\delta)}-1\right)\gamma_{(i+1,j-1,\delta),(n-1,0),(n-j,\zeta),0}\\&\qquad\qquad\qquad\left(q^{\frac{1}{2}(n-j+i)}\left(q^{\frac{1}{2}(n-j-i)}-\delta\eps\right)-q^{\frac{1}{2}(n-j+i)-1}\left(q^{\frac{1}{2}(n-j-i)}-\delta\eps\right)\right)\\
        &=\left(q^{i}-1\right)q^{n-i-1}\gamma_{(i-1,j-1,\delta),(n-1,0),(n-j,\zeta),0}\\
        &\qquad+\frac{1}{2}q^{\frac{1}{2}n-2}\left(q^{\frac{1}{2}(j-i)}-\delta\right)\sum_{\nu\in\{\pm1\}}\left((q-1)q^{\frac{1}{2}(n-j-i)}+\nu q+\delta\eps\right)\gamma_{(i,j-1,0,\nu),(n-1,0),(n-j,\zeta),0}\\
        &\qquad+q^{\frac{1}{2}(n-j+i)-1}\left(q^{\frac{1}{2}(j-i-1-\delta)}+1\right)\left(q^{\frac{1}{2}(j-i-1+\delta)}-1\right)\left(q^{\frac{1}{2}(n-j-i)}-\delta\eps\right)\\&\qquad\qquad\qquad\gamma_{(i+1,j-1,\delta),(n-1,0),(n-j,\zeta),0}\:.
    \end{align*}
    Note that this equality is also valid if $i=j$ (and thus $\delta=1$) and if $i=0$. Then, only one or two of the three possible cases for a $(j-1)$-space in $\pi$ occur, respectively. But the cases that do not appear, indeed correspond to terms with a factor $0$ in the given equation.
\end{proof}

\begin{lemma}\label{lem:recursion2}
    Let $n,j,i$ be integers with $0\leq i\leq j$ and $i+j\leq n$, and with $n$ even and $j-i$ odd. Let $\delta,\eps\in\{\pm1\}$ and $\zeta\in\{0,\pm1\}$. If $j\geq1$, we have
    \begin{align*}
        &\gamma_{(i,j,0),(n,\eps),(n-j,\zeta),\eps}\beta_{(0,n-j,\zeta),(n,\eps),(n-1,0)}\\
        &=\frac{1}{2}q^{n-i-1}\left(q^{i}-1\right)\sum_{\nu\in\{\pm1\}}\gamma_{(i-1,j-1,0,\nu),(n-1,0),(n-j,\zeta),0}\\
        &\qquad+\frac{1}{2}q^{\frac{1}{2}n-1}\sum_{\kappa\in\{\pm1\}}\left(q^{\frac{1}{2}n-i-1}(q-1)-\eps+\kappa q^{\frac{1}{2}(j-i-1)}\left((q-1)q^{\frac{1}{2}n-j}-\eps\right)\right)\\&\qquad\qquad\qquad\qquad\qquad\gamma_{(i,j-1,\kappa),(n-1,0),(n-j,\zeta),0}\\
        &\qquad+\frac{1}{2}\left(q^{j-i-1}-1\right)q^{\frac{1}{2}(n-j+i-1)}\sum_{\nu\in\{\pm1\}}\gamma_{(i+1,j-1,0,\nu),(n-1,0),(n-j,\zeta),0}\left(q^{\frac{1}{2}(n-j-i-1)}+\nu\right)\:.
    \end{align*}
\end{lemma}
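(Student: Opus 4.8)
The plan is to repeat the double-counting scheme used in Lemmas \ref{lem:recursion3}, \ref{lem:recursion4} and \ref{lem:recursion1}. Fix a quadratic form $f$ of type $\eps$ on $\F^{n}_{q}$, so $n$ is even, and a fixed $i$-singular $j$-space $\pi$ --- necessarily of type $0$, since $j-i$ is odd --- with radical $\overline{\pi}$. I would count the tuples $(\sigma,\tau)$ in which $\sigma$ is a non-singular hyperplane of $\F^{n}_{q}$ (necessarily parabolic, as $n$ is even) and $\tau\subseteq\sigma$ is a non-singular $(n-j)$-space of type $\zeta$ disjoint from $\pi$. Counting first the choices of $\tau$ and then the non-singular hyperplanes through $\tau$ gives $\gamma_{(i,j,0),(n,\eps),(n-j,\zeta),\eps}\,\beta_{(0,n-j,\zeta),(n,\eps),(n-1,0)}$, using that $\langle\pi,\tau\rangle=\F^{n}_{q}$ has type $\eps$, so this is the left-hand side.

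For the reverse count, note that such a $\sigma$ cannot contain $\pi$ (otherwise $\tau$ and $\pi$ would meet in $\sigma$), so it meets $\pi$ in a $(j-1)$-space $\pi'$ which is $(i-1)$-, $i$- or $(i+1)$-singular; the $(i-1)$-singular spaces are precisely those not containing $\overline{\pi}$ (there are $\frac{q^{j}-q^{j-i}}{q-1}$ of them), while the $i$- and $(i+1)$-singular ones are the pre-images of the non-singular, respectively $1$-singular, hyperplanes of $\pi/\overline{\pi}$. The $i$-singular case is routine: such a $\pi'$ carries a genuine type $\kappa\in\{\pm1\}$ but no perp type, there are $\alpha_{(0,j-i-1,\kappa),(j-i,0)}$ of them, through each there are $\beta_{(i,j-1,\kappa),(n,\eps),(n-1,0)}-\beta_{(i,j,0),(n,\eps),(n-1,0)}$ admissible hyperplanes, and in each such $\sigma$ there are $\gamma_{(i,j-1,\kappa),(n-1,0),(n-j,\zeta),0}$ choices for $\tau$; this contributes $\sum_{\kappa\in\{\pm1\}}\alpha_{(0,j-i-1,\kappa),(j-i,0)}\bigl(\beta_{(i,j-1,\kappa),(n,\eps),(n-1,0)}-\beta_{(i,j,0),(n,\eps),(n-1,0)}\bigr)\gamma_{(i,j-1,\kappa),(n-1,0),(n-j,\zeta),0}$.

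The main obstacle is the $(i-1)$- and $(i+1)$-singular cases. Here $\pi'$ has type $0$ but, since $n-1$ is odd and $(n-1)\bigl((j-1)-(i\mp1)\bigr)$ is odd, it acquires a perp type $\nu$ on passing to $\sigma$, so the relevant $\tau$-count is $\gamma_{(i\mp1,j-1,0,\nu),(n-1,0),(n-j,\zeta),0}$ and one must, for each $\nu$, count the non-singular hyperplanes through $\pi'$ not containing $\pi$ in which $\pi'$ has perp type $\nu$. By Definition \ref{def:betaquadraticodd} there are $\beta^{\nu}_{(i\mp1,j-1,0),(n,\eps),(n-1,0)}$ non-singular hyperplanes through $\pi'$ with $\pi'$ of perp type $\nu$, and it remains to subtract those that contain $\pi$. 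The key geometric point --- the step requiring the most care --- is that for every non-singular hyperplane $\sigma\supseteq\pi$ the perp type of $\pi'$ in $\sigma$ equals the perp type of $\pi$ in $\sigma$ (which is defined, as $(n-1)(j-i)$ is odd): indeed $\pi^{\perp_{\sigma}}\subseteq\pi'^{\perp_{\sigma}}$ with $\dim\pi^{\perp_{\sigma}}=n-j-1$ and $\dim\pi'^{\perp_{\sigma}}=n-j$, and passing from $\pi^{\perp_{\sigma}}$ to $\pi'^{\perp_{\sigma}}$ either enlarges the radical by a totally isotropic line, when $\pi'$ is $(i+1)$-singular, leaving the non-degenerate part unchanged, or adjoins a hyperbolic plane to the non-degenerate part, when $\pi'$ is $(i-1)$-singular, and neither operation changes the type. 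Hence the number of non-singular hyperplanes $\sigma\supseteq\pi$ with $\pi'$ of perp type $\nu$ equals $\beta^{\nu}_{(i,j,0),(n,\eps),(n-1,0)}$ regardless of the singularity type of $\pi'$, and the two cases contribute $\frac{q^{j}-q^{j-i}}{q-1}\sum_{\nu}\bigl(\beta^{\nu}_{(i-1,j-1,0),(n,\eps),(n-1,0)}-\beta^{\nu}_{(i,j,0),(n,\eps),(n-1,0)}\bigr)\gamma_{(i-1,j-1,0,\nu),(n-1,0),(n-j,\zeta),0}$ and $\alpha_{(1,j-i-1,0),(j-i,0)}\sum_{\nu}\bigl(\beta^{\nu}_{(i+1,j-1,0),(n,\eps),(n-1,0)}-\beta^{\nu}_{(i,j,0),(n,\eps),(n-1,0)}\bigr)\gamma_{(i+1,j-1,0,\nu),(n-1,0),(n-j,\zeta),0}$ respectively.

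Adding the three contributions gives the asserted identity once the closed forms of Corollaries \ref{cor:alphaorthogonal}, \ref{cor:betas} and \ref{cor:betavariations} are substituted --- for instance $\beta_{(i,j,0),(n,\eps),(n-1,0)}=q^{n-j-1}$ and $\beta^{\nu}_{(i,j,0),(n,\eps),(n-1,0)}=\frac{1}{2}q^{\frac{1}{2}(n-j+i-1)}\bigl(q^{\frac{1}{2}(n-j-i-1)}+\nu\bigr)$ --- and the resulting expressions are simplified; in particular, in the $(i-1)$-singular term the $\nu$-dependence cancels, leaving the $\nu$-independent coefficient $\frac{1}{2}q^{n-i-1}(q^{i}-1)$, exactly as in the statement. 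The remaining (tedious but entirely routine) work is this algebraic simplification. Finally, as in the earlier lemmas, the degenerate instances $i=0$, $i=j-1$ and $i=j$ require no separate treatment, since there one or two of the three cases is empty but the corresponding terms in the stated formula vanish automatically.
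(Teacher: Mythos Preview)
Your proof is correct and follows the same double-counting skeleton as the paper, but you take a genuinely different---and cleaner---route at the crucial subtraction step in the $(i-1)$- and $(i+1)$-singular cases. The paper, for each such $\pi'$, parametrises the non-singular hyperplanes $\sigma\supseteq\pi$ with $\pi'$ of perp type $\nu$ by auxiliary subspaces $\sigma'$ of $\pi'^{\perp}$ (in the ambient space), arriving at products such as $\beta_{(1,1,1),(n-j-i+2,0),(n-j-i+1,\nu)}\,\beta_{(i-1,n-i,0),(n,\eps),(n-1,0)}$ and $\alpha_{(0,n-j-i-1,\nu),(n-j-i,0)}\,\beta_{(i,n-i-1,0),(n,\eps),(n-1,0)}$ for the two cases. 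You instead observe that for any non-singular hyperplane $\sigma\supseteq\pi$ the perp type of $\pi'$ in $\sigma$ coincides with the perp type of $\pi$ in $\sigma$, so the subtraction is simply $\beta^{\nu}_{(i,j,0),(n,\eps),(n-1,0)}$ in both cases. This is valid: working in $V=\pi'^{\perp_\sigma}/\overline{\pi}'$, the image of $\pi^{\perp_\sigma}$ is either all of $V$ (the $(i+1)$-case, where $\pi'^{\perp_\sigma}=\pi^{\perp_\sigma}+\overline{\pi}'$) or the tangent hyperplane $R^{\perp_V}$ at the singular point $R=\overline{\pi}/\overline{\pi}'$ (the $(i-1)$-case), and in either situation the non-degenerate quotients have the same type. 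A direct check confirms that the paper's two products each evaluate to $\tfrac12 q^{\frac12(n-j+i-1)}\bigl(q^{\frac12(n-j-i-1)}+\nu\bigr)=\beta^{\nu}_{(i,j,0),(n,\eps),(n-1,0)}$, so the two approaches agree numerically. Your shortcut buys a uniform treatment of both cases and avoids the auxiliary geometry; the paper's construction is more explicit but longer.

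One small slip: you list $i=j$ among the degenerate instances, but $j-i$ odd rules this out; the relevant edge cases here are only $i=0$ and $i=j-1$.
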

\begin{proof}
    Consider a quadratic form $f$ of type $\eps$ on $\F^{n}_{q}$ and a fixed $i$-singular $j$-space $\pi$ with respect to it, with $j\geq1$. Let $\perp$ be the orthogonal polarity corresponding to $f$. The subspace $\pi$ has type 0 since $j-i$ is odd. We denote the $i$-space that is the radical of $\pi$ (the vertex of the cone geometrically) by $\overline{\pi}$. We count the tuples $(\sigma,\tau)$ with $\sigma$ a non-singular hyperplane, and $\tau\subseteq\sigma$ a non-singular $(n-j)$-space of type $\zeta$ disjoint from $\pi$. Note that $\sigma$ is necessarily of parabolic type. Also, note that $\langle\pi,\tau\rangle=\F^{n}_{q}$ and thus that the type of the restriction of the quadratic form to $\langle\pi,\tau\rangle$ is $\eps$. On the one hand there are $\gamma_{(i,j,0),(n,\eps),(n-j,\zeta),\eps}$ choices for $\tau$, and for each of them $\beta_{(0,n-j,\zeta),(n,\eps),(n-1,0)}$ corresponding tuples. Now, we look at the non-singular hyperplanes. Each non-singular hyperplane that contains an $(n-j)$-space disjoint from $\pi$ meets $\pi$ in a $(j-1)$-space, so we look at the $(j-1)$-spaces of $\pi$. There are three possibilities.
	\begin{itemize}
		\item We first look at the $(j-1)$-spaces that meet $\overline{\pi}$ in precisely an $(i-1)$-space. Necessarily, they are $(i-1)$-singular. The number of such $(j-1)$-spaces equals $\gs{j}{j-1}{q}-\gs{j-i}{j-i-1}{q}=\frac{q^{j}-q^{j-i}}{q-1}$. Let $\pi'$ be such a $(j-1)$-space. Given a hyperplane $\sigma$ through $\pi'$ not containing $\pi$, the number of tuples depends on the perp type $\nu$ of $\pi'$ with respect to the quadratic form $f\vert_{\sigma}$: it equals $\gamma_{(i-1,j-1,0,\nu),(n-1,0),(n-j,\zeta),0}$.
        \par We now count the number of non-singular hyperplanes through $\pi'$, not containing $\pi$, such that $\pi'$ has perp type $\nu$ with respect to the restricted quadratic form. The number of non-singular hyperplanes through $\pi'$ such that $\pi'$ has perp type $\nu$ with respect to the restricted quadratic form equals $\beta^{\nu}_{(i-1,j-1,0),(n,\eps),(n-1,0)}$ by Definition \ref{def:betaquadraticodd}. To find the correct number we now need to subtract the number of such hyperplanes containing $\pi$.
        \par Set $\pi'\cap\overline{\pi}=\overline{\pi}'$. We know that $\pi^{\perp}$ is an $i$-singular $(n-j)$-space of type 0 with radical $\overline{\pi}$ and that $\pi'^{\perp}$ is an $(i-1)$-singular $(n-j+1)$-space of type 0 with radical $\overline{\pi}'$, containing $\pi^\perp$. Note that $\langle\pi,\pi^\perp\rangle=\overline{\pi}^{\perp}$ is an $i$-singular $(n-i)$-space and that $\langle\pi',\pi'^\perp\rangle=\overline{\pi}'^{\perp}$ is an $(i-1)$-singular $(n-i+1)$-space, containing $\overline{\pi}^{\perp}$.
        \par Let $\sigma$ be a non-singular (necessarily parabolic) hyperplane through $\pi$ in which $\pi'$ has perp type $\nu$, and let $\perp'$ be the orthogonal polarity on $\sigma$ corresponding to $f\vert_{\sigma}$. Note that $\sigma$ contains neither $\langle\pi,\pi^\perp\rangle$ nor $\langle\pi',\pi'^\perp\rangle$, since a there cannot be an $i$-singular $(n-i)$-space or $(i-1)$-singular $(n-i+1)$-space with respect to a non-singular form in $\F^{n-1}_{q}$. Since $\sigma$ contains $\pi'$, the subspace $\sigma\cap\pi'^\perp$ must have dimension $n-j$, and be different from $\pi^\perp$. Moreover, $\sigma\cap\pi'^\perp$ must be $(i-1)$-singular as it contains $\overline{\pi}'$ but its radical cannot have dimension larger than $i-1$, for if it were than also $\sigma\cap\langle\pi',\pi'^{\perp}\rangle$ would have a radical of dimension at least $i$, which is impossible. Furthermore, we note that $\pi'^{\perp'}=\sigma\cap\pi'^{\perp}$, and so $\nu$ is the type of $\sigma\cap\pi'^{\perp}$.
        \par So, we first count the number of $(i-1)$-singular $(n-j)$-spaces of type $\nu$ in an $(i-1)$-singular $(n-j+1)$-space through a totally singular $i$-space containing the radical. Taking the quotient by the radical, we find that this number is $\beta_{(1,1,1),(n-j-i+2,0),(n-j-i+1,\nu)}$. And given such a space $\sigma'$ we now count the number of non-singular hyperplanes also containing $\pi$. Note that $\langle\sigma',\pi\rangle$ is an $(n-i)$-space in $\langle\pi',\pi'^\perp\rangle$ containing $\overline{\pi}'$ and so its radical has dimension at least $i-1$; if $\langle\sigma',\pi\rangle$ were $i$-singular, then $\overline{\pi}$ has to be its radical, but that is impossible since $\sigma'$ is $(i-1)$-singular. So, we are counting the number of hyperplanes through an $(i-1)$-singular $(n-i)$-space (necessarily of type 0): this number is $\beta_{(i-1,n-i,0),(n,\eps),(n-1,0)}$.
        \par So, given a $(j-1)$-space in $\pi$ that meets $\overline{\pi}$ in an $(i-1)$-space, the number of choices for $\tau$ equals
        \begin{align*}
            &\sum_{\nu\in\{\pm1\}}\left(\beta^{\nu}_{(i-1,j-1,0),(n,\eps),(n-1,0)}-\beta_{(1,1,1),(n-j-i+2,0),(n-j-i+1,\nu)}\beta_{(i-1,n-i,0),(n,\eps),(n-1,0)}\right)\\&\qquad\qquad\qquad\qquad\gamma_{(i-1,j-1,0,\nu),(n-1,0),(n-j,\zeta),0}\:.
        \end{align*}
        \item Secondly, we look at the $(j-1)$-spaces through $\overline{\pi}$ that are $i$-singular. Such a space has a type $\kappa\in\{\pm1\}$. The number of $i$-singular $(j-1)$-spaces of type $\kappa$ through $\overline{\pi}$ corresponds to the number of non-singular $(j-i-1)$-spaces of type $\kappa$ with respect to a non-singular quadratic form of type $0$ on a $(j-i)$-space, which is $\alpha_{(0,j-i-1,\kappa),(j-i,0)}$. Through such a $(j-1)$-space there are $\beta_{(i,j-1,\kappa),(n,\eps),(n-1,0)}-\beta_{(i,j,0),(n,\eps),(n-1,0)}$ non-singular hyperplanes not containing $\pi$. The number of tuples we have for each of these hyperplanes is $\gamma_{(i,j-1,\kappa),(n-1,0),(n-j,\zeta),0}$.
		\item Finally, we look at the $(j-1)$-spaces through $\overline{\pi}$ that are $(i+1)$-singular. The number of such $(j-1)$-spaces corresponds to the number of 1-singular $(j-i-1)$-spaces (necessarily of type $0$) with respect to a non-singular quadratic form of type $0$ on a $(j-i)$-space, which is $\alpha_{(1,j-i-1,0),(j-i,0)}$. 
        Let $\pi'$ be such a $(j-1)$-space. Given a hyperplane $\sigma$ through $\pi'$ not containing $\pi$, the number of tuples depends on the perp type $\nu$ of $\pi'$ with respect to the quadratic form $f\vert_{\sigma}$: it equals $\gamma_{(i+1,j-1,0,\nu),(n-1,0),(n-j,\zeta),0}$.
        \par We now count the number of non-singular hyperplanes through $\pi'$, not containing $\pi$, such that $\pi'$ has perp type $\nu$ with respect to the restricted quadratic form. The number of non-singular hyperplanes through $\pi'$ such that $\pi'$ has perp type $\nu$ with respect to the restricted quadratic form equals $\beta^{\nu}_{(i+1,j-1,0),(n,\eps),(n-1,0)}$ by Definition \ref{def:betaquadraticodd}. To find the correct number we now need to subtract the number of such hyperplanes containing $\pi$.
        \par We denote the radical of $\pi'$ by $\overline{\pi}'$ and we have $\overline{\pi}'\supset\overline{\pi}$. We know that $\pi^{\perp}$ is an $i$-singular $(n-j)$-space of type 0 with radical $\overline{\pi}$ and that $\pi'^{\perp}$ is an $(i+1)$-singular $(n-j+1)$-space of type 0 with radical $\overline{\pi}'$, containing $\pi^\perp$. Note that $\pi'^\perp=\langle\pi^{\perp},\overline{\pi}'\rangle$. Furthermore, note that $\langle\pi,\pi^\perp\rangle=\overline{\pi}^{\perp}$ is an $i$-singular $(n-i)$-space and that $\langle\pi',\pi'^\perp\rangle=\overline{\pi}'^{\perp}$ is an $(i+1)$-singular $(n-i-1)$-space, contained in $\overline{\pi}^{\perp}$.
        \par Let $\sigma$ be a non-singular (necessarily parabolic) hyperplane through $\pi$ in which $\pi'$ has perp type $\nu$, and let $\perp'$ be the orthogonal polarity on $\sigma$ corresponding to $f\vert_{\sigma}$. Note that $\sigma$ contains neither $\langle\pi,\pi^\perp\rangle$ nor $\langle\pi',\pi'^\perp\rangle$, since a there cannot be an $i$-singular $(n-i)$-space or $(i+1)$-singular $(n-i-1)$-space with respect to a non-singular form in $\F^{n-1}_{q}$. Since $\sigma$ contains $\pi'$, the subspace $\sigma\cap\pi'^\perp$ must have dimension $n-j$, and be different from $\pi^\perp$. Moreover, $\sigma\cap\pi'^\perp$ must be $(i+1)$-singular as it contains $\overline{\pi}'$ but its radical cannot have dimension larger than $i+1$, for if it were than also $\sigma\cap\langle\pi',\pi'^{\perp}\rangle$ would have a radical of dimension at least $i+2$, which is impossible. Furthermore, we note that $\pi'^{\perp'}=\sigma\cap\pi'^{\perp}$, and so $\nu$ is the type of $\sigma\cap\pi'^{\perp}$.
        \par So, we first count the number of $(i+1)$-singular $(n-j)$-spaces of type $\nu$ in the $(i+1)$-singular $(n-j+1)$-space $\pi'^{\perp}$ through its radical $\overline{\pi}'$. Taking the quotient by the radical, we find that this number is $\alpha_{(0,n-j-i-1,\nu),(n-j-i,0)}$. And given such a space $\sigma'$ we now count the number of non-singular hyperplanes also containing $\pi$. Note that $\langle\sigma',\pi\rangle$ is an $(n-i-1)$-space in $\langle\pi,\pi^\perp$ containing $\overline{\pi}$ and so its radical has dimension at least $i$; if $\langle\sigma',\pi\rangle$ were $(i+1)$-singular, then its radical has to be contained in $\pi^{\perp}$, but that is impossible since $\sigma'\cap\pi^{\perp}$ is $i$-singular. So, we are counting the number of hyperplanes through an $i$-singular $(n-i-1)$-space (necessarily of type 0): this number is $\beta_{(i,n-i-1,0),(n,\eps),(n-1,0)}$.
        \par So, given a $(j-1)$-space in $\pi$ through $\overline{\pi}$ that is $(i+1)$-singular, the number of choices for $\tau$ equals
        \begin{align*}
            &\sum_{\nu\in\{\pm1\}}\left(\beta^{\nu}_{(i+1,j-1,0),(n,\eps),(n-1,0)}-\alpha_{(0,n-j-i-1,\nu),(n-j-i-1,0)}\beta_{(i,n-i-1,0),(n,\eps),(n-1,0)}\right)\\&\qquad\qquad\qquad\qquad\gamma_{(i+1,j-1,0,\nu),(n-1,0),(n-j,\zeta),0}\:.
        \end{align*}
	\end{itemize}
    We find the following result, which we simplify using Corollaries \ref{cor:alphaorthogonal}, \ref{cor:betas} and \ref{cor:betavariations}:
    \begin{align*}
        &\gamma_{(i,j,0),(n,\eps),(n-j,\zeta),\eps}\beta_{(0,n-j,\zeta),(n,\eps),(n-1,0)}\\
        &=\frac{q^{j}-q^{j-i}}{q-1}\sum_{\nu\in\{\pm1\}}\gamma_{(i-1,j-1,0,\nu),(n-1,0),(n-j,\zeta),0}\left(\beta^{\nu}_{(i-1,j-1,0),(n,\eps),(n-1,0)}\right.\\&\qquad\qquad\qquad\qquad\qquad\qquad\left.-\beta_{(1,1,1),(n-j-i+2,0),(n-j-i+1,\nu)}\beta_{(i-1,n-i,0),(n,\eps),(n-1,0)}\vphantom{\beta^{\nu}_{(i-1,j-1,0),(n,\eps),(n-1,0)}}\right)\\
        &\qquad+\sum_{\kappa\in\{\pm1\}}\alpha_{(0,j-i-1,\kappa),(j-i,0)}\left(\beta_{(i,j-1,\kappa),(n,\eps),(n-1,0)}-\beta_{(i,j,0),(n,\eps),(n-1,0)}\right)\\&\qquad\qquad\qquad\qquad\gamma_{(i,j-1,\kappa),(n-1,0),(n-j,\zeta),0}\\
        &\qquad+\alpha_{(1,j-i-1,0),(j-i,0)}\sum_{\nu\in\{\pm1\}}\gamma_{(i+1,j-1,0,\nu),(n-1,0),(n-j,\zeta),0}\\&\qquad\qquad\qquad\left(\beta^{\nu}_{(i+1,j-1,0),(n,\eps),(n-1,0)}-\alpha_{(0,n-j-i-1,\nu),(n-j-i,0)}\beta_{(i,n-i-1,0),(n,\eps),(n-1,0)}\right)\\
        &{=\frac{q^{j}-q^{j-i}}{q-1}\sum_{\nu\in\{\pm1\}}\gamma_{(i-1,j-1,0,\nu),(n-1,0),(n-j,\zeta),0}}\\&{\qquad\qquad\qquad\left(\frac{1}{2}q^{\frac{1}{2}(n-j+i-1)}\left(q^{\frac{1}{2}(n-j-i+1)}+\nu\right)-\frac{1}{2}q^{\frac{1}{2}(n-j-i+1)}\left(q^{\frac{1}{2}(n-j-i-1)}+\nu\right)q^{i-1}\right)}\\
        &{\qquad+\sum_{\kappa\in\{\pm1\}}\frac{1}{2}q^{\frac{1}{2}(j-i-1)}\left(q^{\frac{1}{2}(j-i-1)}+\kappa\right)\left(q^{\frac{1}{2}(n-j+i-1)}\left(q^{\frac{1}{2}(n-j-i+1)}-\eps\kappa\right)-q^{n-j-1}\right)}\\&{\qquad\qquad\qquad\qquad\gamma_{(i,j-1,\kappa),(n-1,0),(n-j,\zeta),0}}\\
        &\qquad+\frac{q^{j-i-1}-1}{q-1}\sum_{\nu\in\{\pm1\}}\gamma_{(i+1,j-1,0,\nu),(n-1,0),(n-j,\zeta),0}\\&\qquad\qquad\qquad\left(\frac{1}{2}q^{\frac{1}{2}(n-j+i+1)}\left(q^{\frac{1}{2}(n-j-i-1)}+\nu\right)-\frac{1}{2}q^{\frac{1}{2}(n-j-i-1)}\left(q^{\frac{1}{2}(n-j-i-1)}+\nu\right)q^{i}\right)\\
        &=\frac{1}{2}q^{n-i-1}\left(q^{i}-1\right)\sum_{\nu\in\{\pm1\}}\gamma_{(i-1,j-1,0,\nu),(n-1,0),(n-j,\zeta),0}\\
        &\qquad+\frac{1}{2}q^{\frac{1}{2}n-1}\sum_{\kappa\in\{\pm1\}}\left(q^{\frac{1}{2}n-i-1}(q-1)-\eps+\kappa q^{\frac{1}{2}(j-i-1)}\left((q-1)q^{\frac{1}{2}n-j}-\eps\right)\right)\\&\qquad\qquad\qquad\qquad\qquad\gamma_{(i,j-1,\kappa),(n-1,0),(n-j,\zeta),0}\\
        &\qquad+\frac{1}{2}\left(q^{j-i-1}-1\right)q^{\frac{1}{2}(n-j+i-1)}\sum_{\nu\in\{\pm1\}}\gamma_{(i+1,j-1,0,\nu),(n-1,0),(n-j,\zeta),0}\left(q^{\frac{1}{2}(n-j-i-1)}+\nu\right)\:.
    \end{align*}
    Note that this equality is also valid if $i=j-1$ and if $i=0$. Then (in both instances), only two of the three possible cases for a $(j-1)$-space in $\pi$ occur. But the cases that do not appear, indeed correspond to terms with a factor $0$ in the given equation.
\end{proof}

Recall that we set $\gs{-1}{0}{q}=1$; this is important for the case of totally singular spaces (i.e.~$j=i$) in the following theorem, which deals with the case that $n-j$ is even.

\begin{theorem}\label{th:gammaquadratic}
    Let $n,j,i$ be integers with $0\leq i\leq j$ and $i+j\leq n$, such that $n-j$ is even. Let $\delta,\eps\in\{0,\pm1\}$ and $\zeta,\lambda\in\{\pm1\}$ with $j-i-\delta\equiv1\equiv n-\eps\pmod{2}$. We have that
    \begin{align*}
        &\gamma_{(i,j,\delta(,\lambda)),(n,\eps),(n-j,\zeta),\eps}\\
        &=\frac{1}{2}q^{j(n-j)-\left(\frac{j+1-\eps^2}{2}\right)^{2}}\sum_{m=0}^{\frac{j-i-1+\delta^2}{2}}\left(\chi_{1,\frac{j+1-\eps^2}{2}-m}\gs{\frac{j-i-1+\delta^2}{2}}{m}{q^2}q^{m(j+i-n+m+1-\delta^2-\eps^2)}\right.\\
        &\qquad+\zeta q^{\frac{1}{2}(j-n)+(1-\eps^2)j}\chi_{1,\frac{j-1+\eps^2}{2}-m}\gs{\frac{j-i-1-\delta^2}{2}}{m}{q^2}q^{m(j+i-n+m-1+\delta^2+\eps^2)}\\
        &\qquad-\delta\zeta q^{\frac{1}{2}(i-n)+(1-\eps^2)j}\chi_{1,\frac{j-1+\eps^2}{2}-m}\gs{\frac{j-i}{2}-1}{m}{q^2}q^{m(j+i-n+m+\eps^2)}\\
        &\qquad+\eps\zeta q^{-\frac{1}{2}j}\chi_{1,\frac{j}{2}-m}\gs{\frac{j-i-1-\delta^2}{2}}{m}{q^2}q^{m(j+i-n+m+\delta^2)}\\
        &\qquad+\delta\eps\zeta q^{-\frac{1}{2}i}\chi_{1,\frac{j}{2}-m}\gs{\frac{j-i}{2}-1}{m-1}{q^2}q^{m(j+i-n+m-1)}\\
        &\qquad\left.+\lambda q^{\frac{1}{2}(j+i-n)}\chi_{1,\frac{j-1}{2}-m}\gs{\frac{j-i-1}{2}}{m}{q^2}q^{m(j+i-n+m+1)}\right)
    \end{align*}
    where the $\lambda$ only is considered if $n(j-i)$ is odd, and treated as 0 else.
\end{theorem}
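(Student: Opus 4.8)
The plan is to prove the formula by induction on $j$, keeping the even difference $n-j$ fixed, say $n-j=2t$; note that the parity of $n$ is then forced to equal that of $j$. The four recursion lemmas were set up precisely to power this induction: Lemma~\ref{lem:recursion3} handles $n,j,i$ all odd, Lemma~\ref{lem:recursion4} handles $n,j$ odd with $i$ even, Lemma~\ref{lem:recursion1} handles $n,j,i$ all even, and Lemma~\ref{lem:recursion2} handles $n,j$ even with $i$ odd, so together they cover every admissible triple $(n,j,i)$ with $n-j$ even. Each lemma, after one divides through by the explicitly known normalising factor on its left-hand side (evaluated via Corollary~\ref{cor:betas}), writes the target $\gamma_{(i,j,\delta(,\lambda)),(n,\eps),(n-j,\zeta),\eps}$ as a linear combination---with coefficients that are products of $\alpha$'s and $\beta$'s available in closed form from Corollaries~\ref{cor:alphaorthogonal},~\ref{cor:betas} and~\ref{cor:betavariations}---of the quantities $\gamma_{(i',j-1,\cdot),(n-1,\cdot),(n-j,\zeta),\cdot}$ for $i'\in\{i-1,i,i+1\}$. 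These sit at induction level $j-1$, and their ambient dimension $n-1$ still satisfies $(n-1)-(j-1)=n-j$ even, so they are governed by the induction hypothesis.

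For the base case $j=0$ one necessarily has $i=0$, $\delta=1$ and $n$ even, so $\eps\in\{\pm1\}$; here $\sigma$ must be all of $\F_q^n$, hence $\sigma\cap\pi=\{0\}$ automatically and $\langle\pi,\sigma\rangle=\F_q^n$ has type $\eps$, so $\gamma_{(0,0,1),(n,\eps),(n,\zeta),\eps}$ equals $1$ if $\zeta=\eps$ and $0$ otherwise. On the other side the sum collapses to the single index $m=0$, the $\lambda$-term is dropped because $n(j-i)=0$ is even, and substituting $\chi_{1,0}(q)=1$, $\gs{0}{0}{q^2}=\gs{-1}{0}{q^2}=1$ and $\gs{-1}{-1}{q^2}=0$ the bracket simplifies to $1+\zeta q^{-t}-\zeta q^{-t}+\eps\zeta=1+\eps\zeta$, so the formula gives $\frac{1}{2}(1+\eps\zeta)$, as required.

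For the inductive step, assume the formula at level $j-1$ for all admissible $i'$, fix $(n,j,i)$ with $n-j$ even and $j\ge1$, select the recursion lemma dictated by the parity of $(n,j,i)$, and substitute the induction hypothesis for every $\gamma_{(i',j-1,\cdot),(n-1,\cdot),(n-j,\zeta),\cdot}$ that occurs. This turns the right-hand side into a finite sum over the inner index $m$ of the induction-hypothesis formula---with $m$ running over somewhat different ranges for the three shifts $i'=i-1,i,i+1$---and over the auxiliary type or perp-type signs that some of the lemmas carry; it then remains to verify that, after expanding the $\alpha$- and $\beta$-coefficients and re-collecting, one recovers the claimed single sum. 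The extreme cases $i=0$ and $i=j$ need no separate treatment: the coefficients $q^i-1$, $\alpha_{(0,j-i-1,\cdot),(j-i,\delta)}$ and $\alpha_{(1,j-i-1,\cdot),(j-i,\cdot)}$ multiplying the terms that would run out of range vanish identically, and the conventions $\gs{-1}{0}{q}=1$ and $\gs{-1}{-1}{q}=0$ make the target formula behave correctly at the boundary.

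The conceptual skeleton above is short; the main obstacle is the algebraic identity to be checked in the inductive step. One must amalgamate the three shifted copies of the induction-hypothesis sum by repeated use of the $q^2$-analogue of Pascal's identity (Lemma~\ref{lem:pascalgeneral} with $q$ replaced by $q^2$) to combine Gaussian binomials whose upper or lower argument differs by one, rewrite the products $\chi_{1,r}(q)$ after pulling out or inserting a single factor $q^{2r-1}-1$, and carefully keep the six sign sectors---those attached to $1$, $\zeta$, $\delta\zeta$, $\eps\zeta$, $\delta\eps\zeta$ and, only when $n(j-i)$ is odd (i.e.\ in the situations of Lemmas~\ref{lem:recursion3} and~\ref{lem:recursion4}), $\lambda$---from interfering with one another. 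Since each of the four lemmas distributes its $\delta$'s, $\eps$'s and $\lambda$'s differently, the four parity cases must be carried out one at a time; these lengthy but routine computations are precisely what is deferred to Appendix~\ref{ap:calculations}.
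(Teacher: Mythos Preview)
Your proposal is correct and follows essentially the same approach as the paper: induction on $j$ with the base case $j=0$ yielding $\frac{1}{2}(1+\eps\zeta)$, and the inductive step handled by selecting among the four recursion lemmas according to the parities of $n,j,i$, with the lengthy algebraic verifications deferred to Appendix~\ref{ap:calculations}. One minor slip: the $\lambda$-sector arises only in the situation of Lemma~\ref{lem:recursion4} (where $n,j$ are odd and $i$ is even, so $n(j-i)$ is odd), not also in Lemma~\ref{lem:recursion3} (where $j-i$ is even).
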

\begin{proof}
    We prove this theorem using induction on $j$. For $j=0$ we have $i=0$ and thus $\delta=1$, and we also know that $n$ is even and thus $\eps\in\{\pm1\}$. We find that $\gamma_{(0,0,1),(n,\eps),(n,\zeta),\eps}=\frac{1}{2}\left(1+\eps\zeta\right)$, which indeed equals 1 if $\eps=\zeta$ and 0 if $\eps\neq\zeta$.
    \par Now, we assume that $j\geq1$. Given that $n-j$ is even, there are four possibilities for the parities of $n$, $j$ and $i$: \{$n,j$ odd and $i$ odd\}, \{$n,j$ odd and $i$ even\}, \{$n,j$ even and $i$ even\}, \{$n,j$ even and $i$ odd\}. For these four cases, we use the recursion relations given in Lemmas \ref{lem:recursion3}, \ref{lem:recursion4}, \ref{lem:recursion1} and \ref{lem:recursion2}, respectively. To complete the induction argument we only need to make a (lenghty) calculation in each of these four cases. We defer these calculations to Appendix \ref{ap:calculations}. For ease of the calculations we give simplified formulae for each of the four cases.
    \par For $n,j$ odd, and $i$ odd:
    \begin{align}
        \gamma_{(i,j,\delta),(n,0),(n-j,\zeta),0}
        &=\frac{1}{2}q^{jn-\frac{5}{4}j^2-\frac{1}{2}j-\frac{1}{4}}\left(\sum_{m=0}^{\frac{j-i}{2}}\chi_{1,\frac{j+1}{2}-m}(q)\gs{\frac{j-i}{2}}{m}{q^{2}}q^{m(j+i-n+m)}\right.\nonumber\\
        &\qquad\ \ \left.+\zeta q^{j-\frac{1}{2}n}\left(q^{\frac{1}{2}j}-\delta q^{\frac{1}{2}i}\right)\sum_{m=0}^{\frac{j-i}{2}-1}\chi_{1,\frac{j-1}{2}-m}(q)\gs{\frac{j-i}{2}-1}{m}{q^{2}}q^{m(j+i-n+m)}\right).\label{rec3}
    \end{align}
    \par For $n,j$ odd and $i$ even:
    \begin{align}
        \gamma_{(i,j,0,\lambda),(n,0),(n-j,\zeta),0}&=\frac{1}{2}q^{jn-\frac{5}{4}j^2-\frac{1}{2}j-\frac{1}{4}}\left(\sum_{m=0}^{\frac{j-i-1}{2}}\chi_{1,\frac{j+1}{2}-m}(q)\gs{\frac{j-i-1}{2}}{m}{q^{2}}q^{m(j+i-n+m+1)}\right.\nonumber\\
        &\qquad\qquad+\zeta\:q^{\frac{3}{2}j-\frac{1}{2}n}\sum_{m=0}^{\frac{j-i-1}{2}}\chi_{1,\frac{j-1}{2}-m}(q)\gs{\frac{j-i-1}{2}}{m}{q^{2}}q^{m(j+i-n+m-1)}\nonumber\\
        &\qquad\qquad\left.+\lambda\:q^{\frac{1}{2}(j+i-n)}\sum_{m=0}^{\frac{j-i-1}{2}}\chi_{1,\frac{j-1}{2}-m}(q)\gs{\frac{j-i-1}{2}}{m}{q^{2}}q^{m(j+i-n+m+1)}\right)\label{rec4}.
    \end{align}
    \par For $n,j$ even, and $i$ even:
    \begin{align}
        \gamma_{(i,j,\delta),(n,\eps),(n-j,\zeta),\eps}
        &=\frac{1}{2}q^{jn-\frac{5}{4}j^2}\left(\sum_{m=0}^{\frac{j-i}{2}}\chi_{1,\frac{j}{2}-m}(q)\gs{\frac{j-i}{2}}{m}{q^{2}}q^{m(j+i-n+m-1)}\right.\nonumber\\
        &\qquad\qquad+\zeta\:q^{\frac{1}{2}(j-n)}\sum_{m=0}^{\frac{j-i}{2}-1}\chi_{1,\frac{j}{2}-m}(q)\gs{\frac{j-i}{2}-1}{m}{q^{2}}q^{m(j+i-n+m+1)}\nonumber\\
        &\qquad\qquad+\eps\zeta\:q^{-\frac{1}{2}j}\sum_{m=0}^{\frac{j-i}{2}}\chi_{1,\frac{j}{2}-m}(q)\gs{\frac{j-i}{2}-1}{m}{q^{2}}q^{m(j+i-n+m+1)}\nonumber\\
        &\qquad\qquad-\delta\zeta\:q^{\frac{1}{2}(i-n)}\sum_{m=0}^{\frac{j-i}{2}-1}\chi_{1,\frac{j}{2}-m}(q)\gs{\frac{j-i}{2}-1}{m}{q^{2}}q^{m(j+i-n+m+1)}\nonumber\\
        &\qquad\qquad\left.+\delta\eps\zeta\:q^{-\frac{1}{2}i}\sum_{m=0}^{\frac{j-i}{2}}\chi_{1,\frac{j}{2}-m}(q)\gs{\frac{j-i}{2}-1}{m-1}{q^{2}}q^{m(j+i-n+m-1)}\right)\label{rec11}.
    \end{align}
    \par For $n,j$ even, and $i$ odd:
    \begin{align}
        \gamma_{(i,j,0),(n,\eps),(n-j,\zeta),\eps}&=\frac{1}{2}q^{jn-\frac{5}{4}j^2}\left(\sum_{m=0}^{\frac{j-i-1}{2}}\chi_{1,\frac{j}{2}-m}(q)\gs{\frac{j-i-1}{2}}{m}{q^{2}}q^{m(j+i-n+m)}\right.\nonumber\\
        &\qquad+\left.\zeta\left(q^{\frac{1}{2}(j-n)}+\eps q^{-\frac{1}{2}j}\right)\sum_{m=0}^{\frac{j-i-1}{2}}\chi_{1,\frac{j}{2}-m}(q)\gs{\frac{j-i-1}{2}}{m}{q^{2}}q^{m(j+i-n+m)}\right).\label{rec2}
    \end{align}
\end{proof}

\begin{example}\label{ex:gamma}
    Consider the case $i=0,j=3$, $n=5$, $\lambda=-1$, $\zeta=1$. Then $\gamma_{(0,3,0,-1),(5,0),(2,1),0}$ counts the following: given a plane $\pi$ meeting a parabolic quadric $Q(4,q)$ in a conic, and such that $\pi^\perp$ is a passant to $Q(4,q)$, $\gamma_{(0,3,0,-1),(5,0),(2,1),0}$ is the number of secant lines to $Q(4,q)$, disjoint from $\pi$. Using formula \eqref{rec4}, we find that
    \begin{align*}
        \gamma_{(0,3,0,-1),(5,0),(2,1),0}&=\frac{1}{2}q^2(\chi_{1,2}+\chi_{1,1}+q^2(\chi_{1,1}+\chi_{1,0}q^{-2})-q^{-1}(\chi_{1,1}+\chi_{1,0})\\
        &=\frac{1}{2}q^2\left((q-1)(q^3-1)+(q-1)+q^2(q-1+q^{-2})-q^{-1}((q-1)+1 )\right)\\
        &=\frac{1}{2}q^4\left(q^2-1 \right).
    \end{align*}
    Similarly, when $\pi^\perp$ is a secant line (i.e. $\pi$ has perp type $1$), we find
    \begin{align*}
        \gamma_{(0,3,0,1),(5,0),(2,1),0}&=\frac{1}{2}q^2(\chi_{1,2}+\chi_{1,1}+q^2(\chi_{1,1}+\chi_{1,0}q^{-2})+q^{-1}(\chi_{1,1}+\chi_{1,0})\\
        &=\frac{1}{2}q^2\left((q-1)(q^3-1)+(q-1)+q^2(q-1+q^{-2})+q^{-1}((q-1)+1) \right)\\
        &=\frac{1}{2}q^2\left(q^4-q^2+2 \right).
    \end{align*}    
\end{example}

\begin{example}\label{ex:gamma2}
    Consider $i=0,j=2,n=4,\delta=\eps=\zeta=1$, then $\gamma_{(0,2,1),(4,1),(2,1),1}$ counts the number of secant lines to a hyperbolic quadric $\mathcal{Q}^+(3,q)$, disjoint from a given secant line to $\mathcal{Q}^+(3,q)$. Using \eqref{rec11} we find that this number is equal to 
    \begin{align*}
        \frac{1}{2}q^{3}(\chi_{1,1}+\chi_{1,0}q^{-2}+q^{-1}\chi_{1,1}+q^{-1}\chi_{1,1}-q^{-2}\chi_{1,1}+\chi_{1,0}q^{-2}) =\frac{1}{2}q(q^3+q^2-3q+3)\:.
    \end{align*}
\end{example}

As explained before, we will use Theorem \ref{th:gammaquadratic}, which dealt with the case that $n-j$ is even, together with the associate polarity, to find an expression in the case that $n$ is odd and $j$ even.

\begin{theorem}\label{th:gammaquadratic0odd}
    Let $n,j,i$ be integers with $0\leq i\leq j$ and $i+j\leq n$, such that $n$ is odd and $j$ is even. Let $\delta\in\{0,\pm1\}$ and $\lambda,\mu\in\{\pm1\}$ with $j-i-\delta\equiv1\pmod{2}$. For $i$ even (and thus $\delta\in\{\pm1\}$), we have that
    \begin{align*}
        &\gamma_{(i,j,\delta),(n,0),(n-j,0,\mu),0}\\
        &=\frac{1}{2}q^{\frac{3}{2}jn-\frac{1}{4}n^{2}-\frac{5}{4}j^{2}-\frac{1}{2}n+\frac{1}{2}j-\frac{1}{4}}\left(\sum_{m=0}^{\frac{n-j-i-1}{2}}\chi_{1,\frac{n-j+1}{2}-m}(q)\gs{\frac{n-j-i-1}{2}}{m}{q^{2}}q^{m(m-j+i+1)}\right.\\
        &\qquad\qquad+\mu\:q^{n-\frac{3}{2}j}\sum_{m=0}^{\frac{n-j-i-1}{2}}\chi_{1,\frac{n-j-1}{2}-m}(q)\gs{\frac{n-j-i-1}{2}}{m}{q^{2}}q^{m(m-j+i-1)}\\
        &\qquad\qquad\left.+\delta\:q^{\frac{1}{2}(i-j)}\sum_{m=0}^{\frac{n-j-i-1}{2}}\chi_{1,\frac{n-j-1}{2}-m}(q)\gs{\frac{n-j-i-1}{2}}{m}{q^{2}}q^{m(m-j+i+1)}\right)
    \end{align*}
    and for $i$ odd (and thus $\delta=0$ and $\lambda\in\{\pm1\}$), we have that
    \begin{align*}
        &\gamma_{(i,j,0,\lambda),(n,0),(n-j,0,\mu),0}\\
        &=\frac{1}{2}q^{\frac{3}{2}jn-\frac{1}{4}n^{2}-\frac{5}{4}j^{2}-\frac{1}{2}n+\frac{1}{2}j-\frac{1}{4}}\left(\sum_{m=0}^{\frac{n-j-i}{2}}\chi_{1,\frac{n-j+1}{2}-m}(q)\gs{\frac{n-j-i}{2}}{m}{q^{2}}q^{m(m-j+i)}\right.\\
        &\qquad\ \ \left.+\mu q^{\frac{1}{2}n-j}\left(q^{\frac{1}{2}(n-j)}-\lambda q^{\frac{1}{2}i}\right)\sum_{m=0}^{\frac{n-j-i}{2}-1}\chi_{1,\frac{n-j-1}{2}-m}(q)\gs{\frac{n-j-i}{2}-1}{m}{q^{2}}q^{m(m-j+i)}\right).
    \end{align*}
\end{theorem}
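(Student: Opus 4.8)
The plan is to deduce this theorem from Theorem~\ref{th:gammaquadratic} (the case $n-j$ even) by applying the orthogonal polarity $\perp$ attached to the quadratic form. The key observation is that when $n$ is odd and $j$ is even, $n-j$ is odd, so a non-singular $(n-j)$-space is automatically parabolic and, since $n$ is odd, carries a perp type --- which is why the left-hand side involves $\mu$ rather than a type $\zeta$. However $n-(n-j)=j$ is \emph{even}, so after dualising one lands precisely in the situation handled by Theorem~\ref{th:gammaquadratic}.

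First I would set up the polarity dictionary. Fix a quadratic form $f$ of type $0$ on $\F_q^n$ with associated polarity $\perp$, and let $\pi$ be an $i$-singular $j$-space with radical $\overline{\pi}$. Since $\operatorname{rad}(f|_{\pi^\perp})=\pi^\perp\cap(\pi^\perp)^\perp=\pi\cap\pi^\perp=\operatorname{rad}(f|_{\pi})=\overline{\pi}$, the subspace $\pi^\perp$ is an $i$-singular $(n-j)$-space with the same radical $\overline{\pi}$; the base of the corresponding cone has dimension $(n-j)-i$, which is odd when $i$ is even and even when $i$ is odd. Combining $(\pi^\perp)^\perp=\pi$ with the identification of perp type and type-of-perp recorded in Theorem~\ref{th:alphaorthogonalodd}, one gets: for $i$ even, $\pi^\perp$ has type $0$ and perp type $\delta$; for $i$ odd, $\pi^\perp$ has type $\lambda$ (a genuine type in $\{\pm1\}$, with no perp type since $n((n-j)-i)$ is even). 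Next, if $\sigma$ is a non-singular $(n-j)$-space of perp type $\mu$, then $\sigma^\perp$ is a $j$-space of type $\mu$ by definition of perp type, and $\sigma^\perp$ is non-singular because ``$\sigma$ non-degenerate'' $\iff$ $\F_q^n=\sigma\oplus\sigma^\perp$ $\iff$ ``$\sigma^\perp$ non-degenerate''. Finally, $\perp$ being an inclusion-reversing involution gives $(\sigma\cap\pi)^\perp=\langle\sigma^\perp,\pi^\perp\rangle$ and $\langle\sigma,\pi\rangle^\perp=\sigma^\perp\cap\pi^\perp$, so $\sigma\cap\pi=0$ together with $\langle\sigma,\pi\rangle=\F_q^n$ is equivalent to $\langle\sigma^\perp,\pi^\perp\rangle=\F_q^n$ together with $\sigma^\perp\cap\pi^\perp=0$. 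Hence $\sigma\mapsto\sigma^\perp$ is a bijection between the sets being counted, and
\begin{align*}
    \gamma_{(i,j,\delta),(n,0),(n-j,0,\mu),0}&=\gamma_{(i,n-j,0,\delta),(n,0),(j,\mu),0}\qquad(i\text{ even}),\\
    \gamma_{(i,j,0,\lambda),(n,0),(n-j,0,\mu),0}&=\gamma_{(i,n-j,\lambda),(n,0),(j,\mu),0}\qquad(i\text{ odd}).
\end{align*}

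Then I would evaluate the right-hand sides with Theorem~\ref{th:gammaquadratic}. Its hypotheses hold after the substitution $(j,\eps,\zeta)\mapsto(n-j,0,\mu)$: the relevant ``$n-j$'' becomes $j$, which is even; the parity conditions follow from $n$ odd and $j$ even; and the fixed subspace has type $0$ with perp-type term equal to $\delta$ present when $i$ is even (since $n((n-j)-i)$ is odd), and type $\lambda$ with no perp-type term when $i$ is odd. For $i$ even one sets the theorem's $\delta$ and $\eps$ to $0$, its $\zeta$ to $\mu$ and its $\lambda$ to $\delta$: the three summand families containing a factor of the theorem's $\delta$ or $\eps$ vanish, the prefactor exponent $j(n-j)-\bigl(\tfrac{j+1-\eps^2}{2}\bigr)^2$ becomes $j(n-j)-\bigl(\tfrac{n-j+1}{2}\bigr)^2=\tfrac32 jn-\tfrac14 n^2-\tfrac54 j^2-\tfrac12 n+\tfrac12 j-\tfrac14$, and the three surviving families match the three displayed ones. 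For $i$ odd one sets the theorem's $\delta$ to $\lambda$, its $\eps$ to $0$ and its $\zeta$ to $\mu$: the two families carrying a factor of the theorem's $\eps$ vanish, and the remaining ``$\zeta$''-family and ``$-\delta\zeta$''-family --- which share the Gaussian binomial $\gs{\frac{n-j-i}{2}-1}{m}{q^{2}}$ and the factor $q^{m(m-j+i)}$ --- combine, using $q^{\frac12 n-j}q^{\frac12(n-j)}=q^{n-\frac32 j}$ and $q^{\frac12 n-j}q^{\frac12 i}=q^{\frac12(n+i)-j}$, into the single coefficient $\mu q^{\frac12 n-j}\bigl(q^{\frac12(n-j)}-\lambda q^{\frac12 i}\bigr)$, producing the claimed formula.

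The conceptual content is confined to the polarity dictionary of the second step, which is short and low-risk provided one keeps careful track of how type and perp type are interchanged by $\perp$ according to the parity of the cone-base dimension. The only genuine obstacle is bookkeeping: carrying the substitution $j\mapsto n-j$ through the already intricate expression of Theorem~\ref{th:gammaquadratic}, simplifying every $q$-exponent (the quadratic prefactor in particular) and every summation range, and --- in the $i$-odd case --- correctly merging the two surviving summand families. This is routine but error-prone, and I would double-check the exponent arithmetic carefully.
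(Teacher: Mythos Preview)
Your proposal is correct and follows essentially the same approach as the paper: apply the orthogonal polarity to obtain the identities $\gamma_{(i,j,\delta),(n,0),(n-j,0,\mu),0}=\gamma_{(i,n-j,0,\delta),(n,0),(j,\mu),0}$ (for $i$ even) and $\gamma_{(i,j,0,\lambda),(n,0),(n-j,0,\mu),0}=\gamma_{(i,n-j,\lambda),(n,0),(j,\mu),0}$ (for $i$ odd), then substitute into Theorem~\ref{th:gammaquadratic}. The only cosmetic difference is that the paper plugs directly into the already-simplified case formulae \eqref{rec4} and \eqref{rec3} rather than the unified expression of Theorem~\ref{th:gammaquadratic}, which spares the reader the step of zeroing out the $\delta$- and $\eps$-terms and merging the two $\mu$-families in the $i$-odd case.
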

\begin{proof}
    Consider a parabolic form on $\F_{q}^{n}$ with corresponding orthogonal polarity $\perp$, and let $\pi$ be a fixed $i$-singular $j$-space of type $\delta$ (and perp type $\lambda$ in case $\delta=0$). We know that $\pi^\perp$ is an $i$-singular $(n-j)$-space of type $\lambda$ (in case $\delta=0$) or type 0 (in case $\delta^2=1$). We want to count the number of non-singular $(n-j)$-spaces (necessarily of type 0) with perp type $\mu$ disjoint to $\pi$. Applying the polarity $\perp$, it follows that this number is equal to the number of non-singular $j$-spaces of type $\mu$ disjoint from $\pi^\perp$. So, we find the following results, using Theorem \ref{th:gammaquadratic}. For $i$ even (and thus $\delta\in\{\pm1\}$), using \eqref{rec4}, we have:
    \begin{align*}
        &\gamma_{(i,j,\delta),(n,0),(n-j,0,\mu),0}\\
        &=\gamma_{(i,n-j,0,\delta),(n,0),(j,\mu),0}\\
        &=\frac{1}{2}q^{\frac{3}{2}jn-\frac{1}{4}n^{2}-\frac{5}{4}j^{2}-\frac{1}{2}n+\frac{1}{2}j-\frac{1}{4}}\left(\sum_{m=0}^{\frac{n-j-i-1}{2}}\chi_{1,\frac{n-j+1}{2}-m}(q)\gs{\frac{n-j-i-1}{2}}{m}{q^{2}}q^{m(m-j+i+1)}\right.\\
        &\qquad\qquad+\mu\:q^{n-\frac{3}{2}j}\sum_{m=0}^{\frac{n-j-i-1}{2}}\chi_{1,\frac{n-j-1}{2}-m}(q)\gs{\frac{n-j-i-1}{2}}{m}{q^{2}}q^{m(m-j+i-1)}\\
        &\qquad\qquad\left.+\delta\:q^{\frac{1}{2}(i-j)}\sum_{m=0}^{\frac{n-j-i-1}{2}}\chi_{1,\frac{n-j-1}{2}-m}(q)\gs{\frac{n-j-i-1}{2}}{m}{q^{2}}q^{m(m-j+i+1)}\right).
    \end{align*}
    For $i$ odd (and thus $\delta=0$), using \eqref{rec3}, we have:
    \begin{align*}
        &\gamma_{(i,j,0,\lambda),(n,0),(n-j,0,\mu),0}\\
        &=\gamma_{(i,n-j,\lambda),(n,0),(j,\mu),0}\\
        &=\frac{1}{2}q^{\frac{3}{2}jn-\frac{1}{4}n^{2}-\frac{5}{4}j^{2}-\frac{1}{2}n+\frac{1}{2}j-\frac{1}{4}}\left(\sum_{m=0}^{\frac{n-j-i}{2}}\chi_{1,\frac{n-j+1}{2}-m}(q)\gs{\frac{n-j-i}{2}}{m}{q^{2}}q^{m(m-j+i)}\right.\\
        &\qquad\ \ \left.+\mu q^{\frac{1}{2}n-j}\left(q^{\frac{1}{2}(n-j)}-\lambda q^{\frac{1}{2}i}\right)\sum_{m=0}^{\frac{n-j-i}{2}-1}\chi_{1,\frac{n-j-1}{2}-m}(q)\gs{\frac{n-j-i}{2}-1}{m}{q^{2}}q^{m(m-j+i)}\right).\qedhere
    \end{align*}
 
\end{proof}

\begin{corollary}\label{cor:gammaquadratic0odd}
    Let $n,j,i$ be integers with $0\leq i\leq j$ and $i+j\leq n$, such that $n$ is odd and $j$ is even. Let $\delta\in\{0,\pm1\}$ and $\lambda\in\{\pm1\}$ with $j-i-\delta\equiv1\pmod{2}$. For $i$ even (and thus $\delta\in\{\pm1\}$) we have that
    \begin{align*}
        \gamma_{(i,j,\delta),(n,0),(n-j,0),0}&=q^{\frac{3}{2}jn-\frac{1}{4}n^{2}-\frac{5}{4}j^{2}-\frac{1}{2}n+\frac{1}{2}j-\frac{1}{4}}\left(\sum_{m=0}^{\frac{n-j-i-1}{2}}\chi_{1,\frac{n-j+1}{2}-m}(q)\gs{\frac{n-j-i-1}{2}}{m}{q^{2}}q^{m(m-j+i+1)}\right.\\
        &\qquad\qquad\left.+\delta\:q^{\frac{1}{2}(i-j)}\sum_{m=0}^{\frac{n-j-i-1}{2}}\chi_{1,\frac{n-j-1}{2}-m}(q)\gs{\frac{n-j-i-1}{2}}{m}{q^{2}}q^{m(m-j+i+1)}\right)
    \end{align*}
    and for $i$ odd (and thus $\delta=0$ and $\lambda\in\{\pm1\}$) we have that
    \begin{align*}
        \gamma_{(i,j,0,\lambda),(n,0),(n-j,0),0}
        &=q^{\frac{3}{2}jn-\frac{1}{4}n^{2}-\frac{5}{4}j^{2}-\frac{1}{2}n+\frac{1}{2}j-\frac{1}{4}}\sum_{m=0}^{\frac{n-j-i}{2}}\chi_{1,\frac{n-j+1}{2}-m}(q)\gs{\frac{n-j-i}{2}}{m}{q^{2}}q^{m(m-j+i)}\:.
    \end{align*}
\end{corollary}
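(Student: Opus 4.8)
The plan is to deduce this corollary immediately from Theorem~\ref{th:gammaquadratic0odd} by summing over the two possible perp types of the subspace being counted. Since $n$ is odd and $j$ is even, the integer $n-j$ is odd, so by Theorem~\ref{th:orbitsPGO} the non-singular $(n-j)$-spaces (necessarily of type $0$) with respect to a parabolic form on $\F_q^n$ split into exactly two $\PGO(n,q)$-orbits, distinguished by their perp type $\mu\in\{\pm1\}$. Hence, for a fixed $i$-singular $j$-space $\pi$ --- of type $\delta$, and additionally of perp type $\lambda$ when $\delta=0$ --- every non-singular $(n-j)$-space $\sigma$ with $\sigma\cap\pi$ trivial and $\langle\pi,\sigma\rangle$ non-singular of type $0$ has a well-defined perp type, and by Definition~\ref{def:gammaquadraticodd},
\[
    \gamma_{(i,j,\delta(,\lambda)),(n,0),(n-j,0),0}=\sum_{\mu\in\{\pm1\}}\gamma_{(i,j,\delta(,\lambda)),(n,0),(n-j,0,\mu),0}\:.
\]

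I would then substitute the closed forms from Theorem~\ref{th:gammaquadratic0odd} and add the $\mu=1$ and $\mu=-1$ instances. For $i$ even the bracketed expression for $\gamma_{(i,j,\delta),(n,0),(n-j,0,\mu),0}$ is a sum of three terms, exactly one of which (the term with prefactor $\mu\,q^{n-\frac{3}{2}j}$) is odd in $\mu$; it cancels in the sum, while the other two are doubled, so the outer factor $\frac{1}{2}$ disappears and what remains is precisely the claimed formula. For $i$ odd the bracketed expression has two terms, only the second of which carries the factor $\mu$; again it cancels, the first term is doubled, and one obtains the stated formula. In particular the resulting total no longer depends on $\lambda$, which is consistent since in Theorem~\ref{th:gammaquadratic0odd} all of the $\lambda$-dependence sits in the $\mu$-term.

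There is essentially no real obstacle: the analytic content is entirely contained in Theorem~\ref{th:gammaquadratic0odd}, and the only thing to verify is the partition identity above, which is immediate from the two-orbit statement of Theorem~\ref{th:orbitsPGO} and the definitions in Definition~\ref{def:gammaquadraticodd}. The one point warranting a quick sanity check is the degenerate case $n=i+j$ with $i$ odd, where by Remark~\ref{rem:n=i+jgamma} the perp type of $\pi$ is forced to be $1$ and $\gamma_{(i,j,0,-1),(i+j,0),\dots}$ is undefined; but since the final formula is independent of $\lambda$ this causes no trouble, and the empty-product and vanishing-Gaussian-binomial conventions take care of the remaining boundary values exactly as in the preceding lemmas.
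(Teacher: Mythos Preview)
Your proposal is correct and follows essentially the same approach as the paper: both proofs obtain the result by writing $\gamma_{(i,j,\delta(,\lambda)),(n,0),(n-j,0),0}$ as the sum over $\mu\in\{\pm1\}$ of the perp-type-refined quantities and then invoking Theorem~\ref{th:gammaquadratic0odd}, so that the $\mu$-odd terms cancel and the factor $\tfrac12$ disappears. Your version is slightly more detailed in justifying the partition identity and in checking the boundary case, but the argument is the same.
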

\begin{proof}
    It is clear that for $i$ even we have
    \begin{align*}
        \gamma_{(i,j,\delta),(n,0),(n-j,0),0}&=\gamma_{(i,j,\delta),(n,0),(n-j,0,1),0}+\gamma_{(i,j,\delta),(n,0),(n-j,0,-1),0}
    \end{align*}
    and that for $i$ odd we have
    \begin{align*}
        \gamma_{(i,j,0,\lambda),(n,0),(n-j,0),0}&=\gamma_{(i,j,0,\lambda),(n,0),(n-j,0,1),0}+\gamma_{(i,j,0,\lambda),(n,0),(n-j,0,-1),0}
    \end{align*}
    from which the statement immediately follows.
\end{proof}

Note that the above formula for $\gamma_{(i,j,0,\lambda),(n,0),(n-j,0),0}$ is independent from $\lambda$.

\begin{example}
    Consider the case $i=0,j=2,n=5,\delta=1$. Then $\gamma_{(0,2,1),(5,0),(3,0),0}$ denotes the number of planes meeting a parabolic quadric $Q(4,q)$ in a conic, and disjoint from a given secant line to this $Q(4,q)$. By Corollary \ref{cor:gammaquadratic0odd} this number is
    \[
        q^2\left(\chi_{1,2}(q)+\chi_{1,1}(q)+q^{-1}\left(\chi_{1,1}(q)+\chi_{1,0}(q)\right)\right)=q^2(q^4-q^3+1)\:.
    \]
\end{example}

\begin{example}
    Consider the case $i=1,j=2,n=5,\lambda=1$. Let $Q$ be a parabolic quadric $Q(4,q)$ and let $\pi$ be a tangent line to $Q$ whose image under the polarity is a plane meeting $Q$ in two intersecting lines; then $\gamma_{(0,2,0,1),(5,0),(3,0),0}$ counts the number of planes disjoint from this line $\pi$ meeting $Q(4,q)$ in a conic. By Corollary \ref{cor:gammaquadratic0odd} this number is
    \[
        q^2\left(\chi_{1,2}(q)+\chi_{1,1}(q)\right)=q^5(q-1)\:.
    \]
\end{example}

We are now able to derive the expression in the remaining case, namely when $n$ is even and $j$ is odd.

\begin{theorem}\label{th:gammaquadratic0even}
    Let $n,j,i$ be integers with $0\leq i\leq j$ and $i+j\leq n$, such that $n$ is even and $j$ is odd. Let $\delta\in\{0,\pm1\}$ and $\eps,\lambda\in\{\pm1\}$ with $j-i-\delta\equiv1\pmod{2}$. If $i$ is odd, then
    \begin{align*}
        &\gamma_{(i,j,\delta),(n,\eps),(n-j,0),\eps}=q^{\frac{3}{2}jn-\frac{1}{4}n^{2}-\frac{5}{4}j^{2}-\frac{1}{2}n+\frac{1}{2}j-\frac{1}{4}}\left(\sum_{m=0}^{\frac{n-j-i}{2}}\chi_{1,\frac{n-j+1}{2}-m}(q)\gs{\frac{n-j-i}{2}-1}{m}{q^{2}}q^{m(m-j+i+1)}\right.\\
        &\qquad+\left.\left(\delta q^{\frac{1}{2}i-\frac{1}{2}j}-\eps q^{\frac{1}{2}n-j}+\delta\eps q^{\frac{1}{2}n-\frac{3}{2}j+\frac{1}{2}i}\right)\sum_{m=0}^{\frac{n-j-i}{2}-1}\chi_{1,\frac{n-j-1}{2}-m}(q)\gs{\frac{n-j-i}{2}-1}{m}{q^{2}}q^{m(m-j+i+1)}.\right)
    \end{align*}

If $i$ is even, then
    \begin{align*}
        \gamma_{(i,j,0),(n,\eps),(n-j,0),\eps}=&q^{\frac{3}{2}jn-\frac{1}{4}n^{2}-\frac{5}{4}j^{2}-\frac{1}{2}n+\frac{1}{2}j-\frac{1}{4}}\left( \sum_{m=0}^{\frac{n-j-i-1}{2}}\chi_{1,\frac{n-j+1}{2}-m}(q)\gs{\frac{n-j-i-1}{2}}{m}{q^{2}}q^{m(m-j+i)}\right.\\
        &\qquad\left. -\eps q^{\frac{1}{2}n-j}\sum_{m=0}^{\frac{n-j-i-1}{2}}\chi_{1,\frac{n-j-1}{2}-m}(q)\gs{\frac{n-j-i-1}{2}}{m}{q^{2}}q^{m(m-j+i)} \right)
    \end{align*}
\end{theorem}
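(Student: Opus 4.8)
\emph{Proof plan.} This is the last remaining parity configuration, and unlike Theorem \ref{th:gammaquadratic} it requires no induction of its own: a single application of one of the recursion lemmas reduces it to the case $n$ odd, $j$ even, which is already available through Theorem \ref{th:gammaquadratic0odd} and Corollary \ref{cor:gammaquadratic0odd}. Since $n-j$ is odd, a non-singular $(n-j)$-space is forced to be parabolic and its span with the given $j$-space $\pi$ is all of $\F_q^n$, so the object to be computed is exactly $\gamma_{(i,j,\delta),(n,\eps),(n-j,0),\eps}$. When $i$ is odd we have $j-i$ even and $\delta\in\{\pm1\}$, and we apply Lemma \ref{lem:recursion1} with $\zeta=0$; when $i$ is even we have $j-i$ odd and $\delta=0$, and we apply Lemma \ref{lem:recursion2} with $\zeta=0$. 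In both lemmas the left-hand side is $\gamma_{(i,j,\delta),(n,\eps),(n-j,0),\eps}$ multiplied by $\beta_{(0,n-j,0),(n,\eps),(n-1,0)}$, and this last factor equals $q^{j-1}$, either by Corollary \ref{cor:betas} or directly from the general formula of Theorem \ref{th:betaorthogonal}.

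The right-hand sides of Lemmas \ref{lem:recursion1} and \ref{lem:recursion2} are, after the simplifications already carried out in their proofs, linear combinations with explicit polynomial coefficients of the gammas $\gamma_{(i',j-1,\cdot),(n-1,0),(n-j,0),0}$ with $i'\in\{i-1,i,i+1\}$. Since $n-1$ is odd, $j-1$ is even, and $(n-1)-(j-1)=n-j$, each of these is a complementary-case gamma in a parabolic space covered by Corollary \ref{cor:gammaquadratic0odd}: the terms with singular index $i-1$ or $i+1$, together with the type-$\kappa$ terms $\gamma_{(i,j-1,\kappa),(n-1,0),(n-j,0),0}$ ($\kappa\in\{\pm1\}$) that occur in Lemma \ref{lem:recursion2}, belong to the ``$i$ even'' branch of that corollary, while the perp-type terms $\gamma_{(i,j-1,0,\nu),(n-1,0),(n-j,0),0}$ (respectively $\gamma_{(i\pm1,j-1,0,\nu),(n-1,0),(n-j,0),0}$) belong to its ``$i$ odd'' branch, where, as noted after Corollary \ref{cor:gammaquadratic0odd}, the value is independent of the perp type. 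Hence each sum over $\nu$ in the recursions can be evaluated by pulling this common value out of the sum, after which only the part of the coefficient that is even in $\nu$ survives. Substituting the closed formulae of Corollary \ref{cor:gammaquadratic0odd} and dividing through by $q^{j-1}$ gives an explicit expression for $\gamma_{(i,j,\delta),(n,\eps),(n-j,0),\eps}$ in each of the two parity sub-cases.

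The remaining step, and the only serious obstacle, is to check that this expression simplifies to the two formulae in the statement. This is a long but mechanical computation with the functions $\chi_{1,\cdot}$ and the $q^2$-Gaussian binomial coefficients: one reindexes the summation variable $m$ so that all the partial sums share a common range, and then repeatedly invokes the $q^2$-analogues of Pascal's identity (Lemma \ref{lem:pascalgeneral}) to merge consecutive binomial coefficients and collect the terms into the asserted shape. We carry this out in Appendix \ref{ap:morecalculations}, treating the sub-case $i$ odd (via Lemma \ref{lem:recursion1}) and the sub-case $i$ even (via Lemma \ref{lem:recursion2}) separately.

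Finally, one checks the boundary values of the parameters, but no new idea is needed. For $i$ odd these are $i=j$ (forcing $\delta=1$, i.e.\ $\pi$ totally singular) and $i+j=n$, and for $i$ even they are $i=0$ and $i=j-1$; the value $i+j=n$ cannot occur when $i$ is even for parity reasons. In each of these situations some of the three families of $(j-1)$-spaces of $\pi$ fail to exist, but -- as already observed in the proofs of Lemmas \ref{lem:recursion1} and \ref{lem:recursion2} -- the corresponding terms carry a coefficient that vanishes (and the associated gamma on the right-hand side is $0$ when its parameters are out of range), so the recursion, and hence the stated formulae, remain valid under the conventions $\gs{-1}{0}{q}=1$ and $\gs{b}{a}{q}=0$ for $a<0$.
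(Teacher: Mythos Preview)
Your proposal is correct and follows essentially the same approach as the paper: apply Lemma~\ref{lem:recursion1} when $i$ is odd and Lemma~\ref{lem:recursion2} when $i$ is even, with $\zeta=0$, substitute the closed formulae from Corollary~\ref{cor:gammaquadratic0odd} for the resulting $\gamma$'s in $\F_q^{\,n-1}$, divide by $\beta_{(0,n-j,0),(n,\eps),(n-1,0)}=q^{j-1}$, and defer the algebraic simplification to Appendix~\ref{ap:morecalculations}. Your identification of which branch of Corollary~\ref{cor:gammaquadratic0odd} each term falls into, the observation that the perp-type-dependent $\gamma$'s are actually independent of $\nu$, and your treatment of the boundary cases (including the separate handling of $n=i+j$ when $i$ is odd) all match the paper.
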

\begin{proof}
    We distinguish between two cases. First assume that $i$ is odd. Using Lemma \ref{lem:recursion1}, we find that
    \begin{align*}
        &\gamma_{(i,j,\delta),(n,\eps),(n-j,0),\eps}\beta_{(0,n-j,0),(n,\eps),(n-1,0)}\\
        &=\left(q^{i}-1\right)q^{n-i-1}\gamma_{(i-1,j-1,\delta),(n-1,0),(n-j,0),0}\\
        &\qquad+\frac{1}{2}q^{\frac{1}{2}n-2}\left(q^{\frac{1}{2}(j-i)}-\delta\right)\sum_{\nu\in\{\pm1\}}\left((q-1)q^{\frac{1}{2}(n-j-i)}+\nu q+\delta\eps\right)\gamma_{(i,j-1,0,\nu),(n-1,0),(n-j,0),0}\\
        &\qquad+q^{\frac{1}{2}(n-j+i)-1}\left(q^{\frac{1}{2}(j-i-1-\delta)}+1\right)\left(q^{\frac{1}{2}(j-i-1+\delta)}-1\right)\left(q^{\frac{1}{2}(n-j-i)}-\delta\eps\right)\\&\qquad\qquad\qquad\gamma_{(i+1,j-1,\delta),(n-1,0),(n-j,0),0}
    \end{align*}
    The formulae in Corollary \ref{cor:gammaquadratic0odd} now allow us to substitute the values of $\gamma_{(i-1,j-1,\delta),(n-1,0),(n-j,0),0}$,  $\gamma_{(i,j-1,0,\nu),(n-1,0),(n-j,0),0}$, and $\gamma_{(i+1,j-1,\delta),(n-1,0),(n-j,0),0}$. We collect the working in Subsection \ref{sec:appendixB2} of the Appendix. This yields:
    \begin{align*} 
        &\gamma_{(i,j,\delta),(n,\eps),(n-j,0),\eps}\beta_{(0,n-j,0),(n,\eps),(n-1,0)}\\
        &=q^{\frac{3}{2}jn-\frac{1}{4}n^{2}-\frac{5}{4}j^{2}-\frac{1}{2}n+\frac{3}{2}j-\frac{5}{4}}\left(\sum_{m=0}^{\frac{n-j-i}{2}}\chi_{1,\frac{n-j+1}{2}-m}(q)\gs{\frac{n-j-i}{2}-1}{m}{q^{2}}q^{m(m-j+i+1)}\right.\\
        &\qquad+\left.\left(\delta q^{\frac{1}{2}i-\frac{1}{2}j}-\eps q^{\frac{1}{2}n-j}+\delta\eps q^{\frac{1}{2}n-\frac{3}{2}j+\frac{1}{2}i}\right)\sum_{m=0}^{\frac{n-j-i}{2}-1}\chi_{1,\frac{n-j-1}{2}-m}(q)\gs{\frac{n-j-i}{2}-1}{m}{q^{2}}q^{m(m-j+i+1)}\right)
    \end{align*}
    Since Corollary \ref{cor:betas} shows that $\beta_{(0,n-j,0),(n,\eps),(n-1,0)}=q^{j-1}\neq0$,  the statement follows.
    \par Now assume that $i$ is even. 
    By Lemma \ref{lem:recursion2}, we have
    \begin{align*}
        &\gamma_{(i,j,0),(n,\eps),(n-j,0),\eps}\beta_{(0,n-j,0),(n,\eps),(n-1,0)}\\
        &=\frac{1}{2}q^{n-i-1}\left(q^{i}-1\right)\sum_{\nu\in\{\pm1\}}\gamma_{(i-1,j-1,0,\nu),(n-1,0),(n-j,0),0}\\
        &\qquad+\frac{1}{2}q^{\frac{1}{2}n-1}\sum_{\kappa\in\{\pm1\}}\left(q^{\frac{1}{2}n-i-1}(q-1)-\eps+\kappa q^{\frac{1}{2}(j-i-1)}\left((q-1)q^{\frac{1}{2}n-j}-\eps\right)\right)\\&\qquad\qquad\qquad\qquad\qquad\gamma_{(i,j-1,\kappa),(n-1,0),(n-j,0),0}\\
        &\qquad+\frac{1}{2}\left(q^{j-i-1}-1\right)q^{\frac{1}{2}(n-j+i-1)}\sum_{\nu\in\{\pm1\}}\gamma_{(i+1,j-1,0,\nu),(n-1,0),(n-j,0),0}\left(q^{\frac{1}{2}(n-j-i-1)}+\nu\right)
    \end{align*}
    The formulae in Corollary \ref{cor:gammaquadratic0odd} now allow us to substitute the values of $\gamma_{(i-1,j-1,0,\nu),(n-1,0),(n-j,0),0}$, $\gamma_{(i,j-1,\kappa),(n-1,0),(n-j,0),0}$, and $\gamma_{(i+1,j-1,0,\nu),(n-1,0),(n-j,0),0}$. We collect the working in Subsection \ref{sec:appendixB2} of the Appendix. This yields:
    \begin{align*}
        &\gamma_{(i,j,0),(n,\eps),(n-j,0),\eps}\beta_{(0,n-j,0),(n,\eps),(n-1,0)}\\
        &=q^{\frac{3}{2}jn-\frac{1}{4}n^{2}-\frac{5}{4}j^{2}-\frac{1}{2}n+\frac{3}{2}j-\frac{5}{4}}\left( \sum_{m=0}^{\frac{n-j-i-1}{2}}\chi_{1,\frac{n-j+1}{2}-m}(q)\gs{\frac{n-j-i-1}{2}}{m}{q^{2}}q^{m(m-j+i)}\right.\\
        &\qquad\left. -\eps q^{\frac{1}{2}n-j}\sum_{m=0}^{\frac{n-j-i-1}{2}}\chi_{1,\frac{n-j-1}{2}-m}(q)\gs{\frac{n-j-i-1}{2}}{m}{q^{2}}q^{m(m-j+i)} \right)
    \end{align*}
    Since Corollary \ref{cor:betas} shows that $\beta_{(0,n-j,0),(n,\eps),(n-1,0)}=q^{j-1}\neq0$,  the statement follows.
\end{proof}

\begin{example}
    We consider the case $i=1$, $j=3$, $n=6$ and $\delta=\eps=1$. Then $\gamma_{(1,3,1),(6,1),(3,0),1)}$ denotes the number of conic planes with respect to a hyperbolic quadric $\mathcal{Q}^+(5,q)$ that are disjoint from a given plane meeting this $\mathcal{Q}^+(5,q)$ in two intersecting lines. By Theorem \ref{th:gammaquadratic0even} this number is
    \begin{align*}
        q^5\left(\chi_{1,2}(q)+\left(q^{-1}-1+ q^{-1}\right)\chi_{1,1}(q)\right)&=q^5\left((q-1)\left(q^3-1\right)+\left(2q^{-1}-1\right)(q-1)\right)\\&=q^4\left(q^5-q^4-2q^2+4q-2\right)\:.
    \end{align*}
\end{example}

\begin{example}
    We consider the case $i=2$, $j=3$, $n=6$ and $\eps=1$. Then $\gamma_{(2,3,0),(6,1),(3,0),1)}$ denotes the number of conic planes with respect to a hyperbolic quadric $\mathcal{Q}^+(5,q)$ that are disjoint from a given plane meeting this $\mathcal{Q}^+(5,q)$ in exactly one line. By Theorem \ref{th:gammaquadratic0even} this number is
    \begin{align*}
        q^5\left(\chi_{1,2}(q)-\chi_{1,1}(q)\right)&=q^5\left((q-1)\left(q^3-1\right)-(q-1)\right)\\&=q^5\left(q-1\right)(q^3-2)\:.
    \end{align*}
\end{example}

We now derive our main result for the general case where the trivially intersecting subspaces do not necessarily span the full space.

\begin{theorem}\label{th:gammageneral}
    Let $n,j,i,k$ be integers with $0\leq i\leq j$ and $i,k\leq n-j$. Let $\delta,\eps,\zeta,\eta\in\{0,\pm1\}$ and $\lambda\in\{\pm1\}$ with $1\equiv j-i-\delta\equiv n-\eps\equiv k-\zeta\equiv k+j-\eta\pmod{2}$. If $k+j$ is even or $j-i$ is even, then
    \begin{align*}
        \gamma_{(i,j,\delta(,\lambda)),(n,\eps),(k,\zeta),\eta}=\beta_{(i,j,\delta(,\lambda)),(n,\eps),(k+j,\eta)}\gamma_{(i,j,\delta),(k+j,\eta),(k,\zeta),\eta}\:.
    \end{align*}
    
    If $k+j$ and $j-i$ are both odd, then
    \begin{align*}
        \gamma_{(i,j,\delta(,\lambda)),(n,\eps),(k,\zeta),\eta}=\sum_{\nu\in\{\pm1\}}\beta^{\nu}_{(i,j,0),(n,\eps),(k+j,0)}\gamma_{(i,j,0,\nu),(k+j,0),(k,\zeta),0}\:.
    \end{align*}
\end{theorem}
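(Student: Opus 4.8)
The plan is to realise $\gamma_{(i,j,\delta(,\lambda)),(n,\eps),(k,\zeta),\eta}$ as a fibred count over the non-singular $(k+j)$-spaces of type $\eta$ through $\pi$. Fix a quadratic form $f$ of type $\eps$ on $\F_q^n$ and an $i$-singular $j$-space $\pi$ of type $\delta$ (and perp type $\lambda$ when $n(j-i)$ is odd). For every non-singular $k$-space $\sigma$ of type $\zeta$ counted by $\gamma_{(i,j,\delta(,\lambda)),(n,\eps),(k,\zeta),\eta}$, the span $\rho:=\langle\sigma,\pi\rangle$ is, by definition, a non-singular $(k+j)$-space of type $\eta$ containing $\pi$. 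Conversely, fix such a $\rho$ and work inside it with the restricted form $f|_{\rho}$: the radical and the type of $\pi$ depend only on $f|_{\pi}$, so $\pi$ is again an $i$-singular $j$-space of type $\delta$ in $\rho$; a non-singular $k$-space $\sigma\subseteq\rho$ of type $\zeta$ (its type being read off from $f|_{\sigma}$, hence unchanged) meets $\pi$ trivially if and only if $\dim\langle\sigma,\pi\rangle=k+j=\dim\rho$, i.e.\ $\langle\sigma,\pi\rangle=\rho$, and then $\langle\sigma,\pi\rangle$ is automatically non-singular of type $\eta$. Thus $\sigma\mapsto\langle\sigma,\pi\rangle$ partitions the set counted by $\gamma_{(i,j,\delta(,\lambda)),(n,\eps),(k,\zeta),\eta}$ over the non-singular $(k+j)$-spaces of type $\eta$ through $\pi$, the fibre over $\rho$ being counted by a $\gamma$-quantity computed inside $\rho$.

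I then split according to the parity hypothesis. If $k+j$ or $j-i$ is even, then $(k+j)(j-i)$ is even, and Theorem \ref{th:orbitsPGO} (applied inside a $(k+j)$-space of type $\eta$) gives that $\PGO^{\eta}(k+j,q)$ is transitive on the $i$-singular $j$-spaces of type $\delta$; hence every fibre has the common size $\gamma_{(i,j,\delta),(k+j,\eta),(k,\zeta),\eta}$ of Definition \ref{def:gammaquadratic}, and multiplying by the number $\beta_{(i,j,\delta(,\lambda)),(n,\eps),(k+j,\eta)}$ of admissible $\rho$ (Definitions \ref{def:betaquadratic} and \ref{def:betaquadraticodd}) yields the first formula. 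If $k+j$ and $j-i$ are both odd, then $\delta=\eta=0$ and $(k+j)(j-i)$ is odd, so inside every admissible (necessarily parabolic) $\rho$ the subspace $\pi$ has a well-defined perp type $\nu\in\{\pm1\}$; by Theorem \ref{th:orbitsPGO} the isomorphism class of the triple $(\rho,f|_{\rho},\pi)$, and hence the fibre over $\rho$, depends only on $\nu$ and equals $\gamma_{(i,j,0,\nu),(k+j,0),(k,\zeta),0}$ of Definition \ref{def:gammaquadraticodd}. Grouping the admissible $\rho$ according to the perp type $\nu$ of $\pi$ inside $\rho$ --- there being $\beta^{\nu}_{(i,j,0),(n,\eps),(k+j,0)}$ of them, to be read as $\beta^{\nu}_{(i,j,0,\lambda),(n,0),(k+j,0)}$ when $n$ is odd in the sense of Notation \ref{not:ambiguous}, by Definition \ref{def:betaquadraticodd} --- and summing over $\nu$ gives the second formula.

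\textbf{The main obstacle} is the perp-type bookkeeping in the second case: the perp type $\nu$ is an invariant of the pair $(\pi,\rho)$ rather than of $\pi$ alone, so one must verify that this same $\nu$ simultaneously governs the count $\beta^{\nu}$ of $(k+j)$-spaces on the $\F_q^n$-side and the fibre count $\gamma_{(i,j,0,\nu),(k+j,0),(k,\zeta),0}$ on the $\rho$-side, and that all quantities involved are well defined; this is exactly where Theorem \ref{th:orbitsPGO} (and the well-definedness already recorded for Definitions \ref{def:betaquadraticodd} and \ref{def:gammaquadraticodd}) is needed. I would also check the boundary cases $i=k$, $i=j$ and $k+j=n$; in each the identity persists by the standing conventions (such as $\gs{-1}{0}{q}=1$, and $\beta$ collapsing to $1$, resp.\ to the appropriate indicator, when ambient and target dimensions coincide), so they cause no trouble.
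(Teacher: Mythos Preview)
Your proposal is correct and follows essentially the same approach as the paper: both fibre the count over the non-singular $(k+j)$-spaces $\rho$ of type $\eta$ through $\pi$, observe that the fibre over $\rho$ is a $\gamma$-quantity computed inside $\rho$, and split according to whether $(k+j)(j-i)$ is even (single orbit, constant fibre) or odd (two orbits distinguished by the perp type $\nu$ of $\pi$ in $\rho$). Your write-up is slightly more explicit than the paper's in invoking Theorem~\ref{th:orbitsPGO} to justify that the fibre size depends only on $\nu$, and in noting that $\beta^{\nu}$ should be read with the extra $\lambda$ when $n$ is odd (Notation~\ref{not:ambiguous}), but the argument is the same.
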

\begin{proof}
    Consider a quadratic form $f$ of type $\eps$ on $\F_{q}^{n}$, and consider a fixed $i$-singular $j$-space of type $\delta$ (and perp type $\lambda$ in case $n$ and $j-i$ are odd). Now, we count in two ways the pairs $(\sigma,\tau)$, where $\sigma$ is a non-singular $k$-space of type $\zeta$, trivially intersecting $\pi$ %and spanning a $(k+j)$-space of type $\eta$ with $\pi$
    and $\tau=\langle\pi,\sigma\rangle$ is a non-singular $(k+j)$-space of type $\eta$. 
    Note that the type of $\pi$ with respect to $f\vert_\tau$ is $\delta$. In case $k+j$ is even or $j-i$ is even, $\delta$ is sufficient to determine the number of pairs $(\sigma,\tau)$ given a fixed $\tau$. The number of choices for the subspace $\sigma$ is by definition $\gamma_{(i,j,\delta),(n,\eps),(k,\zeta),\eta}$, and the choice of $\sigma$ uniquely determines $\tau$. On the other hand, there are $\beta_{(i,j,\delta(,\lambda)),(n,\eps),(k+j,\eta)}$ choices for a subspace  $\tau$ through $\pi$, each of which contains $\gamma_{(i,j,\delta),(k+j,\eta),(k,\zeta),\eta}$ choices for $\sigma$. The first equality from the statement immediately follows. In the case that $k+j$ and $j-i$ are both odd, $\delta=0$ and the number of pairs $(\sigma,\tau)$ given a fixed $\tau$, depends on the perp type $\nu$ of $\pi$ with respect to $f\vert_{\tau}$. So, we need to sum over both possible perp types to find all pairs $(\sigma,\tau)$. We find the second equality from the statement.
\end{proof}

\begin{example} \label{ex:gamma3}
    Consider $\gamma_{(0,3,0),(6,1),(2,1),0}$; given a plane $\pi$, meeting a hyperbolic quadric $\mathcal{Q}=\mathcal{Q}^+(5,q)$ in a conic, this number counts the number of secant lines $L$ to $\mathcal{Q}$ such that $\langle L,\pi\rangle$ meets $\mathcal{Q}$ in a non-singular parabolic quadric $\mathcal{Q}(4,q)$. We find that
    \begin{align*}
       \gamma_{(0,3,0),(6,1),(2,1),0}=\sum_{\nu\in\{\pm1\}}\beta^{\nu}_{(0,3,0),(6,1),(5,0)}\gamma_{(0,3,0,\nu),(5,0),(2,1,0)}.
    \end{align*}
    In Example \ref{ex:gamma} we have derived that $\gamma_{(0,3,0,-1),(5,0),(2,1,0)}=\frac{1}{2}q^4(q^2-1)$ and $\gamma_{(0,3,0,1),(5,0),(2,1,0)}=\frac{1}{2}q^2(q^4-q^2+2)$. 
    
    We can use Corollary \ref{cor:betavariations} to find that $\beta^\nu_{(0,3,0),(6,1),(5,0)}=\frac{1}{2}q(q+\nu)$. It follows that
 
    \begin{align*}
        \gamma_{(0,3,0,1),(5,0),(2,1,0)}=\frac{1}{2}q(q+1)\frac{1}{2}q^2(q^4-q^2+2)+ \frac{1}{2}q(q-1)\frac{1}{2}q^4(q^2-1)
        =\frac{1}{2}q^3(q^5-q^3+q+1)\:.
    \end{align*}
\end{example}

\section{The proportion of non-singular trivially intersecting subspaces spanning a non-singular space}\label{sec:proportion}

In this section, we look at the proportion that motivated this research.

\begin{definition}\label{def:rhoquadratic}
    Consider a quadratic form of type $\eps$ on $\F^{n}_{q}$. Let $j,k$ be integers with $0\leq j,k\leq n-1$ and $j+k\leq n$. Let $\delta,\zeta,\eta\in\{0,\pm1\}$ be such that $1\equiv j-\delta\equiv k-\zeta\equiv k+j-\eta\pmod{2}$. Now, let $\mathcal{S}_{(j,\delta),(k,\zeta)}$ be the set of pairs $(\pi,\pi')$ with $\dim(\pi)=j$ and $\dim(\pi')=k$, both $\pi$ and $\pi'$ non-singular, $\pi$ of type $\delta$ and $\pi'$ of type $\zeta$. Let $\mathcal{T}_{(j,\delta),(k,\zeta),\eta}$ be the subset of $\mathcal{S}_{(j,\delta),(k,\zeta)}$ with pairs $(\pi,\pi')$ such that $\dim(\langle\pi,\pi'\rangle)=j+k$ and $\langle\pi,\pi'\rangle$ non-singular of type $\eta$. The proportion $\frac{|\mathcal{T}_{(j,\delta),(k,\zeta),\eta}|}{|\mathcal{S}_{(j,\delta),(k,\zeta)}|}$ is denoted by $\rho_{(j,\delta),(k,\zeta),(n,\eps),\eta}$.
\end{definition}

Note that by definition $\rho_{(j,\delta),(k,\zeta),(n,\eps),\eta}=\rho_{(k,\zeta),(j,\delta),(n,\eps),\eta}$.%

In case $k=0$, we must have that $\zeta=1$ in the previous definition, since the 0-space is hyperbolic. It follows immediately that $\rho_{(j,\delta),(0,1),(n,\eps),\eta}=\frac{1}{2}\left(1+\delta\eta\right)$, i.e.~1 if $\eta=\delta$ and 0 otherwise. So, in the theorems of this section we can assume that $j,k\geq1$.

\begin{theorem}\label{th:rhoquadratic}
	Let $n,j,k$ be integers with $0\leq j,k\leq n-1$ and $j+k\leq n$. Let $\delta,\eps,\zeta,\eta\in\{0,\pm1\}$ be such that $1\equiv n-\eps\equiv j-\delta\equiv k-\zeta\equiv k+j-\eta\pmod{2}$. We have
    \begin{align*}
        \displaystyle\rho_{(j,\delta),(k,\zeta),(n,\eps),\eta}=
        \begin{cases}
            \frac{\gamma_{(0,j,\delta),(n,\eps),(k,\zeta),\eta}}{\alpha_{(0,k,\zeta),(n,\eps)}}&nj\text{ even},\\
            \frac{\sum_{\lambda\in\{\pm1\}}\alpha_{(0,j,0,\lambda),(n,0)}\gamma_{(0,j,0,\lambda),(n,0),(k,\zeta),\eta}}{\alpha_{(0,j,0),(n,0)}\alpha_{(0,k,\zeta),(n,0)}}&nj\text{ odd}.
        \end{cases}
    \end{align*}
\end{theorem}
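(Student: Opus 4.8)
The plan is to prove this by a double-counting argument, counting the set $\mathcal{T}_{(j,\delta),(k,\zeta),\eta}$ in two ways, and then dividing by $|\mathcal{S}_{(j,\delta),(k,\zeta)}|$. First I would record that $|\mathcal{S}_{(j,\delta),(k,\zeta)}| = \alpha_{(0,j,\delta),(n,\eps)}\,\alpha_{(0,k,\zeta),(n,\eps)}$ in the case $nj$ even (and similarly with the perp-type split in the case $nj$ odd), since the two coordinates of a pair in $\mathcal{S}$ are chosen independently and each is counted by the appropriate $\alpha$ from Definition \ref{def:alphaquadratic} (resp.\ Definition \ref{def:alphaperp}). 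Note that since $k-\zeta$ is odd, the non-singular $k$-space $\pi'$ is never of parabolic type unless $n$ is odd, but in all cases the number of non-singular $k$-spaces of type $\zeta$ is $\alpha_{(0,k,\zeta),(n,\eps)}$, and likewise for $\pi$; one should also observe that when $nk$ is odd a non-singular $k$-space has a perp type but we are \emph{not} prescribing it in $\mathcal{S}$, so $\alpha_{(0,k,\zeta),(n,0)}$ (summed over both perp types) is the correct count, matching Definition \ref{def:rhoquadratic}.

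For the numerator, fix a non-singular $j$-space $\pi$ of type $\delta$ (in the $nj$-odd case, also of perp type $\lambda$). By Definition \ref{def:gammaquadratic} (resp.\ Definition \ref{def:gammaquadraticodd}), the number of non-singular $k$-spaces $\pi'$ of type $\zeta$ with $\pi\cap\pi'$ trivial and $\langle\pi,\pi'\rangle$ non-singular of type $\eta$ equals $\gamma_{(0,j,\delta),(n,\eps),(k,\zeta),\eta}$ (resp.\ $\gamma_{(0,j,0,\lambda),(n,0),(k,\zeta),\eta}$); these are well-defined, i.e.\ independent of the chosen $\pi$ in its orbit, by Theorem \ref{th:orbitsPGO}. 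Hence in the $nj$-even case
\begin{align*}
    |\mathcal{T}_{(j,\delta),(k,\zeta),\eta}| = \alpha_{(0,j,\delta),(n,\eps)}\,\gamma_{(0,j,\delta),(n,\eps),(k,\zeta),\eta},
\end{align*}
and dividing by $|\mathcal{S}_{(j,\delta),(k,\zeta)}| = \alpha_{(0,j,\delta),(n,\eps)}\,\alpha_{(0,k,\zeta),(n,\eps)}$ gives the first formula after cancelling $\alpha_{(0,j,\delta),(n,\eps)}$. In the $nj$-odd case (so $\eps=0$, $\delta=0$, and $\pi$ has a perp type), the non-singular $j$-spaces of type $0$ split into two orbits under $\PGO(n,q)$ according to perp type $\lambda\in\{\pm1\}$, so I would partition $\mathcal{T}$ over $\lambda$:
\begin{align*}
    |\mathcal{T}_{(j,0),(k,\zeta),\eta}| = \sum_{\lambda\in\{\pm1\}}\alpha_{(0,j,0,\lambda),(n,0)}\,\gamma_{(0,j,0,\lambda),(n,0),(k,\zeta),\eta},
\end{align*}
and divide by $|\mathcal{S}_{(j,0),(k,\zeta)}| = \alpha_{(0,j,0),(n,0)}\,\alpha_{(0,k,\zeta),(n,0)}$ to obtain the second formula.

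The only genuinely delicate point — and the place I would be most careful — is making sure the parity bookkeeping in Definition \ref{def:rhoquadratic} is consistent: we are given $1\equiv j-\delta\pmod 2$, so $j-i-\delta\equiv 1\pmod 2$ with $i=0$ holds automatically, meaning all the $\gamma$'s invoked are well-defined; and the condition $k+j-\eta\equiv1\pmod2$ ensures the spanned space has a legitimate type $\eta$. One should also check that the case distinction ``$nj$ even'' versus ``$nj$ odd'' in the statement of the theorem genuinely matches the case distinction ``$\pi$ lies in one $\PGO^\eps$-orbit'' versus ``two orbits'' from Theorem \ref{th:orbitsPGO} applied with the parameter triple $(0,j,\delta)$: there the condition for a single orbit is that $n$ is even, or $j-i=j$ is even, or $n-i-j=n-j=0$; since $j\le n-1$ we have $n-j\ge1$, so a single orbit occurs precisely when $nj$ is even, and two orbits precisely when both $n$ and $j$ are odd — exactly the stated dichotomy. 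With these consistency checks in place the argument is a direct double count, and no further computation is needed.
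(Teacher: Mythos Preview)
Your proposal is correct and follows essentially the same approach as the paper's own proof: both compute $|\mathcal{S}|$ and $|\mathcal{T}|$ by the obvious double count, invoking Theorem~\ref{th:orbitsPGO} to justify that the relevant $\gamma$'s are well-defined on each orbit, and then split into the one-orbit and two-orbit cases. Your version is in fact slightly more detailed than the paper's, explicitly verifying that the case distinction ``$nj$ even'' versus ``$nj$ odd'' matches the orbit dichotomy from Theorem~\ref{th:orbitsPGO} and checking the parity constraints on the parameters.
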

\begin{proof}
    There are $\alpha_{(0,j,\delta),(n,\eps)}\alpha_{(0,k,\zeta),(n,\eps)}$ couples $(\pi,\pi')$ with $\dim(\pi)=j$ and $\dim(\pi')=k$, both $\pi$ and $\pi'$ non-singular, $\pi$ of type $\delta$ and $\pi'$ of type $\zeta$. 
    \par If $n$ or $j$ is even, we know that the orthogonal group $\PGO^{\eps}(n,q)$ acts transitively on the non-singular $j$-spaces (see Theorem \ref{th:orbitsPGO}). There are $\alpha_{(0,j,\delta),(n,\eps)}$ choices for $\pi$, and given $\pi$, $\gamma_{(0,j,\delta),(n,\eps),(k,\zeta),\eta}$ choices for $\pi'$ with $\dim(\pi')=k$, non-singular of type $\zeta$, and disjoint from $\pi$. Hence, $\rho_{(j,\delta),(k,\zeta),(n,\eps),\eta}=\frac{\alpha_{(0,j,\delta),(n,\eps)}\gamma_{(0,j,\delta),(n,\eps),(k,\zeta),\eta}}{\alpha_{(0,j,\delta),(n,\eps)}\alpha_{(0,k,\zeta),(n,\eps)}}$.
    \par If $n$ and $j$ are both odd, we know that the orthogonal group $\PGO^{\eps}(n,q)$ has two orbits on the non-singular $j$-spaces (necessarily of type 0), corresponding to their perp type (see Theorem \ref{th:orbitsPGO}). Performing the same double counting argument as in the previous case, we now need to sum over the two possibilities for the perp type of the space $\pi$ and we find that $\rho_{(j,0),(k,\zeta),(n,0),0}=\frac{\sum_{\lambda\in\{\pm1\}}\alpha_{(0,j,0,\lambda),(n,0)}\gamma_{(0,j,0,\lambda),(n,\eps),(k,\zeta),\eta}}{\alpha_{(0,j,0),(n,0)}\alpha_{(0,k,\zeta),(n,0)}}$.
\end{proof}

We now provide several examples of this theorem, comparing them with previous results, where possible.

\begin{example}
    Consider the case $j=k=2$, $\delta=\zeta=\eta=1$ and $n=4$. Then we are looking for the proportion of pairs of secant lines to a hyperbolic quadric $\mathcal{Q}^+(3,q)$ which span the entire space among the pairs of secant lines to $\mathcal{Q}^+(3,q)$. By Theorem \ref{th:rhoquadratic}, together with Examples \ref{ex:alfa} and \ref{ex:gamma2} this proportion is $\rho_{(2,1),(2,1),(4,1),1}=\frac{\frac{1}{2}q(q^3+q^2-3q+3)}{\frac{1}{2}q^2(q+1)^2}$. Using this expression, it is easy to check that $\rho_{(2,1),(2,1),(4,1),1}=1-\frac{1}{q}\frac{q^2+4q-3}{q^2+2q+1}$. Since $\frac{q^2+4q-3}{q^2+2q+1}$ reaches it maximum when $q=5$, $\rho_{(2,1),(2,1),(4,1),1}\geq 1-\frac{7}{6}\frac{1}{q}$ when $q\geq 3$. From the results in \cite{gim} the bound $\rho_{(2,1),(2,1),(4,1),1}>1-\frac{3}{2}\frac{1}{q}$ followed. Note that the authors of \cite{gim} only deal with the case that both $j$ and $k$ are even, but they do not impose a condition on the characteristic of the field (only that $q>2$). 
\end{example}

\begin{example}
    Consider the case $j=2$, $\delta=1$, $k=4$, $\zeta=-1$, $n=7$, $\eps=0$, and $\eta=1$. Then we are looking for the proportion of pairs consisting of a secant line to a parabolic quadric $\mathcal{Q}(6,q)$ and a solid intersecting $\mathcal{Q}(6,q)$ in an elliptic quadric $\mathcal{Q}^-(3,q)$, such that this line and solid span a $\mathcal{Q}^+(5,q),$ among the total number of pairs consisting of a secant line and a solid meeting $\mathcal{Q}(6,q)$ in an elliptic quadric $\mathcal{Q}^-(3,q)$. 
    Using first Theorems \ref{th:rhoquadratic} and \ref{th:gammageneral}, and then Theorem \ref{th:alphaorthogonal}, Corollary \ref{cor:betas} and Theorem \ref{th:gammaquadratic}, we find that
    \begin{align*}
        \rho_{(2,1),(4,-1),(7,0),1}&=\frac{\gamma_{(0,2,1),(7,0),(4,-1),1}}{\alpha_{(0,4,-1),(7,0)}}=\frac{\beta_{(0,2,1),(7,0),(6,1)}\gamma_{(0,2,1),(6,1),(4,-1),1}}{\alpha_{(0,4,-1),(7,0)}}\\
        &=\frac{\frac{1}{2}q^{2}\left(q^{2}+1\right)}{\frac{1}{2}q^{6}\left(q^3+1\right)\left(q^3-1\right)}\frac{1}{2}q^{7}\left(\chi_{1,1}(q)\left(1-q^{-2}-q^{-1}+q^{-3}\right)\right)\\
        &=\frac{1}{2}\frac{\left(q^2+1\right)(q-1)^2}{q^{4}+q^{2}+1}\:.
    \end{align*}
    The proportion of subspaces considered in \cite{gnp2} does not make a distinction in between the cases in which the non-singular space spanned by the line and the solid is elliptic or hyperbolic. They consider the proportion of pairs consisting of a secant line to a parabolic quadric and a solid intersecting $\mathcal{Q}(6,q)$ in an elliptic quadric such that this line and solid span a non-singular space, among the total number of such pairs. This proportion equals
    \[
        \frac{\gamma_{(0,2,1),(n,0),(4,-1),1}+\gamma_{(0,2,1),(7,0),(4,-1),-1}}{\alpha_{(0,4,-1),(7,0)}}=\rho_{(2,1),(4,-1),(7,0),1}+\rho_{(2,1),(4,-1),(7,0),-1}\:.
    \]
    Therefore, to compare our results to theirs, we need to compute the proportion $\rho_{(2,1),(4,-1),(7,0),-1}$ as well. Using     again Theorem\ref{th:alphaorthogonal}, Corollary \ref{cor:betas} and Theorems \ref{th:gammaquadratic}, \ref{th:gammageneral} and \ref{th:rhoquadratic}, we find that
    \begin{align*}
        \rho_{(2,1),(4,-1),(7,0),-1}&=\frac{\frac{1}{2}q^{2}\left(q^{2}-1\right)}{\frac{1}{2}q^{6}\left(q^3+1\right)\left(q^3-1\right)}\frac{1}{2}q^{7}\left(\chi_{1,1}(q)\left(1-q^{-2}+q^{-1}+q^{-3}\right)+\chi_{1,0}(q)\cdot2q^{-4}\right)\\
        &=\frac{1}{2}\frac{q^{5}-2q^{3}+2q^{2}-q+2}{q\left(q^{4}+q^{2}+1\right)}\:.
    \end{align*}
    So, the proportion of pairs consisting of a secant line to a parabolic quadric and a solid intersecting $\mathcal{Q}(6,q)$ in an elliptic quadric such that this line and solid span a non-singular space, among the total number of such pairs is
    \begin{align*}
        \rho_{(2,1),(4,-1),(7,0),1}+\rho_{(2,1),(4,-1),(7,0),-1}=\frac{q^5-q^{4}+1}{q\left(q^{4}+q^{2}+1\right)}=1-\frac{1}{q}\:\frac{q^{4}+q^{3}+q-1}{q^{4}+q^{2}+1}\:.
    \end{align*}
    For $q\geq 3$, this proportion is at least $1-c\frac{1}{q}$, with $c=\frac{110}{91}$ an improvement on the value $c=\frac{3}{2}$ found in \cite{gnp2}.
\end{example}

    Let $n,j,k$ be integers with $1\leq j,k\leq n-1$ and $j+k\leq n$, and both $k$ and $n$ even, and $j$ odd. For $\eps,\zeta\in\{\pm1\}$we have

\begin{example}\label{ex:rho1}
    Consider the case $j=3$, $\delta=0$, $k=2$, $n=6$, $\zeta=\eps=1$ and $\eta=0$. Then $\rho_{(3,0),(2,1),(6,1),0}$ is the proportion of the pairs $(\pi,L)$ with $L$ a secant line and $\pi$ a conic plane to hyperbolic quadric $\mathcal{Q}=\mathcal{Q}^+(5,q)$, such that $\langle \pi,L\rangle$ meets $\mathcal{Q}$ a parabolic quadric $\mathcal{Q}(4,q)$, and the total number of pairs of secant lines and conic planes.
    \par We have found in Example \ref{ex:gamma3} that $\gamma_{(0,3,0),(6,1),(2,1),0}=\frac{1}{2}q^3(q^5-q^3+q+1)$. By Theorem \ref{th:alphaorthogonal}, we know that $\alpha_{(0,2,1),(6,1)}=q^4\frac{\psi^+_{2,2}\psi^-_{3,3}}{\psi^+_{0,0}\psi^-_{1,1}}=q^4\frac{(q^2+1)(q^3-1)}{2(q-1)}=\frac{1}{2}q^{4}(q^{2}+1)(q^{2}+q+1)$. Applying Theorem \ref{th:rhoquadratic}, we find that $\rho_{(3,0),(2,1),(6,1),0}=\frac{q^5-q^3+q+1}{q(q^2+1)(q^2+q+1)}$. 
    One can check that $\rho_{(3,0),(2,1),(6,1),0}=1-\frac{1}{q}\frac{q^4+3q^3+q^2-1}{(q^2+1)(q^2+q+1)}>1-\frac{17}{13}\frac{1}{q}$ for all $q\geq 3$.
\end{example}

\begin{example}\label{ex:rho3}
    We consider the case $j=k=3$, $n=7$, $\delta=\eps=\zeta=0$. Then $\rho_{(3,0),(3,0),(7,0),\eta}$ is the proportion of pairs $(\pi,\pi')$ where $\pi$ and $\pi'$ are conic planes of a parabolic quadric $\mathcal{Q}(6,q)$ such that $\langle \pi,\pi'\rangle$ is a hyperplane meeting $\mathcal{Q}(6,q)$ in a non-singular quadric of type $\eta$ among all conic plane pairs. Applying Theorem \ref{th:rhoquadratic} we find that $\rho_{(3,0),(3,0),(7,0),\eta}=\frac{\sum_{\lambda\in\{\pm1\}}\alpha_{(0,3,0,\lambda),(7,0)}\gamma_{(0,3,0,\lambda),(7,0),(3,0),\eta}}{\alpha^2_{(0,3,0),(7,0)}}$. Using Theorem \ref{th:alphaorthogonalodd}, we find that $\alpha_{(0,3,0,1),(7,0)}=\frac{1}{2}q^{6}\left(q^2+1\right)\left(q^4+q^2+1\right)$  and $\alpha_{(0,3,0,-1),(7,0)}=\frac{1}{2}q^6(q^6-1)$. Therefore (or directly via Theorem \ref{th:alphaorthogonal}), $\alpha_{(0,3,0),(7,0)}=q^8\left(q^{4}+q^{2}+1\right)$. Using Theorem \ref{th:gammageneral}, we find that $$\gamma_{(0,3,0,\lambda),(7,0),(3,0),\eta}=\beta_{(0,3,0,\lambda),(7,0),(6,\eta)}\gamma_{(0,3,0)(6,\eta),(3,0),\eta}\:.$$
    By Corollary \ref{cor:betavariations}, $\beta_{(0,3,0,\lambda),(7,0),(6,\eta)}=\frac{1}{2}q(q^2-\lambda)$, and by Theorem \ref{th:gammaquadratic0even},
    \begin{align*}
        \gamma_{(0,3,0)(6,\eta),(3,0),\eta}=q^5\left(\chi_{1,2}(q)+\chi_{1,1}(q)q^{-2}-\eta \chi_{1,1}(q)-\eta \chi_{1,0}(q)q^{-2}\right)\:.
    \end{align*}
    This expression equals $q^3(q^6-q^5-2q^3+2q^2+q-2)$ for $\eta=1$  and $q^4(q^5-q^4+1)$ for $\eta=-1$. 
    We find that 
    \begin{align*}
        &\rho_{(3,0),(3,0),(7,0),1}\\&=\frac{\frac{1}{2}q^6(q^2+1)(q^4+q^2+1)\frac{1}{2}q(q^2-1)+\frac{1}{2}q^6(q^6-1)\frac{1}{2}q(q^2+1)}{q^{16}(q^4+q^2+1)^2}q^3(q^6-q^5-2q^3+2q^2+q-2)\\&=\frac{q^{10}(q^2+1)(q^6-1)(q^6-q^5-2q^3+2q^2+q-2)}{2q^{16}(q^4+q^2+1)^2}\\&=\frac{(q^4-1)(q^6-q^5-2q^3+2q^2+q-2)}{2q^6(q^4+q^2+1)}
    \end{align*}
    and
    \begin{align*}
        \rho_{(3,0),(3,0),(7,0),-1}&=\frac{\frac{1}{2}q^6(q^2+1)(q^4+q^2+1)\frac{1}{2}q(q^2-1)+\frac{1}{2}q^6(q^6-1)\frac{1}{2}q(q^2+1)}{q^{16}(q^4+q^2+1)^2}q^4(q^5-q^4+1)\\&=\frac{q^{11}(q^2+1)(q^6-1)(q^5-q^4+1)}{2q^{16}(q^4+q^2+1)^2}\\&=\frac{(q^4-1)(q^5-q^4+1)}{2q^5(q^4+q^2+1)}
    \end{align*}
    We see that $\rho_{(3,0),(3,0),(7,0),1}+\rho_{(3,0),(3,0),(7,0),-1}=\frac{(q^4-1)(q^6-q^5-q^3+q^2+q-1)}{q^6(q^4+q^2+1)}$.  It follows that the proportion of disjoint conic planes of $\mathcal{Q}(6,q)$ spanning a subspace meeting $\mathcal{Q}(6,q)$ in a non-singular quadric is $1-\frac{1}{q}\frac{q^9+q^8+q^7+q^6-2q^5+q^4-q^3+q^2+q-1}{q^9+q^7+q^5}$ which is at least $ 1-c\frac{1}{q}$ with $c=\frac{28739}{22113}<\frac{13}{10}$ for $q\geq 3$.
\end{example}

We cannot compare the results of Examples \ref{ex:rho1} and \ref{ex:rho3} with previous results since these cases (with simultaneously $j$ or $k$ odd and $n>j+k$) was not discussed in earlier papers.

\begin{remark}
    Similarly to the definition of $\rho_{(j,\delta),(k,\zeta),(n,\eps),\eta}$ as given in Definition \ref{def:rhoquadratic} one could also introduce $\rho_{(j,0,\lambda),(k,\zeta),(n,0),\eta}$, $\rho_{(j,\delta),(k,0,\mu),(n,0),\eta}$ and $\rho_{(j,0,\lambda),(k,0,\mu),(n,0),\eta}$, where one or two of the sets of subspaces considered is restricted to subspaces of certain perp type. We will not discuss this in detail, as it is not relevant for the application we discussed. We will however present an example.
\end{remark}

\begin{example}\label{ex:rho2} 
    Consider the case $j=3$, $k=2$, $\zeta=\lambda=1$ and $n=5$ (so $\delta=\eps=\eta=0$). Then we are looking for the proportion of pairs of conic planes of perp type $1$ and secant lines to a parabolic quadric $\mathcal{Q}(4,q)$ which span the entire space, among the pairs of conic planes of perp type 1 and secant lines to this $\mathcal{Q}(4,q)$. Analogous to Theorem \ref{th:rhoquadratic}, and using Theorem \ref{th:alphaorthogonal} and Example     \ref{ex:gamma} this proportion is
    \begin{align*}
        \rho_{(3,0,1),(2,1),(5,0),0}&=\frac{\gamma_{(0,3,0,1),(5,0),(2,1),0}}{\alpha_{(0,2,1),(5,0)}}=\frac{\frac{1}{2}q^2\left(q^4-q^2+2\right)}{\frac{1}{2}q^3(q+1)\left(q^2+1\right)}=\frac{q^4-q^2+2}{q(q+1)\left(q^2+1\right)}\\
        &=1-\frac{1}{q}\frac{q^3+2q^2+q-2}{(q+1)\left(q^2+1\right)}\:.
    \end{align*}
    Using this expression, it is easy to check that $\rho_{(3,0,1),(2,1),(5,0),0}>1-\frac{23}{20}\frac{1}{q}$ for $q\geq3$. 
\end{example}

\appendix

\section{Appendix: the induction proof calculations}\label{ap:calculations}

In this Appendix, we collect the calculations that were omitted in the proof of Theorem \ref{th:gammaquadratic}. Recall that each of the $4$ different cases based on the parity of $n,j,i$ gave rise to a different recursion relation (described in Lemmas \ref{lem:recursion3}, \ref{lem:recursion4}, \ref{lem:recursion1}, \ref{lem:recursion2}) we give four proofs.

\subsection{\texorpdfstring{$n,j$ odd, $i$ odd (verifying \eqref{rec3})}{n,j,i odd}}

In case $j=i$ (and then automatically also $\delta=1$) the verification is short since only the first term in the recursion relation remains. Note that in the remaining term (in case $j=i$ and only then) there is a factor $\gs{-1}{0}{q^{2}}$. Recall that we set it equal to 1 in Definition \ref{def:gaussianbinomialcoeffient}.

We may assume that $j-i\geq 2$ in the following computation.

% [inline block 0: 1 envs, 34662 chars -> math_tex | \begin{align*}     &\gamma_{(i,j,\delta),(n,0),(n-j,\zeta),0}\beta_{(0,n-j,\zeta),(n,0),(n-1,\zeta)}\\...]


The formula \eqref{rec3} now follows since, by Corollary \ref{cor:betas},
\begin{align*}
    \beta_{(0,n-j,\zeta),(n,0),(n-1,\zeta)}&=\frac{1}{2}q^{\frac{1}{2}(j-1)}\left(q^{\frac{1}{2}(j-1)}+1\right)\neq0.
\end{align*}

\subsection{\texorpdfstring{$n,j$ odd, $i$ even (verifying \eqref{rec4})}{n j odd, i even}}

Note that in the proof below, we use the equality $q^{2m}\gs{\frac{j-i-3}{2}}{m}{q^{2}}+\gs{\frac{j-i-3}{2}}{m-1}{q^{2}}=\gs{\frac{j-i-1}{2}}{m}{q^{2}}$ for values $m=0,\dots,\frac{j-i-1}{2}$ thrice. So, in case $j=i+1$, we make us of the equality $\gs{-1}{-1}{q^2}+q^{0}\gs{-1}{0}{q^{2}}=\gs{0}{0}{q^{2}}$. Recall from Definition \ref{def:gaussianbinomialcoeffient} and the note after Lemma \ref{lem:pascalgeneral}, that this is indeed a valid equality.

\begin{align*}
    &\gamma_{(i,j,0,\lambda),(n,0),(n-j,\zeta),0}\beta_{(0,n-j,\zeta),(n,0),(n-1,\zeta)}\\
    &=\frac{1}{2}q^{n-i-1}\left(q^{i}-1\right)\gamma_{(i-1,j-1,0),(n-1,\zeta),(n-j,\zeta),\zeta}\\
    &\qquad+\frac{1}{4}q^{\frac{1}{2}n-\frac{3}{2}}\sum_{\kappa\in\{\pm1\}}\gamma_{(i,j-1,\kappa),(n-1,\zeta),(n-j,\zeta),\zeta}\\&\qquad\qquad\left((q-1)q^{\frac{1}{2}(n-1)-i}+\lambda q^{\frac{1}{2}(j-i-1)}+\zeta\kappa q^{\frac{1}{2}(j-i+1)}+\kappa(q-1)q^{\frac{1}{2}(n-j-i)}+\kappa\lambda+\zeta q\right)\\
    &\qquad+\frac{1}{2}q^{\frac{1}{2}(n-j+i)-1}\left(q^{j-i-1}-1\right)\left(q^{\frac{1}{2}(n-j-i)}-\lambda\right)\gamma_{(i+1,j-1,0),(n-1,\zeta),(n-j,\zeta),\zeta}\\
    &=\frac{1}{2}q^{n-i-1}\left(q^{i}-1\right) \frac{1}{2}q^{(j-1)(n-1)-\frac{5}{4}(j-1)^2}\left(\sum_{m=0}^{\frac{j-i-1}{2}}\chi_{1,\frac{j-1}{2}-m}(q)\gs{\frac{j-i-1}{2}}{m}{q^{2}}q^{m(j+i-n+m-1)}\right.\\&\qquad\qquad+\zeta\:q^{\frac{1}{2}(j-n)}\sum_{m=0}^{\frac{j-i-1}{2}}\chi_{1,\frac{j-1}{2}-m}(q)\gs{\frac{j-i-1}{2}}{m}{q^{2}}q^{m(j+i-n+m-1)}\\&\qquad\qquad+\left.\zeta^{2}q^{-\frac{1}{2}(j-1)}\sum_{m=0}^{\frac{j-i-1}{2}}\chi_{1,\frac{j-1}{2}-m}(q)\gs{\frac{j-i-1}{2}}{m}{q^{2}}q^{m(j+i-n+m-1)}\right)\\
    &\qquad+\frac{1}{4}q^{\frac{1}{2}n-\frac{3}{2}}\sum_{\kappa\in\{\pm1\}}\left((q-1)q^{\frac{1}{2}(n-1)-i}+\lambda q^{\frac{1}{2}(j-i-1)}+\zeta q\right.\\&\qquad\qquad\qquad\left.+\kappa\left(\zeta q^{\frac{1}{2}(j-i+1)}+(q-1)q^{\frac{1}{2}(n-j-i)}+\lambda\right)\right)\\&\qquad\qquad\frac{1}{2}q^{(j-1)(n-1)-\frac{5}{4}(j-1)^2}\left(\sum_{m=0}^{\frac{j-i-1}{2}}\chi_{1,\frac{j-1}{2}-m}(q)\gs{\frac{j-i-1}{2}}{m}{q^{2}}q^{m(j+i-n+m-1)}\right.\\&\qquad\qquad+\zeta\:q^{\frac{1}{2}(j-n)}\sum_{m=0}^{\frac{j-i-1}{2}-1}\chi_{1,\frac{j-1}{2}-m}(q)\gs{\frac{j-i-1}{2}-1}{m}{q^{2}}q^{m(j+i-n+m+1)}\\&\qquad\qquad+\zeta^{2}q^{-\frac{1}{2}(j-1)}\sum_{m=0}^{\frac{j-i-1}{2}}\chi_{1,\frac{j-1}{2}-m}(q)\gs{\frac{j-i-1}{2}-1}{m}{q^{2}}q^{m(j+i-n+m+1)}\\&\qquad\qquad-\zeta\kappa q^{\frac{1}{2}(i-n+1)}\sum_{m=0}^{\frac{j-i-1}{2}-1}\chi_{1,\frac{j-1}{2}-m}(q)\gs{\frac{j-i-1}{2}-1}{m}{q^{2}}q^{m(j+i-n+m+1)}\\&\qquad\qquad\left.+\zeta^{2}\kappa q^{-\frac{1}{2}i}\sum_{m=0}^{\frac{j-i-1}{2}}\chi_{1,\frac{j-1}{2}-m}(q)\gs{\frac{j-i-1}{2}-1}{m-1}{q^{2}}q^{m(j+i-n+m-1)}\right)\\
    &\qquad+\frac{1}{2}q^{\frac{1}{2}(n-j+i)-1}\left(q^{j-i-1}-1\right)\left(q^{\frac{1}{2}(n-j-i)}-\lambda\right)\\&\qquad \qquad \frac{1}{2}q^{(j-1)(n-1)-\frac{5}{4}(j-1)^2}\left(\sum_{m=0}^{\frac{j-i-3}{2}}\chi_{1,\frac{j-1}{2}-m}(q)\gs{\frac{j-i-3}{2}}{m}{q^{2}}q^{m(j+i-n+m+1)}\right.\\&\qquad\qquad+\zeta q^{\frac{1}{2}(j-n)}\sum_{m=0}^{\frac{j-i-3}{2}}\chi_{1,\frac{j-1}{2}-m}(q)\gs{\frac{j-i-3}{2}}{m}{q^{2}}q^{m(j+i-n+m+1)}\\&\qquad\qquad+\left.\zeta^{2}q^{-\frac{1}{2}(j-1)}\sum_{m=0}^{\frac{j-i-3}{2}}\chi_{1,\frac{j-1}{2}-m}(q)\gs{\frac{j-i-3}{2}}{m}{q^{2}}q^{m(j+i-n+m+1)}\right)\\
    &=\frac{1}{4}q^{jn-\frac{5}{4}j^2-n+\frac{3}{2}j-\frac{1}{4}}\left(\vphantom{\sum_{m=0}^{\frac{j-i-1}{2}}}q^{n-i-1}\left(q^{i}-1\right)\left(1+q^{-\frac{1}{2}(j-1)}\right)\right.\\&\qquad\qquad\qquad\sum_{m=0}^{\frac{j-i-1}{2}}\chi_{1,\frac{j-1}{2}-m}(q)\gs{\frac{j-i-1}{2}}{m}{q^{2}}q^{m(j+i-n+m-1)}\\&\qquad\qquad+\zeta q^{\frac{1}{2}(j-n)}q^{n-i-1}\left(q^{i}-1\right)\sum_{m=0}^{\frac{j-i-1}{2}}\chi_{1,\frac{j-1}{2}-m}(q)\gs{\frac{j-i-1}{2}}{m}{q^{2}}q^{m(j+i-n+m-1)}\\
    &\qquad\qquad+q^{\frac{1}{2}n-\frac{3}{2}}\left((q-1)q^{\frac{1}{2}(n-1)-i}+\lambda q^{\frac{1}{2}(j-i-1)}+\zeta q\right)\\&\qquad\qquad\qquad\sum_{m=0}^{\frac{j-i-1}{2}}\chi_{1,\frac{j-1}{2}-m}(q)\gs{\frac{j-i-1}{2}}{m}{q^{2}}q^{m(j+i-n+m-1)}\\&\qquad\qquad+\zeta q^{\frac{1}{2}j-\frac{3}{2}}\left((q-1)q^{\frac{1}{2}(n-1)-i}+\lambda q^{\frac{1}{2}(j-i-1)}+\zeta q\right)\\&\qquad\qquad\qquad\sum_{m=0}^{\frac{j-i-3}{2}}\chi_{1,\frac{j-1}{2}-m}(q)\gs{\frac{j-i-3}{2}}{m}{q^{2}}q^{m(j+i-n+m+1)}\\&\qquad\qquad+q^{\frac{1}{2}n-\frac{1}{2}j-1}\left((q-1)q^{\frac{1}{2}(n-1)-i}+\lambda q^{\frac{1}{2}(j-i-1)}+\zeta q\right)\\&\qquad\qquad\qquad\sum_{m=0}^{\frac{j-i-1}{2}}\chi_{1,\frac{j-1}{2}-m}(q)\gs{\frac{j-i-3}{2}}{m}{q^{2}}q^{m(j+i-n+m+1)}\\&\qquad\qquad-\zeta q^{\frac{1}{2}i-1}\left(\zeta q^{\frac{1}{2}(j-i+1)}+(q-1)q^{\frac{1}{2}(n-j-i)}+\lambda\right)\\&\qquad\qquad\qquad\sum_{m=0}^{\frac{j-i-3}{2}}\chi_{1,\frac{j-1}{2}-m}(q)\gs{\frac{j-i-3}{2}}{m}{q^{2}}q^{m(j+i-n+m+1)}\\&\qquad\qquad+q^{\frac{1}{2}n-\frac{1}{2}i-\frac{3}{2}}\left(\zeta q^{\frac{1}{2}(j-i+1)}+(q-1)q^{\frac{1}{2}(n-j-i)}+\lambda\right)\\&\qquad\qquad\qquad\left.\sum_{m=0}^{\frac{j-i-1}{2}}\chi_{1,\frac{j-1}{2}-m}(q)\gs{\frac{j-i-3}{2}}{m-1}{q^{2}}q^{m(j+i-n+m-1)}\right)\\
    &\qquad \qquad +q^{n-j-1}\left(q^{j-i-1}-1\right)\left(1+q^{-\frac{1}{2}(j-1)}\right)\sum_{m=0}^{\frac{j-i-3}{2}}\chi_{1,\frac{j-1}{2}-m}(q)\gs{\frac{j-i-3}{2}}{m}{q^{2}}q^{m(j+i-n+m+1)}\\&\qquad\qquad+\zeta q^{\frac{1}{2}(n-j-i)}q^{\frac{1}{2}(n-j+i)-1}\left(q^{j-i-1}-1\right)q^{\frac{1}{2}(j-n)}\\&\qquad\qquad\qquad\sum_{m=0}^{\frac{j-i-3}{2}}\chi_{1,\frac{j-1}{2}-m}(q)\gs{\frac{j-i-3}{2}}{m}{q^{2}}q^{m(j+i-n+m+1)}\\&\qquad \qquad -\lambda q^{\frac{1}{2}(n-j+i)-1}\left(q^{j-i-1}-1\right)\left(1+q^{-\frac{1}{2}(j-1)}\right)\sum_{m=0}^{\frac{j-i-3}{2}}\chi_{1,\frac{j-1}{2}-m}(q)\gs{\frac{j-i-3}{2}}{m}{q^{2}}q^{m(j+i-n+m+1)}\\&\qquad\qquad\left.-\lambda\zeta q^{\frac{1}{2}i-1}\left(q^{j-i-1}-1\right)\sum_{m=0}^{\frac{j-i-3}{2}}\chi_{1,\frac{j-1}{2}-m}(q)\gs{\frac{j-i-3}{2}}{m}{q^{2}}q^{m(j+i-n+m+1)}\right)\\
    &=\frac{1}{4}q^{jn-\frac{5}{4}j^2-n+\frac{3}{2}j-\frac{1}{4}}\left(\left(\vphantom{\sum_{m=0}^{\frac{j-i-1}{2}}}q^{n-i-1}\left(q^{i}-1\right)\left(1+q^{-\frac{1}{2}(j-1)}\right)\right.\right.\\&\qquad\qquad\qquad \sum_{m=0}^{\frac{j-i-1}{2}}\chi_{1,\frac{j-1}{2}-m}(q)\gs{\frac{j-i-1}{2}}{m}{q^{2}}q^{m(j+i-n+m-1)}\\&\qquad\qquad+q^{n-i-2}(q-1)\sum_{m=0}^{\frac{j-i-1}{2}}\chi_{1,\frac{j-1}{2}-m}(q)\gs{\frac{j-i-1}{2}}{m}{q^{2}}q^{m(j+i-n+m-1)}\\&\qquad\qquad+q^{\frac{1}{2}j-\frac{1}{2}}\sum_{m=0}^{\frac{j-i-3}{2}}\chi_{1,\frac{j-1}{2}-m}(q)\gs{\frac{j-i-3}{2}}{m}{q^{2}}q^{m(j+i-n+m+1)}\\&\qquad\qquad+q^{n-\frac{1}{2}j-i-\frac{3}{2}}(q-1)\sum_{m=0}^{\frac{j-i-1}{2}}\chi_{1,\frac{j-1}{2}-m}(q)\gs{\frac{j-i-3}{2}}{m}{q^{2}}q^{m(j+i-n+m+1)}\\&\qquad\qquad-q^{\frac{1}{2}j-\frac{1}{2}}\sum_{m=0}^{\frac{j-i-3}{2}}\chi_{1,\frac{j-1}{2}-m}(q)\gs{\frac{j-i-3}{2}}{m}{q^{2}}q^{m(j+i-n+m+1)}\\&\qquad\qquad+q^{n-\frac{1}{2}j-i-\frac{3}{2}}(q-1)\sum_{m=0}^{\frac{j-i-1}{2}}\chi_{1,\frac{j-1}{2}-m}(q)\gs{\frac{j-i-3}{2}}{m-1}{q^{2}}q^{m(j+i-n+m-1)}\\&\qquad \qquad\left. +q^{n-j-1}\left(q^{j-i-1}-1\right)\left(1+q^{-\frac{1}{2}(j-1)}\right)\sum_{m=0}^{\frac{j-i-3}{2}}\chi_{1,\frac{j-1}{2}-m}(q)\gs{\frac{j-i-3}{2}}{m}{q^{2}}q^{m(j+i-n+m+1)}\right)\\
    &\qquad+\zeta\left( q^{\frac{1}{2}n+\frac{1}{2}j-i-1}\left(q^{i}-1\right)\sum_{m=0}^{\frac{j-i-1}{2}}\chi_{1,\frac{j-1}{2}-m}(q)\gs{\frac{j-i-1}{2}}{m}{q^{2}}q^{m(j+i-n+m-1)}\right.\\&\qquad\qquad+ q^{\frac{1}{2}n-\frac{1}{2}}\sum_{m=0}^{\frac{j-i-1}{2}}\chi_{1,\frac{j-1}{2}-m}(q)\gs{\frac{j-i-1}{2}}{m}{q^{2}}q^{m(j+i-n+m-1)}\\&\qquad\qquad+ q^{\frac{1}{2}n+\frac{1}{2}j-i-2}(q-1)\sum_{m=0}^{\frac{j-i-3}{2}}\chi_{1,\frac{j-1}{2}-m}(q)\gs{\frac{j-i-3}{2}}{m}{q^{2}}q^{m(j+i-n+m+1)}\\&\qquad\qquad+ q^{\frac{1}{2}n-\frac{1}{2}j}\sum_{m=0}^{\frac{j-i-1}{2}}\chi_{1,\frac{j-1}{2}-m}(q)\gs{\frac{j-i-3}{2}}{m}{q^{2}}q^{m(j+i-n+m+1)}\\&\qquad\qquad-(q-1)q^{\frac{1}{2}(n-j)-1}\sum_{m=0}^{\frac{j-i-3}{2}}\chi_{1,\frac{j-1}{2}-m}(q)\gs{\frac{j-i-3}{2}}{m}{q^{2}}q^{m(j+i-n+m+1)}\\&\qquad\qquad+ q^{\frac{1}{2}n+\frac{1}{2}j-i-1}\sum_{m=0}^{\frac{j-i-1}{2}}\chi_{1,\frac{j-1}{2}-m}(q)\gs{\frac{j-i-3}{2}}{m-1}{q^{2}}q^{m(j+i-n+m-1)}\\&\qquad\qquad+\left. q^{\frac{1}{2}(n-j)-1}\left(q^{j-i-1}-1\right)\sum_{m=0}^{\frac{j-i-3}{2}}\chi_{1,\frac{j-1}{2}-m}(q)\gs{\frac{j-i-3}{2}}{m}{q^{2}}q^{m(j+i-n+m+1)}\right)\\
    &\qquad+\lambda\left( q^{\frac{1}{2}n+\frac{1}{2}j-\frac{1}{2}i-2}\sum_{m=0}^{\frac{j-i-1}{2}}\chi_{1,\frac{j-1}{2}-m}(q)\gs{\frac{j-i-1}{2}}{m}{q^{2}}q^{m(j+i-n+m-1)}\right.\\&\qquad\qquad+q^{\frac{1}{2}n-\frac{1}{2}i-\frac{3}{2}}\sum_{m=0}^{\frac{j-i-1}{2}}\chi_{1,\frac{j-1}{2}-m}(q)\gs{\frac{j-i-3}{2}}{m}{q^{2}}q^{m(j+i-n+m+1)}\\&\qquad\qquad+q^{\frac{1}{2}n-\frac{1}{2}i-\frac{3}{2}}\sum_{m=0}^{\frac{j-i-1}{2}}\chi_{1,\frac{j-1}{2}-m}(q)\gs{\frac{j-i-3}{2}}{m-1}{q^{2}}q^{m(j+i-n+m-1)}\\&\qquad \qquad -q^{\frac{1}{2}(n-j+i)-1}\left(q^{j-i-1}-1\right)\left(1+q^{-\frac{1}{2}(j-1)}\right)\\&\qquad\qquad\qquad\left. \sum_{m=0}^{\frac{j-i-3}{2}}\chi_{1,\frac{j-1}{2}-m}(q)\gs{\frac{j-i-3}{2}}{m}{q^{2}}q^{m(j+i-n+m+1)}\right)\\
    &\qquad+\zeta\lambda\left( q^{j-\frac{1}{2}i-2}\sum_{m=0}^{\frac{j-i-3}{2}}\chi_{1,\frac{j-1}{2}-m}(q)\gs{\frac{j-i-3}{2}}{m}{q^{2}}q^{m(j+i-n+m+1)}\right.\\&\qquad\qquad- q^{\frac{1}{2}i-1} \sum_{m=0}^{\frac{j-i-3}{2}}\chi_{1,\frac{j-1}{2}-m}(q)\gs{\frac{j-i-3}{2}}{m}{q^{2}}q^{m(j+i-n+m+1)}\\&\qquad\qquad\left.\left.-q^{\frac{1}{2}i-1}\left(q^{j-i-1}-1\right)\sum_{m=0}^{\frac{j-i-3}{2}}\chi_{1,\frac{j-1}{2}-m}(q)\gs{\frac{j-i-3}{2}}{m}{q^{2}}q^{m(j+i-n+m+1)}\right)\right)\\
    &=\frac{1}{4}q^{jn-\frac{5}{4}j^2-n+\frac{3}{2}j-\frac{1}{4}}\left(q^{n-\frac{3}{2}j-\frac{1}{2}}\left(\vphantom{\sum_{m=0}^{\frac{j-i-1}{2}}}q^{j-i}\left(q^{i}-1\right)\left(q^{\frac{1}{2}(j-1)}+1\right)\right.\right.\\&\qquad\qquad\qquad\sum_{m=0}^{\frac{j-i-1}{2}}\chi_{1,\frac{j-1}{2}-m}(q)\gs{\frac{j-i-1}{2}}{m}{q^{2}}q^{m(j+i-n+m-1)}\\&\qquad\qquad+q^{\frac{3}{2}j-i-\frac{3}{2}}(q-1)\sum_{m=0}^{\frac{j-i-1}{2}}\chi_{1,\frac{j-1}{2}-m}(q)\gs{\frac{j-i-1}{2}}{m}{q^{2}}q^{m(j+i-n+m-1)}\\&\qquad\qquad+q^{j-i-1}(q-1)\sum_{m=0}^{\frac{j-i-1}{2}}\chi_{1,\frac{j-1}{2}-m}(q)\left(q^{2m}\gs{\frac{j-i-3}{2}}{m}{q^{2}}+\gs{\frac{j-i-3}{2}}{m-1}{q^{2}}\right)q^{m(j+i-n+m-1)}\\&\qquad\qquad\left. +\left(q^{j-i-1}-1\right)\left(q^{\frac{1}{2}(j-1)}+1\right)\sum_{m=0}^{\frac{j-i-3}{2}}\chi_{1,\frac{j-1}{2}-m}(q)\gs{\frac{j-i-3}{2}}{m}{q^{2}}q^{m(j+i-n+m+1)}\right)\\
    &\qquad+\zeta q^{\frac{1}{2}n-\frac{1}{2}}\left( q^{\frac{1}{2}j-i-\frac{1}{2}}\left(q^{i}-1\right)\sum_{m=0}^{\frac{j-i-1}{2}}\chi_{1,\frac{j-1}{2}-m}(q)\gs{\frac{j-i-1}{2}}{m}{q^{2}}q^{m(j+i-n+m-1)}\right.\\&\qquad\qquad+ \sum_{m=0}^{\frac{j-i-1}{2}}\chi_{1,\frac{j-1}{2}-m}(q)\gs{\frac{j-i-1}{2}}{m}{q^{2}}q^{m(j+i-n+m-1)}\\&\qquad\qquad+ q^{\frac{1}{2}j-i-\frac{1}{2}}\sum_{m=0}^{\frac{j-i-1}{2}}\chi_{1,\frac{j-1}{2}-m}(q)\gs{\frac{j-i-3}{2}}{m}{q^{2}}q^{m(j+i-n+m+1)}\\&\qquad\qquad\left.+ q^{\frac{1}{2}j-i-\frac{1}{2}}\sum_{m=0}^{\frac{j-i-1}{2}}\chi_{1,\frac{j-1}{2}-m}(q)\gs{\frac{j-i-3}{2}}{m-1}{q^{2}}q^{m(j+i-n+m-1)}\right)\\
    &\qquad+\lambda q^{\frac{1}{2}n+\frac{1}{2}i-j-\frac{1}{2}}\left( q^{\frac{3}{2}j-i-\frac{3}{2}}\sum_{m=0}^{\frac{j-i-1}{2}}\chi_{1,\frac{j-1}{2}-m}(q)\gs{\frac{j-i-1}{2}}{m}{q^{2}}q^{m(j+i-n+m-1)}\right.\\&\qquad\qquad+q^{j-i-1}\sum_{m=0}^{\frac{j-i-1}{2}}\chi_{1,\frac{j-1}{2}-m}(q)\left(q^{2m}\gs{\frac{j-i-3}{2}}{m}{q^{2}}+\gs{\frac{j-i-3}{2}}{m-1}{q^{2}}\right)q^{m(j+i-n+m-1)}\\&\qquad \qquad\left. \left. -\left(q^{j-i-1}-1\right)\left(q^{\frac{1}{2}(j-1)}+1\right)\sum_{m=0}^{\frac{j-i-3}{2}}\chi_{1,\frac{j-1}{2}-m}(q)\gs{\frac{j-i-3}{2}}{m}{q^{2}}q^{m(j+i-n+m+1)}\right)\right)\\
    &=\frac{1}{4}q^{jn-\frac{5}{4}j^2-\frac{3}{4}}\left(\left(q^{j-i}\left(q^{i}-1\right)\left(q^{\frac{1}{2}(j-1)}+1\right) \sum_{m=0}^{\frac{j-i-1}{2}}\chi_{1,\frac{j-1}{2}-m}(q)\gs{\frac{j-i-1}{2}}{m}{q^{2}}q^{m(j+i-n+m-1)}\right.\right.\\&\qquad\qquad+q^{j-i-1}(q-1)\left(q^{\frac{1}{2}(j-1)}+1\right)\sum_{m=0}^{\frac{j-i-1}{2}}\chi_{1,\frac{j-1}{2}-m}(q)\gs{\frac{j-i-1}{2}}{m}{q^{2}}q^{m(j+i-n+m-1)}\\&\qquad\qquad\left. +\left(q^{\frac{1}{2}(j-1)}+1\right)\sum_{m=0}^{\frac{j-i-3}{2}}\chi_{1,\frac{j-1}{2}-m}(q)\gs{\frac{j-i-1}{2}}{m}{q^{2}}\left(q^{j-i-1-2m}-1\right)q^{m(j+i-n+m+1)}\right)\\
    &\qquad+\zeta q^{\frac{3}{2}j-\frac{1}{2}n}\left(\left(q^{\frac{1}{2}j-\frac{1}{2}}-q^{\frac{1}{2}j-i-\frac{1}{2}}+1\right)\sum_{m=0}^{\frac{j-i-1}{2}}\chi_{1,\frac{j-1}{2}-m}(q)\gs{\frac{j-i-1}{2}}{m}{q^{2}}q^{m(j+i-n+m-1)}\right.\\&\qquad\qquad\left.+q^{\frac{1}{2}j-i-\frac{1}{2}}\sum_{m=0}^{\frac{j-i-1}{2}}\chi_{1,\frac{j-1}{2}-m}(q)\left(q^{2m}\gs{\frac{j-i-3}{2}}{m}{q^{2}}+\gs{\frac{j-i-3}{2}}{m-1}{q^{2}}\right)q^{m(j+i-n+m-1)}\right)\\
    &\qquad+\lambda q^{-\frac{1}{2}n+\frac{1}{2}i+\frac{1}{2}j}\left(q^{j-i-1}\left(q^{\frac{1}{2}(j-1)}+1\right)\sum_{m=0}^{\frac{j-i-1}{2}}\chi_{1,\frac{j-1}{2}-m}(q)\gs{\frac{j-i-1}{2}}{m}{q^{2}}q^{m(j+i-n+m-1)}\right.\\&\qquad \qquad\left. \left. -\left(q^{\frac{1}{2}(j-1)}+1\right)\sum_{m=0}^{\frac{j-i-1}{2}}\chi_{1,\frac{j-1}{2}-m}(q)\gs{\frac{j-i-1}{2}}{m}{q^{2}}\left(q^{j-i-1-2m}-1\right)q^{m(j+i-n+m+1)}\right)\right)\\
    &=\frac{1}{4}q^{jn-\frac{5}{4}j^2-\frac{3}{4}}\left(q^{\frac{1}{2}(j-1)}+1\right) \left(\sum_{m=0}^{\frac{j-i-1}{2}}\left(q^{j}-q^{2m}\right)\chi_{1,\frac{j-1}{2}-m}(q)\gs{\frac{j-i-1}{2}}{m}{q^{2}}q^{m(j+i-n+m-1)}\right.\\
    &\qquad+\zeta q^{\frac{3}{2}j-\frac{1}{2}n}\sum_{m=0}^{\frac{j-i-1}{2}}\chi_{1,\frac{j-1}{2}-m}(q)\gs{\frac{j-i-1}{2}}{m}{q^{2}}q^{m(j+i-n+m-1)}\\
    &\qquad\left.+\lambda q^{-\frac{1}{2}n+\frac{1}{2}i+\frac{1}{2}j}\sum_{m=0}^{\frac{j-i-1}{2}}\chi_{1,\frac{j-1}{2}-m}(q)\gs{\frac{j-i-1}{2}}{m}{q^{2}}q^{m(j+i-n+m+1)}\right)\\
    &=\frac{1}{4}q^{jn-\frac{5}{4}j^2-\frac{3}{4}}\left(q^{\frac{1}{2}(j-1)}+1\right) \left(\sum_{m=0}^{\frac{j-i-1}{2}}\chi_{1,\frac{j+1}{2}-m}(q)\gs{\frac{j-i-1}{2}}{m}{q^{2}}q^{m(j+i-n+m+1)}\right.\\
    &\qquad+\zeta q^{\frac{3}{2}j-\frac{1}{2}n}\sum_{m=0}^{\frac{j-i-1}{2}}\chi_{1,\frac{j-1}{2}-m}(q)\gs{\frac{j-i-1}{2}}{m}{q^{2}}q^{m(j+i-n+m-1)}\\
    &\qquad\left.+\lambda q^{-\frac{1}{2}n+\frac{1}{2}i+\frac{1}{2}j}\sum_{m=0}^{\frac{j-i-1}{2}}\chi_{1,\frac{j-1}{2}-m}(q)\gs{\frac{j-i-1}{2}}{m}{q^{2}}q^{m(j+i-n+m+1)}\right)
\end{align*}

The formula \eqref{rec4} now follows since, by Corollary \ref{cor:betas},
\begin{align*}
    \beta_{(0,n-j,\zeta),(n,0),(n-1,\zeta)}&=\frac{1}{2}q^{\frac{1}{2}(j-1)}\left(q^{\frac{1}{2}(j-1)}+1\right)\neq0\:.
\end{align*}

\subsection{\texorpdfstring{$n,j,i$ even (verifying \eqref{rec11})}{n,j,i even} }

In case $j=i$ (and then automatically also $\delta=1$) the verification is short since only the first term in the recursion relation remains. Note that in the remaining term (in case $j=i$ and only then) there is a factor $\gs{-1}{0}{q^{2}}$. Recall that we set it equal to 1 in Definition \ref{def:gaussianbinomialcoeffient}.

We may assume that $j-i\geq 2$ in the following computation.

% [inline block 1: 1 envs, 26313 chars -> math_tex | \begin{align*}     &\gamma_{(i,j,\delta),(n,\eps),(n-j,\zeta),1}\beta_{(0,n-j,\zeta),(n,\eps),(n-1,0)}\\...]


The formula \eqref{rec11} now follows since, by Corollary \ref{cor:betas},
\begin{align*}
    \beta_{(0,n-j,\zeta),(n,\eps),(n-1,0)}&=q^{\frac{1}{2}j-1}\left(q^{\frac{1}{2}j}-\eps\zeta\right)\neq0\:.
\end{align*}

Note that $q^{\frac{1}{2}j}-\eps\zeta\neq0$ since $j\geq2$ is assumed.

\subsection{\texorpdfstring{$n,j$ even, $i$ odd (verifying \eqref{rec2})}{n,j even, i odd}}

\begin{align*}
    &\gamma_{(i,j,0),(n,\eps),(n-j,\zeta),\eps}\beta_{(0,n-j,\zeta),(n,\eps),(n-1,0)}\\
    &=\frac{1}{2}q^{n-i-1}\left(q^{i}-1\right)\sum_{\nu\in\{\pm1\}}\gamma_{(i-1,j-1,0,\nu),(n-1,0),(n-j,\zeta),0}\\
    &\qquad+\frac{1}{2}q^{\frac{1}{2}n-1}\sum_{\kappa\in\{\pm1\}}\left(q^{\frac{1}{2}n-i-1}(q-1)-\eps+\kappa q^{\frac{1}{2}(j-i-1)}\left((q-1)q^{\frac{1}{2}n-j}-\eps\right)\right)\\&\qquad\qquad\qquad\gamma_{(i,j-1,\kappa),(n-1,0),(n-j,\zeta),0}\\
    &\qquad+\frac{1}{2}\left(q^{j-i-1}-1\right)q^{\frac{1}{2}(n-j+i-1)}\sum_{\nu\in\{\pm1\}}\gamma_{(i+1,j-1,0,\nu),(n-1,0),(n-j,\zeta),0}\left(q^{\frac{1}{2}(n-j-i-1)}+\nu\right)\\
    &=\frac{1}{2}q^{n-i-1}\left(q^{i}-1\right)\sum_{\nu\in\{\pm1\}}\frac{1}{2}q^{jn-\frac{5}{4}j^2-n+j}\left(\sum_{m=0}^{\frac{j-i-1}{2}}\chi_{1,\frac{j}{2}-m}(q)\gs{\frac{j-i-1}{2}}{m}{q^{2}}q^{m(j+i-n+m)}\right.\\&\qquad\qquad+\zeta q^{\frac{3}{2}j-\frac{1}{2}n-1}\sum_{m=0}^{\frac{j-i-1}{2}}\chi_{1,\frac{j-2}{2}-m}(q)\gs{\frac{j-i-1}{2}}{m}{q^{2}}q^{m(j+i-n+m-2)}\\&\qquad\qquad\left.+\nu q^{\frac{1}{2}(j+i-n-1)}\sum_{m=0}^{\frac{j-i-1}{2}}\chi_{1,\frac{j-2}{2}-m}(q)\gs{\frac{j-i-1}{2}}{m}{q^{2}}q^{m(j+i-n+m)}\right)\\
    &\qquad+\frac{1}{2}q^{\frac{1}{2}n-1}\sum_{\kappa\in\{\pm1\}}\left(q^{\frac{1}{2}n-i-1}(q-1)-\eps+\kappa q^{\frac{1}{2}(j-i-1)}\left((q-1)q^{\frac{1}{2}n-j}-\eps\right)\right)\\&\qquad\qquad\frac{1}{2}q^{jn-\frac{5}{4}j^2-n+j}\left(\sum_{m=0}^{\frac{j-i-1}{2}}\chi_{1,\frac{j}{2}-m}(q)\gs{\frac{j-i-1}{2}}{m}{q^{2}}q^{m(j+i-n+m)}\right.\\&\qquad\qquad+\zeta q^{\frac{3}{2}j-\frac{1}{2}n-1}\sum_{m=0}^{\frac{j-i-1}{2}-1}\chi_{1,\frac{j-2}{2}-m}(q)\gs{\frac{j-i-1}{2}-1}{m}{q^{2}}q^{m(j+i-n+m)}\\&\qquad\qquad\left.-\zeta\kappa q^{j-\frac{1}{2}n+\frac{1}{2}i-\frac{1}{2}}\sum_{m=0}^{\frac{j-i-1}{2}-1}\chi_{1,\frac{j-2}{2}-m}(q)\gs{\frac{j-i-1}{2}-1}{m}{q^{2}}q^{m(j+i-n+m)}\right)\\
    &\qquad+\frac{1}{2}\left(q^{j-i-1}-1\right)q^{\frac{1}{2}(n-j+i-1)}\sum_{\nu\in\{\pm1\}}\left(q^{\frac{1}{2}(n-j-i-1)}+\nu\right)\frac{1}{2}q^{jn-\frac{5}{4}j^2-n+j}\\&\qquad\qquad\left(\sum_{m=0}^{\frac{j-i-3}{2}}\chi_{1,\frac{j}{2}-m}(q)\gs{\frac{j-i-3}{2}}{m}{q^{2}}q^{m(j+i-n+m+2)}\right.\\&\qquad\qquad+\zeta q^{\frac{3}{2}j-\frac{1}{2}n-1}\sum_{m=0}^{\frac{j-i-3}{2}}\chi_{1,\frac{j-2}{2}-m}(q)\gs{\frac{j-i-3}{2}}{m}{q^{2}}q^{m(j+i-n+m)}\\&\qquad\qquad\left.+\nu q^{\frac{1}{2}(j+i-n+1)}\sum_{m=0}^{\frac{j-i-3}{2}}\chi_{1,\frac{j-2}{2}-m}(q)\gs{\frac{j-i-3}{2}}{m}{q^{2}}q^{m(j+i-n+m+2)}\right)\\
    &=\frac{1}{4}q^{jn-\frac{5}{4}j^2-n+j}\left(2q^{n-i-1}\left(q^{i}-1\right)\sum_{m=0}^{\frac{j-i-1}{2}}\chi_{1,\frac{j}{2}-m}(q)\gs{\frac{j-i-1}{2}}{m}{q^{2}}q^{m(j+i-n+m)}\right.\\&\qquad\qquad+2\zeta \left(q^{i}-1\right)q^{\frac{3}{2}j+\frac{1}{2}n-i-2}\sum_{m=0}^{\frac{j-i-1}{2}}\chi_{1,\frac{j-2}{2}-m}(q)\gs{\frac{j-i-1}{2}}{m}{q^{2}}q^{m(j+i-n+m-2)}\\
    &\qquad\qquad +2q^{n-i-2}(q-1)\sum_{m=0}^{\frac{j-i-1}{2}}\chi_{1,\frac{j}{2}-m}(q)\gs{\frac{j-i-1}{2}}{m}{q^{2}}q^{m(j+i-n+m)}\\&\qquad\qquad+2\zeta q^{\frac{1}{2}n+\frac{3}{2}j-i-3}(q-1)\sum_{m=0}^{\frac{j-i-1}{2}-1}\chi_{1,\frac{j-2}{2}-m}(q)\gs{\frac{j-i-1}{2}-1}{m}{q^{2}}q^{m(j+i-n+m)}\\&\qquad\qquad-2\eps q^{\frac{1}{2}n-1}\sum_{m=0}^{\frac{j-i-1}{2}}\chi_{1,\frac{j}{2}-m}(q)\gs{\frac{j-i-1}{2}}{m}{q^{2}}q^{m(j+i-n+m)}\\&\qquad\qquad-2\eps\zeta q^{\frac{3}{2}j-2}\sum_{m=0}^{\frac{j-i-1}{2}-1}\chi_{1,\frac{j-2}{2}-m}(q)\gs{\frac{j-i-1}{2}-1}{m}{q^{2}}q^{m(j+i-n+m)}\\&\qquad\qquad-2\zeta q^{\frac{1}{2}n+\frac{1}{2}j-2}(q-1) \sum_{m=0}^{\frac{j-i-1}{2}-1}\chi_{1,\frac{j-2}{2}-m}(q)\gs{\frac{j-i-1}{2}-1}{m}{q^{2}}q^{m(j+i-n+m)}\\&\qquad\qquad+2\eps\zeta q^{\frac{3}{2}j-2}\sum_{m=0}^{\frac{j-i-1}{2}-1}\chi_{1,\frac{j-2}{2}-m}(q)\gs{\frac{j-i-1}{2}-1}{m}{q^{2}}q^{m(j+i-n+m)}\\
    &\qquad\qquad +2q^{n-j-1}\left(q^{j-i-1}-1\right)\sum_{m=0}^{\frac{j-i-3}{2}}\chi_{1,\frac{j}{2}-m}(q)\gs{\frac{j-i-3}{2}}{m}{q^{2}}q^{m(j+i-n+m+2)}\\&\qquad\qquad+2\zeta q^{\frac{1}{2}n+\frac{1}{2}j-2}\left(q^{j-i-1}-1\right)\sum_{m=0}^{\frac{j-i-3}{2}}\chi_{1,\frac{j-2}{2}-m}(q)\gs{\frac{j-i-3}{2}}{m}{q^{2}}q^{m(j+i-n+m)}\\&\qquad\qquad\left.+ 2q^{i}\left(q^{j-i-1}-1\right)\sum_{m=0}^{\frac{j-i-3}{2}}\chi_{1,\frac{j-2}{2}-m}(q)\gs{\frac{j-i-3}{2}}{m}{q^{2}}q^{m(j+i-n+m+2)}\right)\\
    &=\frac{1}{2}q^{jn-\frac{5}{4}j^2-n+j}\left(\left(q^{n-i-1}\left(q^{i}-1\right)\sum_{m=0}^{\frac{j-i-1}{2}}\chi_{1,\frac{j}{2}-m}(q)\gs{\frac{j-i-1}{2}}{m}{q^{2}}q^{m(j+i-n+m)}\right.\right.\\&\qquad\qquad+q^{n-i-2}(q-1)\sum_{m=0}^{\frac{j-i-1}{2}}\chi_{1,\frac{j}{2}-m}(q)\gs{\frac{j-i-1}{2}}{m}{q^{2}}q^{m(j+i-n+m)}\\&\qquad\qquad +q^{n-j-1}\left(q^{j-i-1}-1\right)\sum_{m=0}^{\frac{j-i-3}{2}}\chi_{1,\frac{j}{2}-m}(q)\gs{\frac{j-i-3}{2}}{m}{q^{2}}q^{m(j+i-n+m+2)}\\&\qquad\qquad\left.+q^{i}\left(q^{j-i-1}-1\right)\sum_{m=0}^{\frac{j-i-3}{2}}\chi_{1,\frac{j-2}{2}-m}(q)\gs{\frac{j-i-3}{2}}{m}{q^{2}}q^{m(j+i-n+m+2)}\right)\\
    &\qquad+\zeta q^{\frac{1}{2}n+\frac{1}{2}j-2}\left(\left(q^{i}-1\right)q^{j-i}\sum_{m=0}^{\frac{j-i-1}{2}}\chi_{1,\frac{j-2}{2}-m}(q)\gs{\frac{j-i-1}{2}}{m}{q^{2}}q^{m(j+i-n+m-2)}\right.\\&\qquad\qquad+\left(q^{j-i-1}-1\right)(q-1)\sum_{m=0}^{\frac{j-i-3}{2}}\chi_{1,\frac{j-2}{2}-m}(q)\gs{\frac{j-i-3}{2}}{m}{q^{2}}q^{m(j+i-n+m)}\\&\qquad\qquad\left.+ \left(q^{j-i-1}-1\right)\sum_{m=0}^{\frac{j-i-3}{2}}\chi_{1,\frac{j-2}{2}-m}(q)\gs{\frac{j-i-3}{2}}{m}{q^{2}}q^{m(j+i-n+m)}\right)\\
    &\qquad\left.-\eps q^{\frac{1}{2}n-1}\sum_{m=0}^{\frac{j-i-1}{2}}\chi_{1,\frac{j}{2}-m}(q)\gs{\frac{j-i-1}{2}}{m}{q^{2}}q^{m(j+i-n+m)}\right)\\
    &=\frac{1}{2}q^{jn-\frac{5}{4}j^2-n+j}\left(\left(q^{n-i-2}\left(q^{i+1}-1\right)\sum_{m=0}^{\frac{j-i-1}{2}}\chi_{1,\frac{j}{2}-m}(q)\gs{\frac{j-i-1}{2}}{m}{q^{2}}q^{m(j+i-n+m)}\right.\right.\\&\qquad\qquad +q^{n-j-1}\sum_{m=0}^{\frac{j-i-1}{2}}\chi_{1,\frac{j}{2}-m}(q)\left(q^{j-i-1-2m}-1\right)\gs{\frac{j-i-1}{2}}{m}{q^{2}}q^{m(j+i-n+m+2)}\\&\qquad\qquad\left.+q^{i}\sum_{m=0}^{\frac{j-i-3}{2}}\chi_{1,\frac{j-2}{2}-m}(q)\left(q^{j-i-1-2m}-1\right)\gs{\frac{j-i-1}{2}}{m}{q^{2}}q^{m(j+i-n+m+2)}\right)\\
    &\qquad+\zeta q^{\frac{1}{2}n+\frac{1}{2}j-2}\left(\left(q^{i}-1\right)q^{j-i}\sum_{m=0}^{\frac{j-i-1}{2}}\chi_{1,\frac{j-2}{2}-m}(q)\gs{\frac{j-i-1}{2}}{m}{q^{2}}q^{m(j+i-n+m-2)}\right.\\&\qquad\qquad+\left.q\sum_{m=0}^{\frac{j-i-3}{2}}\chi_{1,\frac{j-2}{2}-m}(q)\left(q^{j-i-1-2m}-1\right)\gs{\frac{j-i-1}{2}}{m}{q^{2}}q^{m(j+i-n+m)}\right)\\
    &\qquad\left.-\eps q^{\frac{1}{2}n-1}\sum_{m=0}^{\frac{j-i-1}{2}}\chi_{1,\frac{j}{2}-m}(q)\gs{\frac{j-i-1}{2}}{m}{q^{2}}q^{m(j+i-n+m)}\right)\\
    &=\frac{1}{2}q^{jn-\frac{5}{4}j^2-n+j}\left(\left(q^{n-1}\sum_{m=0}^{\frac{j-i-1}{2}}\chi_{1,\frac{j}{2}-m}(q)\gs{\frac{j-i-1}{2}}{m}{q^{2}}q^{m(j+i-n+m)}\right.\right.\\&\qquad\qquad -q^{n-j-1}\sum_{m=0}^{\frac{j-i-1}{2}}\chi_{1,\frac{j}{2}-m}(q)\gs{\frac{j-i-1}{2}}{m}{q^{2}}q^{m(j+i-n+m+2)}\\&\qquad\qquad\left.+q^{i}\sum_{m=0}^{\frac{j-i-3}{2}}\chi_{1,\frac{j-2}{2}-m}(q)\left(q^{j-i-1-2m}-1\right)\gs{\frac{j-i-1}{2}}{m}{q^{2}}q^{m(j+i-n+m+2)}\right)\\
   &\qquad+\zeta q^{\frac{1}{2}n+\frac{1}{2}j-2}\left(\left(q^{i}-1\right)q^{j-i}\sum_{m=0}^{\frac{j-i-1}{2}}\chi_{1,\frac{j-2}{2}-m}(q)\gs{\frac{j-i-1}{2}}{m}{q^{2}}q^{m(j+i-n+m-2)}\right.\\&\qquad\qquad+\left.q\sum_{m=0}^{\frac{j-i-1}{2}}\chi_{1,\frac{j-2}{2}-m}(q)\left(q^{j-i-1-2m}-1\right)\gs{\frac{j-i-1}{2}}{m}{q^{2}}q^{m(j+i-n+m)}\right)\\
    &\qquad\left.-\eps q^{\frac{1}{2}n-1}\sum_{m=0}^{\frac{j-i-1}{2}}\chi_{1,\frac{j}{2}-m}(q)\gs{\frac{j-i-1}{2}}{m}{q^{2}}q^{m(j+i-n+m)}\right)\\
    &=\frac{1}{2}q^{jn-\frac{5}{4}j^2-n+j}\left(\left(q^{n-1}\sum_{m=0}^{\frac{j-i-1}{2}}\chi_{1,\frac{j}{2}-m}(q)\gs{\frac{j-i-1}{2}}{m}{q^{2}}q^{m(j+i-n+m)}\right.\right.\\&\qquad\qquad -q^{n-j-1}\sum_{m=0}^{\frac{j-i-1}{2}}\chi_{1,\frac{j}{2}-m}(q)\gs{\frac{j-i-1}{2}}{m}{q^{2}}\left((q^{2m}-1)+1\right)q^{m(j+i-n+m)}\\&\qquad\qquad\left.+q^{i}\sum_{m=0}^{\frac{j-i-3}{2}}\chi_{1,\frac{j-2}{2}-m}(q)\left(q^{j-i-1-2m}-1\right)\gs{\frac{j-i-1}{2}}{m}{q^{2}}q^{m(j+i-n+m+2)}\right)\\
    &\qquad+\zeta q^{\frac{1}{2}n+\frac{1}{2}j-2}\sum_{m=0}^{\frac{j-i-1}{2}}(q^{j-i}(q^i-1)q^{-2m}+q^{j-i-2m}-q)\chi_{1,\frac{j-2}{2}-m}(q)\gs{\frac{j-i-1}{2}}{m}{q^{2}}q^{m(j+i-n+m)}\\
    &\qquad\left.-\eps q^{\frac{1}{2}n-1}\sum_{m=0}^{\frac{j-i-1}{2}}\chi_{1,\frac{j}{2}-m}(q)\gs{\frac{j-i-1}{2}}{m}{q^{2}}q^{m(j+i-n+m)}\right)\\
    &=\frac{1}{2}q^{jn-\frac{5}{4}j^2-n+j}\left(\left(q^{n-j-1}\left(q^{j}-1\right)\sum_{m=0}^{\frac{j-i-1}{2}}\chi_{1,\frac{j}{2}-m}(q)\gs{\frac{j-i-1}{2}}{m}{q^{2}}q^{m(j+i-n+m)}\right.\right.\\&\qquad\qquad -q^{i}\sum_{m=1}^{\frac{j-i-1}{2}}\chi_{1,\frac{j}{2}-m}(q)\gs{\frac{j-i-1}{2}}{m-1}{q^{2}}\left(q^{j-i+1-2m}-1\right)q^{(m-1)(j+i-n+m+1)}\\&\qquad\qquad\left.+q^{i}\sum_{m=0}^{\frac{j-i-3}{2}}\chi_{1,\frac{j-2}{2}-m}(q)\left(q^{j-i-1-2m}-1\right)\gs{\frac{j-i-1}{2}}{m}{q^{2}}q^{m(j+i-n+m+2)}\right)\\
    &\qquad+\zeta q^{\frac{1}{2}n+\frac{1}{2}j-2}\sum_{m=0}^{\frac{j-i-1}{2}}q(q^{j-2m-1}-1)\chi_{1,\frac{j-2}{2}-m}(q)\gs{\frac{j-i-1}{2}}{m}{q^{2}}q^{m(j+i-n+m)}\\
    &\qquad\left.-\eps q^{\frac{1}{2}n-1}\sum_{m=0}^{\frac{j-i-1}{2}}\chi_{1,\frac{j}{2}-m}(q)\gs{\frac{j-i-1}{2}}{m}{q^{2}}q^{m(j+i-n+m)}\right)\\
    &=\frac{1}{2}q^{jn-\frac{5}{4}j^2-n+j}\left(\left(q^{n-j-1}\left(q^{j}-1\right)\sum_{m=0}^{\frac{j-i-1}{2}}\chi_{1,\frac{j}{2}-m}(q)\gs{\frac{j-i-1}{2}}{m}{q^{2}}q^{m(j+i-n+m)}\right.\right.\\&\qquad\qquad -q^{i}\sum_{m=0}^{\frac{j-i-3}{2}}\chi_{1,\frac{j}{2}-m-1}(q)\gs{\frac{j-i-1}{2}}{m}{q^{2}}\left(q^{j-i-1-2m}-1\right)q^{m(j+i-n+m+2)}\\&\qquad\qquad\left.+q^{i}\sum_{m=0}^{\frac{j-i-3}{2}}\chi_{1,\frac{j-2}{2}-m}(q)\left(q^{j-i-1-2m}-1\right)\gs{\frac{j-i-1}{2}}{m}{q^{2}}q^{m(j+i-n+m+2)}\right)\\
    &\qquad+\zeta q^{\frac{1}{2}n+\frac{1}{2}j-2}\sum_{m=0}^{\frac{j-i-1}{2}}q\chi_{1,\frac{j}{2}-m}(q)\gs{\frac{j-i-1}{2}}{m}{q^{2}}q^{m(j+i-n+m)}\\
    &\qquad\left.-\eps q^{\frac{1}{2}n-1}\sum_{m=0}^{\frac{j-i-1}{2}}\chi_{1,\frac{j}{2}-m}(q)\gs{\frac{j-i-1}{2}}{m}{q^{2}}q^{m(j+i-n+m)}\right)\\
    &=\frac{1}{2}q^{jn-\frac{5}{4}j^2-n+j}\left( q^{n-j-1}(q^j-1)\sum_{m=0}^{\frac{j-i-3}{2}}\chi_{1,\frac{j}{2}-m}(q)\gs{\frac{j-i-1}{2}}{m}{q^{2}}q^{m(j+i-n+m)}\right.\\
    &\qquad+\zeta q^{\frac{1}{2}n+\frac{1}{2}j-1}\sum_{m=0}^{\frac{j-i-1}{2}}\chi_{1,\frac{j}{2}-m}(q)\gs{\frac{j-i-1}{2}}{m}{q^{2}}q^{m(j+i-n+m)}\\
    &\qquad\left.-\eps q^{\frac{1}{2}n-1}\sum_{m=0}^{\frac{j-i-1}{2}}\chi_{1,\frac{j}{2}-m}(q)\gs{\frac{j-i-1}{2}}{m}{q^{2}}q^{m(j+i-n+m)}\right)\\
    &=\frac{1}{2}q^{jn-\frac{5}{4}j^2}\left(q^{-1}(q^j-1)\sum_{m=0}^{\frac{j-i-3}{2}}\chi_{1,\frac{j}{2}-m}(q)\gs{\frac{j-i-1}{2}}{m}{q^{2}}q^{m(j+i-n+m)}\right.\\
    &\qquad+\zeta q^{-\frac{1}{2}n+\frac{3}{2}j-1}\sum_{m=0}^{\frac{j-i-1}{2}}\chi_{1,\frac{j}{2}-m}(q)\gs{\frac{j-i-1}{2}}{m}{q^{2}}q^{m(j+i-n+m)}\\
    &\qquad\left.-\eps q^{j-\frac{1}{2}n-1}\sum_{m=0}^{\frac{j-i-1}{2}}\chi_{1,\frac{j}{2}-m}(q)\gs{\frac{j-i-1}{2}}{m}{q^{2}}q^{m(j+i-n+m)}\right)
\end{align*}

The formula \eqref{rec2} now follows since, by Corollary \ref{cor:betas},
\begin{align*}
    \beta_{(0,n-j,\zeta),(n,\eps),(n-1,0)}&=q^{\frac{1}{2}j-1}\left(q^{\frac{1}{2}j}-\eps\zeta\right)\neq0\:.
\end{align*}

Note that $q^{\frac{1}{2}j}-\eps\zeta\neq0$ since $j\geq2$ is assumed.

\section{The formulae for \texorpdfstring{$n$}{n} even and \texorpdfstring{$j$}{j} odd}\label{ap:morecalculations}

This appendix contains the details of the proof of Theorem \ref{th:gammaquadratic0even}. Recall that throughout this appendix $n$ is even and $j$ is odd.

\subsection{\texorpdfstring{$i$} is odd}\label{sec:appendixB1}

We assume that $i$ is odd and apply Lemma \ref{lem:recursion1} and Corollary \ref{cor:gammaquadratic0odd}. We distinguish between the cases $n=i+j$ and $n\geq i+j+2$. We first look at the latter case and we find that
\begin{align*}
    &\gamma_{(i,j,\delta),(n,\eps),(n-j,0),\eps}\beta_{(0,n-j,0),(n,\eps),(n-1,0)}\\
    &=\left(q^{i}-1\right)q^{n-i-1}\gamma_{(i-1,j-1,\delta),(n-1,0),(n-j,0),0}\\
    &\qquad+\frac{1}{2}q^{\frac{1}{2}n-2}\left(q^{\frac{1}{2}(j-i)}-\delta\right)\sum_{\nu\in\{\pm1\}}\left((q-1)q^{\frac{1}{2}(n-j-i)}+\nu q+\delta\eps\right)\gamma_{(i,j-1,0,\nu),(n-1,0),(n-j,0),0}\\
    &\qquad+q^{\frac{1}{2}(n-j+i)-1}\left(q^{\frac{1}{2}(j-i-1-\delta)}+1\right)\left(q^{\frac{1}{2}(j-i-1+\delta)}-1\right)\left(q^{\frac{1}{2}(n-j-i)}-\delta\eps\right)\\&\qquad\qquad\qquad\gamma_{(i+1,j-1,\delta),(n-1,0),(n-j,0),0}\\
    &=\left(q^{i}-1\right)q^{n-i-1}q^{\frac{3}{2}jn-\frac{1}{4}n^{2}-\frac{5}{4}j^{2}-\frac{3}{2}n+\frac{3}{2}j-\frac{1}{4}}\left(\sum_{m=0}^{\frac{n-j-i}{2}}\chi_{1,\frac{n-j+1}{2}-m}(q)\gs{\frac{n-j-i}{2}}{m}{q^{2}}q^{m(m-j+i+1)}\right.\\        &\qquad\qquad\left.+\delta\:q^{\frac{1}{2}(i-j)}\sum_{m=0}^{\frac{n-j-i}{2}}\chi_{1,\frac{n-j-1}{2}-m}(q)\gs{\frac{n-j-i}{2}}{m}{q^{2}}q^{m(m-j+i+1)}\right)\\%\gamma_{(i-1,j-1,\delta),(n-1,0),(n-j,0),0}
    &\qquad+\frac{1}{2}q^{\frac{1}{2}n-2}\left(q^{\frac{1}{2}(j-i)}-\delta\right)\sum_{\nu\in\{\pm1\}}\left((q-1)q^{\frac{1}{2}(n-j-i)}+\nu q+\delta\eps\right)\\&\qquad\qquad q^{\frac{3}{2}jn-\frac{1}{4}n^{2}-\frac{5}{4}j^{2}-\frac{3}{2}n+\frac{3}{2}j-\frac{1}{4}}\sum_{m=0}^{\frac{n-j-i}{2}}\chi_{1,\frac{n-j+1}{2}-m}(q)\gs{\frac{n-j-i}{2}}{m}{q^{2}}q^{m(m-j+i+1)}\\%\gamma_{(i,j-1,0,\nu),(n-1,0),(n-j,0),0}
    &\qquad+q^{\frac{1}{2}(n-j+i)-1}\left(q^{\frac{1}{2}(j-i-1-\delta)}+1\right)\left(q^{\frac{1}{2}(j-i-1+\delta)}-1\right)\left(q^{\frac{1}{2}(n-j-i)}-\delta\eps\right)\\&\qquad\qquad q^{\frac{3}{2}jn-\frac{1}{4}n^{2}-\frac{5}{4}j^{2}-\frac{3}{2}n+\frac{3}{2}j-\frac{1}{4}}\left(\sum_{m=0}^{\frac{n-j-i}{2}-1}\chi_{1,\frac{n-j+1}{2}-m}(q)\gs{\frac{n-j-i}{2}-1}{m}{q^{2}}q^{m(m-j+i+3)}\right.\\ &\qquad\qquad\left.+\delta\:q^{\frac{1}{2}(i-j)+1}\sum_{m=0}^{\frac{n-j-i}{2}-1}\chi_{1,\frac{n-j-1}{2}-m}(q)\gs{\frac{n-j-i}{2}-1}{m}{q^{2}}q^{m(m-j+i+3)}\right)\\%\gamma_{(i+1,j-1,\delta),(n-1,0),(n-j,0),0}
    &=q^{\frac{3}{2}jn-\frac{1}{4}n^{2}-\frac{5}{4}j^{2}-\frac{3}{2}n+\frac{3}{2}j-\frac{1}{4}}\left(\left(q^{i}-1\right)q^{n-i-1}\sum_{m=0}^{\frac{n-j-i}{2}}\chi_{1,\frac{n-j+1}{2}-m}(q)\gs{\frac{n-j-i}{2}}{m}{q^{2}}q^{m(m-j+i+1)}\right.\\&\qquad\qquad+\delta\left(q^{i}-1\right)q^{n-\frac{1}{2}(i+j)-1}\sum_{m=0}^{\frac{n-j-i}{2}}\chi_{1,\frac{n-j-1}{2}-m}(q)\gs{\frac{n-j-i}{2}}{m}{q^{2}}q^{m(m-j+i+1)}\\
    &\qquad+q^{\frac{1}{2}n-2}\left(q^{\frac{1}{2}(j-i)}-\delta\right)\left((q-1)q^{\frac{1}{2}(n-j-i)}+\delta\eps\right)\sum_{m=0}^{\frac{n-j-i}{2}}\chi_{1,\frac{n-j+1}{2}-m}(q)\gs{\frac{n-j-i}{2}}{m}{q^{2}}q^{m(m-j+i+1)}\\
    &\qquad+q^{\frac{1}{2}(n-j+i)-1}\left(q^{j-i-1}-1+\delta q^{\frac{1}{2}(j-i)-1}(q-1)\right)\left(q^{\frac{1}{2}(n-j-i)}-\delta\eps\right)\\&\qquad\qquad \left(\sum_{m=0}^{\frac{n-j-i}{2}-1}\chi_{1,\frac{n-j+1}{2}-m}(q)\gs{\frac{n-j-i}{2}-1}{m}{q^{2}}q^{m(m-j+i+3)}\right.\\ &\qquad\qquad\left.\left.+\delta\:q^{\frac{1}{2}(i-j)+1}\sum_{m=0}^{\frac{n-j-i}{2}-1}\chi_{1,\frac{n-j-1}{2}-m}(q)\gs{\frac{n-j-i}{2}-1}{m}{q^{2}}q^{m(m-j+i+3)}\right)\right)\\
    &=q^{\frac{3}{2}jn-\frac{1}{4}n^{2}-\frac{5}{4}j^{2}-\frac{3}{2}n+\frac{3}{2}j-\frac{1}{4}}\left(\left(q^{i}-1\right)q^{n-i-1}\sum_{m=0}^{\frac{n-j-i}{2}}\chi_{1,\frac{n-j+1}{2}-m}(q)\gs{\frac{n-j-i}{2}}{m}{q^{2}}q^{m(m-j+i+1)}\right.\\        &\qquad+\delta\left(q^{i}-1\right)q^{n-\frac{1}{2}(i+j)-1}\sum_{m=0}^{\frac{n-j-i}{2}}\chi_{1,\frac{n-j-1}{2}-m}(q)\gs{\frac{n-j-i}{2}}{m}{q^{2}}q^{m(m-j+i+1)}\\
    &\qquad+\left(q-1\right)q^{n-i-2}\sum_{m=0}^{\frac{n-j-i}{2}}\chi_{1,\frac{n-j+1}{2}-m}(q)\gs{\frac{n-j-i}{2}}{m}{q^{2}}q^{m(m-j+i+1)}\\&\qquad+\delta\eps q^{\frac{1}{2}(n+j-i)-2}\sum_{m=0}^{\frac{n-j-i}{2}}\chi_{1,\frac{n-j+1}{2}-m}(q)\gs{\frac{n-j-i}{2}}{m}{q^{2}}q^{m(m-j+i+1)}\\&\qquad-\delta(q-1)q^{n-\frac{1}{2}(j+i)-2}\sum_{m=0}^{\frac{n-j-i}{2}}\chi_{1,\frac{n-j+1}{2}-m}(q)\gs{\frac{n-j-i}{2}}{m}{q^{2}}q^{m(m-j+i+1)}\\ &\qquad-\eps q^{\frac{1}{2}n-2}\sum_{m=0}^{\frac{n-j-i}{2}}\chi_{1,\frac{n-j+1}{2}-m}(q)\gs{\frac{n-j-i}{2}}{m}{q^{2}}q^{m(m-j+i+1)}\\
    &\qquad+\left(q^{j-i-1}-1\right)q^{n-j-1}\sum_{m=0}^{\frac{n-j-i}{2}-1}\chi_{1,\frac{n-j+1}{2}-m}(q)\gs{\frac{n-j-i}{2}-1}{m}{q^{2}}q^{m(m-j+i+3)}\\&\qquad+\delta\left(q^{j-i-1}-1\right)q^{n-\frac{3}{2}j+\frac{1}{2}i}\sum_{m=0}^{\frac{n-j-i}{2}-1}\chi_{1,\frac{n-j-1}{2}-m}(q)\gs{\frac{n-j-i}{2}-1}{m}{q^{2}}q^{m(m-j+i+3)}\\        &\qquad-\delta\eps q^{\frac{1}{2}(n-j+i)-1}\left(q^{j-i-1}-1\right)\sum_{m=0}^{\frac{n-j-i}{2}-1}\chi_{1,\frac{n-j+1}{2}-m}(q)\gs{\frac{n-j-i}{2}-1}{m}{q^{2}}q^{m(m-j+i+3)}\\&\qquad-\eps \left(q^{j-i-1}-1\right)q^{\frac{1}{2}n-j+i}\sum_{m=0}^{\frac{n-j-i}{2}-1}\chi_{1,\frac{n-j-1}{2}-m}(q)\gs{\frac{n-j-i}{2}-1}{m}{q^{2}}q^{m(m-j+i+3)}\\ &\qquad+\delta (q-1)q^{n-\frac{1}{2}j-\frac{1}{2}i-2}\sum_{m=0}^{\frac{n-j-i}{2}-1}\chi_{1,\frac{n-j+1}{2}-m}(q)\gs{\frac{n-j-i}{2}-1}{m}{q^{2}}q^{m(m-j+i+3)}\\&\qquad+(q-1)q^{n-j-1}\sum_{m=0}^{\frac{n-j-i}{2}-1}\chi_{1,\frac{n-j-1}{2}-m}(q)\gs{\frac{n-j-i}{2}-1}{m}{q^{2}}q^{m(m-j+i+3)}\\&\qquad-\eps(q-1)q^{\frac{1}{2}n-2}\sum_{m=0}^{\frac{n-j-i}{2}-1}\chi_{1,\frac{n-j+1}{2}-m}(q)\gs{\frac{n-j-i}{2}-1}{m}{q^{2}}q^{m(m-j+i+3)}\\&\qquad\left.-\delta\eps (q-1)q^{\frac{1}{2}(n-j+i)-1}\sum_{m=0}^{\frac{n-j-i}{2}-1}\chi_{1,\frac{n-j-1}{2}-m}(q)\gs{\frac{n-j-i}{2}-1}{m}{q^{2}}q^{m(m-j+i+3)}\right)\\
    &=q^{\frac{3}{2}jn-\frac{1}{4}n^{2}-\frac{5}{4}j^{2}-\frac{3}{2}n+\frac{3}{2}j-\frac{1}{4}}\left(q^{n-j-1}\left(\left(q^{i}-1\right)q^{j-i}\sum_{m=0}^{\frac{n-j-i}{2}}\chi_{1,\frac{n-j+1}{2}-m}(q)\gs{\frac{n-j-i}{2}}{m}{q^{2}}q^{m(m-j+i+1)}\right.\right.\\&\qquad\qquad+\left(q-1\right)q^{j-i-1}\sum_{m=0}^{\frac{n-j-i}{2}}\chi_{1,\frac{n-j+1}{2}-m}(q)\gs{\frac{n-j-i}{2}}{m}{q^{2}}q^{m(m-j+i+1)}\\&\qquad\qquad+\left(q^{j-i-1}-1\right)\sum_{m=0}^{\frac{n-j-i}{2}-1}\chi_{1,\frac{n-j+1}{2}-m}(q)\gs{\frac{n-j-i}{2}-1}{m}{q^{2}}q^{m(m-j+i+3)}\\&\qquad\qquad\left.+(q-1)\sum_{m=0}^{\frac{n-j-i}{2}-1}\chi_{1,\frac{n-j-1}{2}-m}(q)\gs{\frac{n-j-i}{2}-1}{m}{q^{2}}q^{m(m-j+i+3)}\right)\\
    &\qquad+\delta q^{n-\frac{1}{2}j-\frac{1}{2}i-2}\left(\left(q^{i}-1\right)q\sum_{m=0}^{\frac{n-j-i}{2}}\chi_{1,\frac{n-j-1}{2}-m}(q)\gs{\frac{n-j-i}{2}}{m}{q^{2}}q^{m(m-j+i+1)}\right.\\&\qquad\qquad-(q-1)\sum_{m=0}^{\frac{n-j-i}{2}}\chi_{1,\frac{n-j+1}{2}-m}(q)\gs{\frac{n-j-i}{2}}{m}{q^{2}}q^{m(m-j+i+1)}\\&\qquad\qquad+\left(q^{j-i-1}-1\right)q^{i-j+2}\sum_{m=0}^{\frac{n-j-i}{2}-1}\chi_{1,\frac{n-j-1}{2}-m}(q)\gs{\frac{n-j-i}{2}-1}{m}{q^{2}}q^{m(m-j+i+3)}\\&\qquad\qquad+\left.(q-1)\sum_{m=0}^{\frac{n-j-i}{2}-1}\chi_{1,\frac{n-j+1}{2}-m}(q)\gs{\frac{n-j-i}{2}-1}{m}{q^{2}}q^{m(m-j+i+3)}\right)\\
    &\qquad-\eps q^{\frac{1}{2}n-j+i}\left( q^{j-i-2}\sum_{m=0}^{\frac{n-j-i}{2}}\chi_{1,\frac{n-j+1}{2}-m}(q)\gs{\frac{n-j-i}{2}}{m}{q^{2}}q^{m(m-j+i+1)}\right.\\&\qquad\qquad+ \left(q^{j-i-1}-1\right)\sum_{m=0}^{\frac{n-j-i}{2}-1}\chi_{1,\frac{n-j-1}{2}-m}(q)\gs{\frac{n-j-i}{2}-1}{m}{q^{2}}q^{m(m-j+i+3)}\\&\left.\qquad\qquad+(q-1)q^{j-i-2}\sum_{m=0}^{\frac{n-j-i}{2}-1}\chi_{1,\frac{n-j+1}{2}-m}(q)\gs{\frac{n-j-i}{2}-1}{m}{q^{2}}q^{m(m-j+i+3)}\right)\\
    &\qquad+\delta\eps q^{\frac{1}{2}(n-j+i)-1}\left(q^{j-i-1}\sum_{m=0}^{\frac{n-j-i}{2}}\chi_{1,\frac{n-j+1}{2}-m}(q)\gs{\frac{n-j-i}{2}}{m}{q^{2}}q^{m(m-j+i+1)}\right.\\&\qquad\qquad-\left(q^{j-i-1}-1\right)\sum_{m=0}^{\frac{n-j-i}{2}-1}\chi_{1,\frac{n-j+1}{2}-m}(q)\gs{\frac{n-j-i}{2}-1}{m}{q^{2}}q^{m(m-j+i+3)}\\&\qquad\qquad\left.\left.-(q-1)\sum_{m=0}^{\frac{n-j-i}{2}-1}\chi_{1,\frac{n-j-1}{2}-m}(q)\gs{\frac{n-j-i}{2}-1}{m}{q^{2}}q^{m(m-j+i+3)}\right)\right)\\
    &=q^{\frac{3}{2}jn-\frac{1}{4}n^{2}-\frac{5}{4}j^{2}-\frac{3}{2}n+\frac{3}{2}j-\frac{1}{4}}\\
    &\qquad\left(q^{n-j-1}\left(\left(q^{i+1}-1\right)q^{j-i-1}\sum_{m=0}^{\frac{n-j-i}{2}}\chi_{1,\frac{n-j+1}{2}-m}(q)\gs{\frac{n-j-i}{2}}{m}{q^{2}}q^{m(m-j+i+1)}\right.\right.\\&\qquad\qquad+\left(q^{j-i-1}-1\right)\sum_{m=0}^{\frac{n-j-i}{2}-1}\chi_{1,\frac{n-j+1}{2}-m}(q)\gs{\frac{n-j-i}{2}-1}{m}{q^{2}}q^{m(m-j+i+3)}\\&\qquad\qquad+q\sum_{m=1}^{\frac{n-j-i}{2}}\chi_{1,\frac{n-j+1}{2}-m}(q)\gs{\frac{n-j-i}{2}-1}{m-1}{q^{2}}q^{m(m-j+i+1)}q^{j-i-2}\\&\qquad\qquad\left.-\sum_{m=0}^{\frac{n-j-i}{2}-1}\chi_{1,\frac{n-j-1}{2}-m}(q)\gs{\frac{n-j-i}{2}-1}{m}{q^{2}}q^{m(m-j+i+3)}\right)\\
    &\qquad+\delta q^{n-\frac{1}{2}j-\frac{1}{2}i-2}\left(\left(q^{i}-1\right)q\sum_{m=0}^{\frac{n-j-i}{2}}\chi_{1,\frac{n-j-1}{2}-m}(q)\gs{\frac{n-j-i}{2}}{m}{q^{2}}q^{m(m-j+i+1)}\right.\\&\qquad\qquad-(q-1)\sum_{m=0}^{\frac{n-j-i}{2}}\chi_{1,\frac{n-j+1}{2}-m}(q)\left(\gs{\frac{n-j-i}{2}}{m}{q^{2}}-q^{2m}\gs{\frac{n-j-i}{2}-1}{m}{q^{2}}\right)q^{m(m-j+i+1)}\\&\qquad\qquad+\left.\left(q-q^{i-j+2}\right)\sum_{m=0}^{\frac{n-j-i}{2}-1}\chi_{1,\frac{n-j-1}{2}-m}(q)\gs{\frac{n-j-i}{2}-1}{m}{q^{2}}q^{m(m-j+i+3)}\right)\\
    &\qquad-\eps q^{\frac{1}{2}n-j+i}\\
    &\qquad\qquad\left(q^{j-i-2}\sum_{m=0}^{\frac{n-j-i}{2}}\chi_{1,\frac{n-j+1}{2}-m}(q)\left(q^{2m}\gs{\frac{n-j-i}{2}-1}{m}{q^{2}}+\gs{\frac{n-j-i}{2}-1}{m-1}{q^{2}}\right)q^{m(m-j+i+1)}\right.\\&\qquad\qquad+ q^{j-i-1}\sum_{m=0}^{\frac{n-j-i}{2}-1}\chi_{1,\frac{n-j-1}{2}-m}(q)\gs{\frac{n-j-i}{2}-1}{m}{q^{2}}q^{m(m-j+i+3)}\\&\qquad\qquad-\sum_{m=1}^{\frac{n-j-i}{2}}\chi_{1,\frac{n-j+1}{2}-m}(q)\gs{\frac{n-j-i}{2}-1}{m-1}{q^{2}}q^{m(m-j+i+1)}q^{j-i-2}\\&\left.\qquad\qquad+(q-1)q^{j-i-2}\sum_{m=0}^{\frac{n-j-i}{2}-1}\chi_{1,\frac{n-j+1}{2}-m}(q)\gs{\frac{n-j-i}{2}-1}{m}{q^{2}}q^{m(m-j+i+3)}\right)\\
    &\qquad+\delta\eps q^{\frac{1}{2}(n-j+i)-1}\\&\qquad\qquad\left(q^{j-i-1}\sum_{m=0}^{\frac{n-j-i}{2}}\chi_{1,\frac{n-j+1}{2}-m}(q)\left(q^{2m}\gs{\frac{n-j-i}{2}-1}{m}{q^{2}}+\gs{\frac{n-j-i}{2}-1}{m-1}{q^{2}}\right)q^{m(m-j+i+1)}\right.\\&\qquad\qquad-\left(q^{j-i-1}-1\right)\sum_{m=0}^{\frac{n-j-i}{2}-1}\chi_{1,\frac{n-j+1}{2}-m}(q)\gs{\frac{n-j-i}{2}-1}{m}{q^{2}}q^{m(m-j+i+3)}\\&\qquad\qquad-q\sum_{m=1}^{\frac{n-j-i}{2}}\chi_{1,\frac{n-j+1}{2}-m}(q)\gs{\frac{n-j-i}{2}-1}{m-1}{q^{2}}q^{m(m-j+i+1)}q^{j-i-2}\\&\qquad\qquad\left.\left.+\sum_{m=0}^{\frac{n-j-i}{2}-1}\chi_{1,\frac{n-j-1}{2}-m}(q)\gs{\frac{n-j-i}{2}-1}{m}{q^{2}}q^{m(m-j+i+3)}\right)\right)\\
    &=q^{\frac{3}{2}jn-\frac{1}{4}n^{2}-\frac{5}{4}j^{2}-\frac{3}{2}n+\frac{3}{2}j-\frac{1}{4}}\\
    &\qquad\left(q^{n-j-1}\left(\left(q^{i+1}-1\right)q^{j-i-1}\sum_{m=0}^{\frac{n-j-i}{2}}\chi_{1,\frac{n-j+1}{2}-m}(q)\gs{\frac{n-j-i}{2}}{m}{q^{2}}q^{m(m-j+i+1)}\right.\right.\\&\qquad\qquad+q^{j-i-1}\sum_{m=0}^{\frac{n-j-i}{2}}\chi_{1,\frac{n-j+1}{2}-m}(q)\left(q^{2m}\gs{\frac{n-j-i}{2}-1}{m}{q^{2}}+\gs{\frac{n-j-i}{2}-1}{m-1}{q^{2}}\right)q^{m(m-j+i+1)}\\&\qquad\qquad\left.-\sum_{m=0}^{\frac{n-j-i}{2}-1}\chi_{1,\frac{n-j-1}{2}-m}(q)\left(\left(q^{n-j-2m}-1\right)+1\right)\gs{\frac{n-j-i}{2}-1}{m}{q^{2}}q^{m(m-j+i+3)}\right)\\
    &\qquad+\delta q^{n-\frac{1}{2}j-\frac{1}{2}i-2}\left(\left(q^{i}-1\right)q\sum_{m=0}^{\frac{n-j-i}{2}}\chi_{1,\frac{n-j-1}{2}-m}(q)\gs{\frac{n-j-i}{2}}{m}{q^{2}}q^{m(m-j+i+1)}\right.\\&\qquad\qquad-(q-1)\sum_{m=1}^{\frac{n-j-i}{2}}\chi_{1,\frac{n-j+1}{2}-m}(q)\gs{\frac{n-j-i}{2}-1}{m-1}{q^{2}}q^{m(m-j+i+1)}\\&\qquad\qquad+\left.\left(q-q^{i-j+2}\right)\sum_{m=0}^{\frac{n-j-i}{2}-1}\chi_{1,\frac{n-j-1}{2}-m}(q)\gs{\frac{n-j-i}{2}-1}{m}{q^{2}}q^{m(m-j+i+3)}\right)\\
    &\qquad-\eps q^{\frac{1}{2}n-j+i}\:q^{j-i-1}\sum_{m=0}^{\frac{n-j-i}{2}-1}\chi_{1,\frac{n-j-1}{2}-m}(q)\left(1+\left(q^{n-j-2m}-1\right)\right)\gs{\frac{n-j-i}{2}-1}{m}{q^{2}}q^{m(m-j+i+3)}\\
    &\qquad+\left.\delta\eps q^{\frac{1}{2}(n-j+i)-1}\sum_{m=0}^{\frac{n-j-i}{2}-1}\chi_{1,\frac{n-j-1}{2}-m}(q)\left(\left(q^{n-j-2m}-1\right)+1\right)\gs{\frac{n-j-i}{2}-1}{m}{q^{2}}q^{m(m-j+i+3)}\right)\\
    &=q^{\frac{3}{2}jn-\frac{1}{4}n^{2}-\frac{5}{4}j^{2}-\frac{3}{2}n+\frac{3}{2}j-\frac{1}{4}}\left(q^{n-j-1}\left(q^{j}\sum_{m=0}^{\frac{n-j-i}{2}}\chi_{1,\frac{n-j+1}{2}-m}(q)\gs{\frac{n-j-i}{2}}{m}{q^{2}}q^{m(m-j+i+1)}\right.\right.\\&\qquad\qquad\left.-q^{n-j}\sum_{m=0}^{\frac{n-j-i}{2}-1}\chi_{1,\frac{n-j-1}{2}-m}(q)\gs{\frac{n-j-i}{2}-1}{m}{q^{2}}q^{m(m-j+i+1)}\right)\\
    &\qquad+\delta q^{n-\frac{1}{2}j-\frac{1}{2}i-2}\left(\left(q^{i}-1\right)q\sum_{m=0}^{\frac{n-j-i}{2}}\chi_{1,\frac{n-j-1}{2}-m}(q)\gs{\frac{n-j-i}{2}}{m}{q^{2}}q^{m(m-j+i+1)}\right.\\&\qquad\qquad-(q-1)\sum_{m=0}^{\frac{n-j-i}{2}-1}\chi_{1,\frac{n-j-1}{2}-m}(q)\gs{\frac{n-j-i}{2}-1}{m}{q^{2}}q^{m(m-j+i+3)}q^{i-j+2}\\&\qquad\qquad+\left.\left(q-q^{i-j+2}\right)\sum_{m=0}^{\frac{n-j-i}{2}-1}\chi_{1,\frac{n-j-1}{2}-m}(q)\gs{\frac{n-j-i}{2}-1}{m}{q^{2}}q^{m(m-j+i+3)}\right)\\
    &\qquad-\eps q^{\frac{3}{2}n-j-1}\left(\sum_{m=0}^{\frac{n-j-i}{2}-1}\chi_{1,\frac{n-j-1}{2}-m}(q)\gs{\frac{n-j-i}{2}-1}{m}{q^{2}}q^{m(m-j+i+1)}\right)\\
    &\qquad+\left.\delta\eps q^{\frac{3}{2}n-\frac{3}{2}j+\frac{1}{2}i-1}\left(\sum_{m=0}^{\frac{n-j-i}{2}-1}\chi_{1,\frac{n-j-1}{2}-m}(q)\gs{\frac{n-j-i}{2}-1}{m}{q^{2}}q^{m(m-j+i+1)}\right)\right)\\
    &=q^{\frac{3}{2}jn-\frac{1}{4}n^{2}-\frac{5}{4}j^{2}-\frac{3}{2}n+\frac{3}{2}j-\frac{1}{4}}\left(q^{n-j-1}\left(q^{j}\sum_{m=0}^{\frac{n-j-i}{2}}\chi_{1,\frac{n-j+1}{2}-m}(q)\gs{\frac{n-j-i}{2}}{m}{q^{2}}q^{m(m-j+i+1)}\right.\right.\\&\qquad\qquad\left.-q^{n-j}\sum_{m=1}^{\frac{n-j-i}{2}}\chi_{1,\frac{n-j+1}{2}-m}(q)\gs{\frac{n-j-i}{2}-1}{m-1}{q^{2}}q^{m(m-j+i-1)}q^{j-i}\right)\\
    &\qquad+\delta q^{n-\frac{1}{2}j-\frac{1}{2}i-2}\left(\left(q^{i}-1\right)q\sum_{m=0}^{\frac{n-j-i}{2}}\chi_{1,\frac{n-j-1}{2}-m}(q)\gs{\frac{n-j-i}{2}}{m}{q^{2}}q^{m(m-j+i+1)}\right.\\&\qquad\qquad-\left.q\left(q^{i-j+2}-1\right)\sum_{m=0}^{\frac{n-j-i}{2}-1}\chi_{1,\frac{n-j-1}{2}-m}(q)\gs{\frac{n-j-i}{2}-1}{m}{q^{2}}q^{m(m-j+i+3)}\right)\\
    &\qquad-\left.\eps q^{\frac{3}{2}n-j-1}\left(1-\delta q^{\frac{1}{2}(i-j)}\right)\sum_{m=0}^{\frac{n-j-i}{2}-1}\chi_{1,\frac{n-j-1}{2}-m}(q)\gs{\frac{n-j-i}{2}-1}{m}{q^{2}}q^{m(m-j+i+1)}\right)\\
    &=q^{\frac{3}{2}jn-\frac{1}{4}n^{2}-\frac{5}{4}j^{2}-\frac{3}{2}n+\frac{3}{2}j-\frac{1}{4}}\\
    &\qquad \left(q^{n-j-1}\left(q^{j}\sum_{m=0}^{\frac{n-j-i}{2}}\chi_{1,\frac{n-j+1}{2}-m}(q)\left(\gs{\frac{n-j-i}{2}}{m}{q^{2}}-q^{n-j-i-2m}\gs{\frac{n-j-i}{2}-1}{m-1}{q^{2}}\right)q^{m(m-j+i+1)}\right)\right.\\
    &\qquad+\delta q^{n-\frac{1}{2}j-\frac{1}{2}i-1}\left(q^{i}\sum_{m=0}^{\frac{n-j-i}{2}}\chi_{1,\frac{n-j-1}{2}-m}(q)\gs{\frac{n-j-i}{2}}{m}{q^{2}}q^{m(m-j+i+1)}\right.\\&\qquad\qquad-\sum_{m=0}^{\frac{n-j-i}{2}}\chi_{1,\frac{n-j-1}{2}-m}(q)\left(\gs{\frac{n-j-i}{2}}{m}{q^{2}}-q^{2m}\gs{\frac{n-j-i}{2}-1}{m}{q^{2}}\right)q^{m(m-j+i+1)}\\&\qquad\qquad-\left.q^{i-j+2}\sum_{m=1}^{\frac{n-j-i}{2}}\chi_{1,\frac{n-j+1}{2}-m}(q)\gs{\frac{n-j-i}{2}-1}{m-1}{q^{2}}q^{m(m-j+i+1)}q^{j-i-2}\right)\\
    &\qquad-\left.\eps q^{\frac{3}{2}n-j-1}\left(1-\delta q^{\frac{1}{2}(i-j)}\right)\sum_{m=0}^{\frac{n-j-i}{2}-1}\chi_{1,\frac{n-j-1}{2}-m}(q)\gs{\frac{n-j-i}{2}-1}{m}{q^{2}}q^{m(m-j+i+1)}\right)\\
    &=q^{\frac{3}{2}jn-\frac{1}{4}n^{2}-\frac{5}{4}j^{2}-\frac{3}{2}n+\frac{3}{2}j-\frac{1}{4}}\left(q^{n-1}\sum_{m=0}^{\frac{n-j-i}{2}-1}\chi_{1,\frac{n-j+1}{2}-m}(q)\gs{\frac{n-j-i}{2}-1}{m}{q^{2}}q^{m(m-j+i+1)}\right.\\
    &\qquad+\delta q^{n-\frac{1}{2}j-\frac{1}{2}i-1}\left(q^{i}\sum_{m=0}^{\frac{n-j-i}{2}}\chi_{1,\frac{n-j-1}{2}-m}(q)\gs{\frac{n-j-i}{2}}{m}{q^{2}}q^{m(m-j+i+1)}\right.\\&\qquad\qquad-\left.\sum_{m=0}^{\frac{n-j-i}{2}}\chi_{1,\frac{n-j-1}{2}-m}(q)\left(1+\left(q^{n-j-2m}-1\right)\right)\gs{\frac{n-j-i}{2}-1}{m-1}{q^{2}}q^{m(m-j+i+1)}\right)\\
    &\qquad-\left.\eps q^{\frac{3}{2}n-j-1}\left(1-\delta q^{\frac{1}{2}(i-j)}\right)\sum_{m=0}^{\frac{n-j-i}{2}-1}\chi_{1,\frac{n-j-1}{2}-m}(q)\gs{\frac{n-j-i}{2}-1}{m}{q^{2}}q^{m(m-j+i+1)}\right)\\
    &=q^{\frac{3}{2}jn-\frac{1}{4}n^{2}-\frac{5}{4}j^{2}-\frac{1}{2}n+\frac{3}{2}j-\frac{5}{4}}\left(\sum_{m=0}^{\frac{n-j-i}{2}-1}\chi_{1,\frac{n-j+1}{2}-m}(q)\gs{\frac{n-j-i}{2}-1}{m}{q^{2}}q^{m(m-j+i+1)}\right.\\
    &\qquad+\delta q^{-\frac{1}{2}j-\frac{1}{2}i}\:q^{i}\sum_{m=0}^{\frac{n-j-i}{2}}\chi_{1,\frac{n-j-1}{2}-m}(q)\left(\gs{\frac{n-j-i}{2}}{m}{q^{2}}-q^{n-j-i-2m}\gs{\frac{n-j-i}{2}-1}{m-1}{q^{2}}\right)q^{m(m-j+i+1)}\\
    &\qquad-\left.\eps q^{\frac{1}{2}n-j}\left(1-\delta q^{\frac{1}{2}(i-j)}\right)\sum_{m=0}^{\frac{n-j-i}{2}-1}\chi_{1,\frac{n-j-1}{2}-m}(q)\gs{\frac{n-j-i}{2}-1}{m}{q^{2}}q^{m(m-j+i+1)}\right)\\
    &=q^{\frac{3}{2}jn-\frac{1}{4}n^{2}-\frac{5}{4}j^{2}-\frac{1}{2}n+\frac{3}{2}j-\frac{5}{4}}\left(\sum_{m=0}^{\frac{n-j-i}{2}-1}\chi_{1,\frac{n-j+1}{2}-m}(q)\gs{\frac{n-j-i}{2}-1}{m}{q^{2}}q^{m(m-j+i+1)}\right.\\
    &\qquad+\delta q^{\frac{1}{2}i-\frac{1}{2}j}\sum_{m=0}^{\frac{n-j-i}{2}-1}\chi_{1,\frac{n-j-1}{2}-m}(q)\gs{\frac{n-j-i}{2}-1}{m}{q^{2}}q^{m(m-j+i+1)}\\
    &\qquad-\left.\eps q^{\frac{1}{2}n-j}\left(1-\delta q^{\frac{1}{2}(i-j)}\right)\sum_{m=0}^{\frac{n-j-i}{2}-1}\chi_{1,\frac{n-j-1}{2}-m}(q)\gs{\frac{n-j-i}{2}-1}{m}{q^{2}}q^{m(m-j+i+1)}\right)\\
    &=q^{\frac{3}{2}jn-\frac{1}{4}n^{2}-\frac{5}{4}j^{2}-\frac{1}{2}n+\frac{3}{2}j-\frac{5}{4}}\left(\sum_{m=0}^{\frac{n-j-i}{2}}\chi_{1,\frac{n-j+1}{2}-m}(q)\gs{\frac{n-j-i}{2}-1}{m}{q^{2}}q^{m(m-j+i+1)}\right.\\
    &\qquad+\left.\left(\delta q^{\frac{1}{2}i-\frac{1}{2}j}-\eps q^{\frac{1}{2}n-j}+\delta\eps q^{\frac{1}{2}n-\frac{3}{2}j+\frac{1}{2}i}\right)\sum_{m=0}^{\frac{n-j-i}{2}-1}\chi_{1,\frac{n-j-1}{2}-m}(q)\gs{\frac{n-j-i}{2}-1}{m}{q^{2}}q^{m(m-j+i+1)}\right)\:.
\end{align*}

Now we look at the case $n=i+j$. Note that necessarily $\delta=\eps$. Again, using Lemma \ref{lem:recursion1} and Corollary \ref{cor:gammaquadratic0odd} we find that
\begin{align*}
    &\gamma_{(n-j,j,\eps),(n,\eps),(n-j,0),\eps}\beta_{(0,n-j,0),(n,\eps),(n-1,0)}\\
    &=\left(q^{n-j}-1\right)q^{j-1}\gamma_{(n-j-1,j-1,\eps),(n-1,0),(n-j,0),0}\\
    &\qquad+\frac{1}{2}q^{\frac{1}{2}n-2}\left(q^{\frac{1}{2}(2j-n)}-\eps\right)\sum_{\nu\in\{\pm1\}}\left((q-1)+\nu q+\eps^{2}\right)\gamma_{(n-j,j-1,0,\nu),(n-1,0),(n-j,0),0}\\
    &\qquad+q^{(n-j)-1}\left(q^{\frac{1}{2}(2j-n-1-\eps)}+1\right)\left(q^{\frac{1}{2}(2j-n-1+\eps)}-1\right)\left(q^{0}-\eps^{2}\right)\\&\qquad\qquad\qquad\gamma_{(n-j+1,j-1,\delta),(n-1,0),(n-j,0),0}\\
    &=\left(q^{n-j}-1\right)q^{j-1}q^{\frac{3}{2}jn-\frac{1}{4}n^{2}-\frac{5}{4}j^{2}-\frac{3}{2}n+\frac{3}{2}j-\frac{1}{4}}\left(\chi_{1,\frac{n-j+1}{2}}(q)+\eps q^{\frac{1}{2}n-j}\chi_{1,\frac{n-j-1}{2}}(q)\right)\\%\gamma_{(i-1,j-1,\delta),(n-1,0),(n-j,0),0}
    &\qquad+\frac{1}{2}q^{\frac{1}{2}n-2}\left(q^{j-\frac{1}{2}n}-\eps\right)\sum_{\nu\in\{\pm1\}}q\left(1+\nu\right) q^{\frac{3}{2}jn-\frac{1}{4}n^{2}-\frac{5}{4}j^{2}-\frac{3}{2}n+\frac{3}{2}j-\frac{1}{4}}\chi_{1,\frac{n-j+1}{2}}(q)\\%\gamma_{(i,j-1,0,\nu),(n-1,0),(n-j,0),0}
    &=q^{\frac{3}{2}jn-\frac{1}{4}n^{2}-\frac{5}{4}j^{2}-\frac{3}{2}n+\frac{3}{2}j-\frac{1}{4}}q^{j-1}\left(\left(q^{n-j}-1\right)\left(\chi_{1,\frac{n-j+1}{2}}(q)+\eps q^{\frac{1}{2}n-j}\chi_{1,\frac{n-j-1}{2}}(q)\right)\right.\\
    &\qquad\left.+q^{\frac{1}{2}n-j}\left(q^{j-\frac{1}{2}n}-\eps\right) \chi_{1,\frac{n-j+1}{2}}(q)\right)\\
    &=q^{\frac{3}{2}jn-\frac{1}{4}n^{2}-\frac{5}{4}j^{2}-\frac{3}{2}n+\frac{5}{2}j-\frac{5}{4}}\left(q^{n-j}\chi_{1,\frac{n-j+1}{2}}(q)+\eps q^{\frac{1}{2}n-j}\left(\left(q^{n-j}-1\right)\chi_{1,\frac{n-j-1}{2}}(q)-\chi_{1,\frac{n-j+1}{2}}(q)\right)\right)\\
    &=q^{\frac{3}{2}jn-\frac{1}{4}n^{2}-\frac{5}{4}j^{2}-\frac{3}{2}n+\frac{5}{2}j-\frac{5}{4}}\left(q^{n-j}\chi_{1,\frac{n-j+1}{2}}(q)+\eps q^{\frac{1}{2}n-j}\left(\chi_{1,\frac{n-j+1}{2}}(q)-\chi_{1,\frac{n-j+1}{2}}(q)\right)\right)\\
    &=q^{\frac{3}{2}jn-\frac{1}{4}n^{2}-\frac{5}{4}j^{2}-\frac{1}{2}n+\frac{3}{2}j-\frac{5}{4}}\chi_{1,\frac{n-j+1}{2}}(q)\:,
\end{align*}
which agrees with the formula for the general case since $\gs{-1}{0}{q^{2}}=1$.

\subsection{\texorpdfstring{$i$} is even}\label{sec:appendixB2}

We assume that $i$ is even and apply Lemma \ref{lem:recursion2} and Corollary \ref{cor:gammaquadratic0odd}. We find that
\begin{align*}
    &\gamma_{(i,j,0),(n,\eps),(n-j,0),\eps}\beta_{(0,n-j,0),(n,\eps),(n-1,0)}\\
    &=\frac{1}{2}q^{n-i-1}\left(q^{i}-1\right)\sum_{\nu\in\{\pm1\}}\gamma_{(i-1,j-1,0,\nu),(n-1,0),(n-j,0),0}\\
    &\qquad+\frac{1}{2}q^{\frac{1}{2}n-1}\sum_{\kappa\in\{\pm1\}}\left(q^{\frac{1}{2}n-i-1}(q-1)-\eps+\kappa q^{\frac{1}{2}(j-i-1)}\left((q-1)q^{\frac{1}{2}n-j}-\eps\right)\right)\\&\qquad\qquad\qquad\qquad\qquad\gamma_{(i,j-1,\kappa),(n-1,0),(n-j,0),0}\\
    &\qquad+\frac{1}{2}\left(q^{j-i-1}-1\right)q^{\frac{1}{2}(n-j+i-1)}\sum_{\nu\in\{\pm1\}}\gamma_{(i+1,j-1,0,\nu),(n-1,0),(n-j,0),0}\left(q^{\frac{1}{2}(n-j-i-1)}+\nu\right)\\
    &=\frac{1}{2}q^{n-i-1}\left(q^{i}-1\right)\sum_{\nu\in \{\pm 1\}} q^{\frac{3}{2}jn-\frac{1}{4}n^{2}-\frac{5}{4}j^{2}-\frac{3}{2}n+\frac{3}{2}j-\frac{1}{4}}\sum_{m=0}^{\frac{n-j-i+1}{2}}\chi_{1,\frac{n-j+1}{2}-m}(q)\gs{\frac{n-j-i+1}{2}}{m}{q^{2}}q^{m(m-j+i)}\\
    &\qquad+\frac{1}{2}q^{\frac{1}{2}n-1}\sum_{\kappa\in\{\pm1\}}\left(q^{\frac{1}{2}n-i-1}(q-1)-\eps+\kappa q^{\frac{1}{2}(j-i-1)}\left((q-1)q^{\frac{1}{2}n-j}-\eps\right)\right)\\& \qquad\qquad q^{\frac{3}{2}jn-\frac{1}{4}n^{2}-\frac{5}{4}j^{2}-\frac{3}{2}n+\frac{3}{2}j-\frac{1}{4}}\left(\sum_{m=0}^{\frac{n-j-i-1}{2}}\chi_{1,\frac{n-j+1}{2}-m}(q)\gs{\frac{n-j-i-1}{2}}{m}{q^{2}}q^{m(m-j+i+2)}\right.\\ &\qquad\qquad\qquad\left.+\kappa\:q^{\frac{1}{2}(i-j+1)}\sum_{m=0}^{\frac{n-j-i-1}{2}}\chi_{1,\frac{n-j-1}{2}-m}(q)\gs{\frac{n-j-i-1}{2}}{m}{q^{2}}q^{m(m-j+i+2)}\right)\\
    &\qquad+\frac{1}{2}\left(q^{j-i-1}-1\right)q^{\frac{1}{2}(n-j+i-1)}\sum_{\nu\in\{\pm1\}}\left(q^{\frac{1}{2}(n-j-i-1)}+\nu\right)\\& \qquad \qquad q^{\frac{3}{2}jn-\frac{1}{4}n^{2}-\frac{5}{4}j^{2}-\frac{3}{2}n+\frac{3}{2}j-\frac{1}{4}}\sum_{m=0}^{\frac{n-j-i-1}{2}}\chi_{1,\frac{n-j+1}{2}-m}(q)\gs{\frac{n-j-i-1}{2}}{m}{q^{2}}q^{m(m-j+i+2)}\\
    &=q^{\frac{3}{2}jn-\frac{1}{4}n^{2}-\frac{5}{4}j^{2}-\frac{3}{2}n+\frac{3}{2}j-\frac{1}{4}}\left(q^{n-i-1}\left(q^{i}-1\right) \sum_{m=0}^{\frac{n-j-i+1}{2}}\chi_{1,\frac{n-j+1}{2}-m}(q)\gs{\frac{n-j-i+1}{2}}{m}{q^{2}}q^{m(m-j+i)}\right.\\
    &\qquad+q^{\frac{1}{2}n-1}\left( \left(q^{\frac{1}{2}n-i-1}(q-1)-\eps\right)\sum_{m=0}^{\frac{n-j-i-1}{2}}\chi_{1,\frac{n-j+1}{2}-m}(q)\gs{\frac{n-j-i-1}{2}}{m}{q^{2}}q^{m(m-j+i+2)}\right.\\&\left. \qquad\qquad +\left((q-1)q^{\frac{1}{2}n-j}-\eps\right)\sum_{m=0}^{\frac{n-j-i-1}{2}}\chi_{1,\frac{n-j-1}{2}-m}(q)\gs{\frac{n-j-i-1}{2}}{m}{q^{2}}q^{m(m-j+i+2)}\right)\\
    &\qquad+\left.\left(q^{j-i-1}-1\right)q^{\frac{1}{2}(n-j+i-1)}q^{\frac{1}{2}(n-j-i-1)} \sum_{m=0}^{\frac{n-j-i-1}{2}}\chi_{1,\frac{n-j+1}{2}-m}(q)\gs{\frac{n-j-i-1}{2}}{m}{q^{2}}q^{m(m-j+i+2)}\right)\\
    &=q^{\frac{3}{2}jn-\frac{1}{4}n^{2}-\frac{5}{4}j^{2}-\frac{3}{2}n+\frac{3}{2}j-\frac{1}{4}}\left(\left(q^{n-i-1}\left(q^{i}-1\right) \sum_{m=0}^{\frac{n-j-i+1}{2}}\chi_{1,\frac{n-j+1}{2}-m}(q)\gs{\frac{n-j-i+1}{2}}{m}{q^{2}}q^{m(m-j+i)}\right.\right.\\&\qquad\qquad+q^{n-i-2}(q-1)\sum_{m=0}^{\frac{n-j-i-1}{2}}\chi_{1,\frac{n-j+1}{2}-m}(q)\gs{\frac{n-j-i-1}{2}}{m}{q^{2}}q^{m(m-j+i+2)}\\& \qquad\qquad + (q-1)q^{n-j-1}\sum_{m=0}^{\frac{n-j-i-1}{2}}\chi_{1,\frac{n-j-1}{2}-m}(q)\gs{\frac{n-j-i-1}{2}}{m}{q^{2}}q^{m(m-j+i+2)}\\ &\qquad\qquad+\left.\left(q^{j-i-1}-1\right)q^{n-j-1} \sum_{m=0}^{\frac{n-j-i-1}{2}}\chi_{1,\frac{n-j+1}{2}-m}(q)\gs{\frac{n-j-i-1}{2}}{m}{q^{2}}q^{m(m-j+i+2)}\right)\\
    & \qquad-\eps q^{\frac{1}{2}n-1}\left(\sum_{m=0}^{\frac{n-j-i-1}{2}}\chi_{1,\frac{n-j+1}{2}-m}(q)\gs{\frac{n-j-i-1}{2}}{m}{q^{2}}q^{m(m-j+i+2)}\right.\\ &\qquad\qquad\left.\left.+\sum_{m=0}^{\frac{n-j-i-1}{2}}\chi_{1,\frac{n-j-1}{2}-m}(q)\gs{\frac{n-j-i-1}{2}}{m}{q^{2}}q^{m(m-j+i+2)}\right)\right)\\
    &=q^{\frac{3}{2}jn-\frac{1}{4}n^{2}-\frac{5}{4}j^{2}-\frac{3}{2}n+\frac{3}{2}j-\frac{1}{4}}\left(q^{n-j-1}\left(q^{j} \sum_{m=0}^{\frac{n-j-i+1}{2}}\chi_{1,\frac{n-j+1}{2}-m}(q)\gs{\frac{n-j-i+1}{2}}{m}{q^{2}}q^{m(m-j+i)}\right.\right.\\&\qquad\qquad -q^{j-i} \sum_{m=0}^{\frac{n-j-i+1}{2}}\chi_{1,\frac{n-j+1}{2}-m}(q)\gs{\frac{n-j-i+1}{2}}{m}{q^{2}}q^{m(m-j+i)}\\&\qquad\qquad+\left(q^{j-i}-1\right)\sum_{m=0}^{\frac{n-j-i-1}{2}}\chi_{1,\frac{n-j+1}{2}-m}(q)\gs{\frac{n-j-i-1}{2}}{m}{q^{2}}q^{m(m-j+i+2)}\\& \qquad\qquad\left. + (q-1)\sum_{m=0}^{\frac{n-j-i-1}{2}}\chi_{1,\frac{n-j-1}{2}-m}(q)\gs{\frac{n-j-i-1}{2}}{m}{q^{2}}q^{m(m-j+i+2)}\right)\\
    &\qquad\left. -\eps q^{\frac{1}{2}n-1} \sum_{m=0}^{\frac{n-j-i-1}{2}}\chi_{1,\frac{n-j-1}{2}-m}(q)((q^{n-j-2m}-1)+1)\gs{\frac{n-j-i-1}{2}}{m}{q^{2}}q^{m(m-j+i+2)} \right)\\
    &=q^{\frac{3}{2}jn-\frac{1}{4}n^{2}-\frac{5}{4}j^{2}-\frac{3}{2}n+\frac{3}{2}j-\frac{1}{4}}\left(q^{n-j-1}\left(q^{j} \sum_{m=0}^{\frac{n-j-i+1}{2}}\chi_{1,\frac{n-j+1}{2}-m}(q)\gs{\frac{n-j-i+1}{2}}{m}{q^{2}}q^{m(m-j+i)}\right.\right.\\&\qquad\qquad -q^{j-i} \sum_{m=0}^{\frac{n-j-i+1}{2}}\chi_{1,\frac{n-j+1}{2}-m}(q)\left(\gs{\frac{n-j-i+1}{2}}{m}{q^{2}}-q^{2m}\gs{\frac{n-j-i-1}{2}}{m}{q^{2}}\right)q^{m(m-j+i)}\\&\qquad\qquad-\sum_{m=0}^{\frac{n-j-i-1}{2}}\chi_{1,\frac{n-j+1}{2}-m}(q)\gs{\frac{n-j-i-1}{2}}{m}{q^{2}}q^{m(m-j+i+2)}\\& \qquad\qquad + q\sum_{m=0}^{\frac{n-j-i-1}{2}}\chi_{1,\frac{n-j-1}{2}-m}(q)\gs{\frac{n-j-i-1}{2}}{m}{q^{2}}q^{m(m-j+i+2)}\\& \qquad\qquad\left. -\sum_{m=0}^{\frac{n-j-i-1}{2}}\chi_{1,\frac{n-j-1}{2}-m}(q)\gs{\frac{n-j-i-1}{2}}{m}{q^{2}}q^{m(m-j+i+2)}\right)\\
    &\qquad\left. -\eps q^{\frac{3}{2}n-j-1}\sum_{m=0}^{\frac{n-j-i-1}{2}}\chi_{1,\frac{n-j-1}{2}-m}(q)\gs{\frac{n-j-i-1}{2}}{m}{q^{2}}q^{m(m-j+i)} \right)\\
    &=q^{\frac{3}{2}jn-\frac{1}{4}n^{2}-\frac{5}{4}j^{2}-\frac{1}{2}n+\frac{1}{2}j-\frac{5}{4}}\left(\left(q^{j} \sum_{m=0}^{\frac{n-j-i+1}{2}}\chi_{1,\frac{n-j+1}{2}-m}(q)\gs{\frac{n-j-i+1}{2}}{m}{q^{2}}q^{m(m-j+i)}\right.\right.\\&\qquad\qquad -q^{j-i} \sum_{m=0}^{\frac{n-j-i+1}{2}}\chi_{1,\frac{n-j+1}{2}-m}(q)\gs{\frac{n-j-i-1}{2}}{m-1}{q^{2}}q^{m(m-j+i)}\\&\qquad\qquad-\sum_{m=0}^{\frac{n-j-i-1}{2}}\chi_{1,\frac{n-j-1}{2}-m}(q)\left(\left(q^{n-j-2m}-1\right)+1\right)\gs{\frac{n-j-i-1}{2}}{m}{q^{2}}q^{m(m-j+i+2)}\\& \qquad\qquad\left. + q\sum_{m=0}^{\frac{n-j-i-1}{2}}\chi_{1,\frac{n-j-1}{2}-m}(q)\gs{\frac{n-j-i-1}{2}}{m}{q^{2}}q^{m(m-j+i+2)}\right)\\
    &\qquad\left. -\eps q^{\frac{1}{2}n}\sum_{m=0}^{\frac{n-j-i-1}{2}}\chi_{1,\frac{n-j-1}{2}-m}(q)\gs{\frac{n-j-i-1}{2}}{m}{q^{2}}q^{m(m-j+i)} \right)\\
    &=q^{\frac{3}{2}jn-\frac{1}{4}n^{2}-\frac{5}{4}j^{2}-\frac{1}{2}n+\frac{1}{2}j-\frac{5}{4}}\left(\left(q^{j} \sum_{m=0}^{\frac{n-j-i+1}{2}}\chi_{1,\frac{n-j+1}{2}-m}(q)\gs{\frac{n-j-i+1}{2}}{m}{q^{2}}q^{m(m-j+i)}\right.\right.\\&\qquad\qquad -q^{j-i} \sum_{m=0}^{\frac{n-j-i-1}{2}}\chi_{1,\frac{n-j-1}{2}-m}(q)\gs{\frac{n-j-i-1}{2}}{m}{q^{2}}q^{m(m-j+i+2)}q^{-j+i+1}\\&\qquad\qquad-q^{n-j}\sum_{m=0}^{\frac{n-j-i-1}{2}}\chi_{1,\frac{n-j-1}{2}-m}(q)\gs{\frac{n-j-i-1}{2}}{m}{q^{2}}q^{m(m-j+i)}\\& \qquad\qquad\left. + q\sum_{m=0}^{\frac{n-j-i-1}{2}}\chi_{1,\frac{n-j-1}{2}-m}(q)\gs{\frac{n-j-i-1}{2}}{m}{q^{2}}q^{m(m-j+i+2)}\right)\\
    &\qquad\left. -\eps q^{\frac{1}{2}n}\sum_{m=0}^{\frac{n-j-i-1}{2}}\chi_{1,\frac{n-j-1}{2}-m}(q)\gs{\frac{n-j-i-1}{2}}{m}{q^{2}}q^{m(m-j+i)} \right)\\%&\qquad\left. -\eps q^{\frac{1}{2}n}\sum_{m=0}^{\frac{n-j-i-1}{2}}\chi_{1,\frac{n-j-1}{2}-m}(q)\gs{\frac{n-j-i-1}{2}}{m}{q^{2}}q^{m(m-j+i)} \right)\\
    &=q^{\frac{3}{2}jn-\frac{1}{4}n^{2}-\frac{5}{4}j^{2}-\frac{1}{2}n+\frac{1}{2}j-\frac{5}{4}}\left(\left(q^{j} \sum_{m=0}^{\frac{n-j-i+1}{2}}\chi_{1,\frac{n-j+1}{2}-m}(q)\gs{\frac{n-j-i+1}{2}}{m}{q^{2}}q^{m(m-j+i)}\right.\right.\\&\qquad\qquad\left.-q^{n-j}\sum_{m=1}^{\frac{n-j-i+1}{2}}\chi_{1,\frac{n-j+1}{2}-m}(q)\gs{\frac{n-j-i-1}{2}}{m-1}{q^{2}}q^{m(m-j+i-2)}q^{j-i+1}\right)\\
    &\qquad\left. -\eps q^{\frac{1}{2}n}\sum_{m=0}^{\frac{n-j-i-1}{2}}\chi_{1,\frac{n-j-1}{2}-m}(q)\gs{\frac{n-j-i-1}{2}}{m}{q^{2}}q^{m(m-j+i)} \right)\\
    &=q^{\frac{3}{2}jn-\frac{1}{4}n^{2}-\frac{5}{4}j^{2}-\frac{1}{2}n+\frac{1}{2}j-\frac{5}{4}}\\&\quad\left(q^{j} \sum_{m=0}^{\frac{n-j-i+1}{2}}\chi_{1,\frac{n-j+1}{2}-m}(q)\left(\gs{\frac{n-j-i+1}{2}}{m}{q^{2}}-q^{n-j-i+1-2m}\gs{\frac{n-j-i-1}{2}}{m-1}{q^{2}}\right)q^{m(m-j+i)}\right.\\
    &\qquad\left. -\eps q^{\frac{1}{2}n}\sum_{m=0}^{\frac{n-j-i-1}{2}}\chi_{1,\frac{n-j-1}{2}-m}(q)\gs{\frac{n-j-i-1}{2}}{m}{q^{2}}q^{m(m-j+i)} \right)\\
    &=q^{\frac{3}{2}jn-\frac{1}{4}n^{2}-\frac{5}{4}j^{2}-\frac{1}{2}n+\frac{3}{2}j-\frac{5}{4}}\left(\sum_{m=0}^{\frac{n-j-i-1}{2}}\chi_{1,\frac{n-j+1}{2}-m}(q)\gs{\frac{n-j-i-1}{2}}{m}{q^{2}}q^{m(m-j+i)}\right.\\
    &\qquad\left. -\eps q^{\frac{1}{2}n-j}\sum_{m=0}^{\frac{n-j-i-1}{2}}\chi_{1,\frac{n-j-1}{2}-m}(q)\gs{\frac{n-j-i-1}{2}}{m}{q^{2}}q^{m(m-j+i)} \right)\:.
\end{align*}

\end{document}